\title{Finite element spaces of double forms}
\author{Yakov Berchenko-Kogan and Evan S. Gawlik}
\colorlet{darkgreen}{green!50!black}
\newtheorem{theorem}{Theorem}[section]
\newtheorem{proposition}[theorem]{Proposition}
\newtheorem{corollary}[theorem]{Corollary}
\newtheorem{lemma}[theorem]{Lemma}
\theoremstyle{definition}
\newtheorem{definition}[theorem]{Definition}
\newtheorem{notation}[theorem]{Notation}
\newtheorem{example}[theorem]{Example}
\newtheorem{remark}[theorem]{Remark}
\newcommand\contract{\mathbin\lrcorner}
\newcommand\abs[1]{\left\lvert{#1}\right\rvert}
\renewcommand\phi\varphi
\newcommand\R{\mathbb R}
\newcommand\cP{\mathcal P}
\newcommand\cH{\mathcal H}
\newcommand\oP{\mathring{\cP}}
\newcommand\oH{\mathring{\cH}}
\newcommand\cT{\mathcal T}
\newcommand\rs{\mathring s}
\newcommand{\ostarsymb}{{\mathpalette\make@circled\star}}
\newcommand{\make@circled}[2]{%
  \ooalign{$\m@th#1\smallbigcirc{#1}$\cr\hidewidth$\m@th#1#2$\hidewidth\cr}%
}
\newcommand{\smallbigcirc}[1]{%
  \vcenter{\hbox{\scalebox{0.77778}{$\m@th#1\bigcirc$}}}%
}
\DeclareMathOperator\ostar\ostarsymb
\DeclareMathOperator\starop\star
\DeclareMathOperator\id{id}
\DeclareMathOperator\Lie{\mathfrak L}
\DeclareMathOperator\tr{tr}
\newcommand\pp[2][]{\frac{\partial{#1}}{\partial{#2}}}
\newcommand\D{\mathop{}\!d}
\newcommand\dx{\D x}
\newcommand\dl{\D\lambda}
\newcommand\du{\D u}
\newcommand\dlx[2]{\dl_{#1}\otimes\dl_{#2}}
\newcommand\dlsym[2]{\dl_{#1}\odot\dl_{#2}}
\begin{document}

\begin{abstract}
  The tensor product of two differential forms of degree $p$ and $q$ is a multilinear form that is alternating in its first $p$ arguments and alternating in its last $q$ arguments.  These forms, which are known as double forms or $(p,q)$-forms, play a central role in certain differential complexes that arise when studying partial differential equations.  We construct piecewise polynomial finite element spaces for all of the natural subspaces of the space of $(p,q)$-forms, excluding one subspace which fails to admit a piecewise constant discretization.  As special cases, our construction recovers known finite element spaces for symmetric matrices with tangential-tangential continuity (the Regge finite elements), symmetric matrices with normal-normal continuity, and trace-free matrices with normal-tangential continuity.  It also gives rise to new spaces, like a finite element space for tensors possessing the symmetries of the Riemann curvature tensor.
\end{abstract}

\maketitle

\section{Introduction}

Over the last several decades, exterior calculus has played an important role in the development of numerical methods for partial differential equations (PDEs).  Notably, Arnold, Falk, and Winther~\cite{arnold2006finite,arnold2010finite} showed that finite element methods for many PDEs can be best understood by viewing the unknowns as differential forms and seeking approximate solutions in finite-dimensional spaces of differential forms.  These finite-dimensional spaces, or finite element spaces, consist of differential forms which are piecewise polynomial with respect to a simplicial triangulation of the domain on which the PDE is posed.  When chosen carefully, such spaces give rise to stable mixed discretizations of PDEs involving the Hodge-Laplace operator.  Arnold, Falk, and Winther's work led to a complete classification of such spaces, generalizing and unifying finite element spaces that are attributed to Whitney~\cite{whitney1957geometric}, Raviart and Thomas~\cite{raviart2006mixed}, N\'ed\'elec~\cite{nedelec1980mixed,nedelec1986new}, Brezzi, Douglas, and Marini~\cite{brezzi1985two}, and others.  Their work also highlighted the importance of differential complexes---particularly the de Rham complex---in the design and analysis of finite element methods for PDEs.

In this paper, we construct finite element spaces for \emph{double forms}: tensor products of differential forms.  Unlike ordinary differential $k$-forms, which are multilinear and alternating in all $k$ of their arguments, the tensor product of a $p$-form and a $q$-form is a multilinear form that is alternating in its first $p$ arguments and alternating in its last $q$ arguments.  These forms, which are known as double forms or $(p,q)$-forms, have a long history in differential geometry~\cite{calabi1961compact,gray1970some,kulkarni1972bianchi,de2012differentiable,labbi2005double,kupferman2024double,troyanov2024doubleforms} and have recently drawn the attention of numerical analysts~\cite{arnold2021complexes,bonizzoni2023discrete} due to their role in certain differential complexes that arise when studying PDEs. 

To be specific, we consider an $N$-dimensional simplicial triangulation $\mathcal{T}$ and focus on constructing piecewise polynomial $(p,q)$-forms that have single-valued trace on element interfaces.  Here, taking the trace of a $(p,q)$-form $\phi$ on a simplex $F$ means that we not only evaluate $\phi$ at points on $F$, but we also feed into $\phi$ only vectors which are tangent to $F$.  It turns out that the full space of $(p,q)$-forms does not admit such a discretization if one uses piecewise constants (unless $p+q \ge N$); only certain subspaces do.   We determine which subspaces admit such a discretization and, for those that do, we construct one by providing degrees of freedom for the finite element space.  We do this in depth for the piecewise constant finite element spaces and leave certain details about the higher-order spaces (like explicit degree of freedom counts) for future work. 

Our construction recovers several known finite element spaces as special cases.  The Regge finite element space is one example~\cite{christiansen2004characterization,christiansen2011linearization,li2018regge}.  In the lowest order setting, the members of this space are often described as piecewise constant symmetric matrices possessing tangential-tangential continuity, and the space has one degree of freedom per edge in the triangulation.  In our language, the Regge finite elements are symmetric $(1,1)$-forms with single-valued trace on lower-dimensional simplices.  The word ``symmetric'' is important here; our construction recovers the piecewise constant Regge finite element space when considering symmetric $(1,1)$-forms but fails to provide a piecewise constant finite element space when considering antisymmetric $(1,1)$-forms (except in dimension $N=2$).  This is consistent with the fact that antisymmetric $(1,1)$-forms are simply $2$-forms, and $2$-forms do not admit a piecewise constant discretization with tangential continuity in dimension $N>2$. (More precisely, they do not admit a tangentially continuous piecewise constant discretization that is geometrically decomposable in the sense of~\cite{arnold2009geometric}.)

In the same way that the space of $(1,1)$-forms decomposes naturally into two subspaces---symmetric $(1,1)$-forms and antisymmetric $(1,1)$-forms---the space of $(p,q)$-forms admits a natural decomposition as well.  This decomposition, which has its origins in representation theory~\cite[Exercises 6.13* and 15.32*]{fulton2004representation} and involves at most $\min\{p,q\}+1$ summands, can be characterized in several different ways~\cite{gawlik2025title}.  For our purposes it is convenient to regard this decomposition as an eigendecomposition of a certain self-adjoint operator on $(p,q)$-forms.  Relative to this decomposition, we show that all of the summands admit a discretization with piecewise polynomials of degree $\ge 1$, and all but one of the summands admits a piecewise constant discretization.  The exceptional summand consists of those $(p,q)$-forms that alternate in all $p+q$ arguments, i.e.~the $(p,q)$-forms that are actually $(p+q)$-forms.

When one considers $(2,1)$-forms, there are two summands in the aforementioned decomposition.  For one of those summands, our construction yields a finite element space that in 3D is isomorphic to the space of piecewise polynomial, trace-free matrices with normal-tangential continuity introduced by Gopalakrishnan, Lederer, and Sch\"oberl~\cite{gopalakrishnan2020mass}.  This space has two degrees of freedom per triangle in the lowest-order setting.

For $(2,2)$-forms in dimension $N=3$, our construction yields (for one of the summands in the decomposition) a finite element space that is isomorphic to the space of piecewise polynomial, symmetric matrices with normal-normal continuity introduced by Pechstein and Sch\"oberl~\cite{pechstein2011tangential}. This space has one degree of freedom per triangle and two degrees of freedom per tetrahedron in the lowest-order setting.

Our construction also gives rise to many new finite element spaces.  Of particular interest are $(2,2)$-forms in dimension $N \ge 3$ that satisfy the algebraic Bianchi identity $\phi(X,Y;Z,W) + \phi(Y,Z;X,W) + \phi(Z,X;Y,W) = 0$.  These so-called \emph{algebraic curvature tensors} possess precisely the same symmetries as the Riemann curvature tensor from differential geometry (including the symmetry $\phi(X,Y;Z,W) = \phi(Z,W;X,Y)$, which follows from the algebraic Bianchi identity and the fact that $\phi(X,Y;Z,W)$ alternates in $X$ and $Y$ and alternates in $Z$ and $W$).  Our construction yields a piecewise polynomial finite element space for such algebraic curvature tensors.  In dimension $N = 3$, the space is the same as the one mentioned above that can be identified with Pechstein and Sch\"oberl's space.  In dimension $N > 3$, the lowest-order version of this space has one degree of freedom per triangle and two degrees of freedom per tetrahedron, just like in dimension $N=3$.  Let us remark that in certain contexts, it may be preferable to work with a dual version of these double forms, namely $(N-2,N-2)$-forms whose double Hodge dual satisfies the algebraic Bianchi identity.  Our construction yields a finite element space for these double forms as well.  As discussed in~\cite{gopalakrishnan2023analysis}, such a finite element space may be useful for computations that involve the distributional Riemann curvature tensor.

\subsection{Organization}
We begin in Section~\ref{sec:doublemulti} by studying multilinear functionals that alternate in their first $p$ arguments and alternate in their last $q$ arguments.  We show that these  functionals, or $(p,q)$-covectors, admit a natural decomposition.  We then bring spatial dependence into the picture in Section~\ref{sec:doubleforms} and study $(p,q)$-forms on a manifold.  The tools developed in Sections~\ref{sec:doublemulti} and~\ref{sec:doubleforms} will be used to prove a key result in Section~\ref{sec:extension}: Given any $(p,q)$-form $\phi$ on the standard simplex $T^n = \{(\lambda_0,\dots,\lambda_n) \mid \lambda_i \ge 0, \, \sum_i \lambda_i = 1\}$ that has polynomial coefficients of degree at most $r \ge 0$ and vanishing trace on $\partial T^n$, one can nearly always extend $\phi$ to a $(p,q)$-form on $\R^{n+1}$ that has polynomial coefficients of degree $r$ and vanishing trace on the coordinate hyperplanes.  We show that this extension preserves the aforementioned decomposition, and that such an extension fails to exist for precisely one summand if $r=0$: the aforementioned exceptional summand of fully alternating tensors.  With the help of a recent paper by the first author~\cite{berchenko2025extension}, we use this result to prove the existence of piecewise polynomial finite element spaces for $(p,q)$-forms in Section~\ref{sec:finite}.  These finite element spaces exist for all polynomial degrees $r \ge 0$ and all subspaces in the decomposition except for the one that fails to admit extensions when $r=0$.  Using~\cite{berchenko2025extension}, we show that these finite element spaces admit a geometric decomposition in the sense of~\cite{arnold2009geometric}.  This decomposition gives rise to a local basis for the finite element spaces that resembles the Bernstein basis for scalar-valued Lagrange finite elements; see Table~\ref{tab:bases} for examples of these bases. We conclude Section~\ref{sec:finite} by providing formulas for the dimensions of the piecewise constant finite element spaces and for the number of degrees of freedom that one must assign to each simplex to ensure unisolvence.  We give examples that show how our construction recovers some known finite element spaces and discovers some new ones.  Finally, in Appendix~\ref{sec:rep}, we discuss how the decomposition of double forms and some of our results arise from representation theory.

\subsection{Related work} 
Recently, Hu and Lin~\cite{hu2025finite} have constructed finite element spaces for double forms using techniques that differ from ours.  As we explain in Section~\ref{sec:related}, some of the spaces we obtain coincide with theirs and some do not.  Briefly, the finite element spaces we treat in this paper are what Hu and Lin call $i^*i^*$-conforming spaces.  These spaces consist of double forms with single-valued trace on element interfaces.  Hu and Lin arrive at such spaces by tensoring Whitney forms with alternating forms and moving degrees of freedom around to weaken the continuity of the resulting spaces. Then, certain degrees of freedom are removed to obtain ``symmetry-reduced spaces''.  These symmetry-reduced, $i^*i^*$-conforming spaces are the ones which most closely match ours; see Section~\ref{sec:related} for details.  Hu and Lin also treat spaces with other continuity properties, while we do not.

Although some of the finite element spaces we obtain overlap with Hu and Lin's, the approach we take to arrive at them is rather different and leads us to new results.  Notably, we obtain a geometric decomposition of the finite element spaces in the sense of~\cite{arnold2009geometric}, complete with local bases and explicit extension operators.  Our approach also highlights the usefulness of the representation-theoretic decomposition of double forms in the construction of finite element spaces.  We suspect that such decompositions will be equally useful for constructing finite element spaces for tensors with other symmetries.  In fact, this has already happened in another context: When developing finite element exterior calculus, Arnold, Falk, and Winther~\cite[p. 37]{arnold2006finite} used a decomposition of polynomial differential forms into irreducible linear-invariant subspaces to identify affine-invariant subspaces thereof.

\section{Double multicovectors} \label{sec:doublemulti}
We begin by working in the linear algebra context, studying multilinear functionals that are antisymmetric in the first $p$ indices and antisymmetric in the last $q$ indices. We call these functionals \emph{double multicovectors}. As we will see, the space $\Lambda^{p,q}$ of double multicovectors has a much richer structure than the space $\Lambda^k$ of regular multicovectors. Ultimately, this rich structure arises because, unlike $\Lambda^k$, the space of double multicovectors $\Lambda^{p,q}$ is \emph{not} an irreducible representation with respect to the natural action of the general linear group. Consequently, $\Lambda^{p,q}$ has a natural decomposition into subspaces, and there are nontrivial natural maps between the $\Lambda^{p,q}$. We discuss the connection to representation theory in Appendix~\ref{sec:rep}. For now, we give a more elementary discussion of this structure.

We begin by defining double multicovectors. Then, in Section~\ref{sec:ssdual}, we define the \emph{Bianchi sum} $s$ and its adjoint $s^*$. We show that the eigendecomposition of $s^*s$ gives a natural decomposition $\Lambda^{p,q}=\bigoplus_m\Lambda^{p,q}_m$ in Section~\ref{sec:decomposition}. In Section~\ref{sec:properties}, we delve into the relationships between the summands of this decomposition and the $s$ and $s^*$ operators, including determining which summands are nonzero and proving the isomorphism $s^m\colon\Lambda^{p,q}_m\to\Lambda^{p+m,q-m}_0$. We discuss examples in Section~\ref{sec:decompeg}, including how this decomposition specializes to matrix decompositions in three dimensions. Finally, in Section~\ref{sec:additionaloperations}, we discuss the transposition operator $\tau\colon\Lambda^{p,q}\to\Lambda^{q,p}$ and the double Hodge star operator $\ostar\colon\Lambda^{p,q}\to\Lambda^{n-p,n-q}$.

In what follows, we fix a vector space $V$. We will assume that $V$ is finite-dimensional, though some definitions and results work equally well in the infinite-dimensional context.

\begin{definition}
  A \emph{$k$-covector} or \emph{multicovector} is an antisymmetric $k$-linear functional on $V$. The space of $k$-covectors is denoted $\bigwedge^kV^*$, which we will abbreviate as $\Lambda^k$ when there is no risk of confusion.
\end{definition}
\begin{definition}
  Let $\Lambda^{p,q}:=\Lambda^p\otimes\Lambda^q$. When we wish to emphasize the space $V$, we will use the notation $\bigwedge^{p,q}V^*$. A \emph{$(p,q)$-covector} or \emph{double multicovector} is an element of $\Lambda^{p,q}$. Letting $k=p+q$, double multicovectors are $k$-linear functionals on $V$ which are antisymmetric in the first $p$ indices and antisymmetric in the last $q$ indices. 
\end{definition}

\begin{notation} \label{not:basis}
  If $e_1,\dotsc,e_n$ is a basis for $V$, let $e^1,\dotsc,e^n$ be the corresponding dual basis of $V^*$. For a multi-index $I=(i_1,\dotsc,i_k)$, let $e^I:=e^{i_1}\wedge\dotsb\wedge e^{i_k}\in\Lambda^k$. Let $e^{I,J}:=e^I\otimes e^J\in\Lambda^{p,q}$, where $p=\abs I$ and $q=\abs J$.
\end{notation}

If we restrict $I$ and $J$ to each be in increasing order, then the $e^{I,J}$ form a basis of $\Lambda^{p,q}$. If $e_1,\dotsc,e_n$ is orthonormal with respect to an inner product on $V$, then this $e^{I,J}$ basis is orthonormal with respect to the induced inner product on $\Lambda^{p,q}$.

\subsection{The $s$ and $s^*$ operators}\label{sec:ssdual}
For any $p$ and $q$, there is a natural map $s\colon\Lambda^{p,q}\to\Lambda^{p+1,q-1}$. Up to a constant, this map is simply antisymmetrization in the first $p+1$ indices. There is likewise a corresponding map $s^*\colon\Lambda^{p,q}\to\Lambda^{p-1,q+1}$. The decomposition alluded to earlier is simply the eigendecomposition of $s^*s$. We now discuss these operators and this decomposition in more detail.

\begin{definition}\label{def:s}
  Let $s\colon\Lambda^{p,q}\to\Lambda^{p+1,q-1}$, sometimes called the \emph{Bianchi sum}, denote the map defined by
  \begin{multline*}
    (s\phi)(X_1,\dots,X_{p+1};Y_1,\dots,Y_{q-1}) \\
    = \sum_{a=1}^{p+1} (-1)^{a-1} \phi(X_1,\dots,\widehat{X_a},\dots,X_{p+1};X_a,Y_1,\dots,Y_{q-1}).
  \end{multline*}
  Equivalently, we can define $s$ on simple tensors by
  \begin{multline*}
    s\bigl((\alpha_1 \wedge \dots \wedge \alpha_p) \otimes (\beta_1 \wedge \dots \wedge \beta_q) \bigr) \\
    = \sum_{a=1}^q (-1)^{a-1} (\beta_a \wedge \alpha_1 \wedge \dots \wedge \alpha_p) \otimes (\beta_1 \wedge \dots \wedge \widehat{\beta_a} \wedge \dots \wedge \beta_q).
  \end{multline*}
  and then extending by linearity.
\end{definition}

See~\cite{kulkarni1972bianchi,gray1970some,calabi1961compact,kupferman2023elliptic,kupferman2024double}, as well as \cite[Exercise~15.30]{fulton2004representation} for more discussion of this map. We can likewise define the map going the other way.

\begin{definition}
  Let $s^*\colon\Lambda^{p,q}\to\Lambda^{p-1,q+1}$ denote the map defined by
  \begin{multline*}
    (s^*\phi)(X_1,\dots,X_{p-1};Y_1,\dots,Y_{q+1}) \\
    = \sum_{a=1}^{q+1} (-1)^{a-1} \phi(Y_a,X_1,\dots,X_{p-1};Y_1,\dots,\widehat{Y_a},\dots,Y_{q+1}),
  \end{multline*}
  or, equivalently, by
  \begin{multline*}
    s^*\bigl((\alpha_1 \wedge \dots \wedge \alpha_p) \otimes (\beta_1 \wedge \dots \wedge \beta_q) \bigr) \\
    = \sum_{a=1}^p (-1)^{a-1} (\alpha_1 \wedge \dots \wedge \widehat{\alpha_a} \wedge\dots\wedge \alpha_p) \otimes (\alpha_a\wedge\beta_1 \wedge \dots \wedge \beta_q).
  \end{multline*}
\end{definition}

As the notation suggests, $s$ and $s^*$ are adjoints of each other with respect to the natural inner product on double multicovectors induced by an arbitrary inner product on $V$. We will prove this result shortly. First, we give an alternate characterization of the $s$ operator as wedge-contraction with the identity linear transformation.

\begin{remark}
  Propositions~\ref{prop:wedgecontract},~\ref{prop:adjoint} and~\ref{prop:ssstar} also appear in~\cite[p.~49]{kupferman2023elliptic}.
\end{remark}

\begin{proposition} \label{prop:wedgecontract}
  We have
  \begin{align*}
    s(\alpha\otimes\beta)&=\sum_{i=1}^n(e^i\wedge\alpha)\otimes(e_i\contract\beta),\\
    s^*(\alpha\otimes\beta)&=\sum_{i=1}^n(e_i\contract\alpha)\otimes(e^i\wedge\beta),
  \end{align*}
  where $\alpha$ is a $p$-covector and $\beta$ is a $q$-covector.
\end{proposition}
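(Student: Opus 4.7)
The plan is to verify both identities by direct computation on simple tensors and then extend by bilinearity, since both sides of each equation are linear in $\alpha$ and $\beta$ separately. The pivotal ingredient is the dual-basis identity
\[
  \gamma = \sum_{i=1}^n \gamma(e_i)\, e^i
\]
valid for every covector $\gamma$, which will let me translate between the basis-flavored right-hand sides and the basis-free left-hand sides.

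For the first identity, I would fix $\alpha = \alpha_1 \wedge \dots \wedge \alpha_p$ and $\beta = \beta_1 \wedge \dots \wedge \beta_q$ and expand the interior product by the standard formula
\[
  e_i \contract \beta = \sum_{a=1}^{q} (-1)^{a-1}\, \beta_a(e_i)\, \beta_1 \wedge \dots \wedge \widehat{\beta_a} \wedge \dots \wedge \beta_q.
\]
Substituting into $\sum_i (e^i \wedge \alpha) \otimes (e_i \contract \beta)$ and interchanging the order of summation moves the $i$-sum inside, leaving a factor of $\sum_i \beta_a(e_i)\, e^i = \beta_a$ in the left tensor slot. What remains is exactly
\[
  \sum_{a=1}^{q} (-1)^{a-1}\, (\beta_a \wedge \alpha) \otimes (\beta_1 \wedge \dots \wedge \widehat{\beta_a} \wedge \dots \wedge \beta_q),
\]
which is $s(\alpha \otimes \beta)$ by the simple-tensor formula in Definition~\ref{def:s}. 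Bilinearity in $\alpha$ and $\beta$ then upgrades the identity from simple tensors to all of $\Lambda^{p,q}$.

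The second identity for $s^*$ is proved by the same argument with the roles of $\alpha$ and $\beta$ swapped: expand $e_i \contract \alpha$ analogously, interchange sums, and use the dual-basis identity applied to each $\alpha_a$. Matching the resulting expression against the simple-tensor formula for $s^*$ finishes the argument.

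I do not expect any real obstacle here; the work is essentially sign bookkeeping in the standard contraction expansion. The only conceptual point worth flagging is that while the right-hand sides visibly depend on a choice of basis, the calculation confirms they equal the basis-free operators $s$ and $s^*$, so the formulas are automatically basis-independent. (Equivalently, one may observe that $\sum_i e^i \otimes e_i$ is the identity endomorphism of $V$ and is thus intrinsically defined.)
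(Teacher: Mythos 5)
Your proof is correct and is essentially the paper's argument: a direct verification on simple tensors matching the two formulas via the interior-product expansion. The only cosmetic difference is that the paper checks on basis elements $e^{I,J}$ (where $e_i\contract e^J$ collapses to a single term), whereas you work with arbitrary simple tensors and invoke the completeness relation $\sum_i\beta_a(e_i)\,e^i=\beta_a$; both routes are equally valid.
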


\begin{proof}
  Checking on a basis, we must check that
  \begin{equation*}
    s(e^{I,J})=\sum_{i=1}^n(e^i\wedge e^I)\otimes(e_i\contract e^J).
  \end{equation*}
  By the above definition, the left-hand side is
  \begin{equation*}
    s(e^{I,J})=\sum_{a=1}^q(-1)^{a-1}(e^{j_a}\wedge e^{i_1}\wedge\dots\wedge e^{i_p})\otimes(e^{j_1}\wedge\dots\wedge\widehat{e^{j_a}}\wedge\dots\wedge e^{j_q}),
  \end{equation*}
  where $I=(i_1,\dotsc,i_p)$ and $J=(j_1,\dotsc,j_q)$.

  Moving on to the right-hand side, we have that $e_i\contract e^J=0$ unless $i=j_a$ for some $a$. Thus, we can instead sum over $a$, obtaining
  \begin{equation*}
    \sum_{i=1}^n(e^i\wedge e^I)\otimes(e_i\contract e^J)=\sum_{a=1}^q(e^{j_a}\wedge e^I)\otimes(e_{j_a}\contract e^J).
  \end{equation*}
  The claim follows because $e_{j_a}\contract e^J=(-1)^{a-1}(e^{j_1}\wedge\dots\wedge\widehat{e^{j_a}}\wedge\dots\wedge e^{j_q})$. The result for the $s^*$ operator is analogous.
\end{proof}

Assuming that the $e_1,\dotsc,e_n$ are orthonormal, the operators $e_i\contract$ and $e^i\wedge$ are adjoints of each other. We can use this property along with the previous proposition to show that the operators $s$ and $s^*$ are adjoints of each other.

\begin{proposition}\label{prop:adjoint}
  The operators $s$ and $s^*$ are adjoints of each other.
\end{proposition}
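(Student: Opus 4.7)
The plan is to derive adjointness directly from the wedge-contraction formulas in Proposition~\ref{prop:wedgecontract}, reducing the claim to the well-known fact that, for an orthonormal basis $e_1,\dots,e_n$, the operators $e^i\wedge\colon\Lambda^{k-1}\to\Lambda^k$ and $e_i\contract\colon\Lambda^k\to\Lambda^{k-1}$ are mutual adjoints on ordinary multicovectors. This latter fact is standard and I would simply invoke it (or remark that it follows from checking on the $e^I$ basis).

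Next, I would work with simple tensors $\phi=\alpha\otimes\beta\in\Lambda^{p,q}$ and $\psi=\gamma\otimes\delta\in\Lambda^{p+1,q-1}$, since the inner product on $\Lambda^{p,q}=\Lambda^p\otimes\Lambda^q$ is the tensor product inner product and both $s$ and $s^*$ are linear. Using Proposition~\ref{prop:wedgecontract} and the factorization of the tensor inner product, the computation is
\begin{align*}
  \langle s\phi,\psi\rangle
  &=\sum_i\langle e^i\wedge\alpha,\gamma\rangle\,\langle e_i\contract\beta,\delta\rangle \\
  &=\sum_i\langle \alpha,e_i\contract\gamma\rangle\,\langle \beta,e^i\wedge\delta\rangle
  =\langle \phi,s^*\psi\rangle,
\end{align*}
where the middle equality applies the adjoint property of $e^i\wedge$ and $e_i\contract$ in the first and second tensor factor respectively, and the last equality again uses Proposition~\ref{prop:wedgecontract} (applied to $\psi$) together with the tensor inner product. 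Bilinearity then extends the identity from simple tensors to all of $\Lambda^{p,q}\times\Lambda^{p+1,q-1}$.

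There is no real obstacle here: the whole argument is a one-line manipulation once Proposition~\ref{prop:wedgecontract} is in hand. The only point to be careful about is basis-independence of the statement. The wedge-contraction formulas in Proposition~\ref{prop:wedgecontract} are written in terms of a chosen basis, and the adjointness of $e^i\wedge$ and $e_i\contract$ requires that basis to be orthonormal; however, the operators $s$ and $s^*$ and the induced inner product on $\Lambda^{p,q}$ are defined intrinsically (independent of the chosen basis), so working in any orthonormal basis of $V$ is sufficient to establish the global adjoint relation. I would include a brief sentence to this effect for clarity.
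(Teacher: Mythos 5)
Your proposal is correct and matches the paper's proof essentially line for line: both reduce to simple tensors, apply Proposition~\ref{prop:wedgecontract}, factor the tensor-product inner product, and invoke the adjointness of $e^i\wedge$ and $e_i\contract$ in an orthonormal basis. Your added remark about basis-independence is a reasonable clarification but not a substantive difference.
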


\begin{proof}
  It suffices to check on simple tensors that
  \begin{equation*}
    \left\langle s(\alpha\otimes\beta),\gamma\otimes\delta\right\rangle=\left\langle\alpha\otimes\beta,s^*(\gamma\otimes\delta)\right\rangle,
  \end{equation*}
  where $\alpha\in\Lambda^p$, $\beta\in\Lambda^q$, $\gamma\in\Lambda^{p+1}$, and $\delta\in\Lambda^{q-1}$. We compute
  \begin{equation*}
    \begin{split}
      \left\langle s(\alpha\otimes\beta),\gamma\otimes\delta\right\rangle&=\sum_{i=1}^n\left\langle(e^i\wedge\alpha)\otimes(e_i\contract\beta),\gamma\otimes\delta\right\rangle\\
      &=\sum_{i=1}^n\left\langle e^i\wedge\alpha,\gamma\right\rangle\left\langle e_i\contract\beta,\delta\right\rangle\\
      &=\sum_{i=1}^n\left\langle\alpha,e_i\contract\gamma\right\rangle\left\langle\beta,e^i\wedge\delta\right\rangle\\
      &=\sum_{i=1}^n\left\langle\alpha\otimes\beta,(e_i\contract\gamma)\otimes(e^i\wedge\delta)\right\rangle\\
      &=\left\langle\alpha\otimes\beta,s^*(\gamma\otimes\delta)\right\rangle.\qedhere
    \end{split}
  \end{equation*}
\end{proof}

Therefore, $s^*s\colon\Lambda^{p,q}\to\Lambda^{p,q}$ is self-adjoint, so it is diagonalizable. Note that $s$ is nilpotent because $s^{q+1}\colon\Lambda^{p,q}\to\Lambda^{p+q+1,-1}=0$. Thus, for each eigenvector $\phi$ of $s^*s$, there exists an $m$ such that $s^{m+1}\phi=0$ but $s^m\phi\neq0$. As we will prove in the following propositions, the eigenvalue corresponding to $\phi$ is uniquely determined by $m$, so $m$ indexes the eigenspaces of $s^*s$.

\begin{lemma} \label{lemma:wedgecontract}
  If $\alpha$ is a $k$-covector, then
  \begin{equation*}
    \sum_{i=1}^ne^i\wedge(e_i\contract\alpha)=k\alpha.
  \end{equation*}
\end{lemma}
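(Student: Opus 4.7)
The plan is to reduce to basis elements and then verify the identity by a careful signs-and-positions computation. Specifically, both sides are linear in $\alpha$, so it suffices to prove the identity when $\alpha = e^I = e^{i_1} \wedge \dots \wedge e^{i_k}$ for an increasing multi-index $I = (i_1,\dots,i_k)$, using the basis $e_1,\dots,e_n$ dual to the $e^i$.

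First, I would observe that $e_i \contract e^I = 0$ unless $i$ appears in $I$, so only $k$ terms in the sum $\sum_{i=1}^n e^i \wedge (e_i \contract e^I)$ survive, namely those with $i = i_a$ for some $a \in \{1,\dots,k\}$. For each such $a$, the computation $e_{i_a} \contract e^I = (-1)^{a-1} e^{i_1} \wedge \dots \wedge \widehat{e^{i_a}} \wedge \dots \wedge e^{i_k}$ (already used in the proof of Proposition~\ref{prop:wedgecontract}) gives
\begin{equation*}
  e^{i_a} \wedge (e_{i_a} \contract e^I) = (-1)^{a-1}\, e^{i_a} \wedge e^{i_1} \wedge \dots \wedge \widehat{e^{i_a}} \wedge \dots \wedge e^{i_k}.
\end{equation*}

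Next, I would move $e^{i_a}$ back to its $a$-th position in the wedge product; this requires $a-1$ adjacent swaps and therefore contributes another sign of $(-1)^{a-1}$. The two signs cancel to give simply $e^I$, independent of $a$. Summing over $a$ from $1$ to $k$ then produces $k\, e^I$, which is the desired identity on the basis element $e^I$.

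There is no real obstacle here; the only thing to watch is that the two factors of $(-1)^{a-1}$ (one from the contraction, one from reinsertion) combine to $+1$ so that every one of the $k$ surviving terms contributes a clean copy of $e^I$. By linearity the identity then holds for all $\alpha \in \Lambda^k$.
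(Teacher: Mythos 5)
Your proof is correct and follows the same route as the paper: reduce to a basis element $e^I$, note that only the indices $i \in I$ contribute, and check that each surviving term equals $e^I$. The paper simply asserts $e^i \wedge (e_i \contract \alpha) = \alpha$ for $i \in I$, whereas you spell out the cancellation of the two factors of $(-1)^{a-1}$; this is the same argument with the sign bookkeeping made explicit.
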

\begin{proof}
  It suffices to check on a basis. If $\alpha=e^I$, then $e_i\contract\alpha$ is nonzero if and only if $i\in I$. If so, then $e^i\wedge(e_i\contract\alpha)=\alpha$. Therefore,
  \begin{equation*}
    \sum_{i=1}^ne^i\wedge(e_i\contract\alpha)=\sum_{i\in I}\alpha=k\alpha.\qedhere
  \end{equation*}
\end{proof}

\begin{proposition} \label{prop:ssstar}
  On $\Lambda^{p,q}$, we have
  \begin{equation*}
    ss^*-s^*s=p-q.
  \end{equation*}
\end{proposition}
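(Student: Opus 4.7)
The plan is to verify the identity on simple tensors $\phi=\alpha\otimes\beta$ with $\alpha\in\Lambda^p$ and $\beta\in\Lambda^q$, using the wedge--contract formulas from Proposition~\ref{prop:wedgecontract} together with the graded Leibniz identity
\[
  e_j\contract(e^i\wedge\gamma)=\delta_{ij}\,\gamma-e^i\wedge(e_j\contract\gamma),
\]
which holds for any multicovector $\gamma$ in an orthonormal basis.

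First I would compute $s^*s(\alpha\otimes\beta)$ by applying $s$ using Proposition~\ref{prop:wedgecontract}, then $s^*$. This yields a double sum
\[
  s^*s(\alpha\otimes\beta)=\sum_{i,j}\bigl(e_j\contract(e^i\wedge\alpha)\bigr)\otimes\bigl(e^j\wedge(e_i\contract\beta)\bigr).
\]
Applying the Leibniz identity to the first factor splits this into a diagonal part $\sum_i\alpha\otimes\bigl(e^i\wedge(e_i\contract\beta)\bigr)$ and an off-diagonal part $-\sum_{i,j}\bigl(e^i\wedge(e_j\contract\alpha)\bigr)\otimes\bigl(e^j\wedge(e_i\contract\beta)\bigr)$. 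By Lemma~\ref{lemma:wedgecontract} applied to $\beta$, the diagonal part collapses to $q\,\alpha\otimes\beta$.

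The analogous computation for $ss^*(\alpha\otimes\beta)$ produces $p\,\alpha\otimes\beta$ from the diagonal term (now via Lemma~\ref{lemma:wedgecontract} applied to $\alpha$), plus an off-diagonal part $-\sum_{i,j}\bigl(e^j\wedge(e_i\contract\alpha)\bigr)\otimes\bigl(e^i\wedge(e_j\contract\beta)\bigr)$. The key observation is that this off-diagonal term matches the off-diagonal term appearing in $s^*s(\alpha\otimes\beta)$ after relabeling the dummy indices $i\leftrightarrow j$, so the two cancel in the difference $ss^*-s^*s$. What remains is $(p-q)\,\alpha\otimes\beta$, proving the identity on simple tensors; by bilinearity it extends to all of $\Lambda^{p,q}$.

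There is no serious obstacle here beyond careful bookkeeping of signs and indices; the crux is recognizing that the single Leibniz move produces exactly the scalar contributions $p$ and $q$ (via Lemma~\ref{lemma:wedgecontract}) together with symmetric off-diagonal remainders that cancel. One could alternatively phrase the argument basis-freely by noting that $[e^i\wedge\cdot\,,\,e_j\contract\cdot]=\delta_{ij}\id$ on multicovectors and then commuting the sums; I would mention this viewpoint but execute the calculation above, since it directly mirrors the structure already set up in Propositions~\ref{prop:wedgecontract} and~\ref{prop:adjoint}.
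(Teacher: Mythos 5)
Your proposal is correct and follows essentially the same route as the paper: verify on simple tensors using Proposition~\ref{prop:wedgecontract}, apply the Leibniz identity $e_j\contract(e^i\wedge\gamma)=\delta_{ij}\gamma-e^i\wedge(e_j\contract\gamma)$ to split each composite into a diagonal part (yielding $p$ and $q$ via Lemma~\ref{lemma:wedgecontract}) and an off-diagonal part, and observe that the off-diagonal parts cancel in the difference. The only cosmetic difference is your relabeling of dummy indices, which the paper's choice of index names makes unnecessary.
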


\begin{proof}
  It suffices to check on simple tensors. We compute that
  \begin{align*}
    ss^*(\alpha\otimes\beta)&=\sum_{i,j}\left(e^i\wedge(e_j\contract\alpha)\right)\otimes\left(e_i\contract(e^j\wedge\beta)\right)\\
                            &=\sum_{i,j}\left(e^i\wedge(e_j\contract\alpha)\right)\otimes\left(\delta_i^j\beta-e^j\wedge(e_i\contract\beta)\right),\\
    s^*s(\alpha\otimes\beta)&=\sum_{i,j}\left(e_j\contract(e^i\wedge\alpha)\right)\otimes\left(e^j\wedge(e_i\contract\beta)\right)\\
                            &=\sum_{i,j}\left(\delta^i_j\alpha-e^i\wedge(e_j\contract\alpha)\right)\otimes\left(e^j\wedge(e_i\contract\beta)\right),
  \end{align*}
  where $\delta$ denotes the Kronecker delta.
  Subtracting, we find that the second terms on each line cancel, leaving
  \begin{equation*}
    \begin{split}
      (ss^*-s^*s)(\alpha\otimes\beta)&=\sum_{i,j}\left(e^i\wedge(e_j\contract\alpha)\right)\otimes\delta_i^j\beta-\delta^i_j\alpha\otimes\left(e^j\wedge(e_i\contract\beta)\right)\\
      &=\sum_i(e^i\wedge(e_i\contract\alpha))\otimes\beta-\alpha\otimes(e^i\wedge(e_i\contract\beta))\\
      &=p\alpha\otimes\beta-\alpha\otimes q\beta.\qedhere
    \end{split}
  \end{equation*}
\end{proof}

Proposition~\ref{prop:ssstar} gives us a quick way to determine when $s$ is injective or surjective, yielding a much shorter proof of \cite[Lemma~2]{arnold2021complexes}.

\begin{lemma}\label{lem:pq0}
  On $\Lambda^{p,q}$, if $0\le q\le p\le n$, then $\ker s$ has a nonzero element. Likewise, if $0\le p\le q\le n$, then $\ker s^*$ has a nonzero element.
\end{lemma}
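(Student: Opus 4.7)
The plan is to prove the statement about $s$ by a direct dimension count and then deduce the statement about $s^*$ by the obvious symmetry that swaps the roles of $p$ and $q$. Since $s$ maps $\Lambda^{p,q}$ to $\Lambda^{p+1,q-1}$, the rank--nullity theorem yields
\[
\dim\ker s \ge \dim\Lambda^{p,q} - \dim\Lambda^{p+1,q-1},
\]
so it suffices to verify that the right-hand side is strictly positive whenever $0\le q\le p\le n$.

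The degenerate cases $q=0$ and $p=n$ are immediate, since then $\Lambda^{p+1,q-1}$ is the zero space while $\dim\Lambda^{p,q}=\binom{n}{p}\binom{n}{q}>0$. In the remaining range $1\le q\le p\le n-1$ both spaces have positive dimension, so I would form the ratio
\[
\frac{\dim\Lambda^{p+1,q-1}}{\dim\Lambda^{p,q}}=\frac{\binom{n}{p+1}\binom{n}{q-1}}{\binom{n}{p}\binom{n}{q}}=\frac{(n-p)\,q}{(p+1)(n-q+1)}.
\]
The main task is then to check that $(n-p)q<(p+1)(n-q+1)$; a short rearrangement reveals that the difference factors as $(n+1)(p-q+1)$, which is strictly positive exactly when $p\ge q$. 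This yields the desired strict inequality $\dim\Lambda^{p,q}>\dim\Lambda^{p+1,q-1}$ and therefore a nonzero element of $\ker s$.

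The assertion about $s^*$ follows by interchanging the roles of $p$ and $q$: since $s^*\colon\Lambda^{p,q}\to\Lambda^{p-1,q+1}$ and the hypothesis $p\le q$ is the swap of $q\le p$, the same argument with $(p+1,q-1)$ replaced by $(p-1,q+1)$ shows $\dim\Lambda^{p,q}>\dim\Lambda^{p-1,q+1}$ and produces a nonzero element of $\ker s^*$. I do not anticipate any genuine obstacle here; the entire proof reduces to a binomial identity, and the only point requiring care is the bookkeeping of the boundary cases where one of $\Lambda^{p+1,q-1}$ or $\Lambda^{p-1,q+1}$ is automatically zero, in which case the conclusion is trivial.
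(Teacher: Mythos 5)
Your proof is correct, but it takes a genuinely different route from the paper. You argue non-constructively: by rank--nullity, $\dim\ker s \ge \dim\Lambda^{p,q} - \dim\Lambda^{p+1,q-1} = \binom{n}{p}\binom{n}{q} - \binom{n}{p+1}\binom{n}{q-1}$, and your factorization of the difference $(p+1)(n-q+1)-(n-p)q = (n+1)(p-q+1)$ is right, so the count is strictly positive precisely when $q \le p$ (with the boundary cases $q=0$ and $p=n$ handled separately, as you do). The paper instead exhibits an explicit kernel element: taking $\alpha = \alpha_1\wedge\dots\wedge\alpha_p$ nonzero and $\beta = \alpha_1\wedge\dots\wedge\alpha_q$ (possible since $q\le p\le n$), every term of $s(\alpha\otimes\beta)$ contains $\alpha_a\wedge\alpha = 0$, so $\alpha\otimes\beta\in\ker s$. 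Both arguments are sound; yours has the mild advantage of also giving the sharp lower bound $\dim\Lambda^{p,q}-\dim\Lambda^{p+1,q-1}$ on $\dim\ker s$ essentially for free, while the paper's explicit element is not incidental --- it is reused later (in the appendix on representation theory) to identify $\Lambda^{p,q}_0$ with the image of a specific Young symmetrizer, which a pure dimension count cannot provide. Your deduction of the $s^*$ statement by swapping the roles of $p$ and $q$ matches the paper's use of symmetry and is fine.
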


\begin{proof}
  Assume $0\le q\le p\le n$. Let $\alpha=\alpha_1\wedge\dotsb\wedge\alpha_p$ be a nonzero $p$-covector, which is possible since $p\le n$. Since $q\le p$, we can let $\beta=\alpha_1\wedge\dotsb\wedge\alpha_q$, so $\beta$ is also nonzero and hence so is $\alpha\otimes\beta$. In the notation of Definition~\ref{def:s}, we have $\beta_a=\alpha_a$ for $1\le a\le q$, which implies that $\beta_a\wedge\alpha=0$ for all $a$, so $s(\alpha\otimes\beta)=0$. The second claim follows by symmetry.
\end{proof}

\begin{proposition}\label{prop:sinjsurj}
  Assume $0\le p,q\le n$, where $n=\dim V$. The operator $s\colon\Lambda^{p,q}\to\Lambda^{p+1,q-1}$ is injective if and only if $p<q$ and surjective if and only if $p\ge q-1$.
\end{proposition}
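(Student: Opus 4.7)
The strategy is to combine Lemma~\ref{lem:pq0} (for the failure directions of each equivalence) with the commutator identity $ss^* - s^*s = p - q$ from Proposition~\ref{prop:ssstar} (for the success directions), together with Proposition~\ref{prop:adjoint}, which lets us convert surjectivity of $s$ into injectivity of $s^*$ via $(\operatorname{im} s)^\perp = \ker s^*$.

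For injectivity, the ``only if'' direction is immediate: if $p \ge q$, Lemma~\ref{lem:pq0} directly produces a nonzero element of $\ker s$. For the converse, assume $p < q$ and $s\phi = 0$ for some $\phi \in \Lambda^{p,q}$. Then $s^*s\phi = 0$, so Proposition~\ref{prop:ssstar} gives $ss^*\phi = (p-q)\phi$. Pairing both sides with $\phi$ yields
\[
  \|s^*\phi\|^2 = \langle ss^*\phi,\phi\rangle = (p-q)\|\phi\|^2.
\]
The left-hand side is nonnegative while the coefficient $p-q$ is strictly negative, forcing $\phi = 0$.

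The surjectivity assertion is handled by the mirror argument applied to $s^*$. Using Proposition~\ref{prop:adjoint}, $s\colon\Lambda^{p,q}\to\Lambda^{p+1,q-1}$ is surjective if and only if $s^*\colon\Lambda^{p+1,q-1}\to\Lambda^{p,q}$ is injective. If $p < q - 1$, then $p + 1 \le q - 1$, and the second half of Lemma~\ref{lem:pq0} supplies a nonzero element of $\ker s^*$, so $s$ is not surjective. If instead $p \ge q - 1$, then on $\Lambda^{p+1,q-1}$ Proposition~\ref{prop:ssstar} reads $ss^* - s^*s = (p+1)-(q-1) = p - q + 2$, so from $s^*\psi = 0$ and the analogous pairing trick we obtain $\|s\psi\|^2 = (q - p - 2)\|\psi\|^2$. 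Since the coefficient is at most $-1$, this forces $\psi = 0$.

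I do not anticipate any serious obstacle; the main technical point is simply to remember that the commutator $ss^* - s^*s$ depends on the space $\Lambda^{p,q}$ on which it acts, so the formula must be re-evaluated after shifting $(p,q)$ to $(p+1,q-1)$ for the surjectivity argument. The boundary cases in which the codomain $\Lambda^{p+1,q-1}$ is zero (namely $p = n$ or $q = 0$) are consistent with the statement: surjectivity is then trivial and matches $p \ge q-1$, while injectivity is excluded by the corresponding case of Lemma~\ref{lem:pq0}.
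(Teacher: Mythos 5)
Your proof is correct and follows essentially the same route as the paper's: Lemma~\ref{lem:pq0} supplies the kernel elements for the failure directions, the commutator identity of Proposition~\ref{prop:ssstar} gives positivity of $s^*s$ (resp.\ of $s^*s$ on $\Lambda^{p+1,q-1}$) for the success directions, and adjointness converts surjectivity of $s$ into injectivity of $s^*$. The only cosmetic difference is that you unpack the paper's ``by symmetry and reindexing'' step by running the argument directly on $\Lambda^{p+1,q-1}$.
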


\begin{proof}
  Assume $p<q$. By Proposition~\ref{prop:ssstar}, we have $s^*s=ss^*+q-p$. Since $ss^*$ is positive semidefinite and $p<q$, we know that $s^*s$ is positive definite, and hence $s$ has zero kernel. Conversely, if $p\ge q$, then $s$ has nonzero kernel by Lemma~\ref{lem:pq0}.

  By symmetry, we have that $s^*\colon\Lambda^{p,q}\to\Lambda^{p-1,q+1}$ is injective if and only if $p>q$. Reindexing, we have that $s^*\colon\Lambda^{p+1,q-1}\to\Lambda^{p,q}$ is injective if and only if $p+1>q-1$. Hence its adjoint $s\colon\Lambda^{p,q}\to\Lambda^{p+1,q-1}$ is surjective if and only if $p+1>q-1$, which is equivalent to $p\ge q-1$.
\end{proof}

If we relax the constraint that $0\le p,q \le n$, then Proposition~\ref{prop:sinjsurj} continues to hold with the words ``if and only if'' replaced by ``if''.

\begin{proposition}\label{prop:sinjsurj2}
  The operator $s\colon\Lambda^{p,q}\to\Lambda^{p+1,q-1}$ is injective  if $p<q$ and surjective if $p\ge q-1$.
\end{proposition}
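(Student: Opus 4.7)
The plan is to reduce everything to Proposition~\ref{prop:sinjsurj} by dispatching the boundary cases where the source or target space of $s$ is already zero. Whenever $\Lambda^{p,q} = \Lambda^p \otimes \Lambda^q$ vanishes (which happens precisely when some index leaves the range $[0,n]$), injectivity of $s$ on that space is automatic; likewise, surjectivity onto a zero codomain is automatic. So the only substantive cases are those in which the relevant spaces are nonzero, and in those cases the indices fall within the hypotheses of Proposition~\ref{prop:sinjsurj}.

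For the injectivity statement under $p < q$: if $\Lambda^{p,q}$ is nonzero, then both $0 \le p \le n$ and $0 \le q \le n$, and Proposition~\ref{prop:sinjsurj} applies directly. If $\Lambda^{p,q} = 0$, the claim is vacuous.

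For the surjectivity statement under $p \ge q - 1$: if $\Lambda^{p+1,q-1} = 0$, the claim is vacuous. Otherwise $-1 \le p \le n-1$ and $1 \le q \le n+1$. I would then rule out the two extreme endpoints. If $p = -1$, the hypothesis $p \ge q-1$ forces $q \le 0$, contradicting $q \ge 1$; if $q = n+1$, the hypothesis forces $p \ge n$, contradicting $p \le n-1$. Hence in fact $0 \le p,q \le n$, and Proposition~\ref{prop:sinjsurj} delivers surjectivity.

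There is no real obstacle here beyond careful bookkeeping of the degenerate index regimes, since all the analytic content has already been packaged into Proposition~\ref{prop:sinjsurj}.
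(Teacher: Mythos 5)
Your proof is correct and takes essentially the same approach as the paper: both reduce to Proposition~\ref{prop:sinjsurj} and dispatch the degenerate index ranges where $\Lambda^{p,q}$ or $\Lambda^{p+1,q-1}$ vanishes, you simply organizing the casework contrapositively (assuming the relevant space is nonzero and deducing $0\le p,q\le n$) where the paper assumes an index is out of range and checks the relevant space is zero.
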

\begin{proof}
  The case $0\le p,q\le n$ is handled by Proposition~\ref{prop:sinjsurj}, so assume now that $p<0$ or $q<0$ or $p>n$ or $q>n$. In this case, $\Lambda^{p,q}=0$, so injectivity is automatic. If, additionally, $p\ge q-1$, then either $q-1\le p<0$ or $q-1<q<0$ or $p+1>p>n$ or $p+1\ge q>n$. In any case, $\Lambda^{p+1,q-1}=0$, so surjectivity is automatic.
\end{proof}

\subsection{The decomposition of double multicovectors}\label{sec:decomposition}
We can naturally decompose the space of double multicovectors into the eigenspaces of $s^*s$. We begin by investigating the eigenvalues.

\begin{lemma}\label{lem:sphieig}
  If $\phi\in\Lambda^{p,q}$ is an eigenvector of $s^*s$ with eigenvalue $\lambda$ then $s\phi$ is either zero or an eigenvector of $s^*s$ with eigenvalue $\lambda+q-p-2$.
\end{lemma}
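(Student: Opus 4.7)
The plan is a short computation using Proposition~\ref{prop:ssstar}: I would write $s^*s(s\phi) = ss^*(s\phi) - c\, s\phi$ for an appropriate constant $c$, then turn $ss^*(s\phi)$ into $\lambda\, s\phi$ via the eigenvector hypothesis.

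First I would observe that $s\phi$ lives in $\Lambda^{p+1,q-1}$, so the commutation relation $ss^* - s^*s = p - q$ from Proposition~\ref{prop:ssstar}, when evaluated on that space, reads
\begin{equation*}
  ss^* - s^*s = (p+1) - (q-1) = p - q + 2.
\end{equation*}
Rearranging and applying to $s\phi$ yields
\begin{equation*}
  s^*s(s\phi) = ss^*(s\phi) - (p - q + 2)\, s\phi.
\end{equation*}

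Next I would rewrite $ss^*(s\phi) = s(s^*s\phi)$ by associativity of composition and use the hypothesis $s^*s\phi = \lambda \phi$ to conclude $ss^*(s\phi) = \lambda\, s\phi$. Substituting gives
\begin{equation*}
  s^*s(s\phi) = (\lambda + q - p - 2)\, s\phi,
\end{equation*}
which is exactly the claim: either $s\phi = 0$, or it is an eigenvector of $s^*s$ with eigenvalue $\lambda + q - p - 2$.

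The only (minor) pitfall is that the commutation constant in Proposition~\ref{prop:ssstar} depends on the space the operator is acting on, so one must remember to evaluate $p - q$ on $\Lambda^{p+1, q-1}$ rather than on $\Lambda^{p,q}$; this shift by $+2$ is precisely what produces the $-2$ in the eigenvalue $\lambda + q - p - 2$.
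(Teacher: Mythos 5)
Your proof is correct and follows essentially the same route as the paper: apply Proposition~\ref{prop:ssstar} on $\Lambda^{p+1,q-1}$ to get the commutator constant $p-q+2$, then use $ss^*(s\phi)=s(s^*s\phi)=\lambda\,s\phi$ and rearrange. The arithmetic and the caveat about evaluating the commutator on the shifted space are both handled exactly as in the paper's argument.
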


\begin{proof}
  Since $s\phi\in\Lambda^{p+1,q-1}$, we have by Proposition~\ref{prop:ssstar} that
  \begin{equation*}
    (ss^*-s^*s)(s\phi)=((p+1)-(q-1))(s\phi)=(p-q+2)(s\phi).
  \end{equation*}
  On the other hand, since $s^*s\phi=\lambda\phi$, we have
  \begin{equation*}
    (ss^*-s^*s)(s\phi)=ss^*s\phi-s^*ss\phi=s\lambda\phi-s^*ss\phi=(\lambda-s^*s)(s\phi).
  \end{equation*}
  We conclude that
  \begin{equation*}
    s^*s(s\phi)=(\lambda+q-p-2)(s\phi)
  \end{equation*}
  as desired.
\end{proof}

\begin{proposition}\label{prop:eigenvalues}
  If $\phi\in\Lambda^{p,q}$ is an eigenvector of $s^*s$ and $m$ is the smallest integer such that $s^{m+1}\phi=0$, then $\phi$ has eigenvalue
  \begin{equation*}
    m(m+p-q+1).
  \end{equation*}
\end{proposition}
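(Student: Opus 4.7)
The plan is to iterate Lemma~\ref{lem:sphieig} to track how the eigenvalue shifts under successive applications of $s$, then extract the initial eigenvalue from the boundary condition $s^{m+1}\phi = 0$.

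First, I would set $\phi_k := s^k \phi$, which lies in $\Lambda^{p+k,q-k}$ whenever it is nonzero. By hypothesis, $\phi_0, \phi_1, \dotsc, \phi_m$ are all nonzero, while $\phi_{m+1} = 0$. Writing $\lambda_k$ for the eigenvalue of $\phi_k$ under $s^*s$, Lemma~\ref{lem:sphieig} (applied at each step, where the exponent depends on the current bidegree) gives the recursion
\begin{equation*}
    \lambda_{k+1} = \lambda_k + (q-k) - (p+k) - 2 = \lambda_k + (q-p-2) - 2k.
\end{equation*}
Summing this telescoping recursion from $k=0$ to $k=m-1$, I would obtain the closed form
\begin{equation*}
    \lambda_m = \lambda_0 + m(q-p-2) - 2\cdot\tfrac{m(m-1)}{2} = \lambda_0 + m(q-p) - m(m+1).
\end{equation*}

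The key step is then to extract $\lambda_0$ by evaluating $\lambda_m$ independently. Since $s\phi_m = \phi_{m+1} = 0$, we have $s^*s(\phi_m) = s^*(0) = 0$, yet $\phi_m$ is a nonzero eigenvector with eigenvalue $\lambda_m$; hence $\lambda_m = 0$. Substituting and solving gives
\begin{equation*}
    \lambda_0 = m(m+1) - m(q-p) = m(m+p-q+1),
\end{equation*}
which is the claimed formula.

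The only technical subtlety I anticipate is verifying that Lemma~\ref{lem:sphieig} indeed applies at every intermediate step, that is, that each $\phi_k$ for $k < m$ is nonzero and is an honest eigenvector of $s^*s$ in its own bidegree with the eigenvalue produced by the recursion. This is exactly what Lemma~\ref{lem:sphieig} delivers inductively: if $\phi_k$ is a nonzero eigenvector with eigenvalue $\lambda_k$ and $s\phi_k \neq 0$, then $\phi_{k+1}$ is a nonzero eigenvector with eigenvalue $\lambda_k + (q-p-2) - 2k$. The minimality of $m$ guarantees $s\phi_k \neq 0$ for $k < m$, so the induction proceeds without incident, and no further ingredients beyond Lemma~\ref{lem:sphieig} and Proposition~\ref{prop:ssstar} are needed.
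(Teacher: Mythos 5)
Your proof is correct and is essentially the paper's argument with the induction on $m$ unrolled into an explicit telescoping sum: both iterate Lemma~\ref{lem:sphieig} along the chain $\phi, s\phi, \dots, s^m\phi$ and use that the terminal nonzero term lies in $\ker s$, hence has eigenvalue $0$. The bookkeeping of the bidegree-dependent shift and the nonvanishing of each $\phi_k$ for $k \le m$ is handled correctly.
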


\begin{proof}
  We induct on $m$. If $m=0$ then $s\phi=0$ and so $\phi$ has eigenvalue $0$, as desired.

  Now assume that $m>0$ and that the claim holds for $m-1$ for all $p$ and $q$. In particular, we can apply the claim to $s\phi\in\Lambda^{p+1,q-1}$, because we know by the preceding lemma that $s\phi$ is an eigenvector. So then, we have that $s\phi$ has eigenvalue
  \begin{equation*}
    \begin{split}
      (m-1)((m-1)+(p+1)-(q-1)+1)&=(m-1)(m+p-q+2)\\
      &=m(m+p-q+1)+q-p-2.
    \end{split}
  \end{equation*}
  On the other hand, by the preceding lemma, if $\phi$ has eigenvalue $\lambda$, then $s\phi$ has eigenvalue $\lambda+q-p-2$, from which we conclude that $\lambda=m(m+p-q+1)$, as desired.
\end{proof}

\begin{corollary}\label{cor:mqp}
  If $\phi\in\Lambda^{p,q}$ is an eigenvector of $s^*s$ and $m$ is the smallest integer such that $s^{m+1}\phi=0$, then $m\ge q-p$.
\end{corollary}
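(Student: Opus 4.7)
My plan is to combine the explicit eigenvalue formula from Proposition~\ref{prop:eigenvalues} with the fact that $s^*s$ is positive semidefinite. The latter is immediate from Proposition~\ref{prop:adjoint}: for any $\phi$, $\langle s^*s\phi,\phi\rangle = \langle s\phi,s\phi\rangle \ge 0$, so every eigenvalue of $s^*s$ is non-negative. Since $\phi$ is an eigenvector it is nonzero, and because $s^0\phi = \phi \ne 0$, the minimality of $m$ forces $m \ge 0$. By Proposition~\ref{prop:eigenvalues}, the eigenvalue associated to $\phi$ is
\begin{equation*}
  \lambda = m(m+p-q+1) \ge 0.
\end{equation*}

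I would then split into two cases according to whether $\lambda$ is positive or zero. If $\lambda > 0$, then the product $m(m+p-q+1)$ is strictly positive, and since the first factor $m$ is non-negative, both factors must in fact be strictly positive. In particular $m + p - q + 1 > 0$, which is equivalent to $m \ge q-p$.

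If instead $\lambda = 0$, then $\|s\phi\|^2 = \langle s^*s\phi,\phi\rangle = 0$, so $s\phi = 0$, and by minimality $m = 0$. Since $\phi$ is a nonzero element of $\ker s$, the operator $s\colon\Lambda^{p,q}\to\Lambda^{p+1,q-1}$ fails to be injective. By Proposition~\ref{prop:sinjsurj}, this forces $p \ge q$, and hence $m = 0 \ge q - p$.

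There is not really a hard step here; the main subtlety is that one must treat the case $\lambda = 0$ separately. Ignoring it would only yield the weaker bound $m \ge q - p - 1$, because the factorization $m(m+p-q+1) \ge 0$ with $m \ge 0$ by itself allows $m + p - q + 1 = 0$. Invoking positive definiteness on the nonzero-eigenvalue piece and Proposition~\ref{prop:sinjsurj} on the kernel piece is what closes the gap.
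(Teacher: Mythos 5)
Your proof is correct and uses the same ingredients as the paper's: the eigenvalue formula $m(m+p-q+1)$, the positive semidefiniteness of $s^*s$ coming from adjointness, and the injectivity criterion of Proposition~\ref{prop:sinjsurj}. The only difference is cosmetic — the paper splits on $p\ge q$ versus $p<q$ while you split on $\lambda>0$ versus $\lambda=0$ — and both case divisions close the same gap at $m+p-q+1=0$.
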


\begin{proof}
  The claim is obvious if $p\ge q$ because $m\ge0$. On the other hand, if $p<q$, then $s$ is injective by Proposition~\ref{prop:sinjsurj}, so $s^*s$ is positive definite, so the eigenvalue $m(m+p-q+1)$ is positive. Since $m\ge0$, we conclude that $m+p-q+1$ is positive, which implies that $m\ge q-p$.
\end{proof}

\begin{corollary}
  For fixed $p$ and $q$, the eigenvalues $m(m+p-q+1)$ in the preceding proposition are strictly increasing in $m$. In particular, $m$ is determined by the eigenvalue $m(m+p-q+1)$.
\end{corollary}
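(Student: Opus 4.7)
The plan is to show that the forward difference of the sequence $f(m) = m(m+p-q+1)$ is strictly positive on the range of admissible indices identified by Corollary~\ref{cor:mqp}. Since $f$ is a quadratic in $m$ with positive leading coefficient, strict monotonicity on a discrete set amounts to verifying $f(m+1) - f(m) > 0$ for every relevant $m$, which is a purely arithmetic check.

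First I would compute
\begin{equation*}
  f(m+1) - f(m) = (m+1)(m+p-q+2) - m(m+p-q+1) = 2m + p - q + 2.
\end{equation*}
Next, I would invoke Corollary~\ref{cor:mqp}, which guarantees $m \ge q - p$ for any $m$ arising from an eigenvector of $s^*s$ (and trivially $m \ge 0$). In either the case $q \ge p$ (using $m \ge q-p$) or the case $q < p$ (using $m \ge 0 \ge q-p$), we obtain $2m \ge q - p$, so
\begin{equation*}
  f(m+1) - f(m) = 2m + p - q + 2 \ge 2 > 0.
\end{equation*}
This establishes that $f$ is strictly increasing on the admissible range, hence the eigenvalues $m(m+p-q+1)$ are strictly increasing in $m$, and consequently $m$ is recovered from the eigenvalue.

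I don't anticipate any real obstacle, since once the forward difference has been computed, Corollary~\ref{cor:mqp} gives exactly the lower bound on $m$ needed to make it positive. The only mild subtlety is remembering that the bound $m \ge q-p$ must be combined with $m \ge 0$ to cover the case $q < p$ (where $q-p$ is negative and gives no information on its own).
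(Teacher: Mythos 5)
Your proof is correct and uses the same essential input as the paper, namely the bounds $m \ge 0$ and $m \ge q-p$ from Corollary~\ref{cor:mqp}; the paper simply phrases the monotonicity as the product of two increasing factors (one nonnegative, one positive) rather than computing the forward difference $2m+p-q+2$ explicitly. Both are equivalent elementary arithmetic checks, so there is nothing to add.
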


\begin{proof}
  The claim follows because $m\ge0$ and $m+p-q+1>0$ and both are increasing in $m$.
\end{proof}

We are now ready to define the decomposition of double multicovectors.

\begin{definition}\label{def:decomposition}
  For an integer $m\ge0$, let $\Lambda^{p,q}_m$ be the eigenspace of $s^*s\colon\Lambda^{p,q}\to\Lambda^{p,q}$ corresponding to eigenvalue $m(m+p-q+1)$. We define these spaces to be zero if $m<0$.
\end{definition}

\begin{proposition} \label{prop:decomp}
  We have the decomposition
  \begin{equation*}
    \Lambda^{p,q}=\bigoplus_m\Lambda^{p,q}_m.
  \end{equation*}
\end{proposition}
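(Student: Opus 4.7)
The plan is to invoke the spectral theorem for $s^*s$ and then use the preceding propositions to identify its eigenspaces with the $\Lambda^{p,q}_m$.

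First, since $s^*$ is the adjoint of $s$ by Proposition~\ref{prop:adjoint}, the composition $s^*s\colon\Lambda^{p,q}\to\Lambda^{p,q}$ is self-adjoint. Fix any inner product on $V$ and the induced inner product on $\Lambda^{p,q}$ for which $s^*$ is indeed adjoint to $s$. By the spectral theorem, $\Lambda^{p,q}$ decomposes as the orthogonal direct sum of the eigenspaces of $s^*s$, which a priori are indexed by the (real) eigenvalues.

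Second, I would show that every eigenspace equals some $\Lambda^{p,q}_m$. Let $\phi$ be a nonzero eigenvector of $s^*s$. Since $s^{q+1}\colon\Lambda^{p,q}\to\Lambda^{p+q+1,-1}=0$, the operator $s$ is nilpotent, so there is a smallest integer $m\ge0$ with $s^{m+1}\phi=0$. Proposition~\ref{prop:eigenvalues} then identifies the eigenvalue as $m(m+p-q+1)$, placing $\phi$ in $\Lambda^{p,q}_m$ by Definition~\ref{def:decomposition}. Conversely, any element of $\Lambda^{p,q}_m$ is by definition an eigenvector of $s^*s$ with that eigenvalue.

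Third, I would invoke the corollary immediately preceding the proposition, which asserts that the map $m\mapsto m(m+p-q+1)$ is strictly increasing on $m\ge0$, so distinct nonnegative integers $m$ yield distinct eigenvalues and hence mutually orthogonal eigenspaces. Combining these three observations, the spectral decomposition of $s^*s$ is exactly
\begin{equation*}
  \Lambda^{p,q}=\bigoplus_{m\ge 0}\Lambda^{p,q}_m,
\end{equation*}
and extending the sum to all integers $m$ introduces only the zero summands from $m<0$ of Definition~\ref{def:decomposition}.

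No step is particularly hard here, as all the work has been done in Propositions~\ref{prop:adjoint} and~\ref{prop:eigenvalues} and the subsequent corollary; the only point requiring care is to notice that nilpotence of $s$ guarantees that the minimal $m$ with $s^{m+1}\phi=0$ exists for every eigenvector, which is what lets Proposition~\ref{prop:eigenvalues} pin down the eigenvalue.
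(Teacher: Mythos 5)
Your proposal is correct and follows essentially the same route as the paper: self-adjointness of $s^*s$ gives diagonalizability, nilpotence of $s$ produces the minimal $m$ for each eigenvector, Proposition~\ref{prop:eigenvalues} identifies the eigenvalue as $m(m+p-q+1)$, and the monotonicity corollary shows $m$ is determined by the eigenvalue. You have merely spelled out the same three steps in slightly more detail.
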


\begin{proof}
  The operator $s^*s$ is self-adjoint and hence diagonalizable. Since $s$ is nilpotent, for each eigenvector $\phi$ there exists a smallest integer $m$ such that $s^{m+1}\phi=0$. We have shown that the corresponding eigenvalue is $m(m+p-q+1)$ and that this eigenvalue uniquely determines $m$.
\end{proof}

\begin{remark}
  Note that the definitions of the operators $s$ and $s^*$ do not require or depend on an inner product on $V$. Consequently, the eigendecomposition $\Lambda^{p,q}=\bigoplus\Lambda^{p,q}_m$ also does not depend on the inner product on $V$. (We did, however, use an arbitrary inner product to simplify the proofs of claims such as the diagonalizability of $s^*s$.) 
\end{remark}

By symmetry, the above discussion works equally well if we swap the roles of $p$ and $q$ and consider the eigendecomposition of $ss^*$ instead of $s^*s$. We call this decomposition the dual decomposition, but, as we show, it is actually the same as the decomposition above with shifted index.

\begin{definition}\label{def:dualdecomp}
  For an integer $m^*\ge0$, let $\prescript{}{m^*}\Lambda^{p,q}$ be the eigenspace of $ss^*\colon\Lambda^{p,q}\to\Lambda^{p,q}$ corresponding to eigenvalue $m^*(m^*+q-p+1)$. We define these spaces to be zero if $m^*<0$.
\end{definition}

\begin{proposition}\label{prop:dualdecomp}
  The dual decomposition is the same as the original decomposition with shifted index. Namely, $\prescript{}{m^*}\Lambda^{p,q}=\Lambda^{p,q}_m$ for $m^*=m+p-q$.
\end{proposition}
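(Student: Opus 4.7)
The plan is to derive the statement as a direct consequence of Proposition~\ref{prop:ssstar}, which gives the commutator $ss^* - s^*s = p-q$ on $\Lambda^{p,q}$. Rearranging, $ss^* = s^*s + (p-q)$ on $\Lambda^{p,q}$, so $ss^*$ and $s^*s$ are simultaneously diagonalizable with the same eigenspaces; only the eigenvalues differ by the constant $p-q$. This immediately implies that each $\Lambda^{p,q}_m$ is contained in some eigenspace of $ss^*$, so the job reduces to identifying the correct value of $m^*$.

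Suppose $\phi\in\Lambda^{p,q}_m$, so $s^*s\phi=m(m+p-q+1)\phi$. Then I would compute
\begin{equation*}
  ss^*\phi=\bigl(m(m+p-q+1)+(p-q)\bigr)\phi,
\end{equation*}
and then perform the algebraic simplification
\begin{equation*}
  m(m+p-q+1)+(p-q)=(m+1)(m+p-q)=(m+p-q)\bigl((m+p-q)+q-p+1\bigr).
\end{equation*}
Setting $m^*=m+p-q$, the right-hand side is precisely $m^*(m^*+q-p+1)$, which is exactly the eigenvalue of $ss^*$ defining $\prescript{}{m^*}\Lambda^{p,q}$ in Definition~\ref{def:dualdecomp}. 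Hence $\Lambda^{p,q}_m\subseteq\prescript{}{m^*}\Lambda^{p,q}$.

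To conclude equality and not just inclusion, I would note that the assignment $m\mapsto m^*=m+p-q$ is a bijection between the set of indices for which $\Lambda^{p,q}_m$ can be nonzero and the set of indices for which $\prescript{}{m^*}\Lambda^{p,q}$ can be nonzero: by Corollary~\ref{cor:mqp} applied to $s$ we have $m\ge\max(0,q-p)$, whence $m^*\ge\max(p-q,0)\ge0$, and symmetrically applied to $s^*$ we have $m^*\ge\max(0,p-q)$. Combined with Proposition~\ref{prop:decomp} (applied to both decompositions), the subspaces $\Lambda^{p,q}_m$ and $\prescript{}{m^*}\Lambda^{p,q}$ for matched indices $m^*=m+p-q$ sum to all of $\Lambda^{p,q}$, so the inclusions must be equalities.

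There is no real obstacle here: the whole argument rests on the commutator identity already proved in Proposition~\ref{prop:ssstar} together with a one-line algebraic factorization. The only thing to be mildly careful about is to verify that the index shift sends the non-negative range of $m$ into the non-negative range of $m^*$, which is handled by Corollary~\ref{cor:mqp}.
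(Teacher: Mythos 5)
Your proposal is correct and follows essentially the same route as the paper: both rest entirely on the commutator identity of Proposition~\ref{prop:ssstar} together with the factorization $m(m+p-q+1)+(p-q)=m^*(m^*+q-p+1)$, the only difference being that you prove the inclusion $\Lambda^{p,q}_m\subseteq\prescript{}{m^*}\Lambda^{p,q}$ while the paper proves the reverse one. Your extra step upgrading the inclusion to equality is sound (and in fact immediate, since $ss^*=s^*s+(p-q)\id$ forces the two operators to have identical eigenspaces).
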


\begin{proof}
  Say $\phi\in\prescript{}{m^*}\Lambda^{p,q}$, so $\phi$ is an eigenvector of $ss^*$ with eigenvalue $m^*(m^*+q-p+1)$. By Proposition~\ref{prop:ssstar}, we have that then $\phi$ is also an eigenvector of $s^*s$ with eigenvalue $m^*(m^*+q-p+1)+q-p$. With $m^*=m+p-q$, we compute that
  \begin{equation*}
    \begin{split}
      m^*(m^*+q-p+1)+q-p&=(m+p-q)(m+1)+q-p\\
      &=m(m+p-q+1),
    \end{split}
  \end{equation*}
  so $\phi\in\Lambda^{p,q}_m$, as desired.
\end{proof}

\begin{remark}
  We caution the reader that \cite{gawlik2025title} uses the indexing for the dual decomposition. In other words, $\Lambda^{p,q}_m$ in \cite{gawlik2025title} refers to $\prescript{}m\Lambda^{p,q}$ in this paper.
\end{remark}

\subsection{Properties of the decomposition}\label{sec:properties}

Note that some terms of the decomposition may be zero. In the following propositions, we will determine exactly for which values of $m$ the space $\Lambda^{p,q}_m$ is nonzero, as well as how the operators $s$ and $s^*$ interact with the decomposition. In particular, we will prove the isomorphism $s^m\colon\Lambda^{p,q}_m\to\Lambda^{p+m,q-m}_0$.

\begin{proposition}
  The map $s$ sends $\Lambda^{p,q}_m$ to $\Lambda^{p+1,q-1}_{m-1}$. Likewise, the map $s^*$ sends $\Lambda^{p,q}_m$ to $\Lambda^{p-1,q+1}_{m+1}$.
\end{proposition}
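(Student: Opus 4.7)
My plan is to reduce both statements to direct eigenvalue computations. Since $\Lambda^{p,q}_m$ is, by Definition~\ref{def:decomposition}, the $m(m+p-q+1)$-eigenspace of $s^*s$, it suffices to show that for any $\phi\in\Lambda^{p,q}_m$, the images $s\phi$ and $s^*\phi$ are each either zero or an eigenvector of $s^*s$ on the respective target space with the numerically correct eigenvalue.

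For the $s$ direction I would invoke Lemma~\ref{lem:sphieig} directly: if $\phi$ has eigenvalue $\lambda=m(m+p-q+1)$, then $s\phi\in\Lambda^{p+1,q-1}$ is either zero or an $s^*s$-eigenvector with eigenvalue $\lambda+q-p-2$. A brief algebraic rearrangement gives $\lambda+q-p-2=(m-1)(m+p-q+2)=(m-1)\bigl((m-1)+(p+1)-(q-1)+1\bigr)$, which is exactly the eigenvalue defining $\Lambda^{p+1,q-1}_{m-1}$. The boundary case $m=0$ is automatic: $s^*s\phi=0$ forces $\langle s^*s\phi,\phi\rangle=\|s\phi\|^2=0$, hence $s\phi=0$, consistent with the convention $\Lambda^{p+1,q-1}_{-1}=0$.

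For the $s^*$ direction I would run the parallel computation using Proposition~\ref{prop:ssstar}, which gives $ss^*=s^*s+(p-q)$ on $\Lambda^{p,q}$. Applied to $\phi$, this yields $ss^*\phi=\bigl(m(m+p-q+1)+(p-q)\bigr)\phi$, and left-multiplying by $s^*$ gives $(s^*s)(s^*\phi)=\bigl(m(m+p-q+1)+(p-q)\bigr)s^*\phi$. A short rearrangement identifies this scalar as $(m+1)(m+p-q)=(m+1)\bigl((m+1)+(p-1)-(q+1)+1\bigr)$, which is the defining eigenvalue of $\Lambda^{p-1,q+1}_{m+1}$. One could alternatively bootstrap the $s^*$ claim from the $s$ claim through Proposition~\ref{prop:dualdecomp}, by applying the $s$ statement with the roles of $p,q$ and $s,s^*$ swapped and then translating indices via $m^*=m+p-q$.

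I do not anticipate any real obstacle, as the heavy lifting has already been done in Proposition~\ref{prop:ssstar} and Lemma~\ref{lem:sphieig}. The only point demanding care is consistent bookkeeping of the index shifts $(p,q,m)\mapsto(p\pm1,q\mp1,m\mp1)$ in both the source and target eigenvalue formulas, so that the arithmetic on $m(m+p-q+1)$ is carried out against the correct $p$ and $q$ on each side.
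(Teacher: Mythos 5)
Your proof is correct and follows essentially the same route as the paper: the $s$ half rests on Lemma~\ref{lem:sphieig} exactly as in the paper's proof (you track the eigenvalue directly where the paper tracks the nilpotency index, which amounts to the same thing given Proposition~\ref{prop:eigenvalues}), and your handling of the $m=0$ boundary case via $\|s\phi\|^2=\langle s^*s\phi,\phi\rangle$ is sound. For the $s^*$ half you substitute a direct computation from Proposition~\ref{prop:ssstar} for the paper's symmetry-plus-reindexing argument via Proposition~\ref{prop:dualdecomp}, but this is a minor variation that you yourself note, and the arithmetic checks out.
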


\begin{proof}
  Let $\phi\in\Lambda^{p,q}_m$. If $s\phi$ is zero, the claim is tautological. Otherwise, $\phi$ is an eigenvector of $s^*s$ and $m$ is the smallest integer such that $s^{m+1}\phi=0$. By Lemma~\ref{lem:sphieig}, $s\phi$ is also an eigenvector of $s^*s$, and we have that $m-1$ is the smallest integer such that $s^{(m-1)+1}(s\phi)=0$. Hence, $s\phi\in\Lambda^{p+1,q-1}_{m-1}$. So, $s$ maps $\Lambda^{p,q}_m$ to $\Lambda^{p+1,q-1}_{m-1}$.

  By symmetry, $s^*$ maps $\prescript{}{m^*}\Lambda^{p,q}$ to $\prescript{}{m^*-1}\Lambda^{p-1,q+1}$. Letting $m^*=m+p-q$, we have $m^*-1=(m+1)+(p-1)-(q+1)$, so $\prescript{}{m^*}\Lambda^{p,q}=\Lambda^{p,q}_m$ and $\prescript{}{m^*-1}\Lambda^{p-1,q+1}=\Lambda^{p-1,q+1}_{m+1}$ by Proposition~\ref{prop:dualdecomp}.
\end{proof}

\begin{proposition}\label{prop:siso}
  The map $s\colon\Lambda^{p,q}_m\to\Lambda^{p+1,q-1}_{m-1}$ is injective if $m>0$ and surjective if $m^*=m+p-q\ge0$. Likewise, the map $s^*\colon\Lambda^{p,q}_m\to\Lambda^{p-1,q+1}_{m+1}$ is injective if $m^*>0$ and surjective if $m\ge0$.
\end{proposition}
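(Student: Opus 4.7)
The plan is to reduce the whole statement to a scalar computation on each summand. By Proposition~\ref{prop:eigenvalues}, $s^*s$ acts on $\Lambda^{p,q}_m$ as $m(m^*+1)\cdot\id$, where $m^*:=m+p-q$ as in Proposition~\ref{prop:dualdecomp}. Either by adding the commutator $p-q=m^*-m$ from Proposition~\ref{prop:ssstar}, or by rereading the same eigenvalue formula through the dual indexing of Proposition~\ref{prop:dualdecomp}, $ss^*$ acts on $\Lambda^{p,q}_m$ as $m^*(m+1)\cdot\id$. With these two scalars in hand, adjointness (Proposition~\ref{prop:adjoint}) reduces both injectivity and surjectivity to checking positivity of the relevant scalar.

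For injectivity of $s$ when $m>0$, I fix $\phi\in\Lambda^{p,q}_m$ and use $\|s\phi\|^2=\langle\phi,s^*s\phi\rangle=m(m^*+1)\|\phi\|^2$. Corollary~\ref{cor:mqp} forces $m^*\ge 0$, so $m^*+1\ge 1$, and combined with $m>0$ this gives $\|s\phi\|^2>0$ whenever $\phi\ne 0$. The injectivity of $s^*$ when $m^*>0$ is identical, using $\|s^*\phi\|^2=m^*(m+1)\|\phi\|^2$ and the automatic bound $m\ge 0$ coming from Definition~\ref{def:decomposition}.

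For surjectivity of $s$ when $m^*\ge 0$, I split on $m$. If $m=0$ the target $\Lambda^{p+1,q-1}_{-1}$ is zero by definition. If $m>0$, then on the target $\Lambda^{p+1,q-1}_{m-1}$ the reindexed dual parameter is $(m-1)+(p+1)-(q-1)=m^*+1$, so $ss^*$ restricts to the positive scalar $(m^*+1)\cdot m=m(m^*+1)$. Each $\psi$ in the target therefore admits the preimage $\phi:=\frac{s^*\psi}{m(m^*+1)}$, which lies in $\Lambda^{p,q}_m$ by the preceding proposition and satisfies $s\phi=\psi$. Surjectivity of $s^*$ is dual: when $m\ge 0$, either $m^*\le 0$, in which case the target $\Lambda^{p-1,q+1}_{m+1}$ vanishes since its dual index $m^*-1$ is negative, or $m^*>0$, in which case the analogous formula $\phi:=\frac{s\psi}{m^*(m+1)}$ does the job.

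The main obstacle is purely bookkeeping: when one passes from $\Lambda^{p,q}_m$ to $\Lambda^{p+1,q-1}_{m-1}$ under $s$, the index $m$ drops by one while the dual index $m^*$ rises by one, and one must apply the correct scalar in each location to avoid an off-by-one error. The only other subtlety is the boundary case $m=0$ (resp.~$m^*=0$) for surjectivity of $s$ (resp.~$s^*$), where the positive-eigenvalue argument degenerates but the target simply vanishes by definition (resp.~by Corollary~\ref{cor:mqp} applied through the dual decomposition).
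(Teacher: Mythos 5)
Your proof is correct and follows essentially the same route as the paper's: both arguments rest on the fact that $s^*s$ (equivalently $ss^*$) acts as the positive scalar $m(m^*+1)$ (resp.\ $m^*(m+1)$) on the relevant summand, which gives injectivity directly and surjectivity of the adjoint via the explicit preimage $s^*\psi/\lambda$. The only difference is presentational — you write out the scalars, the norm identity, and the degenerate $m=0$ (resp.\ $m^*\le 0$) cases explicitly, where the paper compresses these into a reindexing step and an appeal to symmetry.
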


\begin{proof}
  For $m>0$, the map $s^*s$ is a positive multiple of the identity on $\Lambda^{p,q}_m$, so $s\colon\Lambda^{p,q}_m\to\Lambda^{p+1,q-1}_{m-1}$ is injective and $s^*\colon\Lambda^{p+1,q-1}_{m-1}\to\Lambda^{p,q}_m$ is surjective. Reindexing, $s^*\colon\Lambda^{p,q}_m\to\Lambda^{p-1,q+1}_{m+1}$ is surjective for $m\ge0$. By symmetry, we have that the operator $s^*\colon\Lambda^{p,q}_m\to\Lambda^{p-1,q+1}_{m+1}$ is injective if $m^*>0$ and $s$ is surjective if $m^*\ge0$.
\end{proof}

We remark that the above proposition holds even if $\Lambda^{p,q}_m=0$; the bijectivity of $s^*s$ is then tautological. However, if we assume that $\Lambda^{p,q}_m$ is nonzero, then we can restate this proposition in a different way.

\begin{corollary}\label{cor:sisoalt}
  Assume $\Lambda^{p,q}_m$ is nonzero. Then $s\colon\Lambda^{p,q}_m\to\Lambda^{p+1,q-1}_{m-1}$ is an isomorphism if and only if $m>0$. Likewise, $s^*\colon\Lambda^{p,q}_m\to\Lambda^{p-1,q+1}_{m+1}$ is an isomorphism if and only if $m^*=m+p-q>0$.
\end{corollary}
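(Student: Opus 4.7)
The plan is to obtain the corollary by combining Proposition~\ref{prop:siso} with Corollary~\ref{cor:mqp}, which together pin down precisely when the one-sided conditions in Proposition~\ref{prop:siso} are guaranteed to hold simultaneously. The key observation is that the nonvanishing hypothesis on $\Lambda^{p,q}_m$ is exactly what upgrades the surjectivity (resp.\ injectivity) hypothesis in Proposition~\ref{prop:siso} to being automatic.

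First I would prove the ``if'' direction for $s$. Assume $m>0$. Proposition~\ref{prop:siso} immediately gives that $s\colon\Lambda^{p,q}_m\to\Lambda^{p+1,q-1}_{m-1}$ is injective. For surjectivity, the same proposition requires $m^*=m+p-q\ge0$. To obtain this, pick any nonzero $\phi\in\Lambda^{p,q}_m$ (possible by hypothesis); then Corollary~\ref{cor:mqp} applied to $\phi$ yields $m\ge q-p$, i.e.\ $m^*\ge 0$, so $s$ is surjective. Hence $s$ is an isomorphism. The $s^*$ case is symmetric: if $m^*>0$, injectivity follows from Proposition~\ref{prop:siso}, and surjectivity from the fact that $m\ge 0$ is automatic.

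Next I would prove the ``only if'' direction. Suppose $m=0$, so $\Lambda^{p,q}_0\ne 0$. On $\Lambda^{p,q}_0$ the operator $s^*s$ is zero, so for any $\phi\in\Lambda^{p,q}_0$,
\begin{equation*}
  \langle s\phi,s\phi\rangle=\langle s^*s\phi,\phi\rangle=0,
\end{equation*}
and therefore $s\phi=0$. Since $\Lambda^{p,q}_0$ is nonzero, $s$ has a nontrivial kernel and cannot be an isomorphism, giving the contrapositive. The analogous argument applied to $ss^*$ on $\prescript{}{0}\Lambda^{p,q}=\Lambda^{p,q}_m$ with $m^*=0$ handles the $s^*$ case.

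There is no real obstacle here; the only thing to notice is that the nonvanishing of $\Lambda^{p,q}_m$ is exactly the hypothesis needed to invoke Corollary~\ref{cor:mqp} in the forward direction, which is what makes the one-sided conditions ``injective iff $m>0$'' and ``surjective iff $m^*\ge0$'' collapse into the single clean statement ``isomorphism iff $m>0$''.
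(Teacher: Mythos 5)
Your proof is correct and follows essentially the same route as the paper's: use Corollary~\ref{cor:mqp} (nonvanishing forces $m^*\ge0$) to make Proposition~\ref{prop:siso} deliver both injectivity and surjectivity when $m>0$, and observe that $m=0$ places the summand in $\ker s$. The only cosmetic difference is that you justify $s\phi=0$ on $\Lambda^{p,q}_0$ via $\langle s\phi,s\phi\rangle=\langle s^*s\phi,\phi\rangle=0$, where the paper simply cites the definition of the eigenspace.
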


\begin{proof}
  If $\Lambda^{p,q}_m$ is nonzero, then $m\ge0$ and $m^*\ge0$ (Corollary~\ref{cor:mqp}). So, if $m>0$, then $s$ is an isomorphism by Proposition~\ref{prop:siso}. If $m=0$, then $\Lambda^{p,q}_m$ is in the kernel of $s$ by definition, so $s$ is not an isomorphism. The argument for $s^*$ is analogous.
\end{proof}

\begin{corollary}\label{cor:siso}
  Each of the maps $s\colon\Lambda^{p,q}_m\to\Lambda^{p+1,q-1}_{m-1}$ and $s^*\colon\Lambda^{p,q}_m\to\Lambda^{p-1,q+1}_{m+1}$ is an isomorphism if and only if the map's domain and codomain are both zero or both nonzero.
\end{corollary}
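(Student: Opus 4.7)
The plan is to deduce this from Corollary~\ref{cor:sisoalt} by examining each of the four possible combinations of the domain and codomain being zero or nonzero. Since the two statements (about $s$ and $s^*$) are symmetric, I would argue only the case of $s$ and then invoke symmetry.

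For the ``if'' direction, suppose first that both $\Lambda^{p,q}_m$ and $\Lambda^{p+1,q-1}_{m-1}$ are zero; then $s$ is trivially an isomorphism. Suppose instead that both are nonzero. Then the nonzero codomain $\Lambda^{p+1,q-1}_{m-1}$ forces, by Corollary~\ref{cor:mqp} applied to it, the inequality $m-1 \ge 0$, i.e.\ $m>0$. Since the domain $\Lambda^{p,q}_m$ is also nonzero, Corollary~\ref{cor:sisoalt} then immediately yields that $s$ is an isomorphism.

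For the ``only if'' direction, I would rule out the two mixed cases. If the domain is zero but the codomain is nonzero, then $s$ fails to be surjective, so it is not an isomorphism. Conversely, if the domain is nonzero but the codomain is zero, then $s$ is the zero map on a nonzero space, hence fails to be injective, and again is not an isomorphism. Thus if $s$ is an isomorphism, we must be in one of the two matched cases.

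There is no real obstacle here beyond bookkeeping; the key bit of input is that the codomain $\Lambda^{p+1,q-1}_{m-1}$ being nonzero automatically upgrades the hypothesis $m \ge 0$ (obtained from the nonzero domain via Corollary~\ref{cor:mqp}) to the strict inequality $m > 0$ needed to apply Corollary~\ref{cor:sisoalt}. The statement for $s^*$ follows by the same argument with the roles of $p,q$ and $m,m^*$ interchanged, using Proposition~\ref{prop:dualdecomp} to translate between the two indexings.
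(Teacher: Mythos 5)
Your proof is correct and follows essentially the same route as the paper's: handle the two matched cases (tautological isomorphism when both are zero; Corollary~\ref{cor:sisoalt} when both are nonzero, using that a nonzero codomain forces $m-1\ge 0$), observe that the mixed cases cannot give an isomorphism, and dispatch $s^*$ by symmetry.
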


\begin{proof}
  We prove the claim for $s$, and the claim for $s^*$ follows by symmetry. If both spaces are zero, the map is tautologically an isomorphism. If exactly one space is zero, the map cannot be an isomorphism. If both spaces are nonzero, then $\Lambda^{p+1,q-1}_{m-1}$ being nonzero implies $m-1\ge0$, so $s$ is an isomorphism by Corollary~\ref{cor:sisoalt}.
\end{proof}

Proposition~\ref{prop:siso} implies the following generalization to powers of $s$.
\begin{corollary}\label{cor:power}
  For a nonnegative integer $l$, the power $s^l\colon\Lambda^{p,q}_m\to\Lambda^{p+l,q-l}_{m-l}$ is injective if $m\ge l$ and surjective if $m^*=m+p-q\ge0$. Likewise, the power $(s^*)^l\colon\Lambda^{p,q}_m\to\Lambda^{p-l,q+l}_{m+l}$ is injective if $m^*\ge l$ and surjective if $m\ge0$.
\end{corollary}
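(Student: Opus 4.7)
The plan is to prove both statements by induction on $l$. The case $l=0$ is trivial (the identity is both injective and surjective), and the case $l=1$ is precisely Proposition~\ref{prop:siso}. The inductive step will be a routine composition argument, provided one carefully tracks how the grading indices $m$ and $m^*=m+p-q$ transform under a single application of $s$: the map $s$ sends $\Lambda^{p,q}_m$ into $\Lambda^{p+1,q-1}_{m-1}$, so $m$ drops by $1$ while the new shifted index is $(m-1)+(p+1)-(q-1)=m^*+1$, i.e.\ $m^*$ rises by $1$.

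For the injectivity half, I would factor $s^l = s\circ s^{l-1}$ as
\begin{equation*}
  \Lambda^{p,q}_m \xrightarrow{\,s^{l-1}\,} \Lambda^{p+l-1,q-l+1}_{m-l+1} \xrightarrow{\,s\,} \Lambda^{p+l,q-l}_{m-l}.
\end{equation*}
Assuming $m\ge l$, the inductive hypothesis gives injectivity of the first factor (since $m\ge l\ge l-1$), and Proposition~\ref{prop:siso} applied on $\Lambda^{p+l-1,q-l+1}_{m-l+1}$ gives injectivity of the second factor, because its grading index $m-l+1$ is positive precisely when $m\ge l$. Composing two injections yields injectivity of $s^l$.

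For the surjectivity half, I would factor the other way, $s^l = s^{l-1}\circ s$, as
\begin{equation*}
  \Lambda^{p,q}_m \xrightarrow{\,s\,} \Lambda^{p+1,q-1}_{m-1} \xrightarrow{\,s^{l-1}\,} \Lambda^{p+l,q-l}_{m-l}.
\end{equation*}
Proposition~\ref{prop:siso} gives surjectivity of the first factor directly from the hypothesis $m^*\ge 0$; and because passing through $s$ replaces $m^*$ by $m^*+1\ge 1$, the inductive hypothesis applied on $\Lambda^{p+1,q-1}_{m-1}$ furnishes surjectivity of the second factor. The claims for $(s^*)^l$ follow by the symmetric argument that swaps the roles of $(p,m)$ and $(q,m^*)$, exactly as in the last paragraph of the proof of Proposition~\ref{prop:siso}.

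There is no real obstacle: the entire argument is bookkeeping for a two-step induction. The only point that deserves attention is the opposing behavior of $m$ and $m^*$ under $s$, which is precisely what forces the injectivity hypothesis to strengthen with $l$ (to $m\ge l$) while leaving the surjectivity hypothesis $m^*\ge 0$ invariant under the induction.
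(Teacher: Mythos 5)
Your proof is correct and is essentially the paper's argument: both decompose $s^l$ (resp.\ $(s^*)^l$) into $l$ single applications of $s$ (resp.\ $s^*$), apply Proposition~\ref{prop:siso} to each factor after tracking how $m$ and $m^*$ shift, and conclude by composing injections or surjections; your induction is just a reformulation of the paper's direct composition. The index bookkeeping in your argument checks out.
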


\begin{proof}
  For the first claim, if $m\ge l$, then the numbers $m, m-1, \dots,m-l+1$ are all positive, so each map in the composition
  \begin{equation*}
    \Lambda^{p,q}_m\xrightarrow{s}\Lambda^{p+1,q-1}_{m-1}\xrightarrow{s}\dots\xrightarrow{s}\Lambda^{p+l-1,q-l+1}_{m-l+1}\xrightarrow{s}\Lambda^{p+l,q-l}_{m-l}
  \end{equation*}
  is injective, so the composition $s^l$ is injective as well.
  
  For the last claim, if $m\ge0$, then the numbers $m,m+1,\dots,m+l-1$ are all nonnegative, so each map in the composition
  \begin{equation*}
    \Lambda^{p,q}_m\xrightarrow{s^*}\Lambda^{p-1,q+1}_{m+1}\xrightarrow{s^*}\dots\xrightarrow{s^*}\Lambda^{p-(l-1),q+l-1}_{m+l-1}\xrightarrow{s^*}\Lambda^{p-l,q+l}_{m+l}
  \end{equation*}
  is surjective, so the composition $(s^*)^l$ is surjective as well. The remaining claims follow by symmetry.
\end{proof}

This corollary has an important consequence, allowing us to reduce to the $m=0$ case.

\begin{proposition}\label{prop:reducetom0}
  If $m\ge0$ and $m^*=m+p-q\ge0$, then $s^m\colon\Lambda^{p,q}_m\to\Lambda^{p+m,q-m}_0$ is an isomorphism. Otherwise, if $m<0$ or $m^*<0$, then $\Lambda^{p,q}_m$ is zero.
\end{proposition}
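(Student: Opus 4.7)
The plan is to obtain this proposition as a straightforward bookkeeping consequence of Corollary~\ref{cor:mqp} and Corollary~\ref{cor:power}; essentially no new argument is needed, since everything has already been set up in this section. I will first dispatch the vanishing claim and then read off the isomorphism from Corollary~\ref{cor:power} with $l = m$.

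For the vanishing statement, suppose either $m < 0$ or $m^* = m+p-q < 0$. The case $m < 0$ is immediate from Definition~\ref{def:decomposition}, which declares $\Lambda^{p,q}_m = 0$ in that range. For the case $m^* < 0$, I would argue by contradiction: if $\Lambda^{p,q}_m$ contained a nonzero vector $\phi$, then $\phi$ would be an eigenvector of $s^*s$ with eigenvalue $m(m+p-q+1)$, and by Proposition~\ref{prop:eigenvalues} together with the corollary establishing the injectivity of the map $m' \mapsto m'(m'+p-q+1)$, $m$ would be the smallest integer such that $s^{m+1}\phi = 0$. But Corollary~\ref{cor:mqp} then forces $m \ge q - p$, i.e. $m^* \ge 0$, contradicting our assumption.

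For the isomorphism, assume $m \ge 0$ and $m^* \ge 0$. I would apply Corollary~\ref{cor:power} with $l = m$ to the composition
\begin{equation*}
  s^m \colon \Lambda^{p,q}_m \to \Lambda^{p+m,q-m}_{0}.
\end{equation*}
Injectivity follows from the trivial inequality $m \ge l = m$, and surjectivity follows from the standing hypothesis $m^* = m+p-q \ge 0$. In the degenerate situation where $\Lambda^{p,q}_m$ itself happens to be zero, the surjectivity clause in Corollary~\ref{cor:power} forces the codomain $\Lambda^{p+m,q-m}_0$ to be zero as well, so the zero map between zero spaces is tautologically an isomorphism.

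The only point requiring any care is confirming that the $m^*$ appearing in Corollary~\ref{cor:power} is computed from the source $\Lambda^{p,q}_m$ and not the target, so that our hypothesis applies as stated. Beyond this bit of index matching I do not foresee any obstacle: the heavy lifting has been done in establishing Corollary~\ref{cor:power}, which in turn rests on Proposition~\ref{prop:siso} and the commutator identity $ss^* - s^*s = p - q$ from Proposition~\ref{prop:ssstar}.
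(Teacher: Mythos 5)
Your proposal is correct and follows the paper's proof exactly: the paper likewise applies Corollary~\ref{cor:power} with $l=m$ for the isomorphism and cites Definition~\ref{def:decomposition} and Corollary~\ref{cor:mqp} for the vanishing statement. Your unpacking of the $m^*<0$ case via Proposition~\ref{prop:eigenvalues} and the eigenvalue-to-$m$ correspondence is just a more explicit rendering of the same argument.
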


\begin{proof}
  We apply Corollary~\ref{cor:power} with $l=m$. For the second statement, see Definition~\ref{def:decomposition} and Corollary~\ref{cor:mqp}.
\end{proof}

We can now easily determine which decomposition summands are nonzero.

\begin{proposition}\label{prop:lpqmnonzero}
  The space $\Lambda^{p,q}_m$ is nonzero if and only if
  \begin{equation}\label{eq:mbounds}
    \max\{0,q-p\}\le m\le\min\{q,n-p\},
  \end{equation}
  where $n=\dim V$.
\end{proposition}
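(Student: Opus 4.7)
The plan is to reduce to the $m = 0$ case via Proposition~\ref{prop:reducetom0}, and then determine when $\Lambda^{p',q'}_0$ can be nonzero by identifying it with $\ker s$ and applying Lemma~\ref{lem:pq0}.

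First I would dispose of the lower bound. Proposition~\ref{prop:reducetom0} states that $\Lambda^{p,q}_m = 0$ whenever $m < 0$ or $m^* = m + p - q < 0$, i.e., whenever $m < \max\{0, q-p\}$; so this part of the stated inequality~\eqref{eq:mbounds} is necessary. For $m \ge \max\{0, q-p\}$, the same proposition provides an isomorphism
\begin{equation*}
  s^m \colon \Lambda^{p,q}_m \xrightarrow{\sim} \Lambda^{p+m,\,q-m}_0,
\end{equation*}
so it remains to decide when the right-hand side is nonzero, writing $(p', q') := (p+m, q-m)$. Note that $p' \ge q'$ automatically: indeed $m \ge q - p$ is equivalent to $p + m \ge q \ge q - m$.

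The next step is to identify $\Lambda^{p', q'}_0$ concretely. By Definition~\ref{def:decomposition}, $\Lambda^{p', q'}_0 = \ker(s^* s \vert_{\Lambda^{p', q'}})$. Since $s^*$ is the adjoint of $s$ (Proposition~\ref{prop:adjoint}), $s^* s$ is positive semidefinite, so $s^* s \phi = 0$ forces $\|s \phi\|^2 = \langle s^* s \phi, \phi\rangle = 0$. Hence $\Lambda^{p', q'}_0 = \ker(s \vert_{\Lambda^{p', q'}})$.

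Finally I would invoke Lemma~\ref{lem:pq0}: when $0 \le q' \le p' \le n$, the kernel of $s$ on $\Lambda^{p', q'}$ is nonzero. Conversely, if $\Lambda^{p',q'} = 0$ then its kernel is zero. Since we already have $p' \ge q'$, the only remaining constraints are $0 \le p' \le n$ and $0 \le q' \le n$. The conditions $p' \ge 0$ and $q' \le n$ follow from $p \ge 0$, $m \ge 0$, and $q \le n$; the nontrivial constraints $p' \le n$ and $q' \ge 0$ translate exactly to $m \le n - p$ and $m \le q$, i.e., $m \le \min\{q, n-p\}$. Combining with the already-established lower bound gives precisely~\eqref{eq:mbounds}.

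There is no real obstacle here; the argument is essentially a bookkeeping exercise once Proposition~\ref{prop:reducetom0} has reduced everything to $m = 0$. The one substantive ingredient is Lemma~\ref{lem:pq0}, which supplies the explicit nonzero element $\alpha \otimes \beta \in \ker s$ in the regime $q' \le p'$.
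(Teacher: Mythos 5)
Your proof is correct and follows essentially the same route as the paper's: reduce to the $m=0$ case via the isomorphism $s^m\colon\Lambda^{p,q}_m\to\Lambda^{p+m,q-m}_0$ from Proposition~\ref{prop:reducetom0}, identify $\Lambda^{p+m,q-m}_0$ with $\ker s$, and then settle nonvanishing with Lemma~\ref{lem:pq0} together with the observation that $\Lambda^{p+m,q-m}=0$ outside the range $0\le p+m,q-m\le n$. The bookkeeping at the end matches the paper's four inequalities $m\ge0$, $m^*\ge0$, $p+m\le n$, $q-m\ge0$.
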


\begin{proof}
  We apply Proposition~\ref{prop:reducetom0} to reduce to the $m=0$ case, which we handle with Lemma~\ref{lem:pq0}. If $m<0$ or $m^*=m+p-q<0$, then $\Lambda^{p,q}_m=0$. So now assume $m\ge0$ and $m^*\ge0$, so $\Lambda^{p,q}_m$ is isomorphic to $\Lambda^{p+m,q-m}_0$, which is the kernel of $s$ on $\Lambda^{p+m,q-m}$.

  If, furthermore, $p+m>n$ or $q-m<0$, then $\Lambda^{p+m,q-m}$ is zero and hence so is $\Lambda^{p+m,q-m}_0$. So now assume furthermore that $p+m\le n$ and $q-m\ge0$. Then we have $0\le q-m\le p+m\le n$, where the middle inequality follows from $q-m=p-m^*\le p\le p+m$. Thus, $\Lambda^{p+m,q-m}_0$ is nonzero by Lemma~\ref{lem:pq0}.

  Putting everything together, we have nonzero $\Lambda^{p,q}_m$ when $m\ge0$, $m^*\ge0$, $p+m\le n$, and $q-m\ge0$, and otherwise $\Lambda^{p,q}_m=0$. Equation~\eqref{eq:mbounds} is equivalent to these four inequalities.
\end{proof}

\begin{remark}
  The preceding propositions yield a simple way to understand the injectivity and surjectivity of $s$ that we proved in Proposition~\ref{prop:sinjsurj}. On most summands of the decomposition, $s\colon\Lambda^{p,q}_m\to\Lambda^{p+1,q-1}_{m-1}$ is an isomorphism, but this fails exactly when one space is zero but the other is not.

  To see when it is possible for exactly one of the spaces to be zero, we compare Inequality~\eqref{eq:mbounds} with the corresponding inequality
  \begin{equation}\label{eq:mminusbounds}
    \max\{1,q-p-1\}\le m\le\min\{q,n-p\},
  \end{equation}
  for $\Lambda^{p+1,q-1}_{m-1}$ (adding $1$ after substituting).
  When can one inequality hold but the other fail? We see that \eqref{eq:mbounds} holds but \eqref{eq:mminusbounds} fails if and only if $m=0\ge q-p$, from which we conclude that $s$ fails to be injective if and only if $p\ge q$. On the other hand, \eqref{eq:mminusbounds} holds but \eqref{eq:mbounds} fails if and only if $m=q-p-1\ge1$, so $s$ fails to be surjective if and only if $q\ge p+2$.
\end{remark}

In addition to providing a decomposition of $\Lambda^{p,q}$, the spaces $\Lambda^{p,q}_m$ also provide a decomposition of other subspaces of $\Lambda^{p,q}$ like the kernel and image of various powers of $s$ and $s^*$.

\begin{proposition} \label{prop:kerimdecomp}
  For each nonnegative integer $l$, consider $s^l\colon\Lambda^{p,q}\to\Lambda^{p+l,q-l}$ and $(s^*)^l\colon\Lambda^{p+l,q-l}\to\Lambda^{p,q}$. We have
  \[
    \operatorname{ker} s^l = \bigoplus_{m=0}^{l-1} \Lambda^{p,q}_m, \quad \operatorname{im} (s^*)^l = \bigoplus_{m=l}^q \Lambda^{p,q}_m, \quad \Lambda^{p,q}_m = \operatorname{ker} s^{m+1} \cap \operatorname{im} (s^*)^m.
  \]
\end{proposition}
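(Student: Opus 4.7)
The plan is to leverage the fact that $s$ and $s^*$ respect the decomposition, shifting the index $m$ by $-1$ and $+1$ respectively; hence $s^l$ and $(s^*)^l$ do likewise. Consequently each of these maps decomposes as a direct sum of its restrictions to the summands $\Lambda^{p,q}_m$, and I can analyze those restrictions componentwise using Corollary~\ref{cor:power}.

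For the first formula, I would examine $s^l$ summand by summand. If $m<l$, then $s^l$ maps $\Lambda^{p,q}_m$ into $\Lambda^{p+l,q-l}_{m-l}$, which is zero by Definition~\ref{def:decomposition}, so $\Lambda^{p,q}_m \subseteq \ker s^l$. If $m \ge l$, then Corollary~\ref{cor:power} (with that value of $l$) guarantees that $s^l$ is injective on $\Lambda^{p,q}_m$, so $\Lambda^{p,q}_m \cap \ker s^l = 0$. Combining these observations with the full decomposition $\Lambda^{p,q}=\bigoplus_m\Lambda^{p,q}_m$ from Proposition~\ref{prop:decomp} yields $\ker s^l = \bigoplus_{m=0}^{l-1}\Lambda^{p,q}_m$.

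For the second formula, I would apply Corollary~\ref{cor:power} to $(s^*)^l$ starting from $\Lambda^{p+l,q-l}$: it sends each summand $\Lambda^{p+l,q-l}_k$ into $\Lambda^{p,q}_{k+l}$, and the surjectivity clause (which requires only $k \ge 0$) makes each such restriction surjective. Letting $m=k+l$, the image therefore contains $\Lambda^{p,q}_m$ for every $m \ge l$, and is plainly contained in $\bigoplus_{m\ge l}\Lambda^{p,q}_m$ since $\Lambda^{p+l,q-l}_k=0$ for $k<0$. By Proposition~\ref{prop:lpqmnonzero} only $m\le q$ contributes, giving $\operatorname{im}(s^*)^l = \bigoplus_{m=l}^{q}\Lambda^{p,q}_m$.

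The third formula then drops out by intersecting the two descriptions: $\ker s^{m+1} = \bigoplus_{k=0}^{m}\Lambda^{p,q}_k$ and $\operatorname{im}(s^*)^m = \bigoplus_{k=m}^{q}\Lambda^{p,q}_k$ overlap precisely in the single summand $\Lambda^{p,q}_m$. No step here is genuinely difficult; the only point requiring care is the bookkeeping of index shifts when translating the hypotheses of Corollary~\ref{cor:power} into the reindexed setting for $(s^*)^l$, and verifying that the ranges stated in the three formulas really do correspond to the nonzero summands identified by Proposition~\ref{prop:lpqmnonzero}.
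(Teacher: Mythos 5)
Your proposal is correct and follows essentially the same route as the paper: both rest on the decomposition $\Lambda^{p,q}=\bigoplus_m\Lambda^{p,q}_m$ together with Corollary~\ref{cor:power} applied summand by summand, and the third identity is obtained by intersecting the first two. The only difference is that the paper derives $\operatorname{ker} s^l$ as the orthogonal complement of $\operatorname{im} (s^*)^l$ via adjointness, whereas you argue for the kernel directly from componentwise injectivity (for $m\ge l$) and the vanishing of the target summands (for $m<l$); both arguments are valid and comparably short.
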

\begin{proof}
  Consider the space $\Lambda^{p+l,q-l}$, which we can decompose as
  \[
    \Lambda^{p+l,q-l} = \bigoplus_{m=l}^{q} \Lambda^{p+l,q-l}_{m-l}
  \]
  by Proposition~\ref{prop:lpqmnonzero}. We now apply $(s^*)^l$ to both sides. By Corollary~\ref{cor:power}, since $m-l\ge0$, we have
  \[
    (s^*)^l \Lambda^{p+l,q-l}_{m-l} = \Lambda^{p,q}_m.
  \]
  Thus,
  \[
    \operatorname{im} (s^*)^l = \bigoplus_{m=l}^q \Lambda^{p,q}_m.
  \]
  Taking the orthogonal complement of both sides yields $\ker s^l = \bigoplus_{m=0}^{l-1} \Lambda^{p,q}_m$, and from this it follows that $\Lambda^{p,q}_m = \operatorname{ker} s^{m+1} \cap \operatorname{im} (s^*)^m$.
\end{proof}

The preceding proposition can be used to give a simpler proof of \cite[Lemma 2.4]{hu2025finite}, which generalizes Proposition~\ref{prop:sinjsurj} to powers of $s$ and $s^*$.

\begin{proposition} \label{prop:sminjsurj}
  Assume $0 \le p,q \le n$, where $n = \dim V$.  Let $l$ be a positive integer.  The operator $s^l : \Lambda^{p,q} \to \Lambda^{p+l,q-l}$ is injective if and only if $p < q-l+1$ and surjective if and only if $p \ge q-l$.
\end{proposition}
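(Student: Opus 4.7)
The plan is to handle injectivity and surjectivity separately, reducing each to a one-line arithmetic inequality using the decomposition $\Lambda^{p,q} = \bigoplus_m \Lambda^{p,q}_m$ together with Propositions~\ref{prop:lpqmnonzero} and~\ref{prop:kerimdecomp}.

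First I would tackle injectivity. Proposition~\ref{prop:kerimdecomp} identifies $\ker s^l = \bigoplus_{m=0}^{l-1}\Lambda^{p,q}_m$, so $s^l$ is injective iff every summand $\Lambda^{p,q}_m$ with $0 \le m \le l-1$ vanishes. By Proposition~\ref{prop:lpqmnonzero} (and since $0 \le p, q \le n$ guarantees the allowed range of $m$ is nonempty), the smallest $m$ giving a nonzero $\Lambda^{p,q}_m$ is $\max\{0, q-p\}$. So $s^l$ is injective iff $\max\{0, q-p\} \ge l$; since $l \ge 1$, this simplifies at once to $q - p \ge l$, equivalently $p < q - l + 1$.

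Next I would handle surjectivity via the adjoint: because $s$ and $s^*$ are adjoints (Proposition~\ref{prop:adjoint}), $s^l$ is surjective iff $(s^*)^l : \Lambda^{p+l, q-l} \to \Lambda^{p, q}$ is injective. The same argument that gave the kernel of $s^l$ in Proposition~\ref{prop:kerimdecomp} applies with $s$ and $s^*$ interchanged and the decomposition indexed by the dual index $m^*$ of Proposition~\ref{prop:dualdecomp}, yielding $\ker (s^*)^l = \bigoplus_{m^*=0}^{l-1}\prescript{}{m^*}\Lambda^{p+l, q-l}$. By the symmetric form of Proposition~\ref{prop:lpqmnonzero} (swapping $p$ and $q$), the smallest $m^*$ giving a nonzero $\prescript{}{m^*}\Lambda^{p+l, q-l}$ is $\max\{0, (p+l)-(q-l)\} = \max\{0, p - q + 2l\}$. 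Injectivity of $(s^*)^l$ thus becomes $\max\{0, p - q + 2l\} \ge l$, which (using $l \ge 1$) simplifies to $p \ge q - l$.

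The main obstacle will be articulating the symmetric version of Proposition~\ref{prop:kerimdecomp} for $(s^*)^l$ cleanly---that is, making precise the claim that the results of Section~\ref{sec:doublemulti} have a formal $s \leftrightarrow s^*$, $p \leftrightarrow q$, $m \leftrightarrow m^*$ symmetry. This is essentially automatic from Proposition~\ref{prop:dualdecomp}, but some bookkeeping is needed to verify that the bounds on $m^*$ transform as claimed. Once the symmetric statement is in place, both parts collapse to elementary manipulations of $\max\{0,\cdot\}$ inequalities.
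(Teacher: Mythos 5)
Your proposal is correct and follows essentially the same route as the paper: injectivity via the kernel decomposition $\ker s^l=\bigoplus_{m=0}^{l-1}\Lambda^{p,q}_m$ combined with Proposition~\ref{prop:lpqmnonzero}, and surjectivity by passing to the adjoint $(s^*)^l$ and invoking the $s\leftrightarrow s^*$, $p\leftrightarrow q$ symmetry with a reindexing. The only cosmetic difference is that you phrase the injectivity criterion as ``the smallest nonzero summand has index $\ge l$'' where the paper phrases it as an interval-intersection condition, and the bookkeeping caveat you flag for the dual decomposition is handled no more explicitly in the paper's own proof.
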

\begin{proof}
  The operator $s^l$ fails to be injective if and only if there is at least one nonzero summand in the decomposition $\operatorname{ker} s^l = \bigoplus_{m=0}^{l-1} \Lambda^{p,q}_m$.  By Proposition~\ref{prop:lpqmnonzero}, this happens if and only if there exists an $m \in \{0,1,\dots,l-1\}$ for which
  \[
    \max\{0,q-p\} \le m \le \min\{q,n-p\},
  \]
  i.e. the intervals $[0,l-1]$ and $[\max\{0,q-p\},\min\{q,n-p\}]$ have nonempty intersection.  This happens if and only if $\max\{0,q-p\} \le l-1$, which is equivalent to $p \ge q-l+1$.  Therefore $s^l$ is injective if and only if $p < q-l+1$.

  To study surjectivity, we use the same strategy as in the proof of Proposition~\ref{prop:sinjsurj}.  By symmetry, we have that $(s^*)^l\colon\Lambda^{p,q}\to\Lambda^{p-l,q+l}$ is injective if and only if $q<p-l+1$. Reindexing, we have that $(s^*)^l\colon\Lambda^{p+l,q-l}\to\Lambda^{p,q}$ is injective if and only if $q-l<(p+l)-l+1$. Hence its adjoint $s^l\colon\Lambda^{p,q}\to\Lambda^{p+l,q-l}$ is surjective if and only if $q-l<p+1$, which is equivalent to $p \ge q-l$.
\end{proof}

\subsection{Decomposition examples}\label{sec:decompeg}
We now provide several examples of the above decomposition. We begin in Section~\ref{sec:mq} by discussing the $m=q$ summand, which, as we will eventually see, is somewhat exceptional. Then, in Sections~\ref{sec:pq11}--\ref{sec:pq22}, we discuss the decomposition of $\Lambda^{p,q}$ for small values of $p$ and $q$, as well as the corresponding decompositions of their matrix proxies in three dimensions. Finally, in Section~\ref{sec:symmetric}, we discuss the special case $p=q$.

\subsubsection{The $m=q$ summand}\label{sec:mq}
It turns out that the case $m=q$ is quite special. Let $k=p+q$. Observe that a $k$-covector, being antisymmetric in all indices, is, in particular, antisymmetric in the first $p$ indices and in the last $q$ indices. Thus, we have a natural inclusion $\Lambda^k\hookrightarrow\Lambda^{p,q}$. As we will see, $\Lambda^{p,q}_q$ is the image of this map. Conversely, the wedge product yields a natural map $\wedge\colon\Lambda^{p,q}\to\Lambda^k$. As we will see, $\Lambda^{p,q}_q$ is the orthogonal complement of $\ker\wedge$.

\begin{definition}\label{def:ipq}
  For $k=p+q$, let
  \begin{equation*}
    i^{p,q}\colon\Lambda^k\to\Lambda^{p,q}
  \end{equation*}
  denote the natural inclusion of antisymmetric $k$-tensors into the space of $(p,q)$-covectors given by
  \begin{equation*}
    (i^{p,q}\psi)(X_1,\dots,X_p;Y_1,\dots,Y_q):=\psi(X_1,\dots,X_p,Y_1,\dots,Y_q).
  \end{equation*}
  If either $p$ or $q$ are negative we define $i^{p,q}$ to be zero.
\end{definition}
\begin{definition}
  Let
  \begin{equation*}
    \wedge\colon\Lambda^{p,q}\to\Lambda^k
  \end{equation*}
  denote the wedge product map defined on simple tensors by
  \begin{equation*}
    \wedge(\alpha\otimes\beta):=\alpha\wedge\beta.
  \end{equation*}
\end{definition}

These operators have the following relationships with $s$.

\begin{proposition}\label{prop:si}
  We have
  \begin{align*}
    si^{p,q}&=(-1)^p(p+1)i^{p+1,q-1},\\
    s^*i^{p,q}&=(-1)^{p-1}(q+1)i^{p-1,q+1}.
  \end{align*}
\end{proposition}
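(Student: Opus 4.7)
The plan is to verify both identities by direct calculation from the defining formulas of $s$, $s^*$, and $i^{p,q}$, using the antisymmetry of $\psi\in\Lambda^k$ to reduce every term in the expansion to the same canonical form.

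For the first identity, I would start from Definition~\ref{def:s} applied to $\phi=i^{p,q}\psi$:
\begin{equation*}
(si^{p,q}\psi)(X_1,\dots,X_{p+1};Y_1,\dots,Y_{q-1})=\sum_{a=1}^{p+1}(-1)^{a-1}\psi(X_1,\dots,\widehat{X_a},\dots,X_{p+1},X_a,Y_1,\dots,Y_{q-1}),
\end{equation*}
where I have immediately applied Definition~\ref{def:ipq} to strip the inclusion. In each summand, the argument $X_a$ sits in slot $p+1$ of $\psi$, whereas in $(i^{p+1,q-1}\psi)(X_1,\dots,X_{p+1};Y_1,\dots,Y_{q-1})=\psi(X_1,\dots,X_{p+1},Y_1,\dots,Y_{q-1})$ it belongs in slot $a$. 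Since $\psi$ is totally antisymmetric, moving $X_a$ from slot $p+1$ back to slot $a$ costs $p+1-a$ adjacent transpositions, introducing a sign $(-1)^{p+1-a}$. Combining with the leading $(-1)^{a-1}$ produces the uniform factor $(-1)^p$, independent of $a$, so the sum collapses to $(p+1)(-1)^p(i^{p+1,q-1}\psi)$ evaluated on the same arguments, giving the first identity.

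The second identity is handled in exactly the same spirit. Expanding $(s^*i^{p,q}\psi)(X_1,\dots,X_{p-1};Y_1,\dots,Y_{q+1})$ via the definition of $s^*$ and then stripping $i^{p,q}$ yields summands in which $Y_a$ occupies slot $1$ of $\psi$, whereas in $i^{p-1,q+1}\psi$ it belongs in slot $p-1+a$. Moving $Y_a$ to its canonical position requires $p-1$ transpositions to pass the $X_j$'s and $a-1$ more to pass $Y_1,\dots,Y_{a-1}$, for a total of $p+a-2$ transpositions and a sign $(-1)^{p+a}$. Combined with the $(-1)^{a-1}$ from the definition of $s^*$, every term contributes the constant sign $(-1)^{p-1}$, and summing over the $q+1$ values of $a$ gives $(-1)^{p-1}(q+1)\,i^{p-1,q+1}\psi$.

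The only real obstacle is bookkeeping: correctly counting the transpositions needed to relocate the split index and checking that the resulting sign is genuinely independent of the summation index $a$, so that the sum of $p+1$ (or $q+1$) identical terms produces the claimed multiplicity. No new structural tools beyond the definitions and the antisymmetry of $\psi$ are required, and the edge cases where $p$ or $q$ becomes negative are covered by the convention in Definition~\ref{def:ipq} that sets $i^{p,q}=0$ there, in which case both sides of each identity vanish.
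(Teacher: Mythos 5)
Your proposal is correct and follows essentially the same route as the paper: expand $s$ (resp.\ $s^*$) on $i^{p,q}\psi$, strip the inclusion, and use the total antisymmetry of $\psi$ to relocate the displaced argument, checking that the resulting sign $(-1)^p$ (resp.\ $(-1)^{p-1}$) is independent of the summation index. Your sign bookkeeping for both identities is accurate; the paper writes out only the first in detail and leaves the second as ``similar.''
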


\begin{proof}
  Let $\psi\in\Lambda^k$. Using the antisymmetry of $\psi$, we have
  \begin{equation*}
    \begin{split}
      &(si^{p,q}\psi)(X_1,\dots,X_{p+1};Y_1,\dots,Y_{q-1})\\
      &=\sum_{a=1}^{p+1}(-1)^{a-1}(i^{p,q}\psi)(X_1,\dots,\widehat{X_a},\dots,X_{p+1};X_a,Y_1,\dots,Y_{q-1})\\
      &=\sum_{a=1}^{p+1}(-1)^{a-1}\psi(X_1,\dots,\widehat{X_a},\dots,X_{p+1},X_a,Y_1,\dots,Y_{q-1})\\
      &=\sum_{a=1}^{p+1}(-1)^p\psi(X_1,\dots,X_a,\dots,X_{p+1},Y_1,\dots,Y_{q-1})\\
      &=(-1)^p(p+1)\psi(X_1,\dots,X_a,\dots,X_{p+1},Y_1,\dots,Y_{q-1}).
    \end{split}
  \end{equation*}
  The second claim follows similarly, with care taken about the signs.
\end{proof}

\begin{proposition}\label{prop:wedges}
  On $(p,q)$-forms, we have
  \begin{align*}
    \mathop\wedge s&=(-1)^pq\mathop\wedge,\\
    \mathop\wedge s^*&=(-1)^{p-1}p\mathop\wedge.
  \end{align*}
\end{proposition}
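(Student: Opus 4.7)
My plan is to verify both identities by direct evaluation on simple tensors $\alpha\otimes\beta = (\alpha_1\wedge\cdots\wedge\alpha_p)\otimes(\beta_1\wedge\cdots\wedge\beta_q)$, using the formulas from Definition~\ref{def:s} and its counterpart for $s^*$, and then extending by linearity.

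For the first identity, I would expand $s(\alpha\otimes\beta)$ as a sum over $a=1,\ldots,q$ according to Definition~\ref{def:s}, then apply $\wedge$. Each summand becomes a wedge product in which $\beta_a$ has been pulled to the front. Moving $\beta_a$ through the $p$ factors $\alpha_1,\ldots,\alpha_p$ introduces a sign $(-1)^p$, and moving it back into its original slot among the $\beta$'s introduces a sign $(-1)^{a-1}$ that exactly cancels the sign $(-1)^{a-1}$ coming from the definition of $s$. The resulting $q$ identical copies of $\alpha\wedge\beta$ sum to $(-1)^p q\cdot \alpha\wedge\beta$. For the second identity, I would run the analogous computation: the sign from moving $\alpha_a$ back to its original position, namely $(-1)^{p-a}$, combines with the $(-1)^{a-1}$ from the definition of $s^*$ to give the $a$-independent sign $(-1)^{p-1}$, and summing over $a=1,\ldots,p$ yields the claimed factor $(-1)^{p-1}p$.

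The only real obstacle is bookkeeping the signs correctly; there is no deeper difficulty. As an alternative worth mentioning, one can check that $\wedge$ is the adjoint of $i^{p,q}$ with respect to the natural inner products (a short verification on the $e^K$ and $e^{I,J}$ bases, where both inner products reduce to the signed shuffle that splits $K$ into ordered increasing subsets $(I,J)$), and then deduce the two identities by taking adjoints of the relations $s\,i^{p,q} = (-1)^p(p+1)\,i^{p+1,q-1}$ and $s^*i^{p,q} = (-1)^{p-1}(q+1)\,i^{p-1,q+1}$ in Proposition~\ref{prop:si}, with appropriately shifted indices.
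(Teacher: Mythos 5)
Your proof is correct and takes essentially the same route as the paper: direct evaluation on simple tensors, with the sign $(-1)^{a-1}$ from the definition of $s$ cancelling against the sign incurred when $\beta_a$ is returned to its original slot, leaving $(-1)^p$ in each of the $q$ identical summands (and the analogous bookkeeping for $s^*$). The adjointness alternative you sketch is also sound, but the paper uses the direct computation.
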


\begin{proof}
  On simple tensors, we have
  \begin{equation*}
    \begin{split}
      &\mathop\wedge s\bigl((\alpha_1 \wedge \dots \wedge \alpha_p) \otimes (\beta_1 \wedge \dots \wedge \beta_q) \bigr) \\
      &= \sum_{a=1}^q (-1)^{a-1}\mathop\wedge\bigl((\beta_a \wedge \alpha_1 \wedge \dots \wedge \alpha_p) \otimes (\beta_1 \wedge \dots \wedge \widehat{\beta_a} \wedge \dots \wedge \beta_q)\bigr)\\
      &=\sum_{a=1}^q(-1)^{a-1}\beta_a \wedge \alpha_1 \wedge \dots \wedge \alpha_p\wedge\beta_1 \wedge \dots \wedge \widehat{\beta_a} \wedge \dots \wedge \beta_q\\
      &=\sum_{a=1}^q(-1)^p\alpha_1\wedge\dots\wedge\alpha_p\wedge\beta_1\wedge\dots\wedge\beta_a\wedge\dots\wedge\beta_q\\
      &=(-1)^pq(\alpha_1 \wedge \dots \wedge \alpha_p) \wedge (\beta_1 \wedge \dots \wedge \beta_q).
    \end{split}
  \end{equation*}
  The second claim follows similarly, with care taken about the signs.
\end{proof}

\begin{proposition}\label{prop:imageipq}
  The image of $i^{p,q}\colon\Lambda^k\to\Lambda^{p,q}$ is $\Lambda^{p,q}_q$.
\end{proposition}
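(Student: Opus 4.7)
The plan is to prove the forward containment $i^{p,q}(\Lambda^k)\subseteq\Lambda^{p,q}_q$ by a direct eigenvalue calculation, and then to establish equality via a dimension count.

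For the forward containment, the first step is to apply Proposition~\ref{prop:si} twice in succession. The first application gives $s\circ i^{p,q}=(-1)^p(p+1)\,i^{p+1,q-1}$, and then applying $s^*$ (now with $p$ replaced by $p+1$ and $q$ by $q-1$) gives $s^*\circ i^{p+1,q-1}=(-1)^p q\,i^{p,q}$. Composing, the two sign factors $(-1)^p$ cancel, leaving
\begin{equation*}
  s^*s\circ i^{p,q}=q(p+1)\,i^{p,q}.
\end{equation*}
Thus every element of the form $i^{p,q}\psi$ is an eigenvector of $s^*s$ with eigenvalue $q(p+1)=q(q+p-q+1)$. By Definition~\ref{def:decomposition}, this is precisely the eigenvalue defining $\Lambda^{p,q}_q$, so $i^{p,q}(\Lambda^k)\subseteq\Lambda^{p,q}_q$.

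For equality, a dimension count suffices. The map $i^{p,q}$ is manifestly injective: if $(i^{p,q}\psi)(X_1,\dots,X_p;Y_1,\dots,Y_q)=\psi(X_1,\dots,X_p,Y_1,\dots,Y_q)=0$ for all choices of vectors, then $\psi=0$. Hence $\dim i^{p,q}(\Lambda^k)=\binom{n}{p+q}$. On the other hand, applying Proposition~\ref{prop:reducetom0} with $m=q$ (so that $m^*=p\ge 0$) yields an isomorphism $s^q\colon\Lambda^{p,q}_q\to\Lambda^{p+q,0}_0$, and since $\Lambda^{p+q,0}_0=\Lambda^{p+q,0}$ is naturally isomorphic to $\Lambda^{p+q}$, we have $\dim\Lambda^{p,q}_q=\binom{n}{p+q}$ as well. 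Combined with the forward containment, this forces equality.

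The only delicate part is the sign tracking in $s^*s\circ i^{p,q}$, where a miscalculation could yield the wrong eigenvalue; fortunately the two $(-1)^p$ factors cancel, producing the correct positive value $q(p+1)$ consistent with $m=q$. Boundary cases are benign: when $p+q>n$, both $\Lambda^k$ and $\Lambda^{p,q}_q$ are zero (the latter by Proposition~\ref{prop:lpqmnonzero}, since $m=q>n-p$), so the statement holds trivially.
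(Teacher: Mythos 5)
Your proof is correct, but it takes a genuinely different route from the paper's. The paper argues by induction on $q$: the base case $q=0$ is the tautological isomorphism $\Lambda^k\to\Lambda^{k,0}=\Lambda^{k,0}_0$, and the inductive step writes $i^{p,q}=(-1)^pq^{-1}s^*i^{p+1,q-1}$ using Proposition~\ref{prop:si} and then invokes Proposition~\ref{prop:siso} to see that $s^*$ carries $\Lambda^{p+1,q-1}_{q-1}$ (the image of $i^{p+1,q-1}$ by the inductive hypothesis) isomorphically onto $\Lambda^{p,q}_q$, so surjectivity comes for free. You instead apply Proposition~\ref{prop:si} twice to get $s^*s\,i^{p,q}=q(p+1)\,i^{p,q}$ --- the sign bookkeeping checks out, and $q(p+1)=q(q+p-q+1)$ is exactly the eigenvalue defining $\Lambda^{p,q}_q$ --- which yields the containment in one stroke without induction; you then close the gap with a dimension count, using injectivity of $i^{p,q}$ on one side and the isomorphism $s^q\colon\Lambda^{p,q}_q\to\Lambda^{p+q,0}_0\cong\Lambda^{p+q}$ from Proposition~\ref{prop:reducetom0} on the other, both giving $\binom{n}{p+q}$. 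Both arguments rest on the same two ingredients (the relations of Proposition~\ref{prop:si} and the isomorphism properties of powers of $s$, $s^*$ between summands), but yours trades the induction for a dimension count: the upside is that the containment step is a transparent eigenvalue verification, while the paper's version never needs to discuss injectivity or dimensions at all. Your treatment of the degenerate cases ($p+q>n$, where both sides vanish) is also fine.
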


\begin{proof}
  Assume $0\le p,q$; otherwise, the claim is tautological because $\Lambda^{p,q}=0$.
  
  We induct on $q$. If $q=0$, then $p=k$, and $m=0$ is the only decomposition summand, so $i^{p,q}$ is just the isomorphism $\Lambda^k\to\Lambda^{k,0}=\Lambda^{k,0}_0$.

  Now let $q\ge1$ and assume that the claim holds for $q-1$. Then $i^{p,q}=(-1)^pq^{-1}s^*i^{p+1,q-1}$. By the inductive hypothesis, the image of $i^{p+1,q-1}$ is $\Lambda^{p+1,q-1}_{q-1}$. Checking that $q-1\ge0$ and $(q-1)+(p+1)-(q-1)=p+1>0$, Proposition~\ref{prop:siso} tells us that $s^*$ is an isomorphism from $\Lambda^{p+1,q-1}_{q-1}$ to $\Lambda^{p,q}_q$. Hence, the image of $i^{p,q}=(-1)^pq^{-1}s^*i^{p+1,q-1}$ is $\Lambda^{p,q}_q$.
\end{proof}

\begin{proposition}\label{prop:kerwedge}
  The space $\Lambda^{p,q}_q$ is the orthogonal complement of the kernel of $\wedge\colon\Lambda^{p,q}\to\Lambda^k$.
\end{proposition}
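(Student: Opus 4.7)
My plan is to establish the stronger statement $\ker\wedge=\bigoplus_{m<q}\Lambda^{p,q}_m$; since the decomposition of Proposition~\ref{prop:decomp} is the orthogonal eigenspace decomposition of the self-adjoint operator $s^*s$, and $\Lambda^{p,q}_m=0$ for $m>q$ by Proposition~\ref{prop:lpqmnonzero}, the orthogonal complement of $\ker\wedge$ is then exactly the remaining summand $\Lambda^{p,q}_q$.

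The first step is to show that $\wedge$ annihilates $\Lambda^{p,q}_m$ for every $m<q$. I would iterate the identity $\mathop\wedge s=(-1)^p q\mathop\wedge$ from Proposition~\ref{prop:wedges}, carefully tracking how the first index increases by $1$ with each application of $s$. A short induction on $j$ gives, for any $\phi\in\Lambda^{p,q}$ and $0\le j\le q$,
\[
  \wedge s^j\phi=(-1)^{pj+\binom{j}{2}}\,\frac{q!}{(q-j)!}\,\wedge\phi,
\]
where the scalar is nonzero as long as $j\le q$. If $\phi\in\Lambda^{p,q}_m$ with $m<q$, then $s^{m+1}\phi=0$ and $m+1\le q$, so taking $j=m+1$ forces $\wedge\phi=0$.

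The second step is to show that $\wedge$ is injective on $\Lambda^{p,q}_q$. By Proposition~\ref{prop:imageipq}, $\Lambda^{p,q}_q=\operatorname{im} i^{p,q}$, so it suffices to prove that $\wedge\circ i^{p,q}\colon\Lambda^k\to\Lambda^k$ is injective. Rearranging $s^*i^{p,q}=(-1)^{p-1}(q+1)i^{p-1,q+1}$ from Proposition~\ref{prop:si} and iterating starting from $i^{p+q,0}=\id_{\Lambda^k}$, I would express $i^{p,q}$ as a nonzero scalar multiple of $(s^*)^q\colon\Lambda^{p+q,0}\to\Lambda^{p,q}$. Then, iterating $\mathop\wedge s^*=(-1)^{p-1}p\mathop\wedge$ from Proposition~\ref{prop:wedges} telescopes $\wedge\circ(s^*)^q$ into a nonzero multiple of $\wedge|_{\Lambda^{p+q,0}}=\id_{\Lambda^k}$, where the accumulated magnitude is $(p+q)!/p!$. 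Combining the two normalizations, $\wedge\circ i^{p,q}=\pm\binom{p+q}{p}\id_{\Lambda^k}$, which is injective.

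Putting the two steps together, $\ker\wedge=\bigoplus_{m<q}\Lambda^{p,q}_m$, and orthogonality of the eigenspace decomposition immediately gives $(\ker\wedge)^\perp=\Lambda^{p,q}_q$. The main obstacle is the second step: one must verify that no scalar in the telescoping vanishes. This is automatic, since the scalar is (up to sign) the nonzero product $(p+1)(p+2)\cdots(p+q)/q!$ of positive integers.
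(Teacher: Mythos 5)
Your argument is correct, and it reorganizes the paper's proof in a genuinely different way. The paper proves the two halves (that $\wedge$ kills $\Lambda^{p,q}_m$ for $m<q$ and is an isomorphism on $\Lambda^{p,q}_q$) by a single induction on $q$, transferring the statement from $\Lambda^{p+1,q-1}$ back to $\Lambda^{p,q}$ via the isomorphism $s\colon\Lambda^{p,q}_m\to\Lambda^{p+1,q-1}_{m-1}$ of Proposition~\ref{prop:siso} together with the identity $\wedge=(-1)^pq^{-1}\mathop{\wedge}s$. You instead avoid inducting on the statement itself: for $m<q$ you telescope Proposition~\ref{prop:wedges} to $\wedge s^{m+1}\phi=(\text{nonzero})\cdot\wedge\phi$ and invoke the nilpotency $s^{m+1}|_{\Lambda^{p,q}_m}=0$, and for $m=q$ you combine Proposition~\ref{prop:imageipq} with the recursion of Proposition~\ref{prop:si} to identify $i^{p,q}$ with $\pm\frac{1}{q!}(s^*)^q$ and compute $\wedge\circ i^{p,q}=\pm\binom{p+q}{p}\id_{\Lambda^k}$. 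Both routes rest on the same two commutation identities, but yours has the merit of producing the explicit normalization constant $\binom{p+q}{p}$ (and hence surjectivity of $\wedge$ on $\Lambda^{p,q}_q$ for free), while the paper's induction is slightly more economical in that it never needs the scalar. The only points worth making explicit in a final write-up are the degenerate cases ($p<0$, $q<0$, or $\Lambda^{p,q}_q=0$, where the claim is that $\wedge\equiv 0$) and the fact that the eigenspace decomposition of the self-adjoint operator $s^*s$ is orthogonal, which is what converts $\ker\wedge=\bigoplus_{m<q}\Lambda^{p,q}_m$ into the stated description of the orthogonal complement; both are immediate from the paper's setup.
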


\begin{proof}
  Again, assume $0\le p,q$; otherwise, the claim is tautological.
  
  We induct on $q$ on the statement that $\wedge\colon\Lambda^{p,q}_m\to\Lambda^k$ is zero if $m<q$ and is an isomorphism if $m=q$. As before, if $q=0$, then $p=k$, and $m=0$ is the only decomposition summand, so we see that $\wedge\colon\Lambda^{k,0}_0=\Lambda^{k,0}\to\Lambda^k$ is the obvious isomorphism.

  Now assume $q\ge1$ and that the proposition holds for $q-1$. On $(p,q)$-forms, we have $\wedge=(-1)^pq^{-1}\mathop\wedge s$.

  Consider first the case $m=q$. Since $m>0$ and $m^*=m+p-q=p\ge0$, Proposition~\ref{prop:siso} tells us that $s$ is an isomorphism from $\Lambda^{p,q}_q$ to $\Lambda^{p+1,q-1}_{q-1}$, and then $\wedge$ is an isomorphism from $\Lambda^{p+1,q-1}_{q-1}$ to $\Lambda^k$ by the inductive hypothesis. Hence the composition $\wedge=(-1)^pq^{-1}\mathop\wedge s$ is an isomorphism from $\Lambda^{p,q}_q$ to $\Lambda^k$.

  Now consider the case $m<q$. If the space $\Lambda^{p,q}_m$ is zero, then the claim is tautological, so we may assume $m\ge0$ and $m^*\ge0$. If $m=0$, then $s$ sends $\Lambda^{p,q}_m$ to zero. If $m>0$, then $s$ is an isomorphism from $\Lambda^{p,q}_m$ to $\Lambda^{p+1,q-1}_{m-1}$ by Proposition~\ref{prop:siso}, and then $\wedge$ sends $\Lambda^{p+1,q-1}_{m-1}$ to zero by the inductive hypothesis. Either way, the composition $\wedge=(-1)^pq^{-1}\mathop\wedge s$ is zero, as desired.
\end{proof}

\subsubsection{The decomposition for $(p,q)=(1,1)$.} \label{sec:pq11}
When $p=q=1$, the decomposition in Proposition~\ref{prop:decomp} reads
\[
  \Lambda^{1,1} = \Lambda^{1,1}_0 \oplus \Lambda^{1,1}_1.
\]
As shown in Proposition~\ref{prop:imageipq}, $\Lambda^{1,1}_1$ is the image of $\Lambda^2$ under the natural inclusion $i^{1,1} : \Lambda^2 \hookrightarrow \Lambda^{1,1}$.  In other words, $\Lambda^{1,1}_1$ consists of antisymmetric bilinear forms.  Consequently, $\Lambda^{1,1}_0$ consists of symmetric bilinear forms.

\subsubsection{The decomposition for $(p,q)=(2,1)$.} \label{sec:pq21}
When $(p,q)=(2,1)$, the decomposition in Proposition~\ref{prop:decomp} reads
\[
  \Lambda^{2,1} = \Lambda^{2,1}_0 \oplus \Lambda^{2,1}_1.
\]

In dimension $n=3$, we can understand these spaces by identifying elements of $\Lambda^{2,1}$ with matrices.  Specifically, we can write any $\phi \in \Lambda^{2,1}$ in the form
\[
  \phi = \sum_{i,j=1}^3 a_{ij} \alpha^i \otimes e^j,
\]
where $e^1,e^2,e^3$ is a basis for $V^*$, $\alpha^1 = e^2 \wedge e^3$, $\alpha^2 = e^3 \wedge e^1$, and $\alpha^3 = e^1 \wedge e^2$.  In this basis,
\[
  s\phi = \sum_{i,j=1}^3 a_{ij} e^j \wedge \alpha^i = \left( \sum_{i=1}^3 a_{ii} \right) e^1 \wedge e^2 \wedge e^3,
\]
so $\phi$ belongs to $\Lambda^{2,1}_0 = \ker s$ if and only if the matrix of coefficients $[a_{ij}]_{i,j=1}^3$ is trace-free.  Thus, in three dimensions, the decomposition $\Lambda^{2,1} = \Lambda^{2,1}_0 \oplus \Lambda^{2,1}_1$ is simply the decomposition of a $3 \times 3$ matrix into its deviatoric part plus a multiple of the identity.

\subsubsection{The decomposition for $(p,q)=(2,2)$.} \label{sec:pq22}
When $(p,q)=(2,2)$, there are three summands in the decomposition:
\[
  \Lambda^{2,2} = \Lambda^{2,2}_0 \oplus \Lambda^{2,2}_1 \oplus \Lambda^{2,2}_2.
\]
We will first discuss the summands in any dimension $n$ and then specialize to $n=3$.

The space $\Lambda^{2,2}_0 = \ker s$ consists of $(2,2)$-forms that satisfy the \emph{Bianchi identity}
\begin{align*}
  (s\phi)(X,Y,Z;W) &= \phi(Y,Z;X,W) - \phi(X,Z;Y,W) + \phi(X,Y;Z,W) \\
                   &= \phi(Y,Z;X,W) + \phi(Z,X;Y,W) + \phi(X,Y;Z,W) = 0.
\end{align*}
Such a $(2,2)$-form is called an \emph{algebraic curvature tensor} because it possesses the same symmetries as the Riemann curvature tensor.  Namely, $\phi(X,Y;Z,W)$ alternates in $X$ and $Y$, alternates in $Z$ and $W$, and satisfies the Bianchi identity.  It can be shown~\cite[p.~204]{lee2018introduction} that such tensors automatically possess the additional symmetry 
\[
  \phi(X,Y;Z,W) = \phi(Z,W;X,Y).
\]

By Proposition~\ref{prop:imageipq}, the space $\Lambda^{2,2}_2$ is the image of $\Lambda^4$ under the natural inclusion $i^{2,2} : \Lambda^4 \hookrightarrow \Lambda^{2,2}$.  As such, it consists of tensors that alternate in all 4 arguments.  In particular, such tensors satisfy the symmetry $\phi(X,Y;Z,W) = \phi(Z,W;X,Y)$ as well.  This implies that any $(2,2)$-form satisfying the \emph{skew}-symmetry
\[
  \phi(X,Y;Z,W) = -\phi(Z,W;X,Y)
\]
must belong to the remaining space $\Lambda^{2,2}_1$.  In fact, $\Lambda^{2,2}_1$ consists precisely of those $(2,2)$-forms $\phi$ satisfying $\phi(X,Y;Z,W) = -\phi(Z,W;X,Y)$.  One way to show this is to count dimensions: By Proposition~\ref{prop:siso}, the dimension of $\Lambda^{2,2}_1$ matches the dimension of $\Lambda^{3,1}_0$, and this space is the kernel of the surjective map $s : \Lambda^{3,1} \to \Lambda^{4,0}$.  Hence it has dimension
\[
  \dim \Lambda^{3,1} - \dim \Lambda^{4,0} = n \binom{n}{3} - \binom{n}{4} = \frac{1}{2} \binom{n}{2} \left( \binom{n}{2} - 1 \right).
\]
Since this number matches the dimension of the space of $(2,2)$-forms satisfying the skew-symmetry $\phi(X,Y;Z,W) = -\phi(Z,W;X,Y)$, the claim follows.

In dimension $n=3$, the situation simplifies.  There are no 4-forms in 3 dimensions, so $\Lambda^{2,2}_2$ vanishes.  By the discussion above, the remaining spaces $\Lambda^{2,2}_0$ and $\Lambda^{2,2}_1$ must therefore consist of all symmetric $(2,2)$-forms and all skew-symmetric $(2,2)$-forms, respectively.  If, in the notation of Section~\ref{sec:pq21}, we identify a $(2,2)$-form $\phi = \sum_{i,j=1}^3 a_{ij} \alpha^i \otimes \alpha^j$ with a $3 \times 3$ matrix $A = [a_{ij}]_{i,j=1}^3$, then $\phi$ belongs to $\Lambda^{2,2}_0$ (respectively, $\Lambda^{2,2}_1$) if and only if $A$ is symmetric (respectively, skew-symmetric).

\subsubsection{Symmetric and skew-symmetric $(p,p)$-forms.} \label{sec:symmetric}
The discussion above about symmetric and skew-symmetric $(2,2)$-forms can be generalized to $(p,p)$-forms with $p$ a positive integer.  Let
\[\begin{split}
    \Lambda^{p,p}_{\rm sym} = \big\{ \phi \in \Lambda^{p,p} \mid \phi(X_1,\dots,X_p;Y_1,\dots,Y_p) = \phi(Y_1,\dots,Y_p;X_1,\dots,X_p) \\ \text{ for all } X_1,\dots,X_p,Y_1,\dots,Y_p \big\}
  \end{split}\]
and
\[\begin{split}
    \Lambda^{p,p}_{\rm skw} = \big\{ \phi \in \Lambda^{p,p} \mid \phi(X_1,\dots,X_p;Y_1,\dots,Y_p) = -\phi(Y_1,\dots,Y_p;X_1,\dots,X_p) \\ \text{ for all } X_1,\dots,X_p,Y_1,\dots,Y_p \big\}.
  \end{split}\]
In Section~\ref{sec:pq11} we pointed out that
\[
  \Lambda^{1,1}_{\rm sym} = \Lambda^{1,1}_0, \quad \Lambda^{1,1}_{\rm skw} = \Lambda^{1,1}_1,
\]
and the discussion in Section~\ref{sec:pq22} implies that
\[
  \Lambda^{2,2}_{\rm sym} = \Lambda^{2,2}_0 \oplus \Lambda^{2,2}_2, \quad \Lambda^{2,2}_{\rm skw} = \Lambda^{2,2}_1.
\]
More generally, it can be shown that
\[
  \Lambda^{p,p}_{\rm sym} = \bigoplus_{m \text{ even }} \Lambda^{p,p}_m, \quad\quad \Lambda^{p,p}_{\rm skw} = \bigoplus_{m \text{ odd }} \Lambda^{p,p}_m;
\]
see~\cite[Exercises 6.16* and 15.32*]{fulton2004representation} and~\cite{gawlik2025title}.

\subsection{Additional operations on double multicovectors}\label{sec:additionaloperations}
In addition to $s$ and $s^*$, there are several other natural operations on double multicovectors. As before, we work on a vector space $V$ of dimension $n$.
\begin{definition}
  Let the \emph{transposition operator}
  \[
    \tau : \Lambda^{p,q} \to \Lambda^{q,p}
  \]
  be the involution that swaps the two factors, that is, on simple tensors, we have
  \[
    \tau(\alpha \otimes \beta) = \beta \otimes \alpha.
  \]
\end{definition}

\begin{definition}
  Let the \emph{double wedge product}, sometimes called the Kulkarni--Nomizu product, be the binary operation
  \[
    \owedge : \Lambda^{p,q} \times \Lambda^{p',q'} \to \Lambda^{p+p',q+q'},
  \]  
  that is defined on simple tensors by
  \[
    (\alpha \otimes \beta) \owedge (\gamma \otimes \delta) = (\alpha \wedge \gamma) \otimes (\beta \wedge \delta).
  \]
\end{definition}

\begin{definition}
  Let the \emph{double Hodge star} be the operator
  \[
    \ostar : \Lambda^{p,q} \to \Lambda^{n-p,n-q}
  \]
  that is defined on simple tensors by
  \begin{equation*}
    \ostar(\alpha \otimes \beta)=\star \alpha \otimes \star \beta,
  \end{equation*}
  where $\star$ is the Hodge star.
\end{definition}

\begin{remark}
  Similarly to $s$ and $s^*$, the definitions of the operators $\tau$ and $\owedge$ do not require or depend on an inner product on $V$. In contrast, because $\star$ does depend on an inner product on $V$, so does $\ostar$.
\end{remark}

Our goal now is to prove the compatibility of the double Hodge star with the above decomposition of double multicovectors. We begin with some basic properties of these operators.

\begin{proposition}
  We have
  \begin{align*}
    \tau s&= s^*\tau,\\
    \tau s^*&=s\tau,\\
    \tau\ostar&=\ostar\tau.
  \end{align*}
\end{proposition}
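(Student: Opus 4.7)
The plan is to verify each of the three identities on simple tensors $\alpha\otimes\beta$ and then extend by linearity, since both $s$, $s^*$, $\tau$, and $\ostar$ are all linear. The key observation is that the formulas from Proposition~\ref{prop:wedgecontract} express $s$ and $s^*$ in a form that is almost invariant under swapping the two tensor factors, which is precisely what $\tau$ does.

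For the first identity, I would take $\alpha\otimes\beta\in\Lambda^{p,q}$ and apply Proposition~\ref{prop:wedgecontract} to get
\begin{equation*}
  \tau s(\alpha\otimes\beta)=\tau\sum_{i=1}^n (e^i\wedge\alpha)\otimes(e_i\contract\beta)=\sum_{i=1}^n (e_i\contract\beta)\otimes(e^i\wedge\alpha).
\end{equation*}
On the other hand, $\tau(\alpha\otimes\beta)=\beta\otimes\alpha\in\Lambda^{q,p}$, and applying the formula for $s^*$ from Proposition~\ref{prop:wedgecontract} to this simple tensor yields the same expression. The identity $\tau s^*=s\tau$ is proved in an entirely symmetric manner, simply swapping the roles of $s$ and $s^*$ in the computation. (Alternatively, one can compose the first identity with $\tau$ on both sides and use that $\tau$ is an involution.)

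The identity $\tau\ostar=\ostar\tau$ is even more immediate: on a simple tensor,
\begin{equation*}
  \tau\ostar(\alpha\otimes\beta)=\tau(\star\alpha\otimes\star\beta)=\star\beta\otimes\star\alpha=\ostar(\beta\otimes\alpha)=\ostar\tau(\alpha\otimes\beta),
\end{equation*}
directly from the definitions of $\tau$ and $\ostar$.

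There is really no serious obstacle here; the proposition is essentially a bookkeeping verification. The only point requiring even modest care is choosing the right characterization of $s$ and $s^*$, namely the wedge-contraction formulas of Proposition~\ref{prop:wedgecontract} rather than the original definitions, since these are the forms in which the swap symmetry is manifest.
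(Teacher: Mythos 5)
Your proof is correct and follows essentially the same route as the paper, which simply observes that the claims follow from the symmetry between the definitions of $s$ and $s^*$ and from the symmetry in the definition of $\ostar$. You merely make that symmetry explicit by checking on simple tensors via the wedge-contraction formulas of Proposition~\ref{prop:wedgecontract} (the original simple-tensor definitions of $s$ and $s^*$ would work just as well).
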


\begin{proof}
  The claims follow from the symmetry between the definitions of $s$ and $s^*$, and from the symmetry in the definition of $\ostar$.
\end{proof}

By symmetry, $\tau$ sends the decomposition to the dual decomposition.
\begin{proposition}\label{prop:taudecomp}
  The transposition $\tau$ is an isomorphism between $\Lambda^{p,q}_m$ and $\Lambda^{q,p}_{m^*}$, where $m^*=m+p-q$.
\end{proposition}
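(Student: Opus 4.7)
The plan is to exploit the fact that $\tau$ is an involution on $\Lambda^{p,q}\oplus\Lambda^{q,p}$, so $\tau$ is automatically a bijection; what needs checking is only that $\tau$ sends $\Lambda^{p,q}_m$ into $\Lambda^{q,p}_{m^*}$. Since $\tau^{-1}=\tau$ exchanges $(p,q)$ with $(q,p)$ and $m$ with $m^*$ (using that $(m^*)^* = m^* + q - p = m$), the reverse containment will follow by applying $\tau$ again.

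The key computation is to show that $\tau$ intertwines $ss^*$ with $s^*s$. From the previously proved identities $\tau s = s^*\tau$ and $\tau s^* = s\tau$ (and the rearrangements $s\tau = \tau s^*$ and $s^*\tau = \tau s$ obtained by composing with $\tau$ on either side), I would chain together
\begin{equation*}
  s^*s\circ\tau = s^*\circ(s\tau) = s^*\circ(\tau s^*) = (s^*\tau)\circ s^* = (\tau s)\circ s^* = \tau\circ ss^*.
\end{equation*}
Hence if $\phi$ is an eigenvector of $ss^*$ with eigenvalue $\lambda$, then $\tau\phi$ is an eigenvector of $s^*s$ with the same eigenvalue $\lambda$.

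The final step is to convert this back into the statement about the decomposition. By Proposition~\ref{prop:dualdecomp}, the space $\Lambda^{p,q}_m$ is equal to $\prescript{}{m^*}\Lambda^{p,q}$, which is the eigenspace of $ss^*$ on $\Lambda^{p,q}$ with eigenvalue $m^*(m^*+q-p+1)$. By the intertwining relation above, $\tau\phi$ lies in the eigenspace of $s^*s$ on $\Lambda^{q,p}$ with the same eigenvalue, and that eigenvalue is precisely the one that appears in Definition~\ref{def:decomposition} (with $p$ and $q$ swapped and $m$ replaced by $m^*$). Therefore $\tau\phi\in\Lambda^{q,p}_{m^*}$, as required.

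There is no real obstacle here; the whole proof is essentially a one-line algebraic manipulation once one remembers that $\tau$ swaps $s$ and $s^*$ and that Proposition~\ref{prop:dualdecomp} allows us to characterize $\Lambda^{p,q}_m$ as an eigenspace of $ss^*$ rather than $s^*s$. The only bookkeeping to be careful about is verifying that the eigenvalue $m^*(m^*+q-p+1)$ for $ss^*$ on $\Lambda^{p,q}$ matches the eigenvalue of $s^*s$ on $\Lambda^{q,p}$ associated with index $m^*$, which is immediate from substituting $p\leftrightarrow q$ in the defining formula.
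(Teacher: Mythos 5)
Your proof is correct and follows essentially the same route as the paper's: both rest on the intertwining relation $s^*s\,\tau=\tau\,ss^*$ together with Proposition~\ref{prop:dualdecomp} identifying $\Lambda^{p,q}_m$ with the $ss^*$-eigenspace $\prescript{}{m^*}\Lambda^{p,q}$. The only cosmetic difference is the direction: you map $\Lambda^{p,q}_m$ forward into $\Lambda^{q,p}_{m^*}$, whereas the paper starts from $\phi\in\Lambda^{q,p}_{m^*}$ and shows $\tau\phi\in\Lambda^{p,q}_m$.
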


\begin{proof}
  Say $\phi\in\Lambda^{q,p}_{m^*}$. Then, by Definition~\ref{def:decomposition}, $\phi$ is an eigenvector of $s^*s$ with eigenvalue $m^*(m^*+q-p+1)$. Since $\tau s^*s=ss^*\tau$, we have that $\tau\phi\in\Lambda^{p,q}$ is an eigenvector of $ss^*$ with the same eigenvalue, $m^*(m^*+q-p+1)$. Therefore, by Definition~\ref{def:dualdecomp} and Proposition~\ref{prop:dualdecomp}, $\tau\phi\in\prescript{}{m^*}\Lambda^{p,q}=\Lambda^{p,q}_m$, as desired.
\end{proof}

\begin{proposition}
  On $\Lambda^{p,q}$, we have
  \begin{equation*}
    \ostar^2=(-1)^{p(n-p)+q(n-q)}.
  \end{equation*}
\end{proposition}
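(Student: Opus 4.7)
The plan is to reduce the claim to the standard identity $\starop^2 = (-1)^{k(n-k)}$ on $\Lambda^k$ by working on simple tensors, and then extend by linearity. Since $\ostar$ is defined on simple tensors and $\Lambda^{p,q} = \Lambda^p \otimes \Lambda^q$ is spanned by simple tensors $\alpha \otimes \beta$ with $\alpha \in \Lambda^p$ and $\beta \in \Lambda^q$, it suffices to verify the identity on such elements.

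First, I would apply the definition of $\ostar$ twice. Given a simple tensor $\alpha \otimes \beta$, the first application gives $\ostar(\alpha \otimes \beta) = \starop\alpha \otimes \starop\beta$, which lies in $\Lambda^{n-p,n-q}$ and is again a simple tensor. Applying $\ostar$ once more yields
\begin{equation*}
  \ostar^2(\alpha \otimes \beta) = \ostar(\starop\alpha \otimes \starop\beta) = \starop^2\alpha \otimes \starop^2\beta.
\end{equation*}
Then I would invoke the classical identity $\starop^2 = (-1)^{k(n-k)}$ on $\Lambda^k$, applied with $k=p$ on the first factor and $k=q$ on the second factor, to conclude
\begin{equation*}
  \ostar^2(\alpha \otimes \beta) = (-1)^{p(n-p)} \alpha \otimes (-1)^{q(n-q)} \beta = (-1)^{p(n-p)+q(n-q)} (\alpha \otimes \beta).
\end{equation*}
Extending by linearity completes the proof.

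There is essentially no obstacle here: the identity is inherited termwise from the corresponding identity for the ordinary Hodge star on each factor. The only small care needed is to observe that the definition of $\ostar$ on simple tensors extends unambiguously by linearity (which is already implicit in its definition as a tensor product of two linear maps $\starop \otimes \starop$), so the computation on simple tensors determines $\ostar^2$ everywhere on $\Lambda^{p,q}$.
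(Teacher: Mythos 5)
Your proof is correct and follows exactly the paper's argument: the paper likewise reduces the claim to the identity $\star^2=(-1)^{k(n-k)}$ on $k$-covectors, applied to each tensor factor. You have simply written out the termwise computation on simple tensors in more detail.
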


\begin{proof}
  The claim follows from the fact that, on $k$-covectors, $\star^2=(-1)^{k(n-k)}$.
\end{proof}

\begin{proposition}
  For any $\phi,\psi \in \Lambda^{p,q}$, we have
  \[
    \langle \varphi, \psi \rangle = \ostar^{-1} (\varphi \owedge \ostar \psi).
  \]
\end{proposition}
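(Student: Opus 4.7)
The plan is to reduce to simple tensors by bilinearity of both sides, then unpack the definitions of $\ostar$ and $\owedge$ and invoke the classical defining property of the ordinary Hodge star.

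Fix $\varphi = \alpha \otimes \beta$ and $\psi = \gamma \otimes \delta$ with $\alpha,\gamma \in \Lambda^p$ and $\beta,\delta \in \Lambda^q$. The left-hand side is immediately $\langle \alpha,\gamma\rangle\langle \beta,\delta\rangle$ from the tensor-product inner product on $\Lambda^{p,q} = \Lambda^p \otimes \Lambda^q$. For the right-hand side, I would compute $\ostar\psi = \star\gamma \otimes \star\delta$, and then use the definition of $\owedge$ on simple tensors to get
\[
  \varphi \owedge \ostar\psi = (\alpha \wedge \star\gamma) \otimes (\beta \wedge \star\delta) \in \Lambda^{n,n}.
\]
Since $\ostar$ acts as $\star \otimes \star$ on simple tensors, it factors as an honest tensor product of operators, and hence $\ostar^{-1} = \star^{-1} \otimes \star^{-1}$ on $\Lambda^{n,n}$. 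Therefore
\[
  \ostar^{-1}(\varphi \owedge \ostar\psi) = \star^{-1}(\alpha \wedge \star\gamma) \otimes \star^{-1}(\beta \wedge \star\delta).
\]

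The remaining step is the standard defining identity for the Hodge star on ordinary multicovectors: $\mu \wedge \star\nu = \langle\mu,\nu\rangle \, \star 1$ for $\mu,\nu \in \Lambda^k$, whence $\star^{-1}(\mu \wedge \star\nu) = \langle\mu,\nu\rangle \in \Lambda^0 \cong \R$. Applying this to each factor gives $\langle\alpha,\gamma\rangle \otimes \langle\beta,\delta\rangle \in \Lambda^{0,0} \cong \R$, which is the scalar $\langle\alpha,\gamma\rangle\langle\beta,\delta\rangle$ and matches the left-hand side. Bilinearity of $\owedge$, $\ostar$, and $\langle\cdot,\cdot\rangle$ then extends the identity from simple tensors to all of $\Lambda^{p,q} \times \Lambda^{p,q}$.

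I do not anticipate a serious obstacle. The only points of care are confirming that $\ostar^{-1}$ factors as $\star^{-1} \otimes \star^{-1}$ on $\Lambda^{n,n}$ (which is immediate since $\ostar$ does on simple tensors) and identifying $\Lambda^{0,0}$ with $\R$ so the final tensor product of two scalars is read as their ordinary product.
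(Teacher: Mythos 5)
Your proof is correct and follows essentially the same route as the paper: reduce to simple tensors, unpack $\ostar$ and $\owedge$ factorwise, and apply the defining identity $\star^{-1}(\mu \wedge \star\nu) = \langle\mu,\nu\rangle$ to each factor. The paper's proof is just a more terse version of the same chain of equalities.
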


\begin{proof}
  It suffices to prove the claim for simple tensors $\phi=\alpha\otimes\beta$ and $\psi=\gamma\otimes\delta$. By properties of the Hodge star, we have
  \begin{multline*}
    \langle\phi,\psi\rangle=\langle\alpha,\gamma\rangle\langle\beta,\delta\rangle=\left(\star^{-1}(\alpha\wedge\star\gamma)\right)\left(\star^{-1}(\beta\wedge\star\delta)\right)\\
    =\ostar^{-1}\left((\alpha\otimes\beta)\owedge\ostar(\gamma\otimes\delta)\right).\qedhere
  \end{multline*}
\end{proof}

\begin{lemma}\label{lemma:tubes}
  For any $\varphi \in \Lambda^{p,q}$ and $\psi \in \Lambda^{p',q'}$, we have
  \[
    s(\varphi \owedge \psi) = (s\varphi) \owedge \psi + (-1)^k \varphi \owedge (s\psi),
  \]
  where $k=p+q$.
\end{lemma}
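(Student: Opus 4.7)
The plan is to work on simple tensors and exploit the wedge-contraction formula from Proposition~\ref{prop:wedgecontract} together with the Leibniz rule for interior multiplication. By bilinearity of $\owedge$ and linearity of $s$ it suffices to verify the identity when $\varphi = \alpha \otimes \beta$ with $\alpha \in \Lambda^p$, $\beta \in \Lambda^q$, and $\psi = \gamma \otimes \delta$ with $\gamma \in \Lambda^{p'}$, $\delta \in \Lambda^{q'}$. In that case $\varphi \owedge \psi = (\alpha \wedge \gamma) \otimes (\beta \wedge \delta)$, and Proposition~\ref{prop:wedgecontract} gives
\begin{equation*}
  s(\varphi \owedge \psi) = \sum_{i=1}^n (e^i \wedge \alpha \wedge \gamma) \otimes \bigl(e_i \contract (\beta \wedge \delta)\bigr).
\end{equation*}

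Next, I would apply the standard Leibniz rule for the interior product, $e_i \contract (\beta \wedge \delta) = (e_i \contract \beta) \wedge \delta + (-1)^q \beta \wedge (e_i \contract \delta)$, so that the right-hand side splits into two sums. The first sum is
\begin{equation*}
  \sum_{i=1}^n \bigl((e^i \wedge \alpha) \wedge \gamma\bigr) \otimes \bigl((e_i \contract \beta) \wedge \delta\bigr) = \left(\sum_{i=1}^n (e^i \wedge \alpha) \otimes (e_i \contract \beta)\right) \owedge (\gamma \otimes \delta),
\end{equation*}
which, by Proposition~\ref{prop:wedgecontract} applied to $\varphi$, equals $(s\varphi) \owedge \psi$.

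For the second sum, the only subtlety is moving $e^i$ past $\alpha$: since $\alpha$ is a $p$-form, $e^i \wedge \alpha \wedge \gamma = (-1)^p \alpha \wedge (e^i \wedge \gamma)$. Combined with the $(-1)^q$ from the Leibniz rule, this produces a total sign of $(-1)^{p+q} = (-1)^k$, and the remaining sum becomes
\begin{equation*}
  (-1)^k \sum_{i=1}^n \bigl(\alpha \wedge (e^i \wedge \gamma)\bigr) \otimes \bigl(\beta \wedge (e_i \contract \delta)\bigr) = (-1)^k (\alpha \otimes \beta) \owedge \left(\sum_{i=1}^n (e^i \wedge \gamma) \otimes (e_i \contract \delta)\right),
\end{equation*}
which is $(-1)^k \varphi \owedge (s\psi)$ by another application of Proposition~\ref{prop:wedgecontract}.

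The main obstacle is purely the sign bookkeeping: one must correctly account for both the sign $(-1)^q$ coming from the Leibniz rule for $\contract$ and the sign $(-1)^p$ from anticommuting $e^i$ past $\alpha$. The other steps are essentially forced, and once the signs are collected the claim follows immediately.
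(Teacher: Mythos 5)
Your proof is correct, and it is precisely the argument the paper has in mind: the paper's proof is only a one-line sketch instructing the reader to verify the claim using Proposition~\ref{prop:wedgecontract} and the antiderivation property of the interior product, which is exactly what you carry out, with the signs $(-1)^q$ (from the Leibniz rule) and $(-1)^p$ (from commuting $e^i$ past $\alpha$) correctly combining to $(-1)^k$.
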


\begin{proof}
  One can verify this claim using Proposition~\ref{prop:wedgecontract} and properties of the interior product. See also~\cite[Proposition 2.1]{gray1970some}.
\end{proof}

\begin{proposition}
  On $\Lambda^{p,q}$, we have
  \begin{align*}
    \ostar s^* &= (-1)^{k+1} s \ostar,\\
    \ostar s &= (-1)^{k+1}s^* \ostar,  
  \end{align*}
  where $k=p+q$.
\end{proposition}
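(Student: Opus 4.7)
The plan is to prove the first identity $\ostar s^*=(-1)^{k+1}s\ostar$ directly via the Leibniz-type rule of Lemma~\ref{lemma:tubes}, and then deduce the second identity $\ostar s=(-1)^{k+1}s^*\ostar$ by conjugating the first with the transposition operator $\tau$.

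For the first identity on $\Lambda^{p,q}$, I would take arbitrary $\phi\in\Lambda^{p-1,q+1}$ and $\psi\in\Lambda^{p,q}$ and form the double wedge $\phi\owedge\ostar\psi$. Since $\phi\in\Lambda^{p-1,q+1}$ and $\ostar\psi\in\Lambda^{n-p,n-q}$, this wedge lies in $\Lambda^{n-1,n+1}=0$. Applying Lemma~\ref{lemma:tubes} (with the sign exponent equal to the total degree $(p-1)+(q+1)=p+q=k$) therefore gives
\[
0=s(\phi\owedge\ostar\psi)=(s\phi)\owedge\ostar\psi+(-1)^k\,\phi\owedge(s\ostar\psi),
\]
so that $(s\phi)\owedge\ostar\psi=(-1)^{k+1}\,\phi\owedge(s\ostar\psi)$. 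Both terms now lie in $\Lambda^{n,n}$, so I can apply $\ostar^{-1}:\Lambda^{n,n}\to\R$ and invoke the earlier identity $\langle\eta,\xi\rangle=\ostar^{-1}(\eta\owedge\ostar\xi)$: the left side becomes $\langle s\phi,\psi\rangle$, and, writing $s\ostar\psi=\ostar(\ostar^{-1}s\ostar\psi)$, the right side becomes $(-1)^{k+1}\langle\phi,\ostar^{-1}s\ostar\psi\rangle$. Combining this with the adjointness $\langle s\phi,\psi\rangle=\langle\phi,s^*\psi\rangle$ from Proposition~\ref{prop:adjoint}, and letting $\phi$ range over all of $\Lambda^{p-1,q+1}$, I conclude that $s^*\psi=(-1)^{k+1}\ostar^{-1}s\ostar\psi$, which is the first identity after multiplying through by $\ostar$.

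For the second identity, I would simply conjugate the first by $\tau$, using the preceding intertwining relations $\tau s^*=s\tau$, $\tau s=s^*\tau$, and $\tau\ostar=\ostar\tau$. Precomposing $\ostar s^*=(-1)^{k+1}s\ostar$ with $\tau$ on the right and rewriting each side, the left becomes $\ostar s^*\tau=\ostar\tau s=\tau\ostar s$ while the right becomes $(-1)^{k+1}s\ostar\tau=(-1)^{k+1}s\tau\ostar=(-1)^{k+1}\tau s^*\ostar$. Cancelling the invertible $\tau$ (and noting that $k=p+q$ is unchanged when $p$ and $q$ are swapped) yields $\ostar s=(-1)^{k+1}s^*\ostar$.

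The only mildly subtle point is the seemingly paradoxical step where I apply Lemma~\ref{lemma:tubes} to a wedge that already vanishes: this is harmless because the two terms on the right of the Leibniz rule each live in $\Lambda^{n,n}\ne 0$, and their sum being zero is precisely the nontrivial relation I need. Otherwise the argument is routine, and I do not expect any real obstacle.
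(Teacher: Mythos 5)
Your proof is correct and follows essentially the same route as the paper: the same application of Lemma~\ref{lemma:tubes} to $\varphi \owedge \ostar\psi \in \Lambda^{n-1,n+1} = 0$ with $\varphi \in \Lambda^{p-1,q+1}$, the same passage to inner products via $\langle \eta,\xi\rangle = \ostar^{-1}(\eta \owedge \ostar\xi)$ and adjointness of $s$ and $s^*$, and the same $\tau$-conjugation for the second identity. No gaps.
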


\begin{proof}
  Let $\varphi \in \Lambda^{p-1,q+1}$ and $\psi \in \Lambda^{p,q}$.  Notice that $\varphi \owedge \ostar \psi$ belongs to $\Lambda^{n-1,n+1}=0$, so $s(\varphi \owedge \ostar \psi) = 0$.  Note also that $(p-1)+(q+1)=k$. Thus, Lemma~\ref{lemma:tubes} implies that
  \[
    (s \varphi) \owedge \ostar \psi = (-1)^{k+1} \varphi \owedge (s \ostar \psi).
  \]
  Equivalently,
  \[
    \langle s \varphi, \psi \rangle = (-1)^{k+1}  \langle \varphi, \ostar^{-1} s \ostar \psi \rangle.
  \]
  Since $\varphi$ and $\psi$ are arbitrary and $s$ and $s^*$ are adjoints, we conclude that
  \[
    s^* = (-1)^{k+1} \ostar^{-1} s \ostar,
  \]
  from which the first claim follows. Conjugating by $\tau$ and using the fact that $\tau$ preserves $k$ and commutes with $\ostar$ and hence $\ostar^{-1}$, we obtain
  \begin{equation*}
    \tau s^*\tau = (-1)^{k+1} \ostar^{-1} \tau s \tau \ostar,
  \end{equation*}
  and so the second claim follows by $\tau s^*\tau =s$ and $\tau s\tau=s^*$.
\end{proof}

We are now ready to prove that the double Hodge star respects the decomposition.

\begin{proposition}
  The operators $s^*s$ and $\tau\ostar$ commute.
\end{proposition}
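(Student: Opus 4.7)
The plan is to assemble the desired commutation from two ingredients that are already established in the preceding propositions: the sign-bearing intertwinings of $\ostar$ with $s$ and $s^*$, and the fact that $\tau$ swaps $s$ with $s^*$.

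First I would show that $\ostar$ intertwines $s^*s$ with $ss^*$ on $\Lambda^{p,q}$. The preceding proposition gives, on $\Lambda^{p,q}$, the identities $\ostar s = (-1)^{k+1} s^* \ostar$ and $\ostar s^* = (-1)^{k+1} s \ostar$ with $k = p+q$. The crucial observation is that $s$ preserves the total degree $k$ (it shifts $(p,q)$ to $(p+1,q-1)$), so the second identity applies with the \emph{same} sign on $\Lambda^{p+1,q-1}$ as the first does on $\Lambda^{p,q}$, and the two sign factors cancel:
\begin{equation*}
\ostar s^* s = (-1)^{k+1} s\, \ostar s = (-1)^{2(k+1)} s s^* \ostar = s s^* \ostar.
\end{equation*}

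Next I would apply $\tau$ on the left of both sides and use the previously established identities $\tau s = s^*\tau$ and $\tau s^* = s\tau$ to reshuffle the right-hand side:
\begin{equation*}
\tau \ostar s^*s = \tau s s^* \ostar = s^* \tau s^* \ostar = s^* s \tau \ostar,
\end{equation*}
which is exactly $\tau\ostar\, s^*s = s^*s\, \tau\ostar$.

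No serious obstacle arises here; the only subtle point is tracking the sign $(-1)^{k+1}$ through the two applications of the $\ostar$-$s$ commutation. Because $s$ does not change the total degree, the same exponent $k+1$ appears both times and the signs cancel cleanly. With that observation in place, the proof is a short two-line computation.
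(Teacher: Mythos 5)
Your proof is correct and uses exactly the same ingredients as the paper's: the identities $\ostar s = (-1)^{k+1}s^*\ostar$ and $\ostar s^* = (-1)^{k+1}s\ostar$ together with the observation that $s$ preserves $k$ (so the signs cancel), followed by the swap relations $\tau s = s^*\tau$ and $\tau s^* = s\tau$. The paper merely arranges the same computation as a single chain of equalities rather than in two stages, so there is nothing to add.
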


\begin{proof}
  On $\Lambda^{p,q}$, with $k=p+q$, noting that $s$ preserves $k$, we compute that
  \begin{equation*}
    s^*s\tau\ostar=s^*\tau s^*\ostar=\tau ss^*\ostar=(-1)^{k+1}\tau s\ostar s=\tau \ostar s^*s.\qedhere
  \end{equation*}
\end{proof}

\begin{proposition}\label{prop:tauostardecomp}
  The isomorphism $\tau\ostar$ preserves the decomposition of double multicovectors, sending $\Lambda^{p,q}_m$ to $\Lambda^{n-q,n-p}_m$.
\end{proposition}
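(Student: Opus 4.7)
The plan is to combine the commutation $s^*s\,\tau\ostar=\tau\ostar\,s^*s$ from the preceding proposition with the observation that the eigenvalues indexing the decomposition of $\Lambda^{p,q}$ and the decomposition of $\Lambda^{n-q,n-p}$ are given by \emph{the same} formula as functions of the index $m$. Concretely, in $\Lambda^{n-q,n-p}$ the eigenvalue associated to the index $m$ is $m\bigl(m+(n-q)-(n-p)+1\bigr)=m(m+p-q+1)$, which is exactly the eigenvalue associated to index $m$ in $\Lambda^{p,q}$. So the result should reduce to the principle that an operator commuting with a self-adjoint operator preserves its eigenspaces, with the correct bookkeeping of the index.

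I would begin by taking $\phi\in\Lambda^{p,q}_m$, so that $s^*s\phi=m(m+p-q+1)\phi$ by Definition~\ref{def:decomposition}. Applying $\tau\ostar$ and using the commutation established in the previous proposition gives
\begin{equation*}
  s^*s(\tau\ostar\phi)=\tau\ostar(s^*s\phi)=m(m+p-q+1)\,\tau\ostar\phi,
\end{equation*}
so $\tau\ostar\phi\in\Lambda^{n-q,n-p}$ is an eigenvector of $s^*s$ with eigenvalue $m(m+p-q+1)$. The next step is to identify which summand of the decomposition of $\Lambda^{n-q,n-p}$ this lands in. If $\tau\ostar\phi=0$ the conclusion is trivial; otherwise, the corollary immediately following Proposition~\ref{prop:eigenvalues} (together with the observation above that the eigenvalue formula is the same) says that the eigenvalue uniquely determines the index, so $\tau\ostar\phi\in\Lambda^{n-q,n-p}_m$.

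Finally, since $\tau$ and $\ostar$ are each isomorphisms ($\tau$ is an involution, and $\ostar$ satisfies $\ostar^2=\pm1$), so is $\tau\ostar$, and the same argument applied to $(\tau\ostar)^{-1}$ (which also commutes with $s^*s$) shows that $(\tau\ostar)^{-1}$ sends $\Lambda^{n-q,n-p}_m$ into $\Lambda^{p,q}_m$. Hence $\tau\ostar$ restricts to an isomorphism $\Lambda^{p,q}_m\to\Lambda^{n-q,n-p}_m$, as claimed.

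There is no real obstacle here: the hard work is contained in the preceding commutation lemma and in the uniqueness statement for the eigenvalue-index correspondence. The only thing to be a little careful about is the bookkeeping of indices under the swap $(p,q)\mapsto(n-q,n-p)$, but this works out cleanly because $(n-q)-(n-p)=p-q$.
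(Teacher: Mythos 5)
Your proof is correct and follows essentially the same route as the paper: use the commutation of $s^*s$ with $\tau\ostar$ to see that eigenvectors map to eigenvectors with the same eigenvalue, and then note that $(n-q)-(n-p)=p-q$ so the eigenvalue $m(m+p-q+1)$ labels the same index $m$ in both $\Lambda^{p,q}$ and $\Lambda^{n-q,n-p}$. The extra care you take with the zero case and with invertibility is fine but not needed beyond what the paper records.
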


\begin{proof}
  Let $\phi\in\Lambda^{p,q}$ be an eigenvector of $s^*s$. Then $\tau\ostar\phi\in\Lambda^{n-q,n-p}$ is an eigenvector of $s^*s$ with the same eigenvalue. We then observe that
  \begin{equation*}
    m(m+p-q+1)=m(m+(n-q)-(n-p)+1),
  \end{equation*}
  so this eigenvalue corresponds to the same value of $m$ in both $\Lambda^{p,q}$ and $\Lambda^{n-q,n-p}$.
\end{proof}

We can conclude that $\ostar$ by itself sends the decomposition to the dual decomposition.

\begin{proposition}\label{prop:ostardecomp}
  The isomorphism $\ostar$ sends $\Lambda^{p,q}_m$ to $\Lambda^{n-p,n-q}_{m^*}$, where $m^*=m+p-q$.
\end{proposition}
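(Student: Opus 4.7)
The plan is to factor $\ostar$ as the composition $\tau \circ (\tau\ostar)$, using the fact that $\tau$ is an involution so $\tau \circ \tau = \id$. Since we have already established how each factor interacts with the decomposition (Propositions~\ref{prop:tauostardecomp} and~\ref{prop:taudecomp}), the result will follow by tracking indices through the composition.

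More concretely, start with $\phi \in \Lambda^{p,q}_m$. First apply $\tau\ostar$: by Proposition~\ref{prop:tauostardecomp}, this produces an element of $\Lambda^{n-q,n-p}_m$. Then apply $\tau$: by Proposition~\ref{prop:taudecomp} applied with $(p,q)$ replaced by $(n-q,n-p)$, the transposition $\tau$ sends $\Lambda^{n-q,n-p}_m$ isomorphically to $\Lambda^{n-p,n-q}_{m'}$, where the shifted index is
\[
m' = m + (n-q) - (n-p) = m + p - q = m^*.
\]
Since $\tau \circ (\tau\ostar) = \ostar$, the composition sends $\Lambda^{p,q}_m$ to $\Lambda^{n-p,n-q}_{m^*}$, as desired.

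There is essentially no obstacle here; the proposition is a bookkeeping consequence of the two preceding results, and the main thing to verify is that the index shift matches correctly under the substitution $(p,q) \mapsto (n-q,n-p)$, which the computation above confirms.
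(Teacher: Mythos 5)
Your proof is correct and takes essentially the same approach as the paper, which factors $\ostar=(\tau\ostar)\tau$ rather than your $\tau(\tau\ostar)$; the order of composition is the only (immaterial) difference, and both rely on the same two preceding propositions and the involution property of $\tau$.
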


\begin{proof}
  Since $\tau$ is an involution that commutes with $\ostar$, we have $\ostar=(\tau\ostar)\tau$. By Proposition~\ref{prop:taudecomp}, $\tau$ sends $\Lambda^{p,q}_m$ to $\Lambda^{q,p}_{m^*}$. By Proposition~\ref{prop:tauostardecomp}, $\tau\ostar$ sends $\Lambda^{q,p}_{m^*}$ to $\Lambda^{n-p,n-q}_{m^*}$.
\end{proof}

\section{Double forms}\label{sec:doubleforms}
Now that we have understood double multicovectors, we move on to double multicovector \emph{fields}, also known as double forms. We first briefly discuss the implications of Section~\ref{sec:doublemulti} in the general manifold setting. Then, in Section~\ref{sec:doubleeuclidean}, we specialize to double forms on Euclidean space and discuss the left and right exterior derivative and Koszul operators. Finally, in Section~\ref{sec:polynomialdouble}, we specialize further to double forms with polynomial coefficients and prove an important result characterizing the kernel of the Koszul operators.

\begin{definition}
  Let $M$ be a smooth manifold. For $p+q=k$, let the space of $(p,q)$-\emph{forms} or \emph{double forms}, denoted $\Lambda^{p,q}(M)$, be the space of smooth covariant $k$-tensor fields on $M$ that are antisymmetric in the first $p$ indices and antisymmetric in the last $q$ indices. In other words, $\Lambda^{p,q}(M)$ is the space of smooth sections of the bundle $\bigwedge^pT^*M\otimes\bigwedge^qT^*M$.
\end{definition}

Note that, at each point $x\in M$, this bundle gives the vector space $\bigwedge^pT^*_xM\otimes\bigwedge^qT^*_xM$, so we just have the constructions from the previous subsections with $V=T_xM$. Consequently, the operators on double multicovectors from the previous section can be applied pointwise to yield operators on double forms.

\begin{definition}\label{def:operatorsM}
  We define the operators
  \begin{align*}
    s&\colon\Lambda^{p,q}(M)\to\Lambda^{p+1,q-1}(M),\\
    s^*&\colon\Lambda^{p,q}(M)\to\Lambda^{p-1,q+1}(M),\\
    \tau&\colon\Lambda^{p,q}(M)\to\Lambda^{q,p}(M),\\
    \ostar&\colon\Lambda^{p,q}(M)\to\Lambda^{n-p,n-q}(M),
  \end{align*}
  by applying the corresponding double multicovector operators pointwise. Here, $n=\dim M$, and $\ostar$ requires and depends on a Riemannian metric on $M$.
\end{definition}

The preceding formulas relating these operators on double multicovectors apply equally well to double forms, and, likewise, double forms have the same eigendecomposition.

\begin{proposition}
  We have the decomposition
  \begin{equation*}
    \Lambda^{p,q}(M)=\bigoplus_m\Lambda^{p,q}_m(M),
  \end{equation*}
  where $\max\{0,q-p\}\le m\le\min\{q,n-p\}$ and $\Lambda^{p,q}_m(M)$ is the space of eigenfunctions of $s^*s$ with eigenvalue $m(m+p-q+1)$.
\end{proposition}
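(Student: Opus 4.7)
The plan is to lift the pointwise eigendecomposition from Proposition~\ref{prop:decomp} to the level of smooth sections. The operator $s^*s$ is defined fiberwise (Definition~\ref{def:operatorsM}), so it is a smooth bundle endomorphism of $\bigwedge^pT^*M\otimes\bigwedge^qT^*M$ whose action on each fiber is the self-adjoint operator analyzed in Section~\ref{sec:doublemulti} with $V=T_xM$. At every point $x\in M$, the fiber therefore decomposes as a direct sum of eigenspaces with eigenvalues $m(m+p-q+1)$ for $\max\{0,q-p\}\le m\le \min\{q,n-p\}$, and these eigenvalues are pairwise distinct by the corollary following Proposition~\ref{prop:eigenvalues}.

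The key step is to promote this fiberwise decomposition to a decomposition of the bundle into smooth subbundles. Since the eigenvalues $m(m+p-q+1)$ are constants independent of $x$, and the fiberwise dimensions of the eigenspaces are likewise constant (they depend only on $n$, $p$, $q$, $m$), the spectral projector onto the $m$-th eigenspace can be written using Lagrange interpolation at the distinct eigenvalues as
\[
  P_m=\prod_{m'\ne m}\frac{s^*s-m'(m'+p-q+1)\,\id}{m(m+p-q+1)-m'(m'+p-q+1)},
\]
where $m'$ ranges over the allowed values from Proposition~\ref{prop:lpqmnonzero} excluding $m$. This is a polynomial in the smooth bundle endomorphism $s^*s$, hence itself a smooth bundle endomorphism, and so its image is a smooth subbundle whose space of smooth sections coincides with $\Lambda^{p,q}_m(M)$.

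Given the projectors $P_m$, any smooth $\phi\in\Lambda^{p,q}(M)$ decomposes as $\phi=\sum_m P_m\phi$, each summand being smooth and lying in $\Lambda^{p,q}_m(M)$; directness of the sum follows from directness on each fiber. The stated range of $m$ is inherited from Proposition~\ref{prop:lpqmnonzero} applied pointwise with $\dim T_xM=n$. I do not expect any real obstacle here: because the eigenvalues are constant in $x$ and mutually distinct, no analytic subtlety about varying spectra arises, and the Lagrange formula for $P_m$ delivers smoothness automatically.
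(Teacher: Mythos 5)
Your proof is correct and takes essentially the same route the paper does: the paper states this proposition without proof, treating it as an immediate pointwise application of Proposition~\ref{prop:decomp}, and your Lagrange-interpolation spectral projectors (polynomials in the smooth bundle endomorphism $s^*s$ with constant, pairwise distinct eigenvalues) are exactly the right way to justify the smoothness of the resulting decomposition of sections.
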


Except for $\ostar$, which depends on a Riemannian metric, these operators commute with pullback by smooth maps.
\begin{proposition}\label{prop:pullbackdecomposition}
  Let $\Phi\colon M\to N$ be a smooth map between smooth manifolds. Then the pullback map $\Phi^*\colon\Lambda^{p,q}(N)\to\Lambda^{p,q}(M)$ commutes with $s$, $s^*$, and $\tau$, and $\Phi^*$ respects the decomposition, sending $\Lambda^{p,q}_m(N)$ to $\Lambda^{p,q}_m(M)$.
\end{proposition}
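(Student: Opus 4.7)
The plan is to reduce this to a pointwise statement at each $x \in M$ and observe that the operators $s$, $s^*$, and $\tau$ are all constructed from purely multilinear-algebraic operations that pull back naturally.

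First I would observe that the definitions of $s$, $s^*$, and $\tau$ given in Definitions~\ref{def:s} and onward, as well as in Definition~\ref{def:operatorsM}, never invoke an inner product on $V$: each is written as an alternating sum of the original multilinear functional evaluated on a rearrangement of its arguments, and $\tau$ just swaps the two tensor factors. Thus at every point they are natural in $V$ under linear maps. (Although $s^*$ turns out to be the adjoint of $s$ with respect to an auxiliary inner product by Proposition~\ref{prop:adjoint}, the formula defining it is metric-free.)

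Next I would show the commutation identities $\Phi^* s = s \Phi^*$, $\Phi^* s^* = s^* \Phi^*$, and $\Phi^* \tau = \tau \Phi^*$ directly from the definitions. For $s$, for instance, given $\phi \in \Lambda^{p,q}(N)$ and vector fields $X_1,\dots,X_{p+1},Y_1,\dots,Y_{q-1}$ on $M$, the formula
\begin{equation*}
  (\Phi^*\phi)(X_1,\dots,X_{p+1};Y_1,\dots,Y_{q-1}) = \phi\bigl(d\Phi\,X_1,\dots,d\Phi\,X_{p+1};d\Phi\,Y_1,\dots,d\Phi\,Y_{q-1}\bigr)
\end{equation*}
combined with Definition~\ref{def:s} gives
\begin{equation*}
  s(\Phi^*\phi)(X_1,\dots;Y_1,\dots) = \sum_a (-1)^{a-1}\phi\bigl(d\Phi\,X_1,\dots,\widehat{d\Phi\,X_a},\dots;d\Phi\,X_a,d\Phi\,Y_1,\dots\bigr),
\end{equation*}
which is exactly $\Phi^*(s\phi)$ evaluated on the same arguments. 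The argument for $s^*$ is identical, and for $\tau$ it is immediate because $\Phi^*$ acts diagonally on the tensor product $\bigwedge^p T^*N \otimes \bigwedge^q T^*N$ while $\tau$ just swaps the factors.

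Finally, the decomposition claim is automatic from these commutation relations: since $\Phi^*$ commutes with both $s$ and $s^*$, it commutes with the self-adjoint operator $s^*s$, hence sends the $\lambda$-eigenspace of $s^*s$ on $N$ into the $\lambda$-eigenspace of $s^*s$ on $M$ for every eigenvalue $\lambda$. Checking this pointwise, $(\Phi^*\phi)_x$ is the pullback of $\phi_{\Phi(x)}$ along $d\Phi_x$, and the pointwise commutation of $s^*s$ with linear pullback (a special case of the previous step applied to linear maps between vector spaces) shows that if $\phi_{\Phi(x)} \in \Lambda^{p,q}_m(T_{\Phi(x)}N)$ for all $x$, then $(\Phi^*\phi)_x \in \Lambda^{p,q}_m(T_xM)$ for all $x$. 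Thus $\Phi^*\Lambda^{p,q}_m(N) \subseteq \Lambda^{p,q}_m(M)$, as claimed.

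There is really no substantive obstacle here; the only thing to watch is that although $s^*$ carries adjoint notation, we must invoke its intrinsic (metric-free) formula rather than its characterization as an adjoint, since naturality under pullback would otherwise require $\Phi$ to be an isometry.
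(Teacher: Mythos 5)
Your proof is correct and follows essentially the same route as the paper's: verify the commutation of $\Phi^*$ with $s$, $s^*$, and $\tau$ directly from their metric-free definitions, then deduce that $\Phi^*$ commutes with $s^*s$ and therefore maps each eigenspace into the eigenspace with the same eigenvalue (possibly to zero). The only cosmetic difference is that the paper checks the commutation on simple tensors using the wedge-product formula for $s$, whereas you evaluate on pushed-forward vector fields using the argument-rearrangement formula; the two definitions are equivalent, so this changes nothing.
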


\begin{proof}
  Because the wedge product commutes with pullback, both $s\Phi^*$ and $\Phi^*s$, when applied to $(\alpha_1 \wedge \dots \wedge \alpha_p) \otimes (\beta_1 \wedge \dots \wedge \beta_q)$, are equal to
  \begin{equation*}
    \sum_{a=1}^q (-1)^{a-1} (\Phi^*\beta_a \wedge \Phi^*\alpha_1 \wedge \dots \wedge \Phi^*\alpha_p) \otimes (\Phi^*\beta_1 \wedge \dots \wedge \widehat{\Phi^*\beta_a} \wedge \dots \wedge \Phi^*\beta_q).
  \end{equation*}
  We can similarly show that $\Phi^*$ commutes with $s^*$ and $\tau$. Consequently, $\Phi^*$ commutes with $s^*s$, and so $\Phi^*$ respects the eigendecomposition of $s^*s$, with the same eigenvalues (possibly sending some eigenvectors to zero).
\end{proof}

If $M$ has a Riemannian metric (or simply a connection $\nabla$ on the tangent bundle), then we can define the exterior covariant derivative on $\Lambda^{p,q}(M)$ in two different ways, since we can view $(p,q)$-forms as $\Lambda^p$-valued $q$-forms or $\Lambda^q$-valued $p$-forms. However, we will only need these operators when $M$ is simply Euclidean space, so we instead present the definition in this specialized context.

\subsection{Double forms on Euclidean space}\label{sec:doubleeuclidean}
In this subsection, we will have $M$ be $\R^{n+1}$, with coordinates $(x^0,\dots,x^n)$. Note that the dimension here is $n+1$, which differs from the convention in Section~\ref{sec:doublemulti}. In this context, we will define natural operators $d_L$, $d_R$, $\kappa_L$, and $\kappa_R$, and discuss how they relate to the decomposition of $\Lambda^{p,q}$.

\begin{notation}
  When there is no risk of confusion, we will let $\Lambda^k$ and $\Lambda^{p,q}$ denote $\Lambda^k(\R^{n+1})$ and $\Lambda^{p,q}(\R^{n+1})$, respectively. For a multi-index $I=(i_1,\dotsc,i_k)$, let $\dx^I=\dx^{i_1}\wedge\dots\wedge\dx^{i_k}\in\Lambda^k$, and let $\dx^{I,J}=\dx^I\otimes\dx^J\in\Lambda^{p,q}$.
\end{notation}

\begin{definition}
  We define natural operators
  \begin{equation*}
    \D_L\colon\Lambda^{p,q}\to\Lambda^{p+1,q},\qquad\D_R\colon\Lambda^{p,q}\to\Lambda^{p,q+1},
  \end{equation*}
  by
  \begin{equation*}
    \D_L(f\dx^{I,J})=\left(\D f\wedge\dx^I\right)\otimes\dx^J,\qquad\D_R(f\dx^{I,J})=\dx^I\otimes\left(\D f\wedge\dx^J\right).
  \end{equation*}
  Here, $f$ is an arbitrary smooth function, and we extend these definitions by linearity.
\end{definition}

\begin{proposition}\label{prop:ddcommute}
  The operators $\D_L$ and $\D_R$ commute.
\end{proposition}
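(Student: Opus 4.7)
The plan is a direct calculation on a basis element $f\,\dx^{I,J}$ of $\Lambda^{p,q}$, extended by linearity. I would expand $\D f = \sum_{i=0}^n \frac{\partial f}{\partial x^i}\,\dx^i$, apply $\D_L$ first to obtain $\sum_i \frac{\partial f}{\partial x^i}(\dx^i \wedge \dx^I)\otimes \dx^J$, and then apply $\D_R$ termwise. Since each summand has coefficient function $\partial f / \partial x^i$ and its only $\dx^J$ factor sits in the second tensor slot, the result is
\[
  \D_R \D_L (f\,\dx^{I,J}) = \sum_{i,j} \frac{\partial^2 f}{\partial x^j \partial x^i}\,(\dx^i \wedge \dx^I) \otimes (\dx^j \wedge \dx^J).
\]
Computing in the opposite order in the same way yields
\[
  \D_L \D_R (f\,\dx^{I,J}) = \sum_{i,j} \frac{\partial^2 f}{\partial x^i \partial x^j}\,(\dx^i \wedge \dx^I) \otimes (\dx^j \wedge \dx^J),
\]
and the two expressions agree by Clairaut's theorem on equality of mixed partials.

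There is no substantive obstacle here; the content is really just that $\D_L$ acts only on the first tensor slot, $\D_R$ only on the second, so the two operations do not interfere. The only potential worry would be signs from wedging a new covector onto $\dx^I$ or $\dx^J$, but because each operator inserts its $\dx^i$ factor on the left of its own slot and leaves the other slot untouched, swapping the order of application produces no sign changes. Once the coefficient question is reduced to $\partial_i \partial_j f = \partial_j \partial_i f$, the proof is complete.
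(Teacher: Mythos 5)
Your proof is correct and is essentially identical to the paper's: both apply $\D_L$ and $\D_R$ in either order to a basis element $f\,\dx^{I,J}$, observe that each operator acts only on its own tensor slot, and conclude from the equality of mixed partials. Nothing further is needed.
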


\begin{proof}
  Applying both $d_Ld_R$ and $d_Rd_L$ to $f\dx^{I,J}$, by the symmetry of the Hessian, we obtain
  \begin{equation*}
    \sum_{i,j}\frac{\partial f}{\partial x^i\partial x^j}\left(\dx^i\wedge\dx^I\right)\otimes\left(\dx^j\wedge\dx^J\right).\qedhere
  \end{equation*}
\end{proof}

\begin{definition}
  The \emph{tautological vector field} is
  \begin{equation*}
    X_{\id}:=\sum_{i=0}^nx^i\pp{x^i}.
  \end{equation*}
  If $\alpha$ is a $k$-form, we let the \emph{Koszul operator} $\kappa$ denote contraction with $X_{\id}$, that is,
  \begin{equation*}
    \kappa\alpha:=X_{\id}\contract\alpha.
  \end{equation*}
  For a double form, we can apply $\kappa$ to either the left factor or the right factor; we denote these operators by $\kappa_L$ and $\kappa_R$, respectively. Namely, we have,
  \begin{align*}
    \kappa_L\colon\Lambda^{p,q}&\to\Lambda^{p-1,q},&\kappa_R\colon\Lambda^{p,q}&\to\Lambda^{p,q-1},\\
    \alpha\otimes\beta&\mapsto\left(\kappa\alpha\right)\otimes\beta,&\alpha\otimes\beta&\mapsto\alpha\otimes\left(\kappa\beta\right).
  \end{align*}
\end{definition}

\begin{proposition}
  The operators $\kappa_L$ and $\kappa_R$ commute.
\end{proposition}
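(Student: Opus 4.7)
The plan is to verify the commutation on simple tensors and extend by linearity, since both $\kappa_L$ and $\kappa_R$ are defined separately on the two factors of a simple tensor. This is essentially a tautology of the tensor product structure: $\kappa_L$ modifies only the left factor while $\kappa_R$ modifies only the right factor, so their actions are independent.

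Concretely, I would compute both compositions on a simple tensor $\alpha \otimes \beta \in \Lambda^{p,q}$. Applying $\kappa_R$ first and then $\kappa_L$ gives
\begin{equation*}
  \kappa_L \kappa_R (\alpha \otimes \beta) = \kappa_L\bigl(\alpha \otimes (X_{\id}\contract\beta)\bigr) = (X_{\id}\contract\alpha) \otimes (X_{\id}\contract\beta),
\end{equation*}
while applying $\kappa_L$ first and then $\kappa_R$ gives
\begin{equation*}
  \kappa_R \kappa_L (\alpha \otimes \beta) = \kappa_R\bigl((X_{\id}\contract\alpha) \otimes \beta\bigr) = (X_{\id}\contract\alpha) \otimes (X_{\id}\contract\beta).
\end{equation*}
The two expressions agree, and since any double form is a linear combination of simple tensors (for instance in the basis $\dx^{I,J}$), the identity $\kappa_L \kappa_R = \kappa_R \kappa_L$ follows by linearity.

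There is no real obstacle here; the only subtlety is recognizing that the definitions of $\kappa_L$ and $\kappa_R$ on simple tensors extend unambiguously by linearity, which is standard. Unlike the analogous statement for $\D_L$ and $\D_R$ (Proposition~\ref{prop:ddcommute}), no appeal to symmetry of a Hessian or any deeper identity is needed, since the Koszul operators do not differentiate coefficient functions.
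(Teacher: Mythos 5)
Your proof is correct and matches the paper's argument exactly: both compute $\kappa_L\kappa_R$ and $\kappa_R\kappa_L$ on a simple tensor $\alpha\otimes\beta$, observe that each yields $(X_{\id}\contract\alpha)\otimes(X_{\id}\contract\beta)$, and conclude by linearity.
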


\begin{proof}
  Applying both $\kappa_L\kappa_R$ and $\kappa_R\kappa_L$ to $\alpha\otimes\beta$, we obtain
  \begin{equation*}
    \left(X_{\id}\contract\alpha\right)\otimes\left(X_{\id}\contract\beta\right).\qedhere
  \end{equation*}
\end{proof}

There are also several nontrivial commutator relationships between our operators. The first proposition that we prove below is a special case of a more general relationship that appears in~\cite[p.~49]{kupferman2023elliptic} and~\cite[p. 259]{gray1970some}.

\begin{proposition}
  We have
  \begin{align*}
    \kappa_Ls+s\kappa_L&=\kappa_R,&\kappa_Rs^*+s^*\kappa_R&=\kappa_L\\
    \kappa_Ls^*+s^*\kappa_L&=0,&\kappa_Rs+s\kappa_R&=0.
  \end{align*}
\end{proposition}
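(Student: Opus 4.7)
The plan is to verify all four identities by applying both sides to a simple tensor $\alpha\otimes\beta\in\Lambda^{p,q}$ and expanding with the formulas from Proposition~\ref{prop:wedgecontract}, namely $s(\alpha\otimes\beta)=\sum_i(\dx^i\wedge\alpha)\otimes(\partial_i\contract\beta)$ and $s^*(\alpha\otimes\beta)=\sum_i(\partial_i\contract\alpha)\otimes(\dx^i\wedge\beta)$, where we have chosen the orthonormal frame $\partial_0,\dots,\partial_n$ with dual coframe $\dx^0,\dots,\dx^n$. The only two external facts I will need are the Leibniz rule for interior product, $X\contract(\omega\wedge\eta)=(X\contract\omega)\wedge\eta+(-1)^{\deg\omega}\omega\wedge(X\contract\eta)$, and the anticommutativity $\iota_X\iota_Y=-\iota_Y\iota_X$.

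For the first identity, I would start from $\kappa_Ls(\alpha\otimes\beta)=\sum_i\bigl(X_{\id}\contract(\dx^i\wedge\alpha)\bigr)\otimes(\partial_i\contract\beta)$ and apply the Leibniz rule, which, since $X_{\id}\contract\dx^i=x^i$, produces
\[
\kappa_Ls(\alpha\otimes\beta)=\sum_ix^i\alpha\otimes(\partial_i\contract\beta)-\sum_i\bigl(\dx^i\wedge(X_{\id}\contract\alpha)\bigr)\otimes(\partial_i\contract\beta).
\]
The first sum collapses using $\sum_ix^i(\partial_i\contract\beta)=X_{\id}\contract\beta$ to give $\alpha\otimes(X_{\id}\contract\beta)=\kappa_R(\alpha\otimes\beta)$, while the second sum is recognized, by Proposition~\ref{prop:wedgecontract} applied to $(X_{\id}\contract\alpha)\otimes\beta$, as $s\kappa_L(\alpha\otimes\beta)$. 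Rearranging yields $\kappa_Ls+s\kappa_L=\kappa_R$. The identity $\kappa_Rs^*+s^*\kappa_R=\kappa_L$ then follows by the formal symmetry between $s,\kappa_L$ and $s^*,\kappa_R$ (indeed, conjugating by $\tau$ swaps $s\leftrightarrow s^*$ and $\kappa_L\leftrightarrow\kappa_R$ and sends the first identity to the second).

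For the vanishing identities, the same expansion gives
\[
\kappa_Ls^*(\alpha\otimes\beta)=\sum_i\bigl(X_{\id}\contract(\partial_i\contract\alpha)\bigr)\otimes(\dx^i\wedge\beta),
\]
and the anticommutativity $X_{\id}\contract(\partial_i\contract\alpha)=-\partial_i\contract(X_{\id}\contract\alpha)$ turns this into $-s^*\kappa_L(\alpha\otimes\beta)$, proving $\kappa_Ls^*+s^*\kappa_L=0$. The final identity $\kappa_Rs+s\kappa_R=0$ is again symmetric, and its direct verification is identical with the roles of the two tensor factors reversed.

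There is no serious obstacle here; the only subtlety is keeping track of which factor each operator acts on and ensuring that in the zero identities the relevant sums really recombine as $s^*\kappa_L$ (or $s\kappa_R$) rather than merely as an abstract rearrangement. All four identities ultimately reduce to the observation that the Leibniz rule contributes the $\kappa_R$ term only when $X_{\id}$ meets the newly inserted $\dx^i$, which happens for $s$ but not for $s^*$ because $s^*$ inserts $\dx^i$ on the \emph{opposite} factor from the one on which $\kappa_L$ acts.
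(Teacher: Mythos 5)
Your proposal is correct and is essentially the same argument as the paper's: both expand via Proposition~\ref{prop:wedgecontract}, use the wedge--contraction anticommutator (your Leibniz rule for $X_{\id}\contract(\dx^i\wedge\alpha)$ is just the paper's $\sum_j x^j\bigl(\partial_j\contract(\dx^i\wedge\cdot)+\dx^i\wedge(\partial_j\contract\cdot)\bigr)=x^i$ packaged into a single interior product), use anticommutativity of contractions for the vanishing identities, and dispatch the remaining two by symmetry. No gaps.
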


\begin{proof}
  Using Proposition~\ref{prop:wedgecontract}, we have
  \begin{align*}
    \kappa_Ls(\alpha\otimes\beta)&=\sum_{i,j}x^j\left(\pp{x^j}\contract(dx^i\wedge\alpha)\right)\otimes\left(\pp{x^i}\contract\beta\right),\\
    s\kappa_L(\alpha\otimes\beta)&=\sum_{i,j}x^j\left(dx^i\wedge\left(\pp{x^j}\contract\alpha\right)\right)\otimes\left(\pp{x^i}\contract\beta\right).
  \end{align*}
  Adding, we obtain
  \begin{equation*}
    \begin{split}
      (\kappa_Ls+s\kappa_L)(\alpha\otimes\beta)&=\sum_{i,j}x^j\left(\pp[x^i]{x^j}\alpha\right)\otimes\left(\pp{x^i}\contract\beta\right)\\
      &=\sum_ix^i\alpha\otimes\left(\pp{x^i}\contract\beta\right)\\
      &=\alpha\otimes\left(X_{\id}\contract\beta\right),
    \end{split}
  \end{equation*}
  as desired.

  Meanwhile,
  \begin{align*}
    \kappa_Ls^*(\alpha\otimes\beta)&=\sum_{i,j}x^j\left(\pp{x^j}\contract\left(\pp{x^i}\contract\alpha\right)\right)\otimes\left(\dx^i\wedge\beta\right),\\
    s^*\kappa_L(\alpha\otimes\beta)&=\sum_{i,j}x^j\left(\pp{x^i}\contract\left(\pp{x^j}\contract\alpha\right)\right)\otimes\left(\dx^i\wedge\beta\right).
  \end{align*}
  The sum is zero by antisymmetry of contraction.

  The remaining two claims follows by symmetry.
\end{proof}

We can now show that the operator $\kappa_L\kappa_R=\kappa_R\kappa_L$ respects the decomposition $\Lambda^{p,q}=\bigoplus_m\Lambda^{p,q}_m$.

\begin{proposition}
  The operator $\kappa_L\kappa_R=\kappa_R\kappa_L$ commutes with both $s$ and $s^*$.
\end{proposition}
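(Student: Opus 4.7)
The plan is to reduce everything to the four anticommutator identities from the preceding proposition, together with the elementary fact that $\kappa_L^2=\kappa_R^2=0$ (which is immediate since contraction of a differential form with a single vector field $X_{\id}$ is nilpotent, applied separately to the left and right factors of $\alpha\otimes\beta$).

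First I would verify $[\kappa_L\kappa_R, s] = 0$ by a two-step commute-through. Starting from $\kappa_L\kappa_R s$, the identity $\kappa_R s = -s\kappa_R$ lets me move $s$ past $\kappa_R$ at the cost of a sign, giving $\kappa_L\kappa_R s = -\kappa_L s\kappa_R$. Next, the identity $\kappa_L s = \kappa_R - s\kappa_L$ lets me move $s$ past $\kappa_L$, producing $-\kappa_L s\kappa_R = -\kappa_R^2 + s\kappa_L\kappa_R$. The $s\kappa_L\kappa_R$ term is exactly what we need to subtract to form the commutator, so after cancellation we are left with $-\kappa_R^2 = 0$.

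The argument for $[\kappa_L\kappa_R, s^*] = 0$ is essentially the mirror image, using instead the identities $\kappa_R s^* = \kappa_L - s^*\kappa_R$ and $\kappa_L s^* = -s^*\kappa_L$. Pushing $s^*$ past $\kappa_R$ first and then past $\kappa_L$ leaves the residual term $\kappa_L^2$, which again vanishes.

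There is no real obstacle; the whole argument is a short bookkeeping exercise once the two nilpotency facts $\kappa_L^2=\kappa_R^2=0$ are noted, since the commutator relations are precisely tuned so that the unwanted $s\kappa_L\kappa_R$ and $s^*\kappa_L\kappa_R$ cross terms cancel. The only point requiring a moment of care is choosing which of the two anticommutator relations (the one producing the other $\kappa$ versus the one anticommuting to zero) to apply at each step; the rule is to use the anticommuting one first, so that the subsequent step produces exactly a $\kappa^2$ plus the term needed to cancel with $s\kappa_L\kappa_R$.
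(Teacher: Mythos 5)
Your argument is correct and is essentially identical to the paper's proof: the paper computes $\kappa_L\kappa_Rs=-\kappa_Ls\kappa_R=(s\kappa_L-\kappa_R)\kappa_R=s\kappa_L\kappa_R$ using exactly the same two commutation identities and the nilpotency $\kappa_R^2=0$, then handles $s^*$ by symmetry. Your explicit mirror computation for $s^*$ (with the residual $\kappa_L^2=0$) is just the unpacked version of that symmetry step.
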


\begin{proof}
  We have
  \begin{equation*}
    \kappa_L\kappa_Rs=-\kappa_Ls\kappa_R=(s\kappa_L-\kappa_R)\kappa_R=s\kappa_L\kappa_R
  \end{equation*}
  because $\kappa_R^2=0$. The claim for $s^*$ follows by symmetry.
\end{proof}

\begin{proposition}\label{prop:kkdecomposition}
  The operator $\kappa_L\kappa_R\colon\Lambda^{p,q}\to\Lambda^{p-1,q-1}$ sends $\Lambda^{p,q}_m$ to $\Lambda^{p-1,q-1}_m$.
\end{proposition}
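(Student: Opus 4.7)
The plan is to deduce the result directly from the preceding proposition together with the observation that the eigenvalue formula for $s^*s$ depends on $p$ and $q$ only through the difference $p - q$, which is preserved when one passes from $\Lambda^{p,q}$ to $\Lambda^{p-1,q-1}$.

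First, I would take $\phi \in \Lambda^{p,q}_m$, so that by Definition~\ref{def:decomposition}, $\phi$ is an eigenvector of $s^*s$ with eigenvalue $\lambda = m(m+p-q+1)$. The preceding proposition tells us that $\kappa_L\kappa_R$ commutes with both $s$ and $s^*$, and hence commutes with $s^*s$. Therefore
\[
  s^*s(\kappa_L\kappa_R\phi) = \kappa_L\kappa_R(s^*s\phi) = \lambda\,\kappa_L\kappa_R\phi,
\]
so $\kappa_L\kappa_R\phi \in \Lambda^{p-1,q-1}$ is either zero (in which case the conclusion is trivial) or an eigenvector of $s^*s$ on $\Lambda^{p-1,q-1}$ with eigenvalue $\lambda$.

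The second step is to identify which summand of the decomposition of $\Lambda^{p-1,q-1}$ this eigenvalue corresponds to. The eigenvalue of $s^*s$ on $\Lambda^{p-1,q-1}_{m'}$ is
\[
  m'\bigl(m' + (p-1) - (q-1) + 1\bigr) = m'(m' + p - q + 1),
\]
which is the same function of $m'$ as the eigenvalue formula on $\Lambda^{p,q}$. By the strict monotonicity of the eigenvalues in $m$ (the corollary immediately following Proposition~\ref{prop:eigenvalues}), the eigenvalue $\lambda$ uniquely determines $m' = m$, so $\kappa_L\kappa_R\phi \in \Lambda^{p-1,q-1}_m$, as claimed.

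I do not anticipate a genuine obstacle: the result is essentially a packaging of the preceding commutation proposition with the invariance of $p - q$ under $(p,q)\mapsto(p-1,q-1)$. The only mild point to be careful about is handling the case $\kappa_L\kappa_R\phi = 0$ separately so that one is not claiming that a zero vector is an eigenvector, and to invoke the monotonicity corollary rather than, say, an injectivity argument, so that the reasoning applies uniformly to all $m$ and all $(p,q)$.
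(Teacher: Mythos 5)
Your proof is correct and follows essentially the same route as the paper: commute $\kappa_L\kappa_R$ past $s^*s$ using the preceding proposition, handle the zero case separately, and note that the eigenvalue formula $m(m+p-q+1)$ is unchanged under $(p,q)\mapsto(p-1,q-1)$. No issues.
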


\begin{proof}
  By the above proposition, $\kappa_L\kappa_R$ commutes with $s^*s$, so if $\phi$ is an eigenvalue of $s^*s$ with eigenvalue $m(m+p-q+1)$, we have
  \begin{equation*}
    s^*s(\kappa_L\kappa_R\phi)=m(m+p-q+1)(\kappa_L\kappa_R\phi).
  \end{equation*}
  Thus, $\kappa_L\kappa_R\phi$ is either zero or an eigenvalue of $s^*s$ with eigenvalue $m(m+p-q+1)$. If $\kappa_L\kappa_R\phi$ is zero, then it is in $\Lambda^{p-1,q-1}_m$ tautologically; if it is nonzero, then it is in $\Lambda^{p-1,q-1}_m$ because $m(m+(p-1)-(q-1)+1)=m(m+p-q+1)$.
\end{proof}

We can prove an analogous result for the operator $d_Ld_R$. First, we prove some relations between $d_L,d_R,s$ and $s^*$.  These relations also appear in~\cite[p. 51]{kupferman2023elliptic}.

\begin{proposition}
  We have
  \begin{align*}
    d_Ls+sd_L&=0,&d_Rs^*+s^*d_R&=0\\
    d_Ls^*+s^*d_L&=d_R,&d_Rs+sd_R&=d_L.
  \end{align*}
\end{proposition}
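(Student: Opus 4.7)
The plan is to verify all four identities by direct coordinate computation, in the same style as the preceding commutator relations between $\kappa_L, \kappa_R, s, s^*$. I would write an arbitrary $(p,q)$-form as $\phi = \sum_{I,J} \phi_{IJ}\, \dx^{I,J}$ and use Proposition~\ref{prop:wedgecontract} to rewrite $s$ and $s^*$ via their wedge-contraction formulas; e.g.\ $s\phi = \sum_{I,J,j} \phi_{IJ}\,(\dx^j \wedge \dx^I) \otimes (\partial_j \contract \dx^J)$. Since $d_L$ and $d_R$ simply differentiate the scalar coefficient $\phi_{IJ}$ and prepend a $\dx^i$ to the appropriate factor, every commutator collapses to an explicit double sum indexed by $i$ and $j$.

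For the first identity $d_L s + s d_L = 0$, I expect that both $d_L s\phi$ and $s d_L \phi$ expand to a sum whose generic term is $\partial_i\phi_{IJ}\,(\dx^i \wedge \dx^j \wedge \dx^I) \otimes (\partial_j \contract \dx^J)$, but with the roles of $\dx^i$ and $\dx^j$ swapped in the left wedge factor; antisymmetry of $\wedge$ then produces the cancellation. For the third identity $d_L s^* + s^* d_L = d_R$, the key algebraic input is the Leibniz rule
\[
  \partial_j \contract (\dx^i \wedge \dx^I) = \delta^i_j\, \dx^I - \dx^i \wedge \bigl(\partial_j \contract \dx^I\bigr),
\]
which when plugged into the expansion of $s^* d_L \phi$ produces a term that cancels against the corresponding term in $d_L s^* \phi$, leaving only the diagonal contribution $\sum_{I,J,i} \partial_i\phi_{IJ}\, \dx^I \otimes (\dx^i \wedge \dx^J) = d_R \phi$.

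The two remaining identities I would obtain for free by conjugating with the transposition $\tau$, which already satisfies $\tau s = s^* \tau$ and also satisfies $\tau d_L = d_R \tau$ (immediate from the definitions, since $d_L$ acts on the left factor, $d_R$ acts on the right, and $\tau$ swaps them). Conjugating $d_L s + s d_L = 0$ by $\tau$ yields $d_R s^* + s^* d_R = 0$, and conjugating $d_L s^* + s^* d_L = d_R$ yields $d_R s + s d_R = d_L$. The computations are entirely routine; the only real obstacle is sign bookkeeping, especially in the Leibniz rule for the interior product against a wedge.
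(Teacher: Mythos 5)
Your proposal is correct and matches the paper's proof essentially step for step: both verify $d_Ls+sd_L=0$ and $d_Ls^*+s^*d_L=d_R$ by expanding via the wedge-contraction formulas of Proposition~\ref{prop:wedgecontract}, using antisymmetry of $\wedge$ for the first and the interior-product Leibniz rule for the second, and then obtain the remaining two identities ``by symmetry,'' which is exactly your $\tau$-conjugation made explicit. No gaps.
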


\begin{proof}
  Using Proposition~\ref{prop:wedgecontract}, we have
  \begin{align*}
    d_Ls(f\dx^{I,J})&=\sum_{i,j}\pp[f]{x^j}(\dx^j\wedge\dx^i\wedge\dx^I)\otimes\left(\pp{x^i}\contract\dx^J\right),\\
    sd_L(f\dx^{I,J})&=\sum_{i,j}\pp[f]{x^j}(\dx^i\wedge\dx^j\wedge\dx^I)\otimes\left(\pp{x^i}\contract\dx^J\right).
  \end{align*}
  The sum is zero by the antisymmetry of wedge.

  Meanwhile,
  \begin{align*}
    d_Ls^*(f\dx^{I,J})&=\sum_{i,j}\pp[f]{x^j}\left(\dx^j\wedge\left(\pp{x^i}\contract\dx^I\right)\right)\otimes(\dx^i\wedge\dx^J),\\
    s^*d_L(f\dx^{I,J})&=\sum_{i,j}\pp[f]{x^j}\left(\pp{x^i}\contract(\dx^j\wedge\dx^I)\right)\otimes(\dx^i\wedge\dx^J).
  \end{align*}
  Adding, we obtain
  \begin{equation*}
    \begin{split}
      (d_Ls^*+s^*d_L)(f\dx^{I,J})&=\sum_{i,j}\pp[f]{x^j}\left(\pp[x^j]{x^i}\dx^I\right)\otimes(dx^i\wedge\dx^J)\\
      &=\sum_i\pp[f]{x^i}\dx^I\otimes(\dx^i\wedge\dx^J)\\
      &=d_R(f\dx^{I,J}).
    \end{split}
  \end{equation*}
  The remaining claims follow by symmetry.
\end{proof}

The following proposition also appears in a more general context in \cite[Proposition~3.10]{kupferman2024double}.

\begin{proposition}\label{prop:ddscommute}
  The operator $d_Ld_R=d_Rd_L$ commutes with both $s$ and $s^*$.
\end{proposition}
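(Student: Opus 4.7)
The plan is to prove $[d_L d_R, s] = 0$ and $[d_L d_R, s^*] = 0$ by a short direct computation, using only the four (anti)commutator identities established in the preceding proposition together with the vanishing $d_L^2 = 0 = d_R^2$.

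First I would note that $d_L^2 = 0$ and $d_R^2 = 0$. This is not stated explicitly in the excerpt, but it is immediate from the defining formula $d_L(f\,\dx^{I,J}) = (\D f \wedge \dx^I)\otimes \dx^J$: applying $d_L$ twice produces $\sum_{i,j}\partial^2 f/\partial x^j\partial x^i\,\dx^j\wedge \dx^i\wedge \dx^I\otimes \dx^J$, which vanishes by symmetry of mixed partial derivatives and antisymmetry of the wedge; the argument for $d_R$ is identical.

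Next I would compute $[d_L d_R, s]$ by moving $s$ leftward past the two exterior derivatives, one at a time. Using $d_R s = d_L - s d_R$ to pass $s$ through $d_R$, and then $d_L s = -s d_L$ to pass $s$ through $d_L$, I obtain
\begin{equation*}
  d_L d_R s = d_L(d_L - s d_R) = d_L^2 - d_L s d_R = d_L^2 + s d_L d_R = s d_L d_R,
\end{equation*}
where the final equality uses $d_L^2 = 0$. For the adjoint version, I would apply exactly the same strategy with the identities $d_R s^* = -s^* d_R$ and $d_L s^* = d_R - s^* d_L$; the analogous manipulation produces an "error term" equal to $-d_R^2$, which again vanishes.

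There is no real obstacle here. The pleasant structural feature worth pointing out is that in each of the two computations, one uses one pure anticommutator identity and one mixed identity with a remainder; the remainder term conveniently turns out to be either $d_L^2$ or $d_R^2$, so both vanish automatically. (Alternatively, by symmetry one could prove only the $s$ statement and deduce the $s^*$ statement by conjugating with the transposition $\tau$, noting that $\tau$ swaps $d_L \leftrightarrow d_R$ and $s \leftrightarrow s^*$.)
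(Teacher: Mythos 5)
Your proof is correct and follows essentially the same route as the paper: use $d_Rs+sd_R=d_L$ and $d_Ls+sd_L=0$ to write $d_Ld_Rs=d_L(d_L-sd_R)=-d_Lsd_R=sd_Ld_R$, with the $s^*$ case handled symmetrically. You are merely more explicit about the (easy) fact $d_L^2=d_R^2=0$, which the paper uses implicitly.
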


\begin{proof}
  We have
  \begin{equation*}
    d_Ld_Rs=d_L(d_L-sd_R)=-d_Lsd_R=sd_Ld_R.
  \end{equation*}
  The claim for $s^*$ follows by symmetry.
\end{proof}

\begin{proposition}\label{prop:dddecomposition}
  The operator $d_Ld_R\colon\Lambda^{p,q}\to\Lambda^{p+1,q+1}$ sends $\Lambda^{p,q}_m$ to $\Lambda^{p+1,q+1}_m$.
\end{proposition}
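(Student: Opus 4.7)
The plan is to mirror the proof of Proposition~\ref{prop:kkdecomposition} exactly, since the key structural ingredients are already in place: Proposition~\ref{prop:ddscommute} tells us that $d_L d_R$ commutes with both $s$ and $s^*$, and hence with $s^*s$. The only things that change between this proof and the one for $\kappa_L\kappa_R$ are the source and target bidegrees, so I need only check that the arithmetic in the eigenvalue formula still matches up.

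Concretely, let $\phi \in \Lambda^{p,q}_m$, so by Definition~\ref{def:decomposition}, $\phi$ is an eigenvector of $s^*s$ with eigenvalue $m(m+p-q+1)$. Since $d_L d_R$ commutes with $s^*s$ by Proposition~\ref{prop:ddscommute}, we have
\begin{equation*}
s^*s(d_L d_R \phi) = d_L d_R (s^*s \phi) = m(m+p-q+1)(d_L d_R \phi).
\end{equation*}
Thus $d_L d_R \phi \in \Lambda^{p+1,q+1}$ is either zero, in which case it lies in $\Lambda^{p+1,q+1}_m$ tautologically, or else it is an eigenvector of $s^*s$ on $\Lambda^{p+1,q+1}$ with eigenvalue $m(m+p-q+1)$. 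The key arithmetic observation is that on $\Lambda^{p+1,q+1}$ the eigenvalue indexed by $m$ is
\begin{equation*}
m(m + (p+1) - (q+1) + 1) = m(m+p-q+1),
\end{equation*}
which matches. By Definition~\ref{def:decomposition}, we conclude $d_L d_R \phi \in \Lambda^{p+1,q+1}_m$.

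There is no real obstacle here, since Proposition~\ref{prop:ddscommute} already did the substantive work. The only thing one needs to notice is the happy coincidence that passing from $(p,q)$ to $(p+1,q+1)$ leaves $p-q$ unchanged, so the $m$-th eigenvalue is the same in both bidegrees; this is precisely why $d_L d_R$ preserves the index $m$, in contrast with operators like $s$ or $s^*$ which shift it.
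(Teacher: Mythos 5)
Your proof is correct and is exactly the argument the paper intends: the paper's own proof simply states that it is analogous to that of Proposition~\ref{prop:kkdecomposition}, and you have carried out that analogy faithfully, including the key arithmetic check that $m(m+(p+1)-(q+1)+1)=m(m+p-q+1)$.
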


\begin{proof}
  The proof is analogous to the proof of Proposition~\ref{prop:kkdecomposition}.
\end{proof}

Finally, we have commutation relations between the exterior derivatives and the Koszul operators. This result also appears in \cite[Lemma 3]{arnold2021complexes}.

\begin{proposition}
  We have
  \begin{equation*}
    d_L\kappa_R-\kappa_Rd_L=s,\qquad
    d_R\kappa_L-\kappa_Ld_R=s^*.
  \end{equation*}
\end{proposition}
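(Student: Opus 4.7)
The plan is to verify the first identity by a direct computation on basis elements $f\,\dx^{I,J}$; the second identity then follows by symmetry, either by an analogous computation or by conjugating with the transposition $\tau$, using $\tau s=s^*\tau$, $\tau d_L=d_R\tau$, and $\tau\kappa_L=\kappa_R\tau$ (all immediate from the definitions). Since $d_L$, $\kappa_R$, and $s$ are all $\R$-linear, checking on such basis elements suffices.

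First I would expand $\kappa_R$ using $X_{\id}=\sum_j x^j\pp{x^j}$ to write
\begin{equation*}
\kappa_R(f\,\dx^{I,J})=\sum_j x^j f\,\dx^I\otimes\Bigl(\pp{x^j}\contract\dx^J\Bigr),
\end{equation*}
and then apply $d_L$. Using the Leibniz rule $\D(x^j f)=f\,\dx^j+x^j\,\D f$ produces two sums; the one coming from $x^j\,\D f$ is precisely $\kappa_R d_L(f\,\dx^{I,J})$, since $d_L(f\,\dx^{I,J})=(\D f\wedge\dx^I)\otimes\dx^J$ and $\kappa_R$ acts only on the right factor. Subtracting cancels this term, leaving
\begin{equation*}
(d_L\kappa_R-\kappa_R d_L)(f\,\dx^{I,J})=\sum_j f(\dx^j\wedge\dx^I)\otimes\Bigl(\pp{x^j}\contract\dx^J\Bigr),
\end{equation*}
which by Proposition~\ref{prop:wedgecontract} equals $f\cdot s(\dx^{I,J})=s(f\,\dx^{I,J})$, using that $s$ acts pointwise and is therefore $C^\infty$-linear.

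There is no real obstacle here; the computation is essentially a ``Cartan magic formula'' calculation. The only point to watch is that the cross term in which $\D$ hits the coefficient $x^j$ of $X_{\id}$ contributes exactly one $\dx^j$ on the left factor paired with a $\pp{x^j}\contract$ on the right factor, which is precisely the shape of $s$ given by Proposition~\ref{prop:wedgecontract}. This matches the paper's earlier pattern of commutator identities (e.g., $\kappa_L s+s\kappa_L=\kappa_R$), where the Leibniz rule for $\D$ on products $x^j f$ and the antisymmetry of wedge and contraction conspire to produce one of $s$, $s^*$, or zero.
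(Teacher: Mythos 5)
Your proposal is correct and follows essentially the same route as the paper: check on basis elements $f\,\dx^{I,J}$, apply the Leibniz rule to $\D(x^j f)$ so the $x^j\,\D f$ terms cancel against $\kappa_R d_L$, and identify the surviving sum $\sum_j f(\dx^j\wedge\dx^I)\otimes(\pp{x^j}\contract\dx^J)$ as $s(f\,\dx^{I,J})$ via Proposition~\ref{prop:wedgecontract}, with the second identity by symmetry.
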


\begin{proof}
  We have
  \begin{align*}
    \begin{split}
      d_L\kappa_R\left(f\dx^{I,J}\right)&=d_L\sum_ix^if\dx^I\otimes\left(\pp{x^i}\contract\dx^J\right)\\
      &=\sum_i\left(\left(f\dx^i+x^i\D f\right)\wedge\dx^I\right)\otimes\left(\pp{x^i}\contract\dx^I\right).
    \end{split}\\
    \kappa_Rd_L\left(f\dx^{I,J}\right)&=\sum_i\left(x^i\D f\wedge\dx^I\right)\otimes\left(\pp{x^i}\contract\dx^J\right).
  \end{align*}
  Subtracting, we obtain $s\left(f\dx^{I,J}\right)$ using Proposition~\ref{prop:wedgecontract}. The second equation follows by symmetry.
\end{proof}

\subsection{Polynomial double forms on Euclidean space}\label{sec:polynomialdouble}
We now specialize to double forms with polynomial coefficients. The operators we have discussed all send polynomial double forms to polynomial double forms, with the Koszul operators raising polynomial degree, the exterior derivative operators lowering polynomial degree, and the $s$ and $s^*$ operators keeping the polynomial degree the same. As we will see, in order to prove our results, we will need to understand the image of $\kappa_L\kappa_R$. We begin with definitions.
\begin{definition}
  Let $\cH_r\Lambda^{p,q}(\R^{n+1})$ or simply $\cH_r\Lambda^{p,q}$ denote the space of double forms with homogeneous polynomial coefficients of degree $r$. In other words, $\cH_r\Lambda^{p,q}$ is spanned by $f\dx^{I,J}$, where $f$ is a homogeneous polynomial of degree $r$. We will define the decomposition component $\cH_r\Lambda^{p,q}_m$ similarly, and we will also occasionally need analogously defined spaces $\cH_r\Lambda^k$ of $k$-forms, as well as the space $\cH_r$ of scalar fields, which is simply the space of homogeneous polynomials of degree $r$.
\end{definition}

Observe that $\kappa_L\kappa_R\colon\cH_{r-2}\Lambda^{p+1,q+1}\to\cH_r\Lambda^{p,q}$. The image of this map will be important enough to merit a definition.

\begin{definition}
  Let
  \begin{equation*}
    \cH_r^-\Lambda^{p,q}:=\kappa_L\kappa_R\cH_{r-2}\Lambda^{p+1,q+1}.
  \end{equation*}
  We likewise let
  \begin{equation*}
    \cH_r^-\Lambda^{p,q}_m:=\kappa_L\kappa_R\cH_{r-2}\Lambda^{p+1,q+1}_m.
  \end{equation*}
\end{definition}
Note that $\cH_r^-\Lambda^{p,q}=0$ if $r<2$. As the notation suggests, $\cH_r^-\Lambda^{p,q}_m$ is a subspace of $\cH_r\Lambda^{p,q}_m$ by Proposition~\ref{prop:kkdecomposition}. More specifically, we have the following.

\begin{proposition}\label{prop:minusm}
  We have
  \begin{equation*}
    \cH_r^-\Lambda^{p,q}_m=\cH_r^-\Lambda^{p,q}\cap\Lambda^{p,q}_m.
  \end{equation*}
\end{proposition}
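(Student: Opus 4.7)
The plan is to prove the two inclusions separately, with the forward direction essentially immediate from the definitions and the reverse direction following from a standard decomposition-and-project argument.

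For the forward inclusion $\cH_r^-\Lambda^{p,q}_m \subseteq \cH_r^-\Lambda^{p,q} \cap \Lambda^{p,q}_m$, the containment $\cH_r^-\Lambda^{p,q}_m \subseteq \cH_r^-\Lambda^{p,q}$ is obvious because $\cH_{r-2}\Lambda^{p+1,q+1}_m \subseteq \cH_{r-2}\Lambda^{p+1,q+1}$, and the containment $\cH_r^-\Lambda^{p,q}_m \subseteq \Lambda^{p,q}_m$ is exactly Proposition~\ref{prop:kkdecomposition} applied pointwise, which says that $\kappa_L\kappa_R$ carries $\Lambda^{p+1,q+1}_m$ into $\Lambda^{p,q}_m$.

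For the reverse inclusion, I would start with $\phi \in \cH_r^-\Lambda^{p,q} \cap \Lambda^{p,q}_m$ and pick $\psi \in \cH_{r-2}\Lambda^{p+1,q+1}$ with $\phi = \kappa_L\kappa_R\psi$. Since the eigendecomposition $\Lambda^{p+1,q+1} = \bigoplus_{m'} \Lambda^{p+1,q+1}_{m'}$ is pointwise and acts only on the multicovector part, it restricts to a decomposition $\cH_{r-2}\Lambda^{p+1,q+1} = \bigoplus_{m'} \cH_{r-2}\Lambda^{p+1,q+1}_{m'}$. Write $\psi = \sum_{m'} \psi_{m'}$ accordingly. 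By Proposition~\ref{prop:kkdecomposition}, each $\kappa_L\kappa_R\psi_{m'}$ lies in $\Lambda^{p,q}_{m'}$. Thus
\[
  \phi = \sum_{m'} \kappa_L\kappa_R\psi_{m'}
\]
is the decomposition of $\phi$ into eigenspace summands. Since $\phi \in \Lambda^{p,q}_m$ and this decomposition is unique, we must have $\kappa_L\kappa_R\psi_{m'} = 0$ for $m' \ne m$, hence $\phi = \kappa_L\kappa_R\psi_m \in \cH_r^-\Lambda^{p,q}_m$.

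There is no real obstacle here; the key conceptual point is simply that polynomial coefficients commute with the purely linear-algebraic eigendecomposition, so one may project the preimage $\psi$ onto the $\Lambda^{p+1,q+1}_m$ component without leaving $\cH_{r-2}\Lambda^{p+1,q+1}$. The argument is the general fact that if $T : A \to B$ is a linear map and both $A$ and $B$ decompose compatibly with $T$, then the image of a summand is the intersection of the image with that summand.
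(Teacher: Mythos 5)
Your proposal is correct and follows essentially the same route as the paper's proof: the forward inclusion from the definition together with Proposition~\ref{prop:kkdecomposition}, and the reverse inclusion by decomposing the preimage $\psi$ into eigenspace summands (noting that this does not affect the polynomial coefficients) and using uniqueness of the eigendecomposition to isolate the $m$ component.
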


\begin{proof}
  If $\phi\in\kappa_L\kappa_R\cH_{r-2}\Lambda^{p+1,q+1}_m$, then it is in $\cH_r^-\Lambda^{p,q}$ by definition and in $\Lambda^{p,q}_m$ by Proposition~\ref{prop:kkdecomposition}.

  Conversely, assume that $\phi\in\cH_r^-\Lambda^{p,q}\cap\Lambda^{p,q}_m$. By definition, $\phi=\kappa_L\kappa_R\psi$ for some $\psi\in\cH_{r-2}\Lambda^{p+1,q+1}$, but $\psi$ might not be in the decomposition summand $\Lambda^{p+1,q+1}_m$. However, we can decompose $\psi=\sum_{m'}\psi_{m'}$ where each $\psi_{m'}\in\Lambda^{p+1,q+1}_{m'}$. The polynomial coefficients are unaffected by the decomposition, so, in fact, $\psi_{m'}\in\cH_{r-2}\Lambda^{p+1,q+1}_{m'}$. Letting $\phi_{m'}=\kappa_L\kappa_R\psi_{m'}$, we have that $\phi=\sum_{m'}\phi_{m'}$. By Proposition~\ref{prop:kkdecomposition}, $\phi_{m'}\in\Lambda^{p,q}_{m'}$. Since $\phi\in\Lambda^{p,q}_m$, we conclude that $\phi_{m'}=0$ unless $m'=m$, so $\phi=\phi_m=\kappa_L\kappa_R\psi_m$, so $\phi\in\cH_r^-\Lambda^{p,q}_m$ by definition.
\end{proof}

Since $\kappa_L$ commutes with $\kappa_R$ and $\kappa_L^2=\kappa_R^2=0$, we see that anything in $\cH_r^-\Lambda^{p,q}$ is in the kernel of both $\kappa_L$ and $\kappa_R$. Through the next few propositions, we will see that this condition almost characterizes $\cH_r^-\Lambda^{p,q}$.

\begin{proposition}\label{prop:cartan}
  On $\cH_r\Lambda^{p,q}$, we have
  \begin{equation*}
    d_L\kappa_L+\kappa_Ld_L=r+p,\qquad d_R\kappa_R+\kappa_Rd_R=r+q.
  \end{equation*}
\end{proposition}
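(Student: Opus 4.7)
The identities are the $(p,q)$-form analogues of the classical Cartan/Euler homotopy formula for polynomial differential forms, and the plan is to reduce both directly to that scalar result. The starting point is the standard identity: for any $\omega\in\cH_r\Lambda^k$, we have $d\kappa\omega+\kappa d\omega=(r+k)\omega$. This follows from Cartan's magic formula $\Lie_{X_{\id}}=d\kappa+\kappa d$ combined with the observation that $\Lie_{X_{\id}}$ acts as multiplication by $r+k$ on $\cH_r\Lambda^k$, since $\Lie_{X_{\id}}x^i=x^i$ and $\Lie_{X_{\id}}\dx^i=\dx^i$, and the Leibniz rule then counts total polynomial-plus-form degree.

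With that in hand, by linearity it suffices to verify the two identities on a basis element $f\dx^{I,J}$ with $f$ homogeneous of degree $r$, $|I|=p$, and $|J|=q$. For the first identity, I would rewrite $f\dx^{I,J}=(f\dx^I)\otimes\dx^J$. Because $d_L$ and $\kappa_L$ act only on the left tensor factor (with $\dx^J$ inert), we obtain
\begin{equation*}
  (d_L\kappa_L+\kappa_Ld_L)(f\dx^{I,J})=\bigl((d\kappa+\kappa d)(f\dx^I)\bigr)\otimes\dx^J,
\end{equation*}
and applying the scalar identity to $f\dx^I\in\cH_r\Lambda^p$ collapses the right-hand side to $(r+p)(f\dx^I)\otimes\dx^J=(r+p)f\dx^{I,J}$.

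For the second identity I would instead write the same element as $\dx^I\otimes(f\dx^J)$ and run the analogous argument on the right factor, which is itself a polynomial $q$-form of degree $r$; the scalar identity then delivers the factor $r+q$. The only thing to notice is that the scalar coefficient $f$ may be freely moved between the two tensor factors by bilinearity of $\otimes$. There is no genuine obstacle here: the entire content of the proposition is that $d_L,\kappa_L$ and $d_R,\kappa_R$ act independently on the two tensor factors, so the classical Cartan/Euler identity on a single factor does all the work.
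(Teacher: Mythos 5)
Your proof is correct and follows essentially the same route as the paper: both reduce to the classical Cartan/Euler identity $d\kappa+\kappa d=\Lie_{X_{\id}}=(r+k)$ on homogeneous polynomial $k$-forms, applied to a single tensor factor of a basis element $f\dx^{I,J}$, with the second identity handled by the symmetric argument on the right factor.
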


\begin{proof}
  Checking on a basis and applying Cartan's formula, we have
  \begin{equation*}
    \begin{split}
      (d_L\kappa_L+\kappa_Ld_L)(f\dx^{I,J})&=\left((d\kappa+\kappa d)\left(f\dx^I\right)\right)\otimes\dx^J\\
      &=\left(\Lie_{X_{\id}}(f\dx^I)\right)\otimes\dx^J\\
      &=\left((r+p)(f\dx^I)\right)\otimes\dx^J.
    \end{split}
  \end{equation*}
  In the last step, we used $\Lie_{X_{\id}}x^i=x^i$ and hence $\Lie_{X_{\id}}\dx^i=\dx^i$, so, using the Leibniz rule, the Lie derivative applied to a differential form with homogeneous polynomial coefficients simply multiplies the form by the total degree, that is, the sum of the polynomial degree and the form degree.

  The claim for the operators on the right factor is analogous.
\end{proof}

\begin{proposition}\label{prop:kkdd}
  If $\phi\in\cH_r\Lambda^{p,q}_m$ and $\kappa_L\phi=\kappa_R\phi=0$, then
  \begin{equation*}
    \begin{split}
      \kappa_L\kappa_Rd_Ld_R\phi&=\left((r+p)(r+q-1)-m(m+p-q+1)\right)\phi\\
      &=(r+p+m)(r+q-m-1)\phi.
    \end{split}
  \end{equation*}
  \begin{proof}
    The idea is to use the commutation relations to move the $\kappa_L$ and $\kappa_R$ operators to the right to get zero. We compute
    \begin{equation*}
      \begin{split}
        \kappa_L\kappa_Rd_Ld_R\phi&=\kappa_R\kappa_Ld_Rd_L\phi\\
        &=(\kappa_Rd_R\kappa_Ld_L-\kappa_Rs^*d_L)\phi\\
        &=((\kappa_Rd_R(r+p)-\kappa_Rd_Rd_L\kappa_L)-(\kappa_Ld_L-s^*\kappa_Rd_L))\phi\\
        &=(((r+p)(r+q)-(r+p)d_R\kappa_R-0)\\
        &\qquad{}-(((r+p)-d_L\kappa_L)-(s^*d_L\kappa_R-s^*s)))\phi\\
        &=((r+p)(r+q)-(r+p)-s^*s)\phi\\
        &=((r+p)(r+q-1)-m(m+p-q+1))\phi\\
        &=(r+p+m)(r+q-m-1)\phi.\qedhere
      \end{split}
    \end{equation*}
  \end{proof}
\end{proposition}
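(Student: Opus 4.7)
The plan is to push the Koszul operators $\kappa_L$ and $\kappa_R$ through the exterior derivatives until they land directly on $\phi$, where they vanish by hypothesis. The leftover commutator terms will telescope into multiples of $\phi$, with coefficients supplied by Proposition~\ref{prop:cartan} (Cartan-type formula) and by the eigenvalue relation $s^*s\phi = m(m+p-q+1)\phi$ (coming from $\phi \in \cH_r\Lambda^{p,q}_m$).

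Concretely, I would first use $\kappa_L\kappa_R = \kappa_R\kappa_L$ and $d_Ld_R = d_Rd_L$ to rewrite the left-hand side as $\kappa_R\kappa_L d_R d_L \phi$. Then I would apply the relation $\kappa_L d_R = d_R\kappa_L - s^*$ (equivalent to $d_R\kappa_L - \kappa_L d_R = s^*$) to split this into $\kappa_R d_R \kappa_L d_L \phi - \kappa_R s^* d_L\phi$. On the first summand, I apply Proposition~\ref{prop:cartan} in the form $\kappa_L d_L = (r+p) - d_L\kappa_L$ and then again (after a $\kappa_R d_R$) on $\cH_r\Lambda^{p,q}$; the term involving $d_L\kappa_L\phi$ dies because $\kappa_L\phi = 0$. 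On the second summand, I rewrite $\kappa_R s^* = \kappa_L - s^*\kappa_R$ using the identity $\kappa_R s^* + s^*\kappa_R = \kappa_L$.

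At this point, every leftover piece simplifies cleanly: $\kappa_R d_R\phi = (r+q)\phi$ by Proposition~\ref{prop:cartan} together with $\kappa_R\phi = 0$; similarly $\kappa_L d_L\phi = (r+p)\phi$; and the last remaining term $s^*\kappa_R d_L\phi$ is handled by $\kappa_R d_L = d_L\kappa_R - s$, which collapses it to $-s^*s\phi = -m(m+p-q+1)\phi$ by the eigenvalue hypothesis. Collecting, I obtain
\[
  \kappa_L\kappa_R d_L d_R\phi = \bigl((r+p)(r+q) - (r+p) - m(m+p-q+1)\bigr)\phi,
\]
which is the first claimed expression. The alternative factored form $(r+p+m)(r+q-m-1)$ then follows by elementary algebraic expansion, since both expressions expand to $(r+p)(r+q-1) + m(q-p-1) - m^2$.

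The main obstacle is purely bookkeeping: choosing the order of commutations so that each auxiliary term is either of the form (operator)$\,\kappa_L\phi$ or (operator)$\,\kappa_R\phi$ (and hence vanishes), or reduces to a scalar multiple of $\phi$ via $s^*s$. No new ideas beyond the commutator identities already established in Section~\ref{sec:doubleeuclidean} are required; the delicacy lies in tracking signs consistently across the four families of relations ($\kappa d + d\kappa$, $\kappa s \pm s\kappa$, $ds \pm sd$, and $d\kappa - \kappa d$).
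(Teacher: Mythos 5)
Your proposal is correct and follows essentially the same route as the paper's proof: commute $\kappa_L\kappa_R d_Ld_R$ into $\kappa_R\kappa_L d_Rd_L$, peel off the commutators $d_R\kappa_L-\kappa_L d_R=s^*$, $\kappa_Rs^*+s^*\kappa_R=\kappa_L$, and $d_L\kappa_R-\kappa_Rd_L=s$, kill every term ending in $\kappa_L\phi$ or $\kappa_R\phi$, and evaluate the survivors via Proposition~\ref{prop:cartan} and the eigenvalue of $s^*s$. The resulting constant $(r+p)(r+q)-(r+p)-m(m+p-q+1)$ and its factorization match the paper exactly.
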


An alternative derivation of this constant via representation theory is given in Appendix~\ref{sec:rep}. We are now ready to understand the relationship between $\ker\kappa_L\cap\ker\kappa_R$ and $\cH_r^-\Lambda^{p,q}$.

\begin{proposition}\label{prop:kerkoszul}
  Let $\phi$ be a nonzero element of $\cH_r\Lambda^{p,q}_m$. We have that $\kappa_L\phi=\kappa_R\phi=0$ if and only if exactly one of the following holds:
  \begin{itemize}
  \item $r=p=q=m=0$, so $\phi$ is a constant scalar field.
  \item $r=1$, $m=q$, and $\phi=i^{p,q}\kappa\psi$ for some $\psi\in\cH_{r-1}\Lambda^{k+1}$, where $k=p+q$ and $i^{p,q}$ is defined in Definition~\ref{def:ipq}.
  \item $r\ge2$, and $\phi\in\cH_r^-\Lambda^{p,q}_m$.
  \end{itemize}
  
  In the last case, we have
  \begin{equation*}
    \phi=\kappa_L\kappa_R\left(C^{-1}d_Ld_R\phi\right),
  \end{equation*}
  where
  \begin{equation*}
    C=(r+p+m)(r+q-m-1).
  \end{equation*}
\end{proposition}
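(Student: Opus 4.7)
The plan is to use Proposition~\ref{prop:kkdd} as the central tool: under $\kappa_L\phi=\kappa_R\phi=0$, that proposition yields $\kappa_L\kappa_R d_L d_R \phi = C\phi$ with $C=(r+p+m)(r+q-m-1)$, and the whole analysis hinges on whether $C$ vanishes.

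For the reverse direction, verifying that each listed condition implies $\kappa_L\phi=\kappa_R\phi=0$ is straightforward. Case~1 is immediate. Case~2 uses the identities
\[
  \kappa_L i^{p,q} = i^{p-1,q}\kappa, \qquad \kappa_R i^{p,q} = (-1)^p i^{p,q-1}\kappa,
\]
which I would verify by a brief pointwise calculation analogous to Proposition~\ref{prop:si}, combined with $\kappa^2=0$. Case~3 follows from $\kappa_L^2=\kappa_R^2=0$ and commutativity of $\kappa_L$ and $\kappa_R$.

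For the forward direction, assume $\phi\ne 0$ satisfies $\kappa_L\phi=\kappa_R\phi=0$. If $C\ne 0$, the identity above rearranges to $\phi=\kappa_L\kappa_R(C^{-1}d_Ld_R\phi)$, and since $d_Ld_R\phi\in\cH_{r-2}\Lambda^{p+1,q+1}_m$ by Proposition~\ref{prop:dddecomposition}, we have $\phi\in\cH_r^-\Lambda^{p,q}_m$; this simultaneously forces $r\ge 2$ (otherwise $\cH_{r-2}$ is zero) and establishes the final formula. If instead $C=0$, then because $r,p,m\ge 0$, either $r+p+m=0$, forcing $r=p=m=0$ and then $q=0$ via $\max\{0,q-p\}\le m$ (case~1), or $r+q=m+1\le q+1$, forcing $r\in\{0,1\}$.

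For the sub-case $r=1$, $m=q$: Proposition~\ref{prop:imageipq} together with the fact that $i^{p,q}$ preserves polynomial degree gives $\phi=i^{p,q}\alpha$ for some $\alpha\in\cH_1\Lambda^k$ with $k=p+q$. Using the identities above and the injectivity of $i^{p-1,q}$ (when $p\ge 1$) or $i^{p,q-1}$ (when $q\ge 1$) gives $\kappa\alpha=0$; when $p=q=0$, $\kappa\alpha$ lies in $\cH_2\Lambda^{-1}=0$ vacuously. Applying Cartan's formula $d\kappa+\kappa d=(1+k)\id$ on $\cH_1\Lambda^k$ then yields $\alpha=\kappa\psi$ with $\psi=(1+k)^{-1}d\alpha\in\cH_0\Lambda^{k+1}$, which is case~2. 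The remaining sub-case $r=0$, $m=q-1$ (which forces $p,q\ge 1$ via $m,m^*\ge 0$) must be excluded: decomposing $\phi$ along its right index reduces $\kappa_L\phi=0$ to the injectivity of $\kappa\colon\cH_0\Lambda^p\to\cH_1\Lambda^{p-1}$ for $p\ge 1$, itself another consequence of Cartan's formula, which forces $\phi=0$---a contradiction. Ruling out this spurious sub-case is the subtlest point of the argument.
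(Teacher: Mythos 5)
Your proof is correct and follows essentially the same route as the paper's: both hinge on Proposition~\ref{prop:kkdd} and the vanishing or not of $C$, both use Proposition~\ref{prop:imageipq} together with the identities $\kappa_Li^{p,q}=i^{p-1,q}\kappa$, $\kappa_Ri^{p,q}=(-1)^pi^{p,q-1}\kappa$ and Cartan's formula to handle the $r=1$, $m=q$ case, and both read off the closing formula from the $C\neq0$ rearrangement. The only organizational difference is that the paper dispatches $r=0$ up front via Proposition~\ref{prop:cartan} (which forces $r+p=r+q=0$ directly), so the sub-case $r=0$, $m=q-1$ never arises; your $C=0$ dichotomy surfaces it, and your component-wise injectivity argument for $\kappa$ on $\cH_0\Lambda^p$ eliminates it correctly.
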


\begin{proof}
  It is easy to check that, in any of these three cases, $\kappa_L\phi=\kappa_R\phi=0$. In the first case, $\phi$ is a $(0,0)$-form, so $\kappa_L\phi=\kappa_R\phi=0$. In the second case, it is easy to check from the definition of $i^{p,q}$ that $\kappa_Li^{p,q}=i^{p-1,q}\kappa$ and $\kappa_Ri^{p,q}=(-1)^pi^{p,q-1}\kappa$, so $\kappa_L\phi=i^{p-1,q}\kappa^2\psi=0$ and $\kappa_R\phi=(-1)^pi^{p,q-1}\kappa^2\psi=0$. Finally, in the third case, by definition, $\phi=\kappa_L\kappa_R\psi$ for some $\psi$, so $\phi$ is in the kernel of $\kappa_L$ and $\kappa_R$ because the two operators commute and square to zero.
  
  Assume now that $\phi$ is in the kernel of both $\kappa_L$ and $\kappa_R$. We must prove that we are in one of the three cases. If $r=0$, then $\phi$ is constant, and so $d_L\phi=d_R\phi=0$. Along with the assumption that $\kappa_L\phi=\kappa_R\phi=0$, Proposition~\ref{prop:cartan} tells us that $r+p=r+q=0$, from which we conclude that $p=q=r=0$, so $\cH_r\Lambda^{p,q}$ is simply the space of constant scalar fields. We also have $m=0$ since $0\le m\le q$.

  Assume henceforth that $r\ge1$. Since $r\ge1$, the factor $r+p+m$ of $C$ must be positive. Recall that, because $\Lambda^{p,q}_m$ is nonempty, we have $m\le q$. So, the second factor $r+q-m-1$ is positive except when $r=1$ and $m=q$. So, apart from the case $r=1$ and $m=q$, we have $C>0$.
  
  If $C>0$, then Proposition~\ref{prop:kkdd} tells us that
  \begin{equation*}
    \phi=\kappa_L\kappa_R\left(C^{-1}d_Ld_R\phi\right).
  \end{equation*}
  Since $d_L$ and $d_R$ lower polynomial degree by one, we have that $C^{-1}d_Ld_R\phi\in\cH_{r-2}\Lambda^{p+1,q+1}$, so $\phi\in\cH_r^-\Lambda^{p,q}$, as desired. In particular, $r\ge2$.

  So then it remains to consider the case $r=1$. In this case, $d_Ld_R\phi=0$ because $d_Ld_R$ lowers polynomial degree by two, so $C=0$ by Proposition~\ref{prop:kkdd}. As discussed, $C=0$ implies $r=1$ and $m=q$. Since $m=q$, by Proposition~\ref{prop:imageipq}, we have that $\phi=i^{p,q}\phi'$ for some $k$-form $\phi'$. Since $\phi$ and $\phi'$ are equal as $k$-tensors, $\phi'$ likewise has homogeneous polynomial coefficients of degree $r$. We claim that $\kappa\phi'=0$. This claim is trivial if $k=0$. Otherwise, $p\ge1$ or $q\ge1$. If $p\ge1$, then we use $0=\kappa_L\phi=i^{p-1,q}\kappa\phi'$, which implies that $\kappa\phi'=0$ because $i^{p-1,q}$ is an inclusion. If $q\ge1$, we reason similarly using $\kappa_R$. By Cartan's formula, we have $(d\kappa+\kappa d)\phi'=(r+k)\phi'$, so, using $r\ge1$ and $\kappa\phi'=0$, we have $\phi'=\kappa\psi$, where $\psi=(r+k)^{-1}\D\phi'$. Since $d$ lowers polynomial degree, we have that $\psi\in\cH_{r-1}\Lambda^{k+1}$, as desired.
\end{proof}

\section{Extending double forms on the simplex} \label{sec:extension}
For it to be possible to construct geometrically decomposable finite element spaces of double forms, a key requirement is that we be able to extend a double form with vanishing trace on the standard simplex $T^n$ to a double form on $\R^{n+1}$ with vanishing trace on the coordinate hyperplanes \cite{berchenko2025extension}. As we will see, doing so is possible except when $r=0$ and $m=q$.

We begin with definitions. Then, in Section~\ref{sec:simplexspherehodge}, we review some results from \cite{berchenkokogan2021duality} regarding the coordinate transformation $\lambda_i=u_i^2$ between the simplex $T^n$ and the sphere $S^n$. As we will see, the extension problem becomes easier after passing to the sphere. In Section~\ref{sec:overview}, we give a high-level overview of the extension construction and provide examples. In Sections~\ref{sec:pullback} and \ref{sec:doublehodgestar}, we delve into the details, adapting the results in \cite{berchenkokogan2021duality} to the double form setting, and setting up everything we need to prove the extension theorem in Section~\ref{sec:extensionconstruction}.

\begin{definition}
  Let $T^n$ denote the standard simplex in $\R^{n+1}$. Specifically,
  \begin{equation*}
    T^n=\{(\lambda_0,\dots,\lambda_n)\mid\lambda_i\ge0,\sum_i\lambda_i=1\}.
  \end{equation*}
\end{definition}

\begin{definition}
  Let $\cP_r(T^n)$ denote the space of polynomials on $T^n$ of degree at most $r$. We define the spaces $\cP_r\Lambda^k(T^n)$, $\cP_r\Lambda^{p,q}(T^n)$, and $\cP_r\Lambda^{p,q}_m(T^n)$ to be the corresponding spaces of forms or double forms with polynomial coefficients of degree at most $r$.
\end{definition}

\begin{definition}
  We have a natural inclusion of the boundary $\partial T^n\hookrightarrow T^n$. We say that a form or double form has \emph{vanishing trace} if it vanishes when pulled back under this inclusion, or, equivalently, that the tensor vanishes at $\partial T^n$ on vectors tangent to $\partial T^n$. We let $\oP_r\Lambda^k(T^n)$, $\oP_r\Lambda^{p,q}(T^n)$, and $\oP_r\Lambda^{p,q}_m(T^n)$ be the vanishing trace subspaces of the corresponding space.
\end{definition}

Note that the boundary of $T^n$ is the set of points $(\lambda_0,\dotsc,\lambda_n)$ in $T^n$ such that $\lambda_i=0$ for some $i$. This observation motivates a corresponding definition for forms on $\R^{n+1}$.

\begin{definition}
  For each $i$, we have a natural inclusion of the coordinate hyperplanes $\{\lambda_i=0\}\hookrightarrow\R^{n+1}$. We say that a form or double form on $\R^{n+1}$ has \emph{vanishing trace} if it vanishes when pulled back under this inclusion for all $i$. We let $\oH_r\Lambda^k(\R^{n+1})$, $\oH_r\Lambda^{p,q}(\R^{n+1})$, and $\oH_r\Lambda^{p,q}_m(\R^{n+1})$ be the vanishing trace subspaces of the corresponding space.
\end{definition}

Pulling back via the inclusion $T^n\hookrightarrow\R^{n+1}$, we can restrict a double form on $\R^{n+1}$ to a double form on $T^n$. Extension is the inverse of this operation.

\begin{definition}
  We say that a double form $\phi$ on $\R^{n+1}$ is an \emph{extension} of a double form $\bar\phi$ on $T^n$ if $\bar\phi$ is the pullback of $\phi$ via the inclusion $T^n\hookrightarrow\R^{n+1}$.
\end{definition}

Without the vanishing trace condition, extension is easy.

\begin{proposition}
  Every form in $\cP_r\Lambda^{p,q}_m(T^n)$ can be extended to a form in $\cH_r\Lambda^{p,q}_m(\R^{n+1})$.
\end{proposition}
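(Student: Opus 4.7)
The plan is a two-step construction: first build some extension $\tilde\phi\in\cH_r\Lambda^{p,q}(\R^{n+1})$ of $\bar\phi$ without worrying about the decomposition, and then project $\tilde\phi$ onto the $m$-th summand, using Proposition~\ref{prop:pullbackdecomposition} to see that the projection still restricts to $\bar\phi$.

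For the first step, I would work in the chart $(\lambda_1,\dots,\lambda_n)$ on $T^n$ (where $\lambda_0=1-\lambda_1-\dots-\lambda_n$) and write
\[
  \bar\phi=\sum_{I,J}f_{I,J}\,\dl^I\otimes\dl^J,
\]
summed over increasing multi-indices $I,J\subseteq\{1,\dots,n\}$ of sizes $p$ and $q$, where each $f_{I,J}$ is a polynomial of degree at most $r$ in $\lambda_1,\dots,\lambda_n$. To homogenize each coefficient, I would replace every monomial $\lambda^\alpha$ of degree $d\le r$ by $\lambda^\alpha(\lambda_0+\dots+\lambda_n)^{r-d}$, producing $\tilde f_{I,J}\in\cH_r(\R^{n+1})$. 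Because $\sum_i\lambda_i=1$ on $T^n$, the pullback of $\tilde f_{I,J}$ to $T^n$ recovers $f_{I,J}$. Setting $\tilde\phi=\sum_{I,J}\tilde f_{I,J}\,\dl^I\otimes\dl^J\in\cH_r\Lambda^{p,q}(\R^{n+1})$ then gives a form whose pullback to $T^n$ is $\bar\phi$, since the indices $I,J$ avoid $0$ and each $\dl^I\otimes\dl^J$ therefore pulls back to itself in the chart.

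For the second step, the decomposition $\Lambda^{p,q}(\R^{n+1})=\bigoplus_{m'}\Lambda^{p,q}_{m'}(\R^{n+1})$ is defined pointwise by the purely algebraic operator $s^*s$, so it preserves polynomial coefficients: decomposing $\tilde\phi=\sum_{m'}\tilde\phi_{m'}$, each $\tilde\phi_{m'}$ lies in $\cH_r\Lambda^{p,q}_{m'}(\R^{n+1})$. By Proposition~\ref{prop:pullbackdecomposition}, pullback respects the decomposition, so $\tilde\phi_{m'}|_{T^n}\in\Lambda^{p,q}_{m'}(T^n)$. Since $\sum_{m'}\tilde\phi_{m'}|_{T^n}=\bar\phi\in\Lambda^{p,q}_m(T^n)$, uniqueness of the direct sum on $T^n$ forces $\tilde\phi_m|_{T^n}=\bar\phi$ and $\tilde\phi_{m'}|_{T^n}=0$ for $m'\neq m$. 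Therefore $\phi:=\tilde\phi_m\in\cH_r\Lambda^{p,q}_m(\R^{n+1})$ is the required extension. I do not foresee any serious obstacle: the two issues---preserving polynomial degree and preserving the decomposition summand---decouple cleanly, handled respectively by homogenizing with powers of $\sum_i\lambda_i$ and by the algebraic nature of the decomposition together with Proposition~\ref{prop:pullbackdecomposition}.
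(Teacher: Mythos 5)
Your proof is correct and follows essentially the same route as the paper's: homogenize the polynomial coefficients using powers of $\lambda_0+\dots+\lambda_n$ to get an arbitrary extension in $\cH_r\Lambda^{p,q}(\R^{n+1})$, then use the fact that pullback respects the decomposition (Proposition~\ref{prop:pullbackdecomposition}) to see that the $\Lambda^{p,q}_m$-component of that extension still restricts to $\bar\phi$. The paper phrases the second step as surjectivity of $\bigwedge^{p,q}_mV^*\to\bigwedge^{p,q}_mH^*$ via a tensor-product splitting, but the underlying argument is the same.
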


\begin{proof}
  Observe that $\cH_r\Lambda^{p,q}_m(\R^{n+1})=\cH_r(\R^{n+1})\otimes\bigwedge^{p,q}_mV^*$, where $V=T_x\R^{n+1}$. Note that $V$ is itself just $\R^{n+1}$, and hence independent of $x$, but we use the notation $V$ to maintain the distinction between $\R^{n+1}$ as a vector space and $\R^{n+1}$ as a manifold. Likewise, $\cP_r\Lambda^{p,q}_m(T^n)=\cP_r(T^n)\otimes\bigwedge^{p,q}_mH^*$, where $H=T_xT^n$, a hyperplane of $V$. As a result, we can prove the proposition by proving two independent claims: The first claim is that polynomials on $T^n$ can be extended to \emph{homogeneous} polynomials on $\R^{n+1}$. The second claim is that the vector space map $\bigwedge^{p,q}_mV^*\to\bigwedge^{p,q}_mH^*$ is surjective.

  The extension of polynomials is the standard homogenization procedure. Given a polynomial $\bar f\in\cP_r(T^n)$, we can write it as a sum of monomials in the variables $\lambda_1,\dots,\lambda_n$ of degrees varying from $0$ to $r$. We obtain $f\in\cH_r(\R^{n+1})$ by multiplying each term by an appropriate power of $\lambda_0+\dots+\lambda_n$ so that the resulting term has degree exactly $r$. Since $\lambda_0+\dots+\lambda_n=1$ on $T^n$, the polynomial $f$ has the same values on $T^n$ as $\bar f$.

  For the linear algebra problem, since $H\hookrightarrow V$ is injective, we have that $V^*\to H^*$ is surjective, and hence so is $\bigwedge^{p,q}V^*\to\bigwedge^{p,q}H^*$. The compatibility with the decomposition follows from the fact that the pullback operation respects the decomposition in Proposition~\ref{prop:pullbackdecomposition}.
\end{proof}

With the vanishing trace condition, the question is more complicated. Certainly, double forms on $\R^{n+1}$ with vanishing trace restrict to double forms on $T^n$ with vanishing trace. However, this map need not be surjective. As we will see, if $r=0$ and $m=q$, then it is generally not be possible to extend a double form in $\oP_r\Lambda^{p,q}_m(T^n)$ to a double form in $\oH_r\Lambda^{p,q}_m(\R^{n+1})$. However, as we will also see, apart from this exceptional case, extension is always possible, via an explicit construction.

This construction relies on some ideas from \cite{berchenkokogan2021duality}; we briefly review the key ideas we will need.

\subsection{The simplex, the sphere, and the Hodge star}\label{sec:simplexspherehodge}
One of the key ideas from \cite{berchenkokogan2021duality} is a coordinate transformation between the simplex and the sphere:
\begin{definition}
  Let $\Phi\colon\R^{n+1}\to\R^{n+1}$ be defined by
  \begin{equation*}
    (\lambda_0,\dots,\lambda_n)=\Phi(u_0,\dots,u_n)=(u_0^2,\dotsc,u_n^2).
  \end{equation*}
\end{definition}
Noting that $\lambda_i=u_i^2\ge0$ and that $u_0^2+\dots+u_n^2=1$ is equivalent to $\lambda_0+\dots+\lambda_n=1$, we see that $\Phi$ maps the unit sphere $S^n$ to the standard simplex $T^n$.

\begin{notation}
  Because of the presence of squares, we will henceforth use subscript notation for coordinates, rather than the Einstein notation of superscripts and subscripts.
\end{notation}

As we will see, one of the key benefits of this coordinate transformation is that it turns vanishing trace into full vanishing on the coordinate hyperplanes. To illustrate, observe that $\dl_i$ has vanishing trace on the hyperplane $\{\lambda_i=0\}$. Indeed, $\dl_i$ vanishes on any vector tangent to the hyperplane. However, it does not vanish on vectors that are not tangent to the hyperplane, such as $\pp{\lambda_i}$. In contrast, the pullback of $\dl_i$ under the transformation $\lambda_i=u_i^2$ is $2u_i\du_i$, which is identically zero on the hyperplane $\{u_i=0\}$, vanishing on all vectors, not just those tangent to $\{u_i=0\}$.

Another key idea from \cite{berchenkokogan2021duality} is the relationship between the Hodge star on the sphere and the Koszul operator. To illustrate, observe that the Hodge star on one-forms on the two-sphere is just $90^\circ$ rotation, which can be realized on vector proxies by taking the cross product with the normal vector. The normal vector on the sphere, however, is just the tautological vector field $X_{\id}$ in the definition of the Koszul operator.

\begin{definition}
  We define the \emph{tautological covector field}
  \begin{equation*}
    \nu:=\sum_{i=0}^nu_i\du_i.
  \end{equation*}
\end{definition}
As the name suggests, $\nu=X_{\id}^\flat$ with respect to the standard metric on the $(u_0,\dots,u_n)$ coordinate system. As the notation suggests, restricted to the unit sphere, $\nu$ is the unit conormal.

\begin{definition}\label{def:starsn}
  We define an operator $\starop_{S^n}\colon\Lambda^k(\R^{n+1})\to\Lambda^{n-k}(\R^{n+1})$ by
  \begin{equation*}
    \starop_{S^n}\alpha:=\starop_{\R^{n+1}}(\nu\wedge\alpha),
  \end{equation*}
  where $\starop_{\R^{n+1}}$ is the usual Hodge star operator on $\R^{n+1}$, with the subscript $\R^{n+1}$ added for clarity.
\end{definition}

As the notation suggests, if we restrict to the sphere, then $\starop_{S^n}$ is the Hodge star on the sphere.

\begin{proposition}[{\cite[Proposition 2.16]{berchenkokogan2021duality}}]\label{prop:starsn}
  Let $\alpha$ be a $k$-form on $\R^{n+1}$, and let $\bar\alpha\in\Lambda^k(S^n)$ be the pullback of $\alpha$ under the inclusion $S^n\hookrightarrow\R^{n+1}$. Then $\starop_{S^n}\bar\alpha$ is the pullback of $\starop_{S^n}\alpha$, where $\starop_{S^n}\bar\alpha$ refers to the Hodge star operator on the sphere, and $\starop_{S^n}\alpha$ refers to Definition~\ref{def:starsn}.
\end{proposition}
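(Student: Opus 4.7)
The plan is to verify the identity pointwise at an arbitrary $p\in S^n$ by choosing an adapted orthonormal frame. Since $X_{\id}|_p=\sum_i u_i\,\partial/\partial u_i$ equals $p$ itself and $\lvert p\rvert=1$, the vector $e_0:=X_{\id}|_p$ is a unit vector normal to $T_p S^n$. Extend it to an orthonormal basis $e_0,e_1,\dots,e_n$ of $T_p\R^{n+1}$ with $e_1,\dots,e_n\in T_pS^n$. The dual basis then satisfies $e^0=\nu|_p$ because $\nu=X_{\id}^\flat$, while $e^1,\dots,e^n$ restrict under $i^*$ to the dual basis of $T_pS^n$.

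Next I would decompose $\alpha|_p=\alpha_\parallel+e^0\wedge\alpha_\perp$, where $\alpha_\parallel,\alpha_\perp$ lie in the subalgebra generated by $e^1,\dots,e^n$. Two observations make the calculation collapse. First, $\nu\wedge\alpha|_p=e^0\wedge\alpha_\parallel$ since $e^0\wedge e^0=0$. Second, $i^*\alpha|_p=\alpha_\parallel$ because $i^*e^0=0$ (the covector $e^0=\nu|_p$ annihilates every tangent vector to $S^n$). Consequently
\begin{equation*}
  i^*\bigl(\starop_{S^n}\alpha\bigr)\big|_p = i^*\starop_{\R^{n+1}}\bigl(e^0\wedge\alpha_\parallel\bigr),
\end{equation*}
and the intrinsic Hodge star to check against is $\starop_{T_pS^n}\alpha_\parallel$ computed with respect to $e^1,\dots,e^n$.

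The remaining step is the following well-known identity for a hypersurface with unit conormal: for any $\beta\in\bigwedge^k\operatorname{span}(e^1,\dots,e^n)^*$,
\begin{equation*}
  \starop_{\R^{n+1}}(e^0\wedge\beta)=\starop_{T_pS^n}\beta,
\end{equation*}
viewed as an element of the same subalgebra. I would verify this on basis elements $\beta=e^I$ with $I\subseteq\{1,\dots,n\}$: both sides equal $\pm e^{I^c}$, and the signs agree because the permutation $(0,I,I^c)\mapsto(0,1,\dots,n)$ has the same parity as $(I,I^c)\mapsto(1,\dots,n)$. Pulling back by $i^*$ is then a no-op on such forms, and the identity $i^*\starop_{S^n}\alpha=\starop_{T_pS^n}(i^*\alpha)$ follows at $p$, hence everywhere on $S^n$.

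The only mild subtlety is bookkeeping the signs in the last step and keeping clear the distinction between $\starop_{S^n}$ as an operator on ambient forms (Definition~\ref{def:starsn}) and as the intrinsic Hodge star on $S^n$; the adapted frame makes both sides manifestly equal and should present no real obstacle.
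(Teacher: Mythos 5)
Your proof is correct. Note that the paper does not actually prove this proposition; it is quoted from \cite[Proposition 2.16]{berchenkokogan2021duality}, so you have supplied a self-contained argument for a cited result. The adapted-frame computation is sound: $e^0=\nu|_p$ kills the normal component under $i^*$, $\nu\wedge{}$ kills it on the other side, and the identity $\starop_{\R^{n+1}}(e^0\wedge e^I)=\pm e^{I^c}=\starop_{T_pS^n}e^I$ holds with matching signs because prepending the fixed index $0$ does not change the parity of the shuffle $(I,I^c)\mapsto(1,\dots,n)$. The only convention you are implicitly invoking is that $S^n$ is oriented so that the outward normal followed by a positively oriented tangent frame is positively oriented in $\R^{n+1}$ (this is what makes $e_1,\dots,e_n$ a positively oriented basis of $T_pS^n$); that is the standard convention and the one used in the cited reference, but it is worth stating explicitly since the sign of $\starop_{S^n}$ on the sphere depends on it.
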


The Koszul operator $\kappa$ is the contraction with $X_{\id}$, which is adjoint to wedging with $\nu$, yielding the following relationship.
\begin{proposition}\label{prop:starkoszul}
  For $\alpha\in\Lambda^k(\R^{n+1})$, we have
  \begin{equation*}
    \starop_{S^n}\alpha=(-1)^k\kappa(\starop_{\R^{n+1}}\alpha).
  \end{equation*}
\end{proposition}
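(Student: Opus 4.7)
The plan is to verify the identity by a standard adjointness argument: pair both sides with an arbitrary test $(k+1)$-form $\gamma$ and use the defining property $\gamma\wedge\starop_{\R^{n+1}}\beta = \langle\gamma,\beta\rangle\,\omega$ of the Hodge star, where $\omega=\du_0\wedge\dots\wedge\du_n$ is the Euclidean volume form in the $u$-coordinates. Since the coordinate frame $\{\partial/\partial u_i\}$ is orthonormal, we have $X_{\id}=\nu^\sharp$, so the operators $\nu\wedge$ and $X_{\id}\contract$ are adjoints on forms.

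Using this adjointness together with the defining property of the Hodge star, for any $(k+1)$-form $\gamma$ and $k$-form $\alpha$ I would compute
\begin{equation*}
  \gamma\wedge\starop_{\R^{n+1}}(\nu\wedge\alpha)
  =\langle\gamma,\nu\wedge\alpha\rangle\,\omega
  =\langle X_{\id}\contract\gamma,\alpha\rangle\,\omega
  =(X_{\id}\contract\gamma)\wedge\starop_{\R^{n+1}}\alpha.
\end{equation*}

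Next, observe that $\gamma\wedge\starop_{\R^{n+1}}\alpha$ has degree $(k+1)+(n+1-k)=n+2$, which exceeds the ambient dimension and so vanishes identically. Applying $X_{\id}\contract$ to this zero form and using the Leibniz rule for interior product yields
\begin{equation*}
  0
  =X_{\id}\contract\bigl(\gamma\wedge\starop_{\R^{n+1}}\alpha\bigr)
  =(X_{\id}\contract\gamma)\wedge\starop_{\R^{n+1}}\alpha+(-1)^{k+1}\gamma\wedge\bigl(X_{\id}\contract\starop_{\R^{n+1}}\alpha\bigr),
\end{equation*}
so $(X_{\id}\contract\gamma)\wedge\starop_{\R^{n+1}}\alpha=(-1)^k\,\gamma\wedge\kappa(\starop_{\R^{n+1}}\alpha)$. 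Combining this with the previous display gives $\gamma\wedge\starop_{\R^{n+1}}(\nu\wedge\alpha)=(-1)^k\,\gamma\wedge\kappa(\starop_{\R^{n+1}}\alpha)$ for every $(k+1)$-form $\gamma$; non-degeneracy of the wedge pairing between $(k+1)$-forms and $(n-k)$-forms then forces $\starop_{\R^{n+1}}(\nu\wedge\alpha)=(-1)^k\,\kappa(\starop_{\R^{n+1}}\alpha)$, which by Definition~\ref{def:starsn} is the claim.

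There is no real obstacle; the only mildly clever step is exploiting the vanishing of the fictitious top-degree form $\gamma\wedge\starop_{\R^{n+1}}\alpha$ to convert a wedge identity into a contraction identity. As a sanity check, one could instead verify the formula directly on the basis $\alpha=\du_I$ by expanding $\nu\wedge\du_I=\sum_{j\notin I}u_j\,\du_j\wedge\du_I$ and carefully tracking shuffle signs, but the adjointness approach avoids this bookkeeping entirely.
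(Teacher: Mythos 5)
Your proof is correct, and it rests on the same key identity as the paper's: the paper simply cites $X_{\id}\contract(\starop_{\R^{n+1}}\alpha)=\starop_{\R^{n+1}}(\alpha\wedge\nu)$ from \cite[Proposition B.1]{berchenkokogan2021duality} and adjusts the sign, whereas you derive that identity from scratch via the wedge pairing, the adjointness of $\nu\wedge{}$ and $X_{\id}\contract{}$, and the vanishing of the overflow form $\gamma\wedge\starop_{\R^{n+1}}\alpha$. The signs and degree counts in your argument all check out, so the only difference is that your version is self-contained where the paper's is a one-line citation.
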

\begin{proof}
  Since $\nu=X_{\id}^\flat$, we have $X_{\id}\contract(\starop_{\R^{n+1}}\alpha)=\starop_{\R^{n+1}}(\alpha\wedge\nu)$; see, for example, \cite[Proposition B.1]{berchenkokogan2021duality}. We then compute  
  \begin{equation*}
    \kappa(\starop_{\R^{n+1}}\alpha)=\starop_{\R^{n+1}}(\alpha\wedge\nu)=(-1)^k\starop_{\R^{n+1}}(\nu\wedge\alpha)=(-1)^k\starop_{S^n}\alpha.\qedhere
  \end{equation*}
\end{proof}

\subsection{An overview of the extension construction}\label{sec:overview}
Before we proceed with the extension construction, we give an overview of how it will work, along with some examples.

Given a polynomial double form $\bar\phi$ on $T^n$ with vanishing trace, we can extend it to a double form $\phi$ with homogeneous coefficients on $\R^{n+1}$. Note that, by homogeneity, the fact that $\phi$ vanishes when pulled back to $\partial T^n$ implies that it also vanishes when pulled back to any dilation $c\partial T^n$, where $c\in\R$. The union of these dilations is the union of the hyperplanes $\{\lambda_i=0\}$, so one might ask why $\phi$ does not automatically vanish when pulled back to the hyperplanes $\{\lambda_i=0\}$, which is the vanishing trace condition for $\R^{n+1}$. The answer is that the vanishing trace condition on $\R^{n+1}$ requires that $\phi$ vanish at the hyperplane for all vectors tangent to the hyperplane. On the other hand, we only have vanishing on vectors tangent to the dilates $c\partial T^n$, so, as illustrated in Figure~\ref{fig:dilates}, $\phi$ does not have to vanish if we input a vector that is tangent to $\{\lambda_i=0\}$ but not tangent to $c\partial T^n$. As we will see, this issue is the key issue that needs to be resolved to construct an extension with vanishing trace.

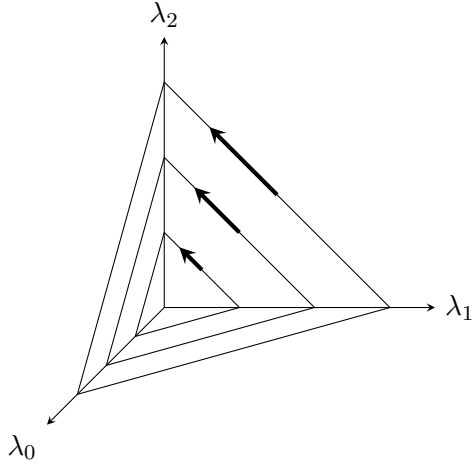
\begin{figure}
  \begin{tikzpicture}[scale=3]
    \coordinate (O) at (0,0,0);
    
    \coordinate (X) at (1,0,0);
    \coordinate (Y) at (0,1,0);
    \coordinate (Z) at (0,0,1);
    
    \coordinate (X2) at (0.666,0,0);
    \coordinate (Y2) at (0,0.666,0);
    \coordinate (Z2) at (0,0,0.666);
    
    \coordinate (X3) at (0.3333,0,0);
    \coordinate (Y3) at (0,0.333,0);
    \coordinate (Z3) at (0,0,0.333);

    \draw[ ->,>=stealth] (O) -- (1.2,0,0) node[anchor=west]{$\lambda_1$};
    \draw[ ->,>=stealth] (O) -- (0,1.2,0) node[anchor=south]{$\lambda_2$};
    \draw[ ->,>=stealth] (O) -- (0,0,1.35) node[anchor=north east]{$\lambda_0$};

    \draw[] (X) -- (Y) -- (Z) -- cycle;
    \draw[] (X2) -- (Y2) -- (Z2) -- cycle;
    \draw[] (X3) -- (Y3) -- (Z3) -- cycle;
    
    \coordinate (M) at (0.5,0.5,0); 
    \coordinate (M2) at (0.333,0.333,0); 
    \coordinate (M3) at (0.167,0.167,0); 
    \draw[->,ultra thick,>=stealth] (M) -- +(-0.3,0.3,0);
    \draw[->,ultra thick,>=stealth] (M2) -- +(-0.2,0.2,0);
    \draw[->,ultra thick,>=stealth] (M3) -- +(-0.1,0.1,0);

  \end{tikzpicture}
  \caption{A form which vanishes on vectors tangent to dilates of $\partial T^n$ need not vanish on all vectors tangent to the coordinate hyperplanes.}
  \label{fig:dilates}
\end{figure}

One way to think about the construction is that we will bring $\bar\phi$ to a venue where extension to $\R^{n+1}$ is automatic, and then return it back once we have extended it. As we will see in Appendix~\ref{sec:rep}, this ``venue'' is more natural not only for solving this extension problem but also for understanding the representation theory of the vanishing trace spaces, yielding, for example, the dimensions of these spaces.

So, we proceed by computing the pullback $\psi=\Phi^*\phi$. Letting $\bar\psi=\Phi^*\bar\phi$ we have that $\bar\psi$ is the pullback of $\psi$ to $S^n$ via $S^n\hookrightarrow\R^{n+1}$. As we discussed, $\bar\psi$ has vanishing trace in a stronger sense. Specifically, at $S^n\cap\{u_i=0\}$, $\bar\psi$ vanishes on all vectors tangent to $S^n$, not just those tangent to $S^n\cap\{u_i=0\}$. By homogeneity, we conclude that $\psi$ vanishes at $\{u_i=0\}$ on all vectors tangent to the dilates $cS^n$. However, as before, in general, $\psi$ will not vanish if we input a vector that is not tangent to $cS^n$, such as $X_{\id}$.

So, now we apply $\ostar_{S^n}$, defined by applying $\star_{S^n}$ to both factors of the double form. At points in $S^n\cap\{u_i=0\}$, since $\psi$ vanishes on all vectors tangent to $S^n$, so does $\ostar_{S^n}\psi$. However, by Proposition~\ref{prop:starkoszul}, we see that $\ostar_{S^n}\psi=(-1)^k\kappa_L\kappa_R\ostar_{\R^{n+1}}\psi$, where $k=p+q$. Therefore, $\ostar_{S^n}\psi$ is in the image of both $\kappa_L$ and $\kappa_R$, and hence in the kernel of both $\kappa_L$ and $\kappa_R$, which we recall are contraction with $X_{\id}$. Thus, unlike $\psi$, we have that $\ostar_{S^n}\psi$ vanishes if we input $X_{\id}$. Since $X_{\id}$ along with the vectors tangent to $S^n$ span the entire tangent space to $\R^{n+1}$, we conclude that $\ostar_{S^n}\psi$ is zero on all vectors at points in $S^n\cap\{u_i=0\}$. By homogeneity, we conclude that $\ostar_{S^n}\psi$ is zero on all vectors at all points in the hyperplane $\{u_i=0\}$. Consequently, all of the polynomial coefficients of $\ostar_{S^n}\psi$ are divisible by $u_i$, so $\ostar_{S^n}\psi$ is divisible by $u_N:=u_0\dotsm u_n$.

So now we divide by $u_N$ and consider $u_N^{-1}\ostar_{S^n}\psi$. By this point, our double form on $S^n$ is very different from what we started with, so our task now is to undo this whole process as far as $S^n$ is concerned. Since $\ostar_{S^n}=(-1)^k\kappa_L\kappa_R\ostar_{\R^{n+1}}$ and $\ostar_{\R^{n+1}}$ is invertible, the task amounts to inverting $\kappa_L\kappa_R$. Proposition~\ref{prop:kerkoszul} is exactly the tool for the job. Since $\ostar_{S^n}\psi$ is in the kernel of both $\kappa_L$ and $\kappa_R$, so is $u_N^{-1}\ostar_{S^n}\psi$, so Proposition~\ref{prop:kerkoszul} applies, and we have $u_N^{-1}\ostar_{S^n}\psi=\kappa_L\kappa_RC^{-1}d_Ld_R(u_N^{-1}\ostar_{S^n}\phi)$, so $(-1)^k\ostar_{R^{n+1}}^{-1}C^{-1}d_Ld_R\phi$ is the desired inverse image of $u_N^{-1}\ostar_{S^n}\psi$ under $\ostar_{S^n}$. Note that $\ostar_{S^n}$ is not injective on double forms on $\R^{n+1}$, so we do not simply get $u_N^{-1}\psi$. On the other hand, $\ostar_{S^n}$ is certainly bijective on double forms on $S^n$, so the restriction to $S^n$ is indeed simply $u_N^{-1}\bar\psi$.

Our penultimate step is simply to multiply back by $u_N$. Then, restricted to the sphere, we have $\bar\psi$. Meanwhile, on $\R^{n+1}$, we have something that, being a multiple of $u_N$, manifestly vanishes on the hyperplanes. Pushing forward via $\Phi$, we obtain an extension of $\bar\phi$ that has vanishing trace on the hyperplanes, as desired.

In the remainder of this section, we will prove that each step works as described in this overview, but we first provide an example and a counterexample.

\begin{example}
  Let $n=1$, and let $\D \ell$ be the length element of $T^1$, normalized so that the length of $T^1$ is one. Let $\bar\phi=\D \ell\otimes\D \ell$. Since we have a $(1,1)$-form and the boundary of $T^1$ is zero-dimensional, we know that $\bar\phi$ has vanishing trace. Since $\bar\phi$ is symmetric, we have $m=0$. Our goal is to construct an extension of $\bar\phi$ to $\R^2$ that has vanishing trace to the hyperplanes $\{\lambda_0=0\}$ and $\{\lambda_1=0\}$.
  \begin{enumerate}
  \item We first construct an arbitrary extension of $\bar\phi$ to $\cH_0\Lambda^{1,1}_0(\R^2)$. In this case, $\phi=\dl_1\otimes\dl_1$ suffices. Note that, while $\phi$ vanishes on $\{\lambda_1=0\}$, it does \emph{not} vanish on $\{\lambda_0=0\}$. Our goal is to find an extension that does.
  \item We pull back via $\Phi$. Since $\dl_1=2u_1\du_1$, we obtain $4u_1^2\du_1\otimes\du_1$.
  \item We apply $\ostar_{S^n}$.
    \begin{enumerate}
    \item Applying $(\nu\otimes\nu)\owedge$, we obtain
      \begin{equation*}
        4u_0^2u_1^2(\du_0\wedge\du_1)\otimes(\du_0\wedge\du_1).
      \end{equation*}
    \item Applying $\ostar_{\R^{n+1}}$, we obtain $4u_0^2u_1^2$.
    \end{enumerate}
    Note that we could also compute using $\ostar_{S^n}=(-1)^k\kappa_L\kappa_R\ostar_{\R^{n+1}}$.
  \item We divide by $u_N=u_0u_1$, obtaining $4u_0u_1$.
  \item We divide by $C$. In the formula for $C$, we have $r=2$ and $p=q=m=0$, so $C=2$, so we obtain $2u_0u_1$.
  \item We apply $d_Ld_R$, obtaining $2(\du_0\otimes\du_1+\du_1\otimes\du_0)$.
  \item We apply $(-1)^k\ostar_{R^{n+1}}^{-1}$. We obtain $-2(\du_1\otimes\du_0+\du_0\otimes\du_1)$.
  \item We multiply by $u_N$, obtaining
    \begin{equation*}
      -2u_0u_1(\du_1\otimes\du_0+\du_0\otimes\du_1).
    \end{equation*}
  \item We push forward via $\Phi$, obtaining
    \begin{equation*}
      -\tfrac12(\dl_1\otimes\dl_0+\dl_0\otimes\dl_1).
    \end{equation*}
  \end{enumerate}
  It is clear that the result vanishes if we pull back to $\{\lambda_0=0\}$, so $\dl_0=0$, and likewise if we pull back to $\{\lambda_1=0\}$, so $\dl_1=0$. Note that the result exactly matches the Regge basis $-\dl_i\odot\dl_j$, where $\odot$ denotes the symmetrized tensor product $\alpha \odot \beta = \frac{1}{2}(\alpha \otimes \beta + \beta \otimes \alpha)$~\cite[Proposition 2.7]{li2018regge}.
\end{example}

\begin{example}\label{eg:counterexample}
  Let $n=2$, and consider the area form on $T^2$, normalized so that $T^2$ has area one, interpreted as an antisymmetric $(1,1)$-form. This tensor has vanishing trace, but the construction \emph{fails} because we are in the exceptional case $r=0$ and $m=q$; no extension exists. It is illustrative to see what goes wrong.
  \begin{enumerate}
  \item We begin with an arbitrary extension; $2(\dl_1\otimes\dl_2-\dl_2\otimes\dl_1)$ suffices.
  \item We pull back via $\Phi$, obtaining
    \begin{equation*}
      8u_1u_2(\du_1\otimes\du_2-\du_2\otimes\du_1).
    \end{equation*}
  \item We apply $\ostar_{S^n}$.
    \begin{enumerate}
    \item Applying $(\nu\otimes\nu)\owedge$, we obtain
      \begin{multline*}
        8u_1u_2((u_0\du_0\wedge\du_1+u_2\du_2\wedge\du_1)\otimes(u_0\du_0\wedge\du_2+u_1\du_1\wedge\du_2)\\
        -(u_0\du_0\wedge\du_2+u_1\du_1\wedge\du_2)\otimes(u_0\du_0\wedge\du_1+u_2\du_2\wedge\du_1)).
      \end{multline*}
    \item Applying $\ostar_{\R^{n+1}}$, we obtain
      \begin{multline*}
        8u_1u_2((u_0\du_2-u_2\du_0)\otimes(-u_0\du_1+u_1\du_0)\\
        -(-u_0\du_1+u_1\du_0)\otimes(u_0\du_2-u_2\du_0)),
      \end{multline*}
      which, with cancellation, simplifies to
      \begin{equation*}
        \begin{split}
          8u_0u_1u_2(&u_0(\du_1\otimes\du_2-\du_2\otimes\du_1)\\
          {}+{}&u_1(\du_2\otimes\du_0-\du_0\otimes\du_2)\\
          {}+{}&u_2(\du_0\otimes\du_1-\du_1\otimes\du_0)).
        \end{split}
      \end{equation*}
    \end{enumerate}
  \item Dividing by $u_N$ yields
    \begin{equation*}
      \begin{split}
        8(&u_0(\du_1\otimes\du_2-\du_2\otimes\du_1)\\
        {}+{}&u_1(\du_2\otimes\du_0-\du_0\otimes\du_2)\\
        {}+{}&u_2(\du_0\otimes\du_1-\du_1\otimes\du_0)).
      \end{split}
    \end{equation*}
  \item In the formula for $C$, we have $r=p=q=m=1$, so $C=0$, so we cannot divide by $C$. Indeed, we are in the exceptional case of Proposition~\ref{prop:kerkoszul}, where we are in the kernel of $\kappa_L$ and $\kappa_R$ but fail to be in the image of $\kappa_L\kappa_R$. Alternatively, we can see that we will fail because $d_Ld_R$ will yield zero because our expression has polynomial degree one and $d_Ld_R$ lowers polynomial degree by two.
  \end{enumerate}
\end{example}

For more examples of extensions obtained from this construction, see Table~\ref{tab:bases} in Section~\ref{sec:finite}. We now proceed with discussing each of the operations in the construction in detail.

\subsection{The pullback, vanishing trace, and even double forms}\label{sec:pullback}
We begin by investigating the pullback operation $\Phi^*$ given by $\lambda_i=u_i^2$, $\dl_i=2u_i\du_i$.

\begin{proposition}\label{prop:pullbackTS}
  The pullback $\Phi^*$ is an injective map from $\Lambda^{p,q}(T^n)$ to $\Lambda^{p,q}(S^n)$.
\end{proposition}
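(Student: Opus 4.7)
The plan is to exploit the fact that $\Phi$ restricts to a diffeomorphism between an open dense subset of $S^n$ and an open dense subset of $T^n$, so that pullback is manifestly injective there, and then to use continuity to extend the conclusion to all of $T^n$.

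Concretely, I would first identify the open positive octant
\begin{equation*}
  U = \{(u_0,\dots,u_n) \in S^n \mid u_i > 0 \text{ for all } i\}
\end{equation*}
and the open simplex
\begin{equation*}
  \mathring T^n = \{(\lambda_0,\dots,\lambda_n) \in T^n \mid \lambda_i > 0 \text{ for all } i\}.
\end{equation*}
On $U$, every coordinate $u_i$ is strictly positive, so the map $\Phi|_U\colon U \to \mathring T^n$ has a smooth two-sided inverse, namely $u_i = \sqrt{\lambda_i}$. Hence $\Phi|_U$ is a diffeomorphism.

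Next, suppose $\bar\phi \in \Lambda^{p,q}(T^n)$ with $\Phi^*\bar\phi = 0$. Restricting to $U$ and using that $\Phi|_U$ is a diffeomorphism, the pullback $(\Phi|_U)^*$ is an isomorphism on $(p,q)$-forms (since it is induced by an isomorphism on tangent spaces at every point). Therefore $\bar\phi$ must vanish on $\mathring T^n$. Since $\bar\phi$ is smooth (hence in particular continuous) on $T^n$ and $\mathring T^n$ is dense in $T^n$, this forces $\bar\phi = 0$, giving injectivity.

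There is no real obstacle here: the only thing one needs to check is that $\Phi|_U$ is a diffeomorphism onto $\mathring T^n$, which is immediate once one observes that the square-root inverse is well-defined and smooth exactly when all $u_i$ are strictly positive. The rest is the general principle that a smooth form vanishing on a dense open set vanishes everywhere.
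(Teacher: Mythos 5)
Your proof is correct and follows essentially the same route as the paper: observe that $\Phi$ restricts to a diffeomorphism from the positive orthant of $S^n$ onto the interior of $T^n$, conclude that $\bar\phi$ vanishes on the interior, and use smoothness/continuity to extend the vanishing to the boundary. The extra detail you supply (the smooth square-root inverse and the pointwise isomorphism on tangent spaces) is just an expansion of the paper's one-line argument.
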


\begin{proof}
  Observe that $\Phi$ is a diffeomorphism from the part of $S^n$ in the positive orthant to the interior of $T^n$. So, therefore, if $\bar\psi\in\Lambda^{p,q}(T^n)$ and $\Phi^*\bar\psi=0$, then $\bar\psi$ is zero on the interior of $T^n$. Since $\bar\psi$ is smooth, it must therefore be zero on the boundary of $T^n$, too.
\end{proof}

\begin{proposition}\label{prop:pullback}
  The pullback $\Phi^*$ is an injective map from $\cH_r\Lambda^{p,q}_m(\R^{n+1})$, to $\cH_{2r+k}\Lambda^{p,q}_m(\R^{n+1})$, where $k=p+q$.
\end{proposition}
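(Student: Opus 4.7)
The plan is to verify three separate things: (a) the polynomial degree count works out, (b) the decomposition index $m$ is preserved, and (c) the map is injective.

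For (a), I would work on a basis. A typical element of $\cH_r\Lambda^{p,q}(\R^{n+1})$ is of the form $f\,d\lambda^{I,J}$ with $f$ a homogeneous polynomial of degree $r$ in $(\lambda_0,\dots,\lambda_n)$ and $|I|+|J| = k$. Under the substitution $\lambda_i = u_i^2$, $d\lambda_i = 2u_i\,du_i$, the pullback equals $2^k f(u_0^2,\dots,u_n^2)\,u_{i_1}\cdots u_{i_p} u_{j_1}\cdots u_{j_q}\,du^{I,J}$. Here $f(u^2)$ is homogeneous of degree $2r$ in $u$, and the $k$ extra $u$-factors make the total coefficient homogeneous of degree $2r + k$, as required.

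For (b), I would simply invoke Proposition~\ref{prop:pullbackdecomposition}: since $\Phi : \R^{n+1} \to \R^{n+1}$ is a smooth map, its pullback commutes with $s$ and $s^*$ and hence sends $\Lambda^{p,q}_m(\R^{n+1})$ into $\Lambda^{p,q}_m(\R^{n+1})$. Combined with (a), this gives the stated mapping property.

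The main content is injectivity (c). The key observation is that the restriction of $\Phi$ to the open positive orthant $U := \{(u_0,\dots,u_n) \mid u_i > 0 \text{ for all } i\}$ is a diffeomorphism onto the open positive orthant in $\lambda$-coordinates. So if $\phi \in \cH_r\Lambda^{p,q}_m(\R^{n+1})$ satisfies $\Phi^*\phi = 0$, then in particular $\Phi^*\phi$ vanishes on $U$, and since $\Phi|_U$ is a local diffeomorphism, $\phi$ must vanish on $\Phi(U)$, an open subset of $\R^{n+1}$. But $\phi$ has polynomial coefficients, and a polynomial that vanishes on a nonempty open set is identically zero; hence $\phi = 0$. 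This is essentially the same argument used in the proof of Proposition~\ref{prop:pullbackTS}, simply transplanted from the compact setting to the homogeneous polynomial setting, and it is not truly an obstacle — the real content of this proposition is just the bookkeeping in (a) together with the application of (b).
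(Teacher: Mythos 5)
Your proposal is correct and follows essentially the same route as the paper: the explicit basis computation for the degree count, the appeal to Proposition~\ref{prop:pullbackdecomposition} for compatibility with the decomposition, and the injectivity argument via the diffeomorphism onto the positive orthant combined with the fact that a polynomial vanishing on an open set vanishes identically are all exactly the paper's argument (the paper merely states the degree bookkeeping more informally).
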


\begin{proof}
  Let $\phi\in\cH_r\Lambda^{p,q}_m(\R^{n+1})$, and $\psi=\Phi^*\phi$. Because $\lambda_i=u_i^2$, the pullback $\psi$ gets two polynomial degrees per polynomial degree of $\phi$; additionally, from $\dl_i=2u_i\du_i$, $\psi$ acquires one polynomial degree for every form degree of $\phi$. The pullback respects the decomposition by Proposition~\ref{prop:pullbackdecomposition}.

  The proof of injectivity is similar to above. Observe that $\Phi$ is a diffeomorphism if we restrict the domain and codomain to the strictly positive orthant of $\R^{n+1}$. Therefore, if $\psi=0$, we can conclude that $\phi=0$ on the strictly positive orthant. Since $\phi$ has polynomial coefficients, the fact that $\phi$ vanishes on an open set implies that it vanishes on all of $\R^{n+1}$.
\end{proof}

Our construction also requires that we invert the pullback operation, but doing so is not always possible, even for scalar fields. For example, $u_0u_1$ gets pushed forward to $\sqrt{\lambda_0\lambda_1}$, which is not a polynomial. As such, we need additional conditions.

\begin{definition}
  Let $R_i$ be the reflection across the coordinate plane $\{u_i=0\}$, so $u_i\mapsto-u_i$. We say that a double form $\psi$ is \emph{even} if $R_i^*\psi=\psi$ for all $i$.
\end{definition}

Since $u_i\mapsto-u_i$ yields $\du_i\mapsto-\du_i$, to check if a polynomial double form is even, in each term, for each $i$, we count the total number of times $u_i$ or $\du_i$ appears; this total must be even.

\begin{proposition}\label{prop:psieven}
  If $\phi\in\Lambda^{p,q}(\R^{n+1})$ and $\psi=\Phi^*\phi\in\Lambda^{p,q}(\R^{n+1})$, then $\psi$ is even.
\end{proposition}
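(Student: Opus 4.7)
The plan is very short: the statement is essentially a consequence of the functoriality of pullback together with the fact that the squaring map $\Phi$ is invariant under the reflections $R_i$.

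First I would observe that $\Phi \circ R_i = \Phi$ for every $i$. Indeed, $R_i$ only changes $u_i$ to $-u_i$, and the $i$-th coordinate of $\Phi(u_0,\dots,u_n)$ is $u_i^2 = (-u_i)^2$, while all other coordinates of $\Phi$ are unaffected by $R_i$. Thus $\Phi$ factors through the quotient by each reflection.

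Then I would apply functoriality of pullback to conclude
\[
  R_i^* \psi = R_i^* \Phi^* \phi = (\Phi \circ R_i)^* \phi = \Phi^* \phi = \psi.
\]
Since this holds for every $i$, $\psi$ is even by definition.

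There is no real obstacle here; the only thing worth noting is that functoriality of pullback applies to arbitrary $(p,q)$-forms because, as discussed after Definition~\ref{def:operatorsM}, pullback is defined pointwise on each tensor factor and commutes with tensor products, so the identity $(\Phi \circ R_i)^* = R_i^* \Phi^*$ valid for forms extends immediately to double forms.
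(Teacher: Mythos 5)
Your proposal is correct and is essentially identical to the paper's proof: both rest on the observation that $\Phi\circ R_i=\Phi$ because $\lambda_i=u_i^2=(-u_i)^2$, followed by functoriality of pullback to get $R_i^*\psi=R_i^*\Phi^*\phi=\Phi^*\phi=\psi$. Nothing further is needed.
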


\begin{proof}
  Since $\lambda_i=u_i^2=(-u_i)^2$, we have that $\Phi\circ R_i=\Phi$, so $R_i^*\Phi^*\phi=\Phi^*\phi$, so $R_i^*\psi=\psi$.
\end{proof}

We might guess that perhaps we find a preimage of $\Phi^*$ for \emph{even} double forms. However, unlike the case of simple forms in \cite{berchenkokogan2021duality}, even the even condition is not enough. For example $u_1^2\du_0\otimes\du_0$ is even, but it gets pushed forward to $\frac{\lambda_1}{4\lambda_0}\dl_0\otimes\dl_0$, which is not a polynomial. However, for our construction, we will only need to push forward double forms that are not only even but also divisible by $u_N:=u_0\dotsm u_n$. We will see that not only is the pushforward a polynomial double form, it also has vanishing trace.

\begin{notation}
  Let $u_N:=\prod_{i=0}^nu_i$ denote the product of the coordinate functions.
\end{notation}

\begin{proposition}\label{prop:pushforward}
  Assume that $\psi\in\cH_{2r+k}\Lambda^{p,q}_m(\R^{n+1})$ is even and divisible by $u_N$. Then $\psi=\Phi^*\phi$ for a unique $\phi\in\oH_r\Lambda^{p,q}_m(\R^{n+1})$ with vanishing trace.
\end{proposition}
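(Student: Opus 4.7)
The plan is to construct $\phi$ explicitly coefficient-by-coefficient in a basis, and then derive decomposition membership and uniqueness from the fact that $\Phi^*$ is injective and respects the decomposition. Let $\dl^I := \dl_{i_1}\wedge\cdots\wedge\dl_{i_p}$ for an increasing multi-index $I=(i_1,\ldots,i_p)$, and similarly for $\du^I$. Expand $\psi$ in the corresponding basis as $\psi = \sum_{I,J} g_{I,J}(u)\, \du^I\otimes\du^J$. Since $\Phi^*\bigl(f(\lambda)\,\dl^I\otimes\dl^J\bigr) = 2^{p+q}f(u_0^2,\ldots,u_n^2)\, u^I u^J\, \du^I\otimes\du^J$, where $u^I:=\prod_{a\in I}u_a$, the only possible preimage with polynomial coefficients is
\[
  \phi := \sum_{I,J} 2^{-(p+q)}\, \tilde g_{I,J}(\lambda)\, \dl^I\otimes\dl^J,
\]
where $\tilde g_{I,J}$ is the polynomial in $\lambda$ satisfying $\tilde g_{I,J}(u_0^2,\ldots,u_n^2) = g_{I,J}(u)/(u^I u^J)$. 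The heart of the proof is to show that this quotient really is a polynomial in $u$ that is even in each $u_a$, of total degree at most $2r$, and divisible by $u_a^2$ whenever $a\notin I\cup J$.

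This reduces to a combinatorial analysis of the parity and divisibility of $g_{I,J}$ in each variable $u_a$. Evenness of $\psi$ forces $g_{I,J}$ to have the same parity in $u_a$ as $u^I u^J$, namely $[a\in I]+[a\in J]\pmod 2$, while the hypothesis $u_N\mid\psi$ gives $u_a\mid g_{I,J}$. I would split into three cases: if $a\in I\cap J$ then $g_{I,J}$ is even in $u_a$ and divisible by $u_a$, hence by $u_a^2$, matching the $u_a^2$ in $u^I u^J$; if $a$ lies in exactly one of $I,J$ then $g_{I,J}$ is odd in $u_a$ and divisible by $u_a$, matching the single $u_a$ in $u^I u^J$; and if $a\notin I\cup J$ then $g_{I,J}$ is again divisible by $u_a^2$, which, since $u^I u^J$ carries no $u_a$, becomes the extra factor of $\lambda_a$ in $\tilde g_{I,J}$ that encodes vanishing trace on $\{\lambda_a=0\}$. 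In every case the quotient is a polynomial even in $u_a$, so $\tilde g_{I,J}$ is a well-defined polynomial in $\lambda$ of degree at most $r$.

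Once $\phi$ is defined, $\Phi^*\phi=\psi$ holds term by term by construction. Vanishing trace of $\phi$ at the hyperplane $\{\lambda_a=0\}$ reduces to the case $a\notin I\cup J$, where $\lambda_a\mid\tilde g_{I,J}$ by the divisibility just observed. Membership of $\phi$ in $\Lambda^{p,q}_m$ follows by decomposing $\phi=\sum_{m'}\phi_{m'}$ and applying $\Phi^*$: by Proposition~\ref{prop:pullbackdecomposition}, each $\Phi^*\phi_{m'}$ lies in $\Lambda^{p,q}_{m'}$, so $\psi=\sum_{m'}\Phi^*\phi_{m'}$ together with $\psi\in\Lambda^{p,q}_m$ and uniqueness of the decomposition forces $\Phi^*\phi_{m'}=0$ for $m'\neq m$, and injectivity of $\Phi^*$ from Proposition~\ref{prop:pullback} then gives $\phi_{m'}=0$. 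Uniqueness of $\phi$ itself likewise follows from that injectivity. The main obstacle is the combinatorial step in the second paragraph: one must verify that evenness plus the single-factor divisibility $u_N\mid\psi$ is exactly enough to force the two-factor divisibility $u_a^2\mid g_{I,J}$ precisely when $a\notin I\cup J$, which is what translates into the factor of $\lambda_a$ needed for vanishing trace.
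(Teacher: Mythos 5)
Your proposal is correct and follows essentially the same route as the paper: both arguments come down to a term-by-term parity/divisibility analysis in each variable $u_a$, split according to whether $a$ occurs zero, one, or two times in $I \cup J$, with evenness forcing the right parity and $u_N \mid \psi$ supplying the extra factor that either cancels against $u^I u^J$ or survives as the $\lambda_a$ needed for vanishing trace. The only cosmetic difference is that the paper first factors $\psi = u_N\psi'$ and reasons about the odd form $\psi'$, whereas you work directly with the coefficients $g_{I,J}$ of $\psi$; the decomposition-membership and uniqueness steps via Propositions~\ref{prop:pullbackdecomposition} and~\ref{prop:pullback} are identical.
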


\begin{proof}
  Let $\psi=u_N\psi'$. Then $\psi'$ is odd in the sense that $R_i^*\psi'=-\psi'$. Consider a term $f\du_{I,J}$ of  $\psi'$, where $f$ is a monomial. Consider the case where this term contains zero or two copies of $\du_i$, that is, $i$ is in both $I$ and $J$ or in neither of them. Then, for $\psi'$ to be odd, this term must also contain an odd power of $u_i$ in the polynomial factor $f$; in particular, this power must be at least one. Consequently, the corresponding term $u_Nf\du_{I,J}$ in $\phi$ has an even power of $u_i$ that is at least two. As a result, if we had two copies of $\du_i$, we can match up each $\du_i$ with a $u_i$, and we can push forward each $u_i\du_i$ to $\frac12\dl_i$, leaving behind an even power of $u_i$, which pushes forward to an integer power of $\lambda_i$. If we had zero copies of $\du_i$, then we just have a positive even power of $u_i$, which pushes forward to a positive integer power of $\lambda_i$. In particular, the pushforward must have at least one $\lambda_i$ or $\dl_i$.

  Meanwhile, a term $u_Nf\du_{I,J}$ of $\psi$ that contains one copy of $\du_i$ must also have an odd power of $u_i$ in the polynomial factor $u_Nf$ because $\psi$ is even. We likewise have that $u_i\du_i$ pushes forward to $\frac12\dl_i$, leaving behind an even power of $u_i$, which pushes forward to an integer power of $\lambda_i$.

  Note that, in either case, for every $i$, the term of the pushforward has at least one $\lambda_i$ or $\dl_i$, so it vanishes when pulled back to the hyperplane $\{\lambda_i=0\}$, as required by the definition of vanishing trace on $\R^{n+1}$.

  Thus, there exists a pushforward $\phi\in\oH_r\Lambda^{p,q}(\R^{n+1})$. The pushforward is unique because $\Phi^*$ is injective. With regards to the decomposition, to check that $\phi\in\Lambda^{p,q}_m$, we can let $\phi_m$ be the projection of $\phi\in\Lambda^{p,q}$ onto the $\Lambda^{p,q}_m$ summand. By Proposition~\ref{prop:pullbackdecomposition}, since $\psi\in\Lambda^{p,q}_m$, we have that $\Phi^*\phi_m=\psi$. By the uniqueness of $\phi$, we have $\phi=\phi_m$.
\end{proof}

The converse holds as well.

\begin{proposition}
  If $\phi\in\oH_r\Lambda^{p,q}_m(\R^{n+1})$, then $\psi=\Phi^*\phi\in\cH_{2r+k}\Lambda^{p,q}_m(\R^{n+1})$ is even and divisible by $u_N$.
\end{proposition}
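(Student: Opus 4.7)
The plan is to verify the three conclusions separately. That $\psi\in\cH_{2r+k}\Lambda^{p,q}_m(\R^{n+1})$ is immediate from Proposition~\ref{prop:pullback} (applied to $\phi$, which lies in $\cH_r\Lambda^{p,q}_m(\R^{n+1})$ since $\oH_r\Lambda^{p,q}_m(\R^{n+1})$ is by definition a subspace of it), and that $\psi$ is even is immediate from Proposition~\ref{prop:psieven}. Neither conclusion uses the vanishing trace hypothesis, so the substance of the statement lies in proving $u_N\mid\psi$.

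Since the $u_i$ are pairwise coprime in $\R[u_0,\dots,u_n]$, it suffices to show $u_i\mid\psi$ for each fixed $i$. The first step is to unpack what vanishing trace buys us at the level of coefficients. Expand
\[
  \phi=\sum_{I,J}f_{IJ}\,\dl_{i_1}\wedge\dotsb\wedge\dl_{i_p}\otimes\dl_{j_1}\wedge\dotsb\wedge\dl_{j_q}
\]
in the standard basis, where $I=(i_1,\dots,i_p)$ and $J=(j_1,\dots,j_q)$. Pulling $\phi$ back along the inclusion $\{\lambda_i=0\}\hookrightarrow\R^{n+1}$ kills every basis term containing a $\dl_i$, while the surviving basis tensors with $i\notin I$ and $i\notin J$ are linearly independent as $(p,q)$-forms on the hyperplane. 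Vanishing trace therefore forces $f_{IJ}|_{\lambda_i=0}=0$, i.e.\ $\lambda_i\mid f_{IJ}$, for every $(I,J)$ with $i\notin I\cup J$.

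Next I would apply $\Phi^*$ term by term using $\lambda_j=u_j^2$ and $\dl_j=2u_j\,\du_j$. A single basis term of $\phi$ pulls back to
\[
  2^{p+q}\,f_{IJ}(u_0^2,\dots,u_n^2)\,\Bigl(\prod_{k\in I}u_k\Bigr)\Bigl(\prod_{k\in J}u_k\Bigr)\,\du_{i_1}\wedge\dotsb\wedge\du_{i_p}\otimes\du_{j_1}\wedge\dotsb\wedge\du_{j_q}.
\]
If $i\in I\cup J$, the product of $u_k$ factors already supplies a $u_i$. Otherwise, $\lambda_i\mid f_{IJ}$ upgrades under the substitution $\lambda\mapsto u^2$ to $u_i^2\mid f_{IJ}(u_0^2,\dots,u_n^2)$. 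In either case the pulled-back term is divisible by $u_i$, so $u_i\mid\psi$, and ranging over all $i$ yields $u_N\mid\psi$.

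I do not foresee a serious obstacle; the argument is essentially bookkeeping. The only point worth noting is the case $i\in I\cap J$ (so $\dl_i$ appears in both tensor factors), where a single $u_i$ from either product is already enough divisibility, so no double-counting issue arises. Membership in the decomposition summand $\Lambda^{p,q}_m$ is preserved throughout by Proposition~\ref{prop:pullbackdecomposition} and plays no role in the divisibility argument.
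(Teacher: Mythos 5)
Your proof is correct. The underlying mechanism is the same as the paper's---the degeneracy of $\Phi$ along $\{u_i=0\}$ (the factor $u_i$ in $\dl_i = 2u_i\,\du_i$) supplies divisibility for terms containing $\dl_i$, while the vanishing trace hypothesis supplies it for the remaining terms---but the execution differs. The paper argues pointwise and coordinate-free: it shows $\psi$ vanishes as a tensor on \emph{all} vectors at each point of $\{u_i=0\}$ by decomposing an arbitrary vector as $be_i+X$ and using that the pushforward $\Phi_*e_i = 2u_i\,\partial/\partial\lambda_i$ is zero there, then deduces that every coefficient of $\psi$ vanishes on the hyperplane. You instead work dually and algebraically: you first extract the coefficient-level consequence of vanishing trace ($\lambda_i \mid f_{IJ}$ whenever $i\notin I\cup J$, by the same linear-independence observation the paper uses in Lemma~\ref{lemma:dimH0pq}), and then track divisibility term by term through the substitution $\lambda_j = u_j^2$, $\dl_j = 2u_j\,\du_j$. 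Both routes are complete; yours has the minor advantage of making the divisibility fully explicit at the level of monomials (and is closer in spirit to the pushforward analysis in Proposition~\ref{prop:pushforward}), while the paper's has the advantage of not requiring a basis expansion and of paralleling the argument it reuses for Proposition~\ref{prop:vanishgreatcircle} on the sphere. Your handling of the degree count, evenness, and preservation of the summand $\Lambda^{p,q}_m$ via Propositions~\ref{prop:pullback}, \ref{prop:psieven}, and~\ref{prop:pullbackdecomposition} matches the paper.
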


\begin{proof}
  We have already shown that $\psi$ is even in Proposition~\ref{prop:psieven}. To show that it is divisible by each $u_i$, we will show that $\psi$ fully vanishes on each of the coordinate hyperplanes $\{u_i=0\}$, in the sense that it vanishes on all vectors, not just those vectors tangent to the hyperplane.

  Let $u=(u_0,\dots,u_n)$ be a point on the hyperplane $u_i=0$, and let $\lambda=\Phi(u)$. Let $e_i$ denote the $i$th coordinate basis vector at $u$, that is, $e_i=\pp{u_i}\Bigr\rvert_u$. Any vector at $u$ can be written in the form $be_i+X$, where $b$ is a real number and $X$ is tangent to the hyperplane.

  Observe that the push forward $\Phi_*e_i$ is zero. Indeed, $\pp{u_i}=\pp[\lambda_i]{u_i}\pp{\lambda_i}=2u_i\pp{\lambda_i}$, which is zero at $u$. Meanwhile, vectors tangent to the hyperplane $\{u_i=0\}$ get pushed forward to vectors tangent to the hyperplane $\{\lambda_i=0\}$. So, at $u$, applying $\psi$ to vectors written in the form $be_i+X$, we obtain
  \begin{equation}\label{eq:boundaryvanish}
    \begin{split}
      &\psi\rvert_u(b_1e_i+X_1,\dots,b_pe_i+X_p;c_1e_i+Y_1,\dots,c_qe_i+Y_q)\\
      &=\phi\rvert_\lambda(\Phi_*(b_1e_i+X_1),\dots,\Phi_*(b_pe_i+X_p);\\
      &\qquad\qquad\Phi_*(c_1e_i+Y_1),\dots,\Phi_*(c_qe_i+Y_q))\\
      &=\phi\rvert_\lambda(\Phi_*X_1,\dots,\Phi_*X_p;\Phi_*Y_1,\dots,\Phi_*Y_q),\\
    \end{split}
  \end{equation}
  which is zero because $\phi$ has vanishing trace and the $\Phi_*X_a$ and $\Phi_*Y_a$ are tangent to the hyperplane $\{\lambda_i=0\}$.

  We conclude that every polynomial coefficient of $\psi$ vanishes on the hyperplane $\{u_i=0\}$, from which we conclude that every polynomial coefficient is divisible by $u_i$. This argument holds for every $i$, so $\psi$ is divisible by $u_N$.
\end{proof}

We can similarly prove the corresponding statement for $T^n$ and $S^n$, namely that vanishing trace on $T^n$ (vanishing of tangential components on $\partial T^n$) yields full vanishing (all components) on the great circles of $S^n$.

\begin{proposition}\label{prop:vanishgreatcircle}
  Let $\bar\phi\in\Lambda^{p,q}(T^n)$ and let $\bar\psi=\Phi^*\bar\phi\in\Lambda^{p,q}(S^n)$. If $\bar\phi$ has vanishing trace, then $\bar\psi$ fully vanishes on the great circles $\{u_i=0\}$, in the sense that it vanishes on all vectors tangent to $S^n$, not just those vectors tangent to the great circle.
\end{proposition}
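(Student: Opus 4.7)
The plan is to mirror the argument used in the preceding proposition (the $\R^{n+1}$ version), adapted to the fact that we only need $\bar\psi$ to vanish on vectors tangent to $S^n$ rather than on all vectors at the point.

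First I would fix a point $u\in S^n$ with $u_i=0$ and set $\lambda=\Phi(u)$, which lies in the face $\{\lambda_i=0\}$ of $\partial T^n$. Two geometric facts drive the argument. First, the coordinate vector $e_i=\partial/\partial u_i|_u$ is tangent to $S^n$ at $u$: the outward unit normal to $S^n$ at $u$ is $u$ itself, and $u_i=0$ forces $u\perp e_i$. (This is exactly why the statement is sharper at points of the great circle than at generic points of $S^n$.) Second, because $\Phi$ sends $u_i\mapsto u_i^2$, the pushforward satisfies $\Phi_* e_i|_u = 2u_i\,\partial/\partial\lambda_i|_\lambda = 0$.

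Next, any vector tangent to $S^n$ at $u$ may be written as $be_i+X$, where $b\in\R$ and $X$ is tangent to the great circle $S^n\cap\{u_i=0\}$, i.e., tangent to $S^n$ and also annihilated by $du_i$. Such an $X$ pushes forward under $\Phi$ to a vector tangent to $T^n$ that lies in the face $\{\lambda_i=0\}$, hence is tangent to $\partial T^n$. Substituting $p+q$ such decomposed inputs into $\bar\psi|_u=(\Phi^*\bar\phi)|_u$ and expanding by multilinearity, every term containing at least one $e_i$ slot vanishes by the second fact above. The only surviving term is
\[
  \bar\phi|_\lambda\bigl(\Phi_*X_1,\dots,\Phi_*X_p;\Phi_*Y_1,\dots,\Phi_*Y_q\bigr),
\]
exactly as in equation~\eqref{eq:boundaryvanish} of the preceding proposition.

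Finally, this last expression is zero because $\bar\phi$ has vanishing trace on $\partial T^n$ and all of the arguments $\Phi_*X_a$, $\Phi_*Y_a$ are tangent to $\partial T^n$. Since $u$ was an arbitrary point on the great circle $S^n\cap\{u_i=0\}$ and the input vectors were arbitrary tangent vectors to $S^n$, this proves that $\bar\psi$ fully vanishes on this great circle. There is no real obstacle in the argument; the only point requiring care is verifying that $e_i$ is tangent to $S^n$ at the points in question, which is precisely what isolates the great circles as the locus of full vanishing.
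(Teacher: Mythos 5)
Your proposal is correct and follows essentially the same argument as the paper: decompose each tangent vector to $S^n$ at a point of the great circle as $be_i+X$, use $\Phi_*e_i=2u_i\,\partial/\partial\lambda_i=0$, and observe that the surviving term vanishes because the $\Phi_*X_a,\Phi_*Y_a$ are tangent to the face $\{\lambda_i=0\}$ and $\bar\phi$ has vanishing trace. The observation that $e_i\in T_uS^n$ precisely when $u_i=0$ is the same point the paper makes, so there is nothing to add.
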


\begin{proof}
  Let $u=(u_0,\dotsc,u_n)$ be a point on $S^n$, and assume that $u_i=0$. Let $\lambda=\Phi(u)$, a point on the boundary component $\{\lambda_i=0\}$ of $T^n$.

  As before, let $e_i$ denote the $i$th coordinate basis vector at $u$, that is, $e_i=\pp{u_i}\Bigr\rvert_u$. Because $u_i=0$, we have $e_i\in T_uS^n$. Note that $e_i$ is normal to the great circle $\{u_i=0\}$, so any vector in $T_uS^n$ can be written in the form $be_i+X$, where $b$ is a real number and $X$ is tangent to the great circle $\{u_i=0\}$.

  As before, the push forward $\Phi_*e_i$ is zero, and vectors tangent to the great circle $\{u_i=0\}$ get pushed forward to vectors tangent to the boundary component $\{\lambda_i=0\}$ of $T^n$. So, we have Equation~\eqref{eq:boundaryvanish} except with $\bar\psi$ and $\bar\phi$ instead of $\psi$ and $\phi$, and the expression is zero because $\bar\phi$ has vanishing trace and the $\Phi_*X_a$ and $\Phi_*Y_a$ are tangent to the boundary component $\{\lambda_i=0\}$.
\end{proof}

\subsection{The double Hodge star on the sphere}\label{sec:doublehodgestar}
Recall from Definition~\ref{def:operatorsM} that we have double Hodge star operations $\ostar_{S^n}\colon\Lambda^{p,q}(S^n)\to\Lambda^{n-p,n-q}(S^n)$ and $\ostar_{\R^{n+1}}\colon\Lambda^{p,q}(\R^{n+1})\to\Lambda^{n+1-p,n+1-q}(\R^{n+1})$ by applying $\star$ to each factor of the double form. Recall from Definition~\ref{def:starsn} that we defined $\star_{S^n}$ on differential forms on $\R^{n+1}$, so we can analogously define  $\ostar_{S^n}$ on double forms on $\R^{n+1}$ as well.

\begin{definition}\label{def:ostarsn}
  Define $\ostar_{S^n}\colon\Lambda^{p,q}(\R^{n+1})\to\Lambda^{n-p,n-q}(\R^{n+1})$ on simple tensors by
  \begin{equation*}
    \ostar_{S^n}(\alpha\otimes\beta):=(\starop_{S^n}\alpha)\otimes(\starop_{S^n}\beta).
  \end{equation*}
\end{definition}

Each proposition in Section~\ref{sec:simplexspherehodge} yields analogous propositions for double forms.

\begin{proposition}\label{prop:ostarrestrict}
  Let $\psi$ be a $(p,q)$-form on $\R^{n+1}$ and let $\bar\psi$ be its pullback under the inclusion $S^n\hookrightarrow\R^{n+1}$. Then $\ostar_{S^n}\bar\psi$ is the pullback of $\ostar_{S^n}\psi$, where $\ostar_{S^n}\bar\psi$ refers to Definition~\ref{def:operatorsM} and $\ostar_{S^n}\psi$ refers to Definition~\ref{def:ostarsn}.
\end{proposition}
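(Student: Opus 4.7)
The plan is to reduce to the single-form statement in Proposition~\ref{prop:starsn} by working on simple tensors. Let $\iota \colon S^n \hookrightarrow \R^{n+1}$ denote the inclusion. Both sides of the asserted identity are linear in $\psi$, and both $\ostar_{S^n}$ (in the sense of Definition~\ref{def:ostarsn}) and the pullback $\iota^*$ are defined on simple tensors in a factor-by-factor fashion. So it suffices to establish the claim for $\psi = \alpha \otimes \beta$ with $\alpha \in \Lambda^p(\R^{n+1})$ and $\beta \in \Lambda^q(\R^{n+1})$.

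The first step is to observe that pullback commutes with the tensor product of forms: $\iota^*(\alpha\otimes\beta) = \iota^*\alpha \otimes \iota^*\beta$, since the tensor product operation is defined pointwise and $\iota^*$ acts on each covector slot by pulling back a vector through $d\iota$. Writing $\bar\alpha := \iota^*\alpha$ and $\bar\beta := \iota^*\beta$, this gives $\bar\psi = \bar\alpha\otimes\bar\beta$.

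Next, apply Proposition~\ref{prop:starsn} to each factor separately: $\iota^*(\star_{S^n}\alpha) = \star_{S^n}\bar\alpha$ and $\iota^*(\star_{S^n}\beta) = \star_{S^n}\bar\beta$, where on the left the operator $\star_{S^n}$ is the one from Definition~\ref{def:starsn} and on the right it is the intrinsic Hodge star on $S^n$. Combining, and using again that pullback commutes with tensor product,
\begin{equation*}
  \iota^*\bigl(\ostar_{S^n}\psi\bigr) = \iota^*\bigl((\star_{S^n}\alpha)\otimes(\star_{S^n}\beta)\bigr) = (\star_{S^n}\bar\alpha)\otimes(\star_{S^n}\bar\beta) = \ostar_{S^n}\bar\psi,
\end{equation*}
which is precisely the claim. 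No step requires an essentially new idea beyond the single-form version, so there is no serious obstacle; the only point requiring care is to invoke consistently the correct meaning of $\ostar_{S^n}$ (Definition~\ref{def:operatorsM} versus Definition~\ref{def:ostarsn}) on each side of the equation, and this is handled automatically by the factorwise definition.
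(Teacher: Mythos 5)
Your argument is correct and is exactly the paper's proof: reduce to simple tensors by linearity, use that pullback commutes with the tensor product, and apply Proposition~\ref{prop:starsn} to each factor. The extra care you take in distinguishing the two meanings of $\ostar_{S^n}$ is fine but adds nothing beyond what the paper's one-line proof already implies.
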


\begin{proof}
  On simple tensors, the claim follows by applying Proposition~\ref{prop:starsn} to each factor, and then we extend by linearity.
\end{proof}

\begin{proposition}\label{prop:ostarkoszul}
  For $\psi\in\Lambda^{p,q}(\R^{n+1})$, we have
  \begin{equation*}
    \ostar_{S^n}\psi=(-1)^k\kappa_L\kappa_R(\ostar_{\R^{n+1}}\psi),
  \end{equation*}
  where $k=p+q$.
\end{proposition}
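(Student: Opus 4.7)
The plan is to reduce this to the single-form case handled by Proposition~\ref{prop:starkoszul} by checking the identity on simple tensors and then extending by linearity, exactly as was done for Proposition~\ref{prop:ostarrestrict}. The operators $\ostar_{S^n}$, $\ostar_{\R^{n+1}}$, and $\kappa_L\kappa_R$ are all defined factorwise on simple tensors, so the identity should decouple cleanly into the product of two copies of Proposition~\ref{prop:starkoszul}, one acting on the left factor and one on the right.

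Concretely, first I would take a simple tensor $\psi=\alpha\otimes\beta$ with $\alpha\in\Lambda^p(\R^{n+1})$ and $\beta\in\Lambda^q(\R^{n+1})$. By Definition~\ref{def:ostarsn},
\[
\ostar_{S^n}(\alpha\otimes\beta)=(\starop_{S^n}\alpha)\otimes(\starop_{S^n}\beta).
\]
Applying Proposition~\ref{prop:starkoszul} separately to each factor gives $\starop_{S^n}\alpha=(-1)^p\kappa(\starop_{\R^{n+1}}\alpha)$ and $\starop_{S^n}\beta=(-1)^q\kappa(\starop_{\R^{n+1}}\beta)$, so
\[
\ostar_{S^n}(\alpha\otimes\beta)=(-1)^{p+q}\bigl(\kappa\starop_{\R^{n+1}}\alpha\bigr)\otimes\bigl(\kappa\starop_{\R^{n+1}}\beta\bigr).
\]

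Then I would compute the right-hand side. By the definition of $\ostar_{\R^{n+1}}$ on simple tensors and the definition of $\kappa_L$ and $\kappa_R$,
\[
\kappa_L\kappa_R\ostar_{\R^{n+1}}(\alpha\otimes\beta)=\kappa_L\kappa_R\bigl((\starop_{\R^{n+1}}\alpha)\otimes(\starop_{\R^{n+1}}\beta)\bigr)=\bigl(\kappa\starop_{\R^{n+1}}\alpha\bigr)\otimes\bigl(\kappa\starop_{\R^{n+1}}\beta\bigr).
\]
Multiplying by $(-1)^k=(-1)^{p+q}$ matches the expression for $\ostar_{S^n}(\alpha\otimes\beta)$ computed above. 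Extending by linearity (both sides are linear in $\psi$) completes the proof.

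There is no real obstacle here: everything decouples because $\ostar_{S^n}$, $\ostar_{\R^{n+1}}$, and $\kappa_L\kappa_R$ are all factorwise operations, and the sign $(-1)^{p+q}$ from the two applications of Proposition~\ref{prop:starkoszul} combines to give precisely $(-1)^k$.
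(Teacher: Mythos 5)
Your proposal is correct and follows exactly the paper's own argument: check on simple tensors, apply Proposition~\ref{prop:starkoszul} to each factor, and note that the two signs $(-1)^p$ and $(-1)^q$ combine to $(-1)^k$. The paper states this in one line; your version just spells out the same computation.
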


\begin{proof}
  As before, on simple tensors, the claim follows by applying Proposition~\ref{prop:starkoszul} to each factor.
\end{proof}

In general, forms $\bar\psi$ on the sphere do not have unique extensions to forms $\psi$ on $\R^{n+1}$, not even if we require $\psi$ to be homogeneous. However, as we will see, we do get uniqueness if we additionally require $\psi$ to be in the image of $\ostar_{S^n}$.

\begin{lemma}\label{lem:ostarvanish}
  Let $\psi\in\cH_r\Lambda^{p,q}(\R^{n+1})$, and let $\bar\psi$ be its pullback under the inclusion $S^n\hookrightarrow\R^{n+1}$. Let $u$ be a point on the sphere, and assume that $\bar\psi\rvert_u$ is the zero double multicovector on $T_uS^n$. Then, for any real number $h$, $\ostar_{S^n}\psi\rvert_{hu}$ is the zero double multicovector on $T_{hu}\R^{n+1}$.
\end{lemma}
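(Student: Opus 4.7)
The plan is to combine two facts about $\ostar_{S^n}\psi$ viewed as an element of $\cH_{r+2}\Lambda^{n-p,n-q}(\R^{n+1})$: that it is annihilated by both Koszul operators $\kappa_L$ and $\kappa_R$ (coming from Proposition~\ref{prop:ostarkoszul} together with $\kappa_L^2 = \kappa_R^2 = 0$ and the commutativity of $\kappa_L$ and $\kappa_R$), and that its pullback to $T_uS^n$ agrees with $\ostar_{S^n}\bar\psi\rvert_u$ by Proposition~\ref{prop:ostarrestrict}. The first fact controls the ``radial'' directions, the second handles the tangential directions, and homogeneity propagates the result from $u$ to every point $hu$ on the line through $u$.

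First I would show that $\ostar_{S^n}\psi\rvert_u = 0$ as a multicovector on $T_u\R^{n+1}$. The key observation is the orthogonal splitting $T_u\R^{n+1} = T_uS^n \oplus \R X_{\id}\rvert_u$, so every input vector can be written as $v + c X_{\id}\rvert_u$ with $v \in T_uS^n$. Expanding $\ostar_{S^n}\psi\rvert_u$ on such inputs by multilinearity, every term in which some slot is filled by $X_{\id}\rvert_u$ vanishes, because $\kappa_L \ostar_{S^n}\psi = 0$ and $\kappa_R \ostar_{S^n}\psi = 0$ (and antisymmetry lets us move $X_{\id}\rvert_u$ to the first slot of either factor). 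The only surviving term is the one in which all inputs are tangent to $S^n$, and on these inputs Proposition~\ref{prop:ostarrestrict} identifies $\ostar_{S^n}\psi\rvert_u$ with $\ostar_{S^n}\bar\psi\rvert_u$, which is zero by the hypothesis $\bar\psi\rvert_u = 0$.

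Second, I would promote vanishing at $u$ to vanishing at $hu$ using homogeneity. Since $\psi \in \cH_r\Lambda^{p,q}$, one has $\ostar_{\R^{n+1}}\psi \in \cH_r\Lambda^{n+1-p,n+1-q}$, and each application of $\kappa_L$ or $\kappa_R$ raises polynomial degree by one, so $\ostar_{S^n}\psi \in \cH_{r+2}\Lambda^{n-p,n-q}$. Writing $\ostar_{S^n}\psi$ in the standard basis $\dx^{I,J}$, its coefficients are homogeneous polynomials of degree $r+2$, while the basis covectors $\dx^{I,J}$ are constant across points. Under the canonical identification $T_{hu}\R^{n+1}\cong T_u\R^{n+1}\cong\R^{n+1}$ via the coordinate frame, we therefore have $\ostar_{S^n}\psi\rvert_{hu} = h^{r+2}\, \ostar_{S^n}\psi\rvert_u$ as multicovectors, which is zero for every $h\in\R$ since $r+2\ge 2 > 0$.

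The only genuine subtlety is the algebraic reduction in the first step: one must be careful that ``$\kappa_L$ annihilates $\ostar_{S^n}\psi$'' truly implies that inserting $X_{\id}\rvert_u$ into \emph{any} of the first $n-p$ slots (not just the first one) produces zero, but this is immediate from antisymmetry in the first factor. Everything else is bookkeeping, so I expect no real obstacle once the splitting $T_u\R^{n+1} = T_uS^n \oplus \R X_{\id}\rvert_u$ and the two propositions are in hand.
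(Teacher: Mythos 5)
Your proposal is correct and follows essentially the same route as the paper's proof: the splitting $T_u\R^{n+1}=T_uS^n\oplus\R X_{\id}\rvert_u$, killing the radial slots via $\kappa_L(\ostar_{S^n}\psi)=\kappa_R(\ostar_{S^n}\psi)=0$ from Proposition~\ref{prop:ostarkoszul}, identifying the tangential part with $\ostar_{S^n}\bar\psi\rvert_u=0$ via Proposition~\ref{prop:ostarrestrict}, and then using homogeneity of the coefficients to pass from $u$ to $hu$. The only cosmetic difference is that you make the scaling factor $h^{r+2}$ explicit where the paper simply appeals to homogeneity.
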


\begin{proof}
  By Proposition~\ref{prop:ostarrestrict}, the pullback of $\ostar_{S^n}\psi$ to the sphere is $\ostar_{S^n}\bar\psi$, which is zero at $u$. In other words, $\ostar_{S^n}\psi\rvert_u(X_1,\dotsc,X_p;Y_1,\dots,Y_q)=0$ for all vectors $X_a,Y_a\in T_uS^n$.

  Observe that the tautological vector field $X_{\id}=\sum_{i=0}^nu_i\pp{u_i}$ is normal to the sphere, so any vector in $X\in T_u\R^{n+1}$ can be written as $X=bX_{\id}+\bar X$, where $b$ is a real number and $\bar X$ is tangent to the sphere. By Proposition~\ref{prop:ostarkoszul}, we have that $\ostar_{S^n}\psi$ is in the image of $\kappa_L\kappa_R$, so it is in the kernel of both $\kappa_L$ and $\kappa_R$. Consequently, by antisymmetry, the expression $\ostar_{S^n}\psi\rvert_u(X_1,\dots,X_p;Y_1,\dots,Y_q)$ vanishes if any of the $X_a$ or $Y_a$ are the tautological vector field $X_{\id}$. By multilinearity, writing each $X_a$ and $Y_a$ in the above form, we obtain
  \begin{equation*}
    \begin{split}
      &\ostar_{S^n}\psi\rvert_u(X_1,\dots,X_p;Y_1,\dots,Y_q)\\
      &=\ostar_{S^n}\psi\rvert_u(b_1X_{\id}+\bar X_1,\dots,b_pX_{\id}+\bar X_p;c_1X_{\id}+\bar Y_1,\dots,c_qX_{\id}+\bar Y_q)\\
      &=\ostar_{S^n}\psi\rvert_u(\bar X_1,\dots,\bar X_p;\bar Y_1,\dots,\bar Y_q),
    \end{split}
  \end{equation*}
  which is zero because $\ostar_{S^n}\psi\rvert_u$ vanishes on vectors tangent to the sphere.
  
  We have shown that $\ostar_{S^n}\psi\rvert_u$ is the zero double multicovector. In other words, in the standard form, all of the polynomial coefficients of $\ostar_{S^n}\psi$ vanish at this point $u$. Because the polynomial coefficients are homogeneous, they must also vanish at any scalar multiple of this point, so $\ostar_{S^n}\psi\rvert_{hu}$ is zero for any real number $h$.
\end{proof}

This lemma immediately implies that extensions become unique after applying the Hodge star.

\begin{proposition}\label{prop:ostarunique}
  Let $\psi,\psi'\in\cH_r\Lambda^{p,q}(\R^{n+1})$, and assume that the pullbacks of $\psi$ and $\psi'$ under the inclusion $S^n\hookrightarrow\R^{n+1}$ are equal. Then $\ostar_{S^n}\psi=\ostar_{S^n}\psi'$.
\end{proposition}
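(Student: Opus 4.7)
The plan is to reduce to showing a single statement by linearity and then apply the pointwise vanishing result already established in Lemma~\ref{lem:ostarvanish}. Specifically, set $\eta := \psi - \psi' \in \cH_r\Lambda^{p,q}(\R^{n+1})$. The hypothesis says that the pullback $\bar\eta$ of $\eta$ under $S^n \hookrightarrow \R^{n+1}$ is zero, and the conclusion $\ostar_{S^n}\psi = \ostar_{S^n}\psi'$ is equivalent, by linearity of $\ostar_{S^n}$, to $\ostar_{S^n}\eta = 0$. So it suffices to prove this linearized version.

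Next, I would argue pointwise. Fix any point $x \in \R^{n+1}$. If $x = 0$, then $\ostar_{S^n}\eta|_x = 0$ automatically, since $\ostar_{S^n}\eta$ has polynomial coefficients of positive degree $r$ (actually this is subsumed by the general case provided $r \ge 0$, because one can take the limit or just note the coefficients are continuous). Otherwise, write $x = hu$ where $u = x/\abs{x} \in S^n$ and $h = \abs{x} \in \R$. Since $\bar\eta = 0$ on $S^n$, in particular $\bar\eta|_u$ is the zero double multicovector on $T_uS^n$. Applying Lemma~\ref{lem:ostarvanish} to $\eta$ at this point $u$ yields that $\ostar_{S^n}\eta|_{hu}$ is the zero double multicovector on $T_{hu}\R^{n+1}$, i.e.\ $\ostar_{S^n}\eta|_x = 0$.

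Since $x \in \R^{n+1}$ was arbitrary, $\ostar_{S^n}\eta = 0$, completing the proof. There is no real obstacle here: the substantive content was already absorbed into Lemma~\ref{lem:ostarvanish}, which used that $\ostar_{S^n}\eta$ lies in the image of $\kappa_L\kappa_R$ (via Proposition~\ref{prop:ostarkoszul}) and is therefore killed by contraction with $X_{\id}$, plus the homogeneity of the polynomial coefficients to spread vanishing from $S^n$ to all radial rays. The present proposition is essentially the global packaging of that pointwise fact.
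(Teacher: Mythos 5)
Your proof is correct and matches the paper's argument exactly: both reduce to $\eta=\psi-\psi'$, note its pullback to $S^n$ vanishes, and apply Lemma~\ref{lem:ostarvanish} together with the fact that every point of $\R^{n+1}$ is a scalar multiple of a point on the sphere. The extra remark about $x=0$ is harmless but unnecessary, since the lemma already allows $h=0$.
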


\begin{proof}
  The pullback of $\psi-\psi'$ is zero at every point on the sphere. Since any point of $\R^{n+1}$ is a scalar multiple of a point on the sphere, we conclude by Lemma~\ref{lem:ostarvanish} that $\ostar_{S^n}(\psi-\psi')=0$ at every point of $\R^{n+1}$.
\end{proof}

The lemma also yields an essential relationship between forms on $S^n$ that fully vanish on great circles and forms on $\R^{n+1}$ that are divisible by $u_N$.

\begin{proposition}\label{prop:ostardivisible}
  Let $\psi\in\cH_r\Lambda^{p,q}(\R^{n+1})$, and let $\bar\psi$ be the pullback of $\psi$ under the inclusion $S^n\hookrightarrow\R^{n+1}$. Assume that $\bar\psi$ fully vanishes on the great circles of $S^n$, that is, at every point $u\in S^n$ with $u_i=0$, we have that $\psi\rvert_u(X_1,\dots,X_p;Y_1,\dots,Y_q)=0$ for all vectors $X_a,Y_a$ in $T_uS^n$. Then $\ostar_{S^n}\psi$ is divisible by $u_N=\prod_{i=0}^nu_i$.
\end{proposition}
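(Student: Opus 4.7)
The plan is to show that for each $i$, the double form $\ostar_{S^n}\psi$ vanishes identically (as a double multicovector) at every point of the coordinate hyperplane $\{u_i=0\}$. Once we have this, every polynomial coefficient of $\ostar_{S^n}\psi$ in the standard $\du^{I,J}$ basis vanishes on $\{u_i=0\}$, hence is divisible by $u_i$. Running over all $i$ and using that these are $n+1$ distinct irreducible factors, we conclude that each coefficient is divisible by $u_N=\prod_{i=0}^nu_i$.

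Fix $i$ and let $v\in\R^{n+1}$ be any point with $v_i=0$. If $v=0$, there is nothing to check. Otherwise we can write $v=hu$ for some real $h$ and some $u\in S^n$ with $u_i=0$. By the hypothesis of the proposition, $\bar\psi\rvert_u$ is the zero double multicovector on $T_uS^n$. Now apply Lemma~\ref{lem:ostarvanish}: the conclusion is exactly that $\ostar_{S^n}\psi\rvert_{hu}$ is the zero double multicovector on $T_{hu}\R^{n+1}$. Thus $\ostar_{S^n}\psi\rvert_v=0$, and this holds for every $v\in\{u_i=0\}$.

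Writing $\ostar_{S^n}\psi=\sum_{I,J}f_{I,J}\,\du^{I,J}$ with $f_{I,J}$ polynomials and the $\du^{I,J}$ a basis of $\Lambda^{p,q}$ (at each point), the above says that each $f_{I,J}$ vanishes on $\{u_i=0\}$, so $u_i\mid f_{I,J}$. Since the $u_i$ for $i=0,\dots,n$ are pairwise coprime irreducible polynomials, we conclude $u_N\mid f_{I,J}$ for every $(I,J)$, i.e.~$\ostar_{S^n}\psi$ is divisible by $u_N$. The main work has already been done in Lemma~\ref{lem:ostarvanish}; the present proposition is essentially a homogeneity-and-divisibility repackaging of it, so no serious new obstacle arises.
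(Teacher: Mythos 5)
Your proof is correct and follows essentially the same route as the paper's: both reduce to Lemma~\ref{lem:ostarvanish} by writing an arbitrary point of the hyperplane $\{u_i=0\}$ as a scalar multiple of a point on the corresponding great circle, then conclude that the polynomial coefficients vanish on each hyperplane and hence are divisible by each $u_i$, and therefore by $u_N$. The only difference is that you spell out the coprimality step for the final divisibility by the product, which the paper leaves implicit.
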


\begin{proof}
  Any point on the hyperplane $\{u_i=0\}$ in $\R^{n+1}$ is a scalar multiple of a point on the great circle $\{u_i=0\}$ in $S^n$, so Lemma~\ref{lem:ostarvanish} tells us that $\ostar_{S^n}\psi\rvert_u$ is the zero double multicovector for any point $u$ with $u_i=0$. Hence, in standard form, the polynomial coefficients of $\ostar_{S^n}\psi$ vanish on the entire hyperplane $\{u_i=0\}$. Consequently, these polynomial coefficients must be divisible by $u_i$.
\end{proof}

\subsection{The extension construction}\label{sec:extensionconstruction}
We now have the tools to follow the steps outlined in Section~\ref{sec:overview} to construct extensions of double forms on $T^n$ with vanishing trace, and to understand when the construction fails and the extension does not exist.

\begin{theorem}\label{thm:extension}
  Let $\bar\phi\in\oP_r\Lambda^{p,q}_m(T^n)$ be nonzero. Let $\phi\in\cH_r\Lambda^{p,q}_m(\R^{n+1})$ be an arbitrary extension of $\bar\phi$ to $\R^{n+1}$. Provided we are not in the case $r=0$ and $m=q$, then $\bar\phi$ also has an extension $\phi'\in\oH_r\Lambda^{p,q}_m(\R^{n+1})$ with vanishing trace, given by the formula
  \begin{equation*}
    \phi'=(-1)^kC^{-1}(\Phi^*)^{-1}u_N\ostar_{\R^{n+1}}^{-1}d_Ld_R(u_N^{-1}\ostar_{S^n}\Phi^*\phi),
  \end{equation*}
  where $k=p+q$, $C=(2r+p+m+1)(2r+q-m)$, and $u_N=\prod u_i$.

  Moreover, the extension $\phi'$ given by this formula depends only on $\bar\phi$, not on the arbitrary extension $\phi$.
\end{theorem}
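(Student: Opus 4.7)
The plan is to verify each step of the construction sketched in Section~\ref{sec:overview}. First, set $\psi = \Phi^*\phi$ so that $\bar\psi = \Phi^*\bar\phi$ fully vanishes on every great circle $\{u_i=0\}\cap S^n$ by Proposition~\ref{prop:vanishgreatcircle}. Proposition~\ref{prop:ostardivisible} then forces $\ostar_{S^n}\psi$ to be divisible by $u_N$, so $\eta := u_N^{-1}\ostar_{S^n}\psi$ is an honest polynomial double form. Proposition~\ref{prop:ostarkoszul} gives $\ostar_{S^n}\psi = (-1)^k\kappa_L\kappa_R\ostar_{\R^{n+1}}\psi$, which lies in $\ker\kappa_L\cap\ker\kappa_R$; since $\kappa_L$ and $\kappa_R$ are $C^\infty$-linear in scalar factors, so is $\eta$.

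Next I will track the decomposition summand and invoke Proposition~\ref{prop:kerkoszul} for $\eta$. Propositions~\ref{prop:pullback}, \ref{prop:ostardecomp}, and \ref{prop:kkdecomposition} place $\eta$ in $\cH_{r'}\Lambda^{n-p,n-q}_{m^*}$ with $r' = 2r+k+1-n$ and $m^* = m+p-q$. Substituting these values into the constant of Proposition~\ref{prop:kerkoszul} and simplifying gives
\[
  (r'+(n-p)+m^*)(r'+(n-q)-m^*-1) = (2r+p+m+1)(2r+q-m) = C,
\]
matching the theorem statement. The first factor is strictly positive; the second vanishes iff $r=0$ and $m=q$. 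Since $\bar\phi\ne 0$ forces $\eta\ne 0$, one of the three cases of Proposition~\ref{prop:kerkoszul} applies: case~1 is numerically ruled out, case~2 corresponds precisely to the exceptional situation, and outside it case~3 yields $\eta = \kappa_L\kappa_R(C^{-1}d_Ld_R\eta)$, equivalently $\kappa_L\kappa_R d_Ld_R\eta = C\eta$.

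Now set $\theta := (-1)^k C^{-1} u_N \ostar_{\R^{n+1}}^{-1} d_Ld_R\eta$, the argument of $(\Phi^*)^{-1}$ in the theorem's formula. Divisibility of $\theta$ by $u_N$ is manifest, and I will verify that $\theta$ is even by tracking parity through the chain of operators starting from the even form $\psi$ of Proposition~\ref{prop:psieven}: $\ostar_{\R^{n+1}}$, $d_Ld_R$, and $\kappa_L\kappa_R$ all preserve parity in each variable, while multiplication or division by $u_N$ flips it, so $\theta$ comes out even. Proposition~\ref{prop:pushforward} then supplies a unique $\phi' \in \oH_r\Lambda^{p,q}_m(\R^{n+1})$ with $\Phi^*\phi' = \theta$; membership in the $m$-summand is preserved at every stage by Propositions~\ref{prop:dddecomposition}, \ref{prop:ostardecomp}, and \ref{prop:pullbackdecomposition}, and the polynomial degree bookkeeping collapses to $r$ after the pushforward.

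Finally, to show $\phi'$ genuinely extends $\bar\phi$, I will compute $\ostar_{S^n}\theta$: using Proposition~\ref{prop:ostarkoszul} together with the identity $\kappa_L\kappa_R d_Ld_R\eta = C\eta$ from above, the signs and the factors of $C^{\pm 1}$ cancel, yielding $\ostar_{S^n}\theta = u_N\eta = \ostar_{S^n}\psi$. Combining Proposition~\ref{prop:ostarrestrict} with pointwise invertibility of $\ostar_{S^n}$ on $S^n$ promotes Proposition~\ref{prop:ostarunique} to an ``iff'', so $\theta$ and $\psi$ have the same restriction to $S^n$, and hence $\phi'$ pulls back to $\bar\phi$ on $T^n$. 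The same observation shows the formula depends on $\phi$ only through $\ostar_{S^n}\Phi^*\phi$, which by Proposition~\ref{prop:ostarunique} depends only on $\bar\psi$ and thus on $\bar\phi$, giving the independence claim. The main obstacle is the identification of the constant $C$ and the recognition that its zero locus coincides precisely with the exceptional case $r=0$, $m=q$; the remainder is careful bookkeeping of polynomial degrees, parities, and decomposition indices.
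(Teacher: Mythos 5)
Your proposal is correct and follows essentially the same route as the paper's proof: pull back via $\Phi$, use full vanishing on great circles to get divisibility of $\ostar_{S^n}\psi$ by $u_N$, invert $\kappa_L\kappa_R$ via Proposition~\ref{prop:kerkoszul} with the same constant computation $C=(2r+p+m+1)(2r+q-m)$, identify the exceptional case with case~2 of that proposition, and verify evenness and $u_N$-divisibility to push forward. The only cosmetic difference is that you organize the case analysis around the vanishing of $C$ rather than walking through the trichotomy of Proposition~\ref{prop:kerkoszul} directly, but these are equivalent given that $\eta\neq 0$, which you correctly note.
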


\begin{proof}
  We have all the ingredients, so now we just apply each operator step by step. Let $\psi=\Phi^*\phi$ and $\bar\psi$ be its restriction to the sphere $S^n$, so we also have $\bar\psi=\Phi^*\bar\phi$. By Proposition~\ref{prop:pullback}, we have $\psi\in\cH_{2r+k}\Lambda^{p,q}_m(\R^{n+1})$ and $\bar\psi$ fully vanishes on the great circles $\{u_i=0\}$ by Proposition~\ref{prop:vanishgreatcircle}.

  We now consider $\ostar_{S^n}\psi$. We first remark that Proposition~\ref{prop:ostarunique} implies that $\ostar_{S^n}\psi$ only depends on $\bar\psi$, which in turns depends only on $\bar\phi$, not on the arbitrary extension $\phi$. Hence, $\phi'$ only depends on $\bar\phi$, not on $\phi$, as desired.

  Next, we have that $\ostar_{S^n}\psi=(-1)^k\kappa_L\kappa_R\ostar_{\R^{n+1}}\psi$ by Proposition~\ref{prop:ostarkoszul}. Noting that $\ostar_{\R^{n+1}}$ does not change polynomial degree, by Proposition~\ref{prop:ostardecomp}, we have that $\ostar_{\R^{n+1}}\psi\in\cH_{2r+k}\Lambda^{n+1-p,n+1-q}_{m^*}(\R^{n+1})$, where $m^*=m+p-q$. Then, noting that $\kappa_L$ and $\kappa_R$ raise polynomial degree and lower form degree, we have by Proposition~\ref{prop:kkdecomposition} that $\ostar_{S^n}\psi\in\cH_{2r+k+2}\Lambda^{n-p,n-q}_{m^*}(\R^{n+1})$. Its restriction to the sphere is $\ostar_{S^n}\bar\psi$ by Proposition~\ref{prop:ostarrestrict}.

  By Proposition~\ref{prop:ostardivisible}, $\ostar_{S^n}\psi$ is divisible by $u_N=\prod_{i=0}^nu_i$. So, $u_N^{-1}\ostar_{S^n}\psi\in\cH_{2r+k-n+1}\Lambda^{n-p,n-q}_{m^*}(\R^{n+1})$. Recalling that $\ostar_{S^n}\psi=(-1)^k\kappa_L\kappa_R\ostar_{\R^{n+1}}\psi$, we have that $\ostar_{S^n}\psi$ is in the kernel of both $\kappa_L$ and $\kappa_R$. Since multiplication by $u_N^{-1}$ commutes with $\kappa_L$ and $\kappa_R$, we conclude that $u_N^{-1}\ostar_{S^n}\psi$ is in the kernel of $\kappa_L$ and $\kappa_R$, too. So now we would like to apply Proposition~\ref{prop:kerkoszul} to show that $u_N^{-1}\ostar_{S^n}\psi$ is in the image of $\kappa_L\kappa_R$.

  To do so, we must first deal with the exceptional cases of Proposition~\ref{prop:kerkoszul}. First, Proposition~\ref{prop:kerkoszul} requires that $u_N^{-1}\ostar_{S^n}\psi$ be nonzero. Assume for the sake of contradiction that $u_N^{-1}\ostar_{S^n}\psi=0$, so then $\ostar_{S^n}\psi$ would be zero. Although $\ostar_{S^n}$ is not injective on forms on $\R^{n+1}$, it is bijective on forms on $S^n$, so then we could conclude that $\bar\psi$ is zero, from which it would follow by Proposition~\ref{prop:pullbackTS} that $\bar\phi$ is zero, which we assumed is not the case.

  Since $u_N^{-1}\ostar_{S^n}\psi\in\cH_{2r+k-n+1}\Lambda^{n-p,n-q}_{m^*}(\R^{n+1})$, the first case of Proposition~\ref{prop:kerkoszul} reads $2r+k-n+1=n-p=n-q=m^*=0$. In particular $p=n$, $q=n$, $k=p+q=2n$, and so $2r+k-n+1=2r+n+1$, which cannot be zero.

  The second case of Proposition~\ref{prop:kerkoszul} reads $2r+k-n+1=1$ and $m^*=n-q$. The first equation gives $n=2r+k$, which implies $n\ge k$. Recalling that $m^*=m+p-q$, the second equation gives $n=m+p$. Recalling that $m\le q$, we have $n\le q+p=k$. We conclude that $n=k$, so $r=0$, and $m=q$, which is the exceptional case excluded in the theorem statement. As we saw in Example~\ref{eg:counterexample} and will see more generally below, vanishing trace extension is not possible in this case.

  So, we are in the general case of Proposition~\ref{prop:kerkoszul}, so
  \begin{equation}\label{eq:ustarpsi}
    u_N^{-1}\ostar_{S^n}\psi=C^{-1}\kappa_L\kappa_Rd_Ld_R(u_N^{-1}\ostar_{S^n}\psi),
  \end{equation}
  where $C$ is
  \begin{multline*}
    \bigl((2r+k-n+1)+(n-p)+(m+p-q)\bigr)\bigl((2r+k-n+1)+(n-q)-(m+p-q)-1\bigr)\\
    =(2r+p+m+1)(2r+q-m).
  \end{multline*}
  So now let
  \begin{equation}\label{eq:psiprime}
    \psi'=(-1)^kC^{-1}u_N\ostar_{R^{n+1}}^{-1}d_Ld_R(u_N^{-1}\ostar_{S^n}\psi).
  \end{equation}
  Per Proposition~\ref{prop:dddecomposition}, $d_Ld_R(u_N^{-1}\ostar_{S^n}\psi)\in\cH_{2r+k-n-1}\Lambda^{n-p+1,n-q+1}_{m^*}$, so then $\ostar_{\R^{n+1}}^{-1}d_Ld_R(u_N^{-1}\ostar_{S^n}\psi)\in\cH_{2r+k-n-1}\Lambda^{p,q}_m$, and so $\psi'\in\cH_{2r+k}\Lambda^{p,q}_m$. So then, applying Proposition~\ref{prop:ostarkoszul}, using the fact that multiplication by $u_N$ commutes with pointwise operations $\kappa_L$, $\kappa_R$, and $\ostar_{\R^{n+1}}$, and using Equation~\eqref{eq:ustarpsi}, we obtain
  \begin{multline*}
    \ostar_{S^n}\psi'=(-1)^k\kappa_L\kappa_R\ostar_{\R^{n+1}}\psi'=C^{-1}u_N\kappa_L\kappa_Rd_Ld_R(u_N^{-1}\ostar_{S^n}\psi)=\ostar_{S^n}\psi.
  \end{multline*}
  Letting $\bar\psi'$ be the restriction of $\psi'$ to the sphere, we conclude by Proposition~\ref{prop:ostarrestrict} that $\ostar_{S^n}\bar\psi'=\ostar_{S^n}\bar\psi$, so $\bar\psi'=\bar\psi$ because $\ostar_{S^n}$ is bijective on the sphere.

  The final step is to let $\phi'=(\Phi^*)^{-1}\psi'$, but to do so we must verify that the push forward exists using Proposition~\ref{prop:pushforward}. We know that $\psi$ is even by Proposition~\ref{prop:psieven}. We recall the notation that $R_i$ is the reflection across the hyperplane $\{u_i=0\}$ given by $u_i\mapsto-u_i$. The Hodge star $\star_{\R^{n+1}}$ anticommutes with pullback under reflections, so then the double Hodge star $\ostar_{\R^{n+1}}$ commutes with pullback under reflections. The operations $d_L$ and $d_R$ commute with any pullback. Since the vector field $X_{\id}$ is invariant under reflection, $\kappa_L$ and $\kappa_R$ commute with $R_i^*$. Finally, $R_i^*u_N=-u_N$, so multiplication or division by $u_N$ anticommutes with $R_i^*$. So, all of the operations in Equation~\eqref{eq:psiprime} commute with $R_i^*$, with the exception of $u_N$ and $u_N^{-1}$, each of which anticommutes with $R_i^*$. We conclude that $\psi'$ is even. We have that $\psi'$ is divisible by $u_N$ by construction. So, by Proposition~\ref{prop:pushforward}, there exists a unique $\phi'\in\oH_r\Lambda^{p,q}_m(\R^{n+1})$ with $\Phi^*\phi'=\psi'$.

  Letting $\bar\phi'$ be the pullback of $\phi'$ to $T^n$, we have $\Phi^*\bar\phi'=\bar\psi'=\bar\psi=\Phi^*\bar\phi$, so $\bar\phi'=\bar\phi$ by Proposition~\ref{prop:pullbackTS}. We conclude that $\phi'$ is the desired vanishing trace extension of $\bar\phi$.
\end{proof}

We also show that extension fails in the exceptional case $r=0$ and $m=q$.

\begin{proposition}
  Let $\bar\phi\in\oP_0\Lambda^{p,q}_q(T^n)$ be nonzero. Then there does not exist a vanishing trace extension $\phi'\in\oH_0\Lambda^{p,q}_q(\R^{n+1})$ of $\bar\phi$.
\end{proposition}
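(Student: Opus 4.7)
The plan is to trivialize the problem in the $m=q$ summand by passing through the inclusion $i^{p,q}\colon\Lambda^{p+q}\hookrightarrow\Lambda^{p,q}_q$ from Proposition~\ref{prop:imageipq}, which reduces the double-form question to a transparent statement about ordinary $k$-forms, where $k=p+q$. Since $i^{p,q}$ is defined pointwise and algebraically (Definition~\ref{def:ipq}), it commutes with pullback along any smooth inclusion; in particular, $i^{p,q}$ is injective and maps vanishing-trace forms to vanishing-trace forms. So a hypothetical extension $\phi'\in\oH_0\Lambda^{p,q}_q(\R^{n+1})$ of $\bar\phi$ necessarily has the form $\phi'=i^{p,q}\mu$ for a uniquely determined constant $k$-form $\mu\in\cH_0\Lambda^k(\R^{n+1})$, and the vanishing-trace condition for $\phi'$ on each coordinate hyperplane is equivalent to the analogous vanishing-trace condition for $\mu$. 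Likewise $\bar\phi=i^{p,q}\bar\mu$ where $\bar\mu$ is the pullback of $\mu$ to $T^n$, and $\bar\mu\ne 0$ because $\bar\phi\ne 0$.

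Next I would argue directly that no nonzero constant $k$-form on $\R^{n+1}$ can have vanishing trace on \emph{every} coordinate hyperplane unless $k=n+1$. Expanding $\mu=\sum_{|I|=k}c_I\,d\lambda^I$ over increasing multi-indices and pulling back to $\{\lambda_i=0\}$ simply annihilates the factor $d\lambda^i$ while preserving every other $d\lambda^j$; hence vanishing trace on this hyperplane forces $c_I=0$ for every $I$ with $i\notin I$. Imposing this simultaneously for each $i\in\{0,\dots,n\}$ leaves only the full multi-index $I=\{0,1,\dots,n\}$, so $k=n+1$.

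Finally I would rule out $k=n+1$ in two complementary ways, either of which suffices. On the one hand, $\bar\phi\ne 0$ forces $\Lambda^{p,q}_q(T^n)$ to be nonzero pointwise on the $n$-dimensional simplex $T^n$, and Proposition~\ref{prop:lpqmnonzero} (applied with ambient dimension $n$ and $m=q$) then gives $q\le n-p$, i.e.\ $p+q\le n$, contradicting $k=n+1$. On the other hand, even granting $k=n+1$, the resulting $\mu$ is a scalar multiple of $d\lambda^0\wedge\dots\wedge d\lambda^n$, whose pullback to the $n$-dimensional $T^n$ vanishes, so $\bar\mu=0$ and therefore $\bar\phi=i^{p,q}\bar\mu=0$, again contradicting the hypothesis. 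I do not anticipate any real obstacle: the only point to verify carefully is that $i^{p,q}$ commutes with pullback along inclusions of submanifolds or coordinate hyperplanes, and this is immediate from the pointwise Definition~\ref{def:ipq}.
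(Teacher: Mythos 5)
Your proof is correct, and it takes a genuinely different and more elementary route than the paper's. You exploit the fact that $\Lambda^{p,q}_q$ is the image of the injection $i^{p,q}\colon\Lambda^k\hookrightarrow\Lambda^{p,q}$ (Proposition~\ref{prop:imageipq}), which commutes with pullback, to reduce the whole question to ordinary constant $k$-forms: a direct basis expansion shows that a nonzero element of $\oH_0\Lambda^k(\R^{n+1})$ must be a multiple of $\dl_0\wedge\dots\wedge\dl_n$, forcing $k=n+1$, which is then excluded either by Proposition~\ref{prop:lpqmnonzero} (nonvanishing of $\Lambda^{p,q}_q$ on the $n$-dimensional $T^n$ forces $p+q\le n$) or by noting that the volume form of $\R^{n+1}$ pulls back to zero on $T^n$. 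The paper instead re-runs the machinery of Theorem~\ref{thm:extension}: it pulls back to the sphere, applies $\ostar_{S^n}$ and divides by $u_N$, and uses the case analysis of Proposition~\ref{prop:kerkoszul} to pin down $k=n$, before showing $\oH_0\Lambda^n(\R^{n+1})=0$ via a Hodge-star basis. What your argument buys is brevity and independence from the sphere/Koszul apparatus; what the paper's argument buys is a precise diagnosis of \emph{where} the constructive extension formula breaks down (the $C=0$ case of Proposition~\ref{prop:kerkoszul}), which connects the nonexistence result to the failure of the construction rather than treating it as a separate algebraic fact. The only point in your write-up worth stating explicitly is the one you flagged yourself: $i^{p,q}$ commutes with pullback along any inclusion and remains injective on the (possibly zero) target space, so the vanishing-trace conditions for $\phi'$ and $\mu$ really are equivalent on each hyperplane; this is immediate from Definition~\ref{def:ipq}.
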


\begin{proof}
  The initial part of the proof of Theorem~\ref{thm:extension} proceeds as before, with $\phi$ an arbitrary extension of $\bar\phi$, then setting $\psi:=\Phi^*\phi$, and then finding that $u_N^{-1}\ostar_{S^n}\psi$ is a nonzero element of $\cH_{2r+k-n+1}\Lambda^{n-p,n-q}_{m^*}$ that is in the kernel of $\kappa_L$ and $\kappa_R$. The first case of Proposition~\ref{prop:kerkoszul} likewise yields a contradiction.

  So, we are in the second or third case of Proposition~\ref{prop:kerkoszul}. Plugging in $r=0$ and $m=q$, we find that $u_N^{-1}\ostar_{S^n}\psi\in\cH_{k-n+1}\Lambda^{n-p,n-q}_p$. Recalling that $\Lambda^{p,q}_m$ being nonzero implies $m\le q$, we have that $\Lambda^{n-p,n-q}_p$ being nonzero implies $p\le n-q$, so $k=p+q\le n$, and so the polynomial degree $k-n+1$ is at most $1$. We conclude that we cannot be in the third case of Proposition~\ref{prop:kerkoszul}, so we must be in the second case, and so $k-n+1=1$ and $p=n-q$, both of which tell us that $k=n$.

  We claim that, when $k=n$, the space $\oH_0\Lambda^{p,q}_q(\R^{n+1})$ is zero and hence cannot contain an extension of a nonzero double form on $T^n$. By Proposition~\ref{prop:imageipq}, the space $\oH_0\Lambda^{p,q}_q(\R^{n+1})$ is the image of $\oH_0\Lambda^k(\R^{n+1})$ under the inclusion $i^{p,q}$ of $k$-forms into $(p,q)$-forms, so it suffices to show that $\oH_0\Lambda^k(\R^{n+1})=0$ when $k=n$.

  For any $\alpha\in\oH_0\Lambda^n(\R^{n+1})$, we can write it as $\alpha=\sum_i a_i\starop_{\R^{n+1}}\dl_i$, where the $a_i$ are constants. By assumption $\alpha$ vanishes when pulled back to every hyperplane $\{\lambda_i=0\}$, so we investigate what happens to the terms in the right-hand side under this restriction. The restriction of $\starop_{\R^{n+1}}\dl_i$ is nonzero; it is just the volume form on this hyperplane, which we can denote $\mu_i$. On the other hand, for $j\neq i$, the restriction of $\starop_{\R^{n+1}}\dl_j$ is zero because $\starop_{\R^{n+1}}\dl_j$ is a wedge product of $n$ factors including $\dl_i$. So, since $\alpha$ has vanishing trace, pulling back the equation $\alpha=\sum_ia_i\starop_{\R^{n+1}}\dl_i$ to the hyperplane yields $0=a_i\mu_i$, so $a_i=0$. This argument holds for all $i$, so $\alpha=0$.
\end{proof}

\section{Finite element spaces} \label{sec:finite}

Thanks to Theorem~\ref{thm:extension} and a recent paper by the first author~\cite{berchenko2025extension}, we now have the ingredients that are needed to construct global finite spaces for $(p,q)$-forms on simplicial triangulations.  We begin in Section~\ref{sec:geo} by constructing a piecewise polynomial finite element space for each summand $\Lambda^{p,q}_m$ in the decomposition $\Lambda^{p,q} = \bigoplus_m \Lambda^{p,q}_m$, with the exception of $\Lambda^{p,q}_q$ which fails to admit a piecewise constant discretization.  Our construction yields a geometric decomposition of each finite element space and its dual.  The degrees of freedom for the finite element spaces are given by integration against $(p,q)$-forms on a simplex that have vanishing trace on the boundary of the simplex.   In Section~\ref{sec:dim}, we investigate the piecewise constant finite element spaces in more detail by computing their dimensions along with the dimensions of the vanishing-trace subspaces.  In Section~\ref{sec:examples}, we discuss some of the piecewise constant finite element spaces produced by our construction, paying special attention to the three-dimensional setting and pointing out some connections with known finite element spaces.  We also present bases for a few of the piecewise constant finite element spaces in Table~\ref{tab:bases}.  In Section~\ref{sec:related}, we discuss the relationship between our work and that of Hu and Lin~\cite{hu2025finite}.

\subsection{Geometric decomposition} \label{sec:geo}

Following \cite{berchenko2025extension}, we show that the extension operator in Theorem~\ref{thm:extension} yields finite element spaces for double forms on simplicial triangulations.

Let $p,q,m,n,r$ be integers satisfying $0 \le p,q \le n$, $\max\{0,q-p\} \le m \le \min\{q,n-p\}$, and $r \ge 0$.  Theorem~\ref{thm:extension} tells us that as long as $(r,m) \neq (0,q)$, every double form $\phi \in \mathring{\mathcal{P}}_r\Lambda^{p,q}_m(T^n)$ admits an extension $\phi' \in \mathring{\mathcal{H}}_r\Lambda^{p,q}_m(\mathbb{R}^{n+1})$.  By restricting the domain of $\phi'$ to the $(n+1)$-simplex
\[
  Q^{n+1} := \left\{(\lambda_0,\dots,\lambda_n) \mid \lambda_i \ge 0, \, \sum_i \lambda_i \le 1 \right\},
\]
we can view $\phi'$ as a member of $\mathcal{P}_r\Lambda^{p,q}_m(Q^{n+1})$, the space of $(p,q)$-forms on $Q^{n+1}$ that belong pointwise to $\Lambda^{p,q}_m$ and have polynomial coefficients of degree at most $r$.

\begin{definition}
  Let 
  \[
    E_n : \mathring{\mathcal{P}}_r\Lambda^{p,q}_m(T^n) \to \mathcal{P}_r\Lambda^{p,q}_m(Q^{n+1})
  \]
  denote the extension operator described above, which, per Theorem~\ref{thm:extension}, exists whenever $(r,m) \neq (0,q)$.  We refer to $E_n$ as a \emph{simplicial extension operator}.
\end{definition}

Now let $N$ be fixed and consider a simplicial triangulation $\mathcal{T}$ of dimension $N$.  Let $\mathfrak{T}$ be the partially ordered set consisting of the vertices, edges, faces, etc.~of $\mathcal{T}$, ordered by inclusion.  

\begin{definition}
  For each pair of simplices $K \le F$ in the triangulation $\mathcal{T}$, let 
  \[
    \tr^F_K : \mathcal{P}_r\Lambda^{p,q}_m(F) \to \mathcal{P}_r\Lambda^{p,q}_m(K)
  \]
  denote the pullback under the inclusion $K \hookrightarrow F$.
\end{definition}

\begin{definition}
  Let the \emph{global function space} be
  \[
    \mathcal{P}_r \Lambda^{p,q}_m(\mathcal{T})  = \left\{\left(\phi_F\right)_{F\in\mathfrak{T}}\in\prod_{F\in\mathfrak{T}}\mathcal{P}_r \Lambda^{p,q}_m(F)\mid\tr^F_K\phi_F=\phi_K\text{ for all }K\le F\right\}.
  \]
  We define $\tr^{\mathcal{T}}_F\colon\mathcal{P}_r \Lambda^{p,q}_m(\mathcal{T})\to\mathcal{P}_r \Lambda^{p,q}_m(F)$ to be the natural projection maps.
\end{definition}

Note that if $\mathcal{T}$ triangulates an $N$-dimensional polyhedral domain $\Omega \subset \mathbb{R}^{N}$, then $\mathcal{P}_r \Lambda^{p,q}_m(\mathcal{T})$ can be identified with the space of double forms in $\Lambda^{p,q}_m(\Omega)$ that are piecewise polynomial with respect to $\mathcal{T}$ and have single-valued trace on element interfaces.

It is proved in~\cite{berchenko2025extension} that the existence of the simplicial extension operators $E_n$ in all dimensions $n<N$ implies that $\mathcal{P}_r \Lambda^{p,q}_m(\mathcal{T})$ admits \emph{local extension operators}
\[
  E_K^{\mathcal{T}} : \mathring{\mathcal{P}}_r\Lambda^{p,q}_m(K) \to \mathcal{P}_r \Lambda^{p,q}_m(\mathcal{T}), \quad K \in \mathfrak{T}
\]
with the following properties:
\begin{itemize}
\item $\tr^{\cT}_KE^{\cT}_K\colon\mathring{\mathcal{P}}_r\Lambda^{p,q}_m(K)\to\mathcal{P}_r\Lambda^{p,q}_m(K)$ is the inclusion, and
\item $\tr^{\cT}_FE^{\cT}_K=0$ if $K\not\le F$.
\end{itemize}

Moreover, these local extension operators give rise to the following geometric decomposition of $\mathcal{P}_r \Lambda^{p,q}_m(\mathcal{T})$.
\begin{theorem}
  Provided that $(r,m) \neq (0,q)$, we have
  \[
    \mathcal{P}_r \Lambda^{p,q}_m(\mathcal{T}) = \bigoplus_{F \in \mathfrak{T}} E_F^{\mathcal{T}} \mathring{\mathcal{P}}_r\Lambda^{p,q}_m(F).
  \]
\end{theorem}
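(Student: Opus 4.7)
The plan is to establish the decomposition by proving two complementary facts: that the sum on the right-hand side exhausts $\mathcal{P}_r\Lambda^{p,q}_m(\mathcal{T})$, and that it is direct. Both arguments proceed by induction on $\dim F$, using the two defining properties of the local extension operators in concert with the fact that an element of the global space is determined by its traces onto the simplices in $\mathfrak{T}$.

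For spanning, given $\phi \in \mathcal{P}_r\Lambda^{p,q}_m(\mathcal{T})$, I would define $\phi_F \in \mathring{\mathcal{P}}_r\Lambda^{p,q}_m(F)$ inductively on $\dim F$ by
\[
  \phi_F := \tr^{\mathcal{T}}_F \phi - \sum_{G < F} \tr^{\mathcal{T}}_F E^{\mathcal{T}}_G \phi_G.
\]
To verify the vanishing-trace condition on $\phi_F$, I would pull back to any proper face $H < F$: the second property of the local extension operators discards all terms with $G \not\le H$, and the remaining terms reorganize into $\tr^{\mathcal{T}}_H \phi - \phi_H - \sum_{G < H} \tr^{\mathcal{T}}_H E^{\mathcal{T}}_G \phi_G$, which vanishes by the inductive definition of $\phi_H$. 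To conclude $\phi = \sum_F E^{\mathcal{T}}_F \phi_F$, one compares traces: for each $K \in \mathfrak{T}$, applying $\tr^{\mathcal{T}}_K$ to the candidate sum retains only the terms with $F \le K$, and the first property isolates $\phi_K$ while the rest match exactly the correction in the definition of $\phi_K$, producing $\tr^{\mathcal{T}}_K \phi$. Since the global space is characterized by its system of traces, equality follows.

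For directness, suppose $\sum_F E^{\mathcal{T}}_F \phi_F = 0$ and induct on $\dim K$ to conclude $\phi_K = 0$. Applying $\tr^{\mathcal{T}}_K$ annihilates every term with $F \not\le K$ by the second property, so only faces $F \le K$ remain. For $K$ a vertex the base case yields $\phi_K = 0$ immediately by the first property; in the inductive step the terms with $F < K$ vanish by hypothesis, again leaving $\phi_K = 0$.

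The substantive content of this theorem is already encapsulated in the local extension operators themselves, whose existence---and in particular their existence for all dimensions $n < N$ appearing in $\mathfrak{T}$---depends precisely on the hypothesis $(r,m) \neq (0,q)$ via Theorem~\ref{thm:extension} and~\cite{berchenko2025extension}. Once those are in hand, the remaining argument is largely formal bookkeeping, relying on the compatibility identity $\tr^F_H \circ \tr^{\mathcal{T}}_F = \tr^{\mathcal{T}}_H$ for $H \le F$ that is built into the definition of $\mathcal{P}_r\Lambda^{p,q}_m(\mathcal{T})$. I do not anticipate a substantial obstacle at this step.
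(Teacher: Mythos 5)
Your proof is correct and rests on exactly the same foundation as the paper's: the existence of the local extension operators $E_K^{\mathcal{T}}$ with the two stated properties, which in turn requires $(r,m)\neq(0,q)$ via Theorem~\ref{thm:extension}. The paper simply cites Corollary 3.10 of~\cite{berchenko2025extension} for the formal step from those operators to the direct-sum decomposition; your inductive construction of the components $\phi_F$ and the trace-comparison argument is a correct, self-contained rendering of precisely that bookkeeping, so the two proofs are essentially the same.
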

\begin{proof}
  This follows from Corollary 3.10 in~\cite{berchenko2025extension}.
\end{proof}
The decomposition above gives rise to a basis for $\mathcal{P}_r \Lambda^{p,q}_m(\mathcal{T})$ consisting of double forms $E^{\mathcal{T}}_F \phi$ where $F \in \mathfrak{T}$ and $\phi \in \mathring{\mathcal{P}}_r\Lambda^{p,q}_m(F)$, much like the Bernstein basis for scalar-valued Lagrange finite elements.  A few of these bases for $r=0$ and various choices of $p,q$, and $m$ are listed in Table~\ref{tab:bases}.

We also obtain the following geometric decomposition of the dual space $\mathcal{P}_r \Lambda^{p,q}_m(\mathcal{T})^*$.
\begin{theorem}
  Provided that $(r,m) \neq (0,q)$, we have
  \[
    \mathcal{P}_r \Lambda^{p,q}_m(\mathcal{T})^* = \bigoplus_{F \in \mathfrak{T}} (\tr_F^{\mathcal{T}})^* \mathring{\mathcal{P}}_r\Lambda^{p,q}_m(F)^\dagger,
  \]
  where $(\tr^{\mathcal{T}}_F)^*\colon\mathcal{P}_r \Lambda^{p,q}_m(F)^*\to\mathcal{P}_r \Lambda^{p,q}_m(\mathcal{T})^*$ denotes the dual map and $\mathring{\mathcal{P}}_r\Lambda^{p,q}_m(F)^\dagger$ denotes the set of all linear functionals of the form $\langle \phi, \cdot \rangle$, where $\phi \in \mathring{\mathcal{P}}_r\Lambda^{p,q}_m(F)$.
\end{theorem}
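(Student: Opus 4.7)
The plan is to deduce the dual decomposition from the primal one already in hand, via a dimension count paired with an injectivity argument that exploits the triangularity of the pair $(\tr_F^{\mathcal{T}},E_K^{\mathcal{T}})$. Define the linear map
\[
  \Psi\colon\bigoplus_{F\in\mathfrak{T}}\mathring{\mathcal{P}}_r\Lambda^{p,q}_m(F)^\dagger\to\mathcal{P}_r\Lambda^{p,q}_m(\mathcal{T})^*,\qquad(\xi_F)_F\mapsto\sum_{F\in\mathfrak{T}}(\tr_F^{\mathcal{T}})^*\xi_F.
\]
By the primal geometric decomposition, the target has dimension $\sum_F\dim\mathring{\mathcal{P}}_r\Lambda^{p,q}_m(F)$. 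The assignment $\bar\phi\mapsto\langle\bar\phi,\cdot\rangle$ from $\mathring{\mathcal{P}}_r\Lambda^{p,q}_m(F)$ into $\mathcal{P}_r\Lambda^{p,q}_m(F)^*$ is injective (if $\langle\bar\phi,\psi\rangle=0$ for all $\psi\in\mathcal{P}_r\Lambda^{p,q}_m(F)$, test against $\psi=\bar\phi$ to get $\bar\phi=0$), and by definition its image is $\mathring{\mathcal{P}}_r\Lambda^{p,q}_m(F)^\dagger$. Hence the source of $\Psi$ has the same dimension as its target, so it suffices to prove that $\Psi$ is injective.

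For injectivity, suppose $(\xi_F)_F$ lies in the kernel of $\Psi$ and write each $\xi_F=\langle\bar\phi_F,\cdot\rangle$ for a unique $\bar\phi_F\in\mathring{\mathcal{P}}_r\Lambda^{p,q}_m(F)$. For any $K\in\mathfrak{T}$ and $\bar\psi\in\mathring{\mathcal{P}}_r\Lambda^{p,q}_m(K)$, I would evaluate $\Psi((\xi_F)_F)$ on the basis element $E_K^{\mathcal{T}}\bar\psi$. Using $\tr_F^{\mathcal{T}}E_K^{\mathcal{T}}=0$ for $K\not\le F$, this yields
\[
  0=\sum_{F\ge K}\langle\bar\phi_F,\tr_F^{\mathcal{T}}E_K^{\mathcal{T}}\bar\psi\rangle.
\]
Then I would induct on $\dim K$ downward from top-dimensional simplices. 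For simplices $K\le F$ in a triangulation, $F\ne K$ forces $\dim F>\dim K$, so once the induction hypothesis has eliminated every $\bar\phi_F$ with $\dim F>\dim K$, the sum collapses to the single term $\langle\bar\phi_K,\bar\psi\rangle=0$, thanks to $\tr_K^{\mathcal{T}}E_K^{\mathcal{T}}=\id$. Since $\bar\psi$ is arbitrary, nondegeneracy of the $L^2$ inner product on $\mathring{\mathcal{P}}_r\Lambda^{p,q}_m(K)$ forces $\bar\phi_K=0$, completing the induction.

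The main obstacle is purely organizational: keeping track of the partial order on $\mathfrak{T}$ in the downward induction and marshalling the trace-extension identities supplied by~\cite{berchenko2025extension}. No new analysis beyond the primal decomposition and the nondegeneracy of the $L^2$ inner product is needed, so once the injectivity of $\Psi$ is secured, the matching-dimension count promotes it to the desired direct-sum isomorphism.
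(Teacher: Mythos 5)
Your argument is correct. The paper itself offers no argument here: it simply invokes Corollary 3.10 and Proposition 2.17 of~\cite{berchenko2025extension}, which package exactly the abstract statement that a primal geometric decomposition built from extension operators dualizes. What you have written is a self-contained reconstruction of that machinery, and it holds together: the two properties of the local extension operators listed in Section~\ref{sec:geo} ($\tr^{\cT}_K E^{\cT}_K$ is the inclusion, and $\tr^{\cT}_F E^{\cT}_K = 0$ when $K \not\le F$) give precisely the triangular structure your downward induction on $\dim K$ needs, since in a simplicial complex $K \le F$ with $K \neq F$ forces $\dim F > \dim K$; the positive-definiteness of the $L^2$ inner product on each $\mathring{\mathcal{P}}_r\Lambda^{p,q}_m(K)$ kills $\bar\phi_K$ at each stage; and the dimension count from the primal decomposition (each $E_F^{\mathcal{T}}$ is injective because $\tr_F^{\mathcal{T}} E_F^{\mathcal{T}} = \id$) upgrades injectivity of $\Psi$ to the full direct-sum identity. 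The only thing I would make explicit is which inner product defines $\mathring{\mathcal{P}}_r\Lambda^{p,q}_m(F)^\dagger$ (the $L^2$ pairing on $F$ against a fixed pointwise inner product on double forms), since both the injectivity of $\bar\phi \mapsto \langle\bar\phi,\cdot\rangle$ and the final step of the induction rest on its positive-definiteness; with that pinned down, the proof is complete.
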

\begin{proof}
  This follows from Corollary 3.10 and Proposition 2.17 in~\cite{berchenko2025extension}.
\end{proof}

\subsection{Dimensions of the piecewise constant spaces} \label{sec:dim}

We will now focus on the case $r=0$ and compute the dimension of the space $\mathring{\mathcal{P}}_0\Lambda^{p,q}_m(T^n)$.   Recall that this space consists of trace-free $(p,q)$-forms on $T^n$ that have constant coefficients and belong to the eigenspace of $s^*s$ corresponding to the eigenvalue $m(m+p-q+1)$.  

\begin{lemma} \label{lemma:recursion}
  Assume $p,q,n \ge 0$ and $0 \le m \le q-1$.  Then
  \[
    \dim \mathring{\mathcal{P}}_0\Lambda^{p,q}_m(T^n) + \dim \mathring{\mathcal{P}}_0\Lambda^{p,q}_m(T^{n+1}) = \dim \mathring{\mathcal{H}}_0\Lambda^{p,q}_m(\mathbb{R}^{n+1}).
  \]
\end{lemma}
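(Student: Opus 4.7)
The plan is to exhibit a short exact sequence
\[
  0 \to \oP_0\Lambda^{p,q}_m(T^{n+1}) \xrightarrow{\iota} \oH_0\Lambda^{p,q}_m(\R^{n+1}) \xrightarrow{\rho} \oP_0\Lambda^{p,q}_m(T^n) \to 0,
\]
from which the dimension identity follows by additivity of dimension.

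I would define $\rho$ to be pullback along the inclusion $T^n\hookrightarrow\R^{n+1}$. Since $\partial T^n$ lies in the union of coordinate hyperplanes $\{\lambda_i=0\}$, a form with vanishing trace on those hyperplanes pulls back to one with vanishing trace on $\partial T^n$, and by Proposition~\ref{prop:pullbackdecomposition} the image lies in the $m$-th summand. The surjectivity of $\rho$ is exactly the content of Theorem~\ref{thm:extension}: the hypothesis $m\le q-1$ places us outside the exceptional case $(r,m)=(0,q)$, so every $\bar\phi\in\oP_0\Lambda^{p,q}_m(T^n)$ admits a vanishing-trace extension to $\R^{n+1}$.

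For the kernel, I would use the standard identification of $Q^{n+1}=\{(\lambda_0,\dots,\lambda_n)\mid \lambda_i\ge 0,\,\sum_i\lambda_i\le 1\}\subset\R^{n+1}$ with $T^{n+1}$ via the auxiliary barycentric coordinate $\lambda_{n+1}:=1-\sum_{i=0}^n\lambda_i$. Under this identification, $n+1$ of the $n+2$ codimension-one faces of $T^{n+1}$ are the faces $\{\lambda_i=0\}\cap Q^{n+1}$ for $i=0,\dots,n$, while the remaining face, opposite the origin, is exactly $T^n$. A constant $(p,q)$-form on $\R^{n+1}$ is determined by its restriction to $Q^{n+1}$, and since each coordinate hyperplane is linear, such a form has vanishing trace on $\{\lambda_i=0\}$ if and only if its restriction has vanishing trace on the corresponding face of $Q^{n+1}$. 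Restriction therefore identifies $\oH_0\Lambda^{p,q}_m(\R^{n+1})$ with the subspace of $\cP_0\Lambda^{p,q}_m(T^{n+1})$ whose trace vanishes on the $n+1$ faces opposite the origin. Under this identification, $\ker\rho$ consists of forms whose trace additionally vanishes on $T^n$, that is, on all $n+2$ faces of $T^{n+1}$, which is precisely $\oP_0\Lambda^{p,q}_m(T^{n+1})$, giving the injection $\iota$ and exactness in the middle.

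Theorem~\ref{thm:extension} absorbs the substantive analytic content, so there is no real obstacle. The main care needed is the algebraic bookkeeping identifying coordinate hyperplanes of $\R^{n+1}$ with faces of $Q^{n+1}\cong T^{n+1}$ and checking that the trace-vanishing conditions line up under the constantness of the forms. Once that identification is in hand, additivity of dimension in the exact sequence yields the claim.
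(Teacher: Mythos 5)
Your proof is correct and is essentially the paper's own argument: the paper likewise obtains surjectivity of the trace map onto $\oP_0\Lambda^{p,q}_m(T^n)$ from Theorem~\ref{thm:extension} with $r=0$, identifies its kernel with the forms having vanishing trace on all of $\partial Q^{n+1}$, and then uses the affine identification $Q^{n+1}\cong T^{n+1}$ to recognize that kernel as $\oP_0\Lambda^{p,q}_m(T^{n+1})$. Your short-exact-sequence packaging is just rank--nullity stated differently, and your bookkeeping of the faces of $Q^{n+1}$ is accurate.
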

\begin{proof}
  Taking $r=0$ in Theorem~\ref{thm:extension} tells us that as long as $m \neq q$, every member of $\mathring{\mathcal{P}}_0\Lambda^{p,q}_m(T^n)$ admits an extension to $\mathring{\mathcal{H}}_0\Lambda^{p,q}_m(\mathbb{R}^{n+1})$.  Put another way, the map
  \[
    \tr_{T^n}^{\mathbb{R}^{n+1}} : \mathring{\mathcal{H}}_0\Lambda^{p,q}_m(\mathbb{R}^{n+1}) \to \mathring{\mathcal{P}}_0\Lambda^{p,q}_m(T^n)
  \]
  which takes each member of $\mathring{\mathcal{H}}_0\Lambda^{p,q}_m(\mathbb{R}^{n+1})$ to its trace on $T^n$ is surjective when $m \neq q$.  It follows that
  \[
    \dim \mathring{\mathcal{P}}_0\Lambda^{p,q}_m(T^n) + \dim \ker \tr_{T^n}^{\mathbb{R}^{n+1}} = \dim  \mathring{\mathcal{H}}_0\Lambda^{p,q}_m(\mathbb{R}^{n+1}), \quad m \neq q.
  \]
  The kernel of $\tr_{T^n}^{\mathbb{R}^{n+1}}$ consists of those double forms in $\mathring{\mathcal{H}}_0\Lambda^{p,q}_m(\mathbb{R}^{n+1})$ that have vanishing trace on the coordinate hyperplanes as well as on the hyperplane containing $T^n$.  Equivalently, they have vanishing trace on the boundary of the $(n+1)$-simplex $Q^{n+1} = \{(\lambda_0,\dots,\lambda_n) \mid \lambda_i \ge 0, \, \sum_i \lambda_i \le 1 \}$.
  Since $Q^{n+1}$ is isomorphic to $T^{n+1}$ via an affine transformation, it follows from Proposition~\ref{prop:pullbackdecomposition} that the kernel of $\tr_{T^n}^{\mathbb{R}^{n+1}}$ is isomorphic to $\mathring{\mathcal{P}}_0\Lambda^{p,q}_m(T^{n+1})$.  Thus,
  \[
    \dim \mathring{\mathcal{P}}_0\Lambda^{p,q}_m(T^n) + \dim \mathring{\mathcal{P}}_0\Lambda^{p,q}_m(T^{n+1}) = \dim \mathring{\mathcal{H}}_0\Lambda^{p,q}_m(\mathbb{R}^{n+1}), \quad m \neq q.\qedhere
  \]
\end{proof}

The lemma above provides a recursive formula that we can use to compute the dimension of $\mathring{\mathcal{P}}_0\Lambda^{p,q}_m(T^n)$.  To use it, we will first need to compute the dimensions of $\mathring{\mathcal{H}}_0\Lambda^{p,q}(\mathbb{R}^{n+1})$,  $\mathring{\mathcal{H}}_0\Lambda^{p,q}_m(\mathbb{R}^{n+1})$, and (to handle the base case $n=p$) $\mathring{\mathcal{P}}_0\Lambda^{p,q}_m(T^p)$.
\begin{lemma} \label{lemma:dimH0pq}
  The dimension of $\mathring{\mathcal{H}}_0\Lambda^{p,q}(\mathbb{R}^{n+1})$ is
  \[
    \dim \mathring{\mathcal{H}}_0\Lambda^{p,q}(\mathbb{R}^{n+1}) = \binom{n+1}{p}\binom{p}{n+1-q} = \binom{n+1}{q}\binom{q}{n+1-p}.
  \]
\end{lemma}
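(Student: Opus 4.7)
The plan is to compute the dimension directly by exhibiting an explicit basis for $\mathring{\mathcal{H}}_0\Lambda^{p,q}(\mathbb{R}^{n+1})$ and counting its elements. The space $\mathcal{H}_0\Lambda^{p,q}(\mathbb{R}^{n+1})$ of constant $(p,q)$-forms has the standard basis $\{\dl^{I,J} : |I|=p, |J|=q, \text{ both increasing}\}$ (see Notation~\ref{not:basis}), so the task is to determine exactly which linear combinations of these basis elements lie in the vanishing-trace subspace.

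First I would analyze the pullback under the inclusion $\{\lambda_k=0\}\hookrightarrow\R^{n+1}$. Since $\dl_k$ pulls back to zero and each other $\dl_j$ pulls back to itself (viewed as a coordinate form on the hyperplane), the pullback of $\dl^{I,J}$ is zero if and only if $k\in I\cup J$, and is a genuine (nonzero) basis element $\dl^{I,J}$ on the hyperplane otherwise. Consequently, the pullback of a general form $\sum_{I,J}c_{I,J}\dl^{I,J}$ to $\{\lambda_k=0\}$ equals $\sum_{k\notin I\cup J}c_{I,J}\dl^{I,J}$, and since the surviving $\dl^{I,J}$ are linearly independent on the hyperplane, this pullback vanishes if and only if $c_{I,J}=0$ for every pair with $k\notin I\cup J$.

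Requiring vanishing trace on all hyperplanes $\{\lambda_k=0\}$ for $k=0,\dots,n$ simultaneously therefore forces $c_{I,J}=0$ unless $I\cup J=\{0,1,\dots,n\}$. Thus
\[
  \mathring{\mathcal{H}}_0\Lambda^{p,q}(\mathbb{R}^{n+1}) = \operatorname{span}\bigl\{\dl^{I,J} : |I|=p,\ |J|=q,\ I\cup J=\{0,1,\dots,n\}\bigr\},
\]
and the dimension is the number of such pairs $(I,J)$.

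Finally I would count these pairs. From $|I\cup J|=n+1$ and inclusion–exclusion, $|I\cap J|=p+q-(n+1)$; in particular, when $p+q<n+1$ the space is zero and the stated binomials also vanish. To count: choose $I$ in $\binom{n+1}{p}$ ways; then $J$ must contain the complement $\{0,\dots,n\}\setminus I$, which has $n+1-p$ elements, and $J$ picks up its remaining $q-(n+1-p)=p+q-n-1$ elements from inside $I$, giving $\binom{p}{p+q-n-1}=\binom{p}{n+1-q}$ choices. This yields the first expression $\binom{n+1}{p}\binom{p}{n+1-q}$; swapping the roles of $I$ and $J$ in the counting argument gives the second expression $\binom{n+1}{q}\binom{q}{n+1-p}$. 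No step here is a real obstacle — the only mildly subtle point is verifying that the constraints at the different hyperplanes decouple, which follows because the surviving basis elements on a given hyperplane remain linearly independent.
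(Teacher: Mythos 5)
Your proof is correct and follows essentially the same route as the paper's: identify the surviving basis elements as exactly those $e^{I,J}$ with $I\cup J=\{0,\dots,n\}$ by examining the pullback to each coordinate hyperplane, then count. The only cosmetic difference is that you count by choosing $I$ first and then $J$, whereas the paper phrases the same count as a trinomial coefficient via the partition into $\{0,\dots,n\}\setminus I$, $\{0,\dots,n\}\setminus J$, and $I\cap J$.
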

\begin{proof}
  Using Notation~\ref{not:basis}, let $e^i =\D\lambda_i$ and let
  \[
    \phi = \sum_{I,J} c_{I,J} e^{I,J}
  \]
  be an arbitrary $(p,q)$-form on $\mathbb{R}^{n+1}$ with constant coefficients.  We assume that the multi-indices $I=(i_1,\dots,i_p)$ and $J=(j_1,\dots,j_q)$ are each in increasing order. If we take the trace on a coordinate hyperplane $\{\lambda_i=0\}$, then every term in this sum has vanishing trace except for the terms with $i \notin I \cup J$.  Those terms with $i \notin I \cup J$ are linearly independent $(p,q)$-forms on $\{\lambda_i=0\}$, so the corresponding coefficients $c_{I,J}$ with $i \notin I \cup J$ must vanish if $\phi$ belongs to $\mathring{\mathcal{H}}_0\Lambda^{p,q}(\mathbb{R}^{n+1})$.  Therefore
  \[
    \phi = \sum_{I,J \colon I \cup J = \{0,1,\dots,n\}} c_{I,J} e^{I,J}
  \]
  if $\phi \in \mathring{\mathcal{H}}_0\Lambda^{p,q}(\mathbb{R}^{n+1})$.  Conversely, every $\phi$ of this form clearly has vanishing trace on the coordinate hyperplanes.  It follows that the set
  \[
    \{ e^{I,J} \mid I \cup J = \{0,1,\dots,n\}, \, |I|=p, \, |J|=q \}
  \]
  forms a basis for $\mathring{\mathcal{H}}_0\Lambda^{p,q}(\mathbb{R}^{n+1})$, where, once again, the multi-indices above are understood to be in increasing order. Each $e^{I,J}$ in this basis can be formed by splitting the set $\{0,\dots,n\}$ into three subsets: $\{0,\dots,n\}\setminus I$, $\{0,\dots,n\}\setminus J$, and $I\cap J$. These subsets have cardinalities $n+1-p$, $n+1-q$, and $p+q-n-1$, respectively, so the dimension of $\oH_0\Lambda^{p,q}(\R^{n+1})$ is given by the corresponding trinomial coefficient, which can be written as
  \begin{equation*}
    \dim \mathring{\mathcal{H}}_0\Lambda^{p,q}(\mathbb{R}^{n+1}) = \binom{n+1}{p}\binom{p}{n+1-q}.\qedhere
  \end{equation*}
\end{proof}

To compute the dimension of the subspace $\mathring{\mathcal{H}}_0\Lambda^{p,q}_m(\mathbb{R}^{n+1}) \subseteq \mathring{\mathcal{H}}_0\Lambda^{p,q}(\mathbb{R}^{n+1})$, we introduce some notation.

\begin{notation}
  Let $\mathring{s}$ and $\mathring{s}^*$ denote the restrictions of $s : \Lambda^{p,q}(\mathbb{R}^{n+1}) \to \Lambda^{p+1,q-1}(\mathbb{R}^{n+1})$ and $s^* : \Lambda^{p,q}(\mathbb{R}^{n+1}) \to \Lambda^{p-1,q+1}(\mathbb{R}^{n+1})$ to $\mathring{\mathcal{H}}_0\Lambda^{p,q}(\mathbb{R}^{n+1})$.  Recall from Proposition~\ref{prop:pullbackdecomposition} that $s$ and $s^*$ commute with pullbacks, so $s\phi$ and $s^*\phi$ have vanishing trace on the coordinate hyperplanes whenever $\phi$ does.  Thus, we have maps
  \[
    \mathring{s} : \mathring{\mathcal{H}}_0\Lambda^{p,q}(\mathbb{R}^{n+1}) \to \mathring{\mathcal{H}}_0\Lambda^{p+1,q-1}(\mathbb{R}^{n+1})
  \]
  and
  \[
    \mathring{s}^* : \mathring{\mathcal{H}}_0\Lambda^{p,q}(\mathbb{R}^{n+1}) \to \mathring{\mathcal{H}}_0\Lambda^{p-1,q+1}(\mathbb{R}^{n+1}).
  \]
\end{notation}

Likewise, on the individual summands, we have maps
\[
  \mathring{s} : \mathring{\mathcal{H}}_0\Lambda^{p,q}_m(\mathbb{R}^{n+1}) \to \mathring{\mathcal{H}}_0\Lambda^{p+1,q-1}_{m-1}(\mathbb{R}^{n+1})
\]
and
\[
  \mathring{s}^* : \mathring{\mathcal{H}}_0\Lambda^{p,q}_m(\mathbb{R}^{n+1}) \to \mathring{\mathcal{H}}_0\Lambda^{p-1,q+1}_{m+1}(\mathbb{R}^{n+1}).
\]

We can see that the $\rs$ and $\rs^*$ operators have the same injectivity and surjectivity properties as the $s$ and $s^*$ operators. One way is to observe that we still have $\rs\rs^*-\rs^*\rs=p-q$ on $\oH_0\Lambda^{p,q}(\R^{n+1})$ and we have $\rs^*\rs=m(m+p-q+1)$ on $\oH_0\Lambda^{p,q}_m(\R^{n+1})$, so the reasoning in Section~\ref{sec:doublemulti} still applies. We can also reason directly, as follows.

\begin{proposition}\label{prop:ringinjsurj}
  Let $l$ be a nonnegative integer. If $s^l\colon\Lambda^{p,q}_m(\R^{n+1})\to\Lambda^{p+l,q-l}_{m-l}(\R^{n+1})$ is injective or surjective, then $\rs^l\colon\oH_0\Lambda^{p,q}_m(\R^{n+1})\to\oH_0\Lambda^{p+l,q-l}_{m-l}(\R^{n+1})$ is injective or surjective, respectively. Likewise, if $\left(s^*\right)^l\colon\Lambda^{p,q}_m(\R^{n+1})\to\Lambda^{p-l,q+l}_{m+l}(\R^{n+1})$ is injective or surjective, then $\left(\rs^*\right)^l\colon\oH_0\Lambda^{p,q}_m(\R^{n+1})\to\oH_0\Lambda^{p-l,q+l}_{m+l}(\R^{n+1})$ is injective or surjective, respectively. These statements also hold when applied to the full spaces $\Lambda^{p,q}$ rather than to the individual summands $\Lambda^{p,q}_m$.
\end{proposition}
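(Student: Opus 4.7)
The plan is to treat the injectivity and surjectivity implications separately. The injectivity direction is immediate: since $\oH_0\Lambda^{p,q}_m(\R^{n+1})$ sits as a subspace of $\Lambda^{p,q}_m(\R^{n+1})$, the restriction of any injective operator stays injective. Everything nontrivial happens in the surjectivity direction, which I plan to handle by a short adjointness argument.

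For surjectivity, the first step is to verify that $s$ and $s^*$ actually restrict to operators on the vanishing-trace subspaces at all. This is a direct consequence of Proposition~\ref{prop:pullbackdecomposition}: since $s$ and $s^*$ commute with pullbacks and vanishing trace is defined via pullback to the coordinate hyperplanes, both operators preserve $\oH_0\Lambda^{p,q}_m(\R^{n+1})$. Next, I would equip $\cH_0\Lambda^{p,q}(\R^{n+1})$ with the natural inner product induced pointwise from the inner product on $\Lambda^{p,q}$ (well-defined because the coefficients are constant) and pass it to $\oH_0\Lambda^{p,q}(\R^{n+1})$ by restriction. Because both $s$ and $s^*$ preserve the vanishing-trace subspace, the restricted operators $\rs$ and $\rs^*$ remain an adjoint pair: for $\phi\in\oH_0\Lambda^{p,q}(\R^{n+1})$ and $\psi\in\oH_0\Lambda^{p+1,q-1}(\R^{n+1})$, Proposition~\ref{prop:adjoint} gives
\[
  \langle\rs\phi,\psi\rangle=\langle s\phi,\psi\rangle=\langle\phi,s^*\psi\rangle=\langle\phi,\rs^*\psi\rangle.
\]

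With this adjoint relationship in hand, the surjectivity implication finishes in two lines. All spaces involved are finite-dimensional, so $\rs^l$ is surjective if and only if $(\rs^*)^l$ is injective. If $s^l$ is surjective on the full $\Lambda^{p,q}_m(\R^{n+1})$, then $(s^*)^l$ is injective on $\Lambda^{p+l,q-l}_{m-l}(\R^{n+1})$ by the adjoint relation there, hence its restriction $(\rs^*)^l$ is injective on $\oH_0\Lambda^{p+l,q-l}_{m-l}(\R^{n+1})$, and therefore $\rs^l$ is surjective. The analogous statements for $(s^*)^l$ then follow by swapping the roles of $s$ and $s^*$ throughout. The final sentence of the proposition, about the full $\Lambda^{p,q}$ rather than the individual summands $\Lambda^{p,q}_m$, follows from the fact that the decomposition $\Lambda^{p,q}=\bigoplus_m\Lambda^{p,q}_m$ restricts to $\oH_0\Lambda^{p,q}(\R^{n+1})=\bigoplus_m\oH_0\Lambda^{p,q}_m(\R^{n+1})$: since $s^*s$ commutes with pullbacks, the eigencomponents $\phi_m$ of any vanishing-trace $\phi$ inherit vanishing trace. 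I do not anticipate a serious technical obstacle here; the only point that needs to be recorded explicitly is the elementary observation that an adjoint pair of operators, when restricted to subspaces each of the two operators preserves, remains an adjoint pair on those subspaces with the inherited inner product.
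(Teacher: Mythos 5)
Your proposal is correct and follows essentially the same route as the paper: injectivity passes to the restriction trivially, and surjectivity is obtained by dualizing, restricting the (injective) adjoint to the vanishing-trace subspace, and dualizing back. The only difference is that you spell out explicitly why $\rs$ and $\rs^*$ remain an adjoint pair on the invariant subspaces, a point the paper leaves implicit.
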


\begin{proof}
  If $s^l$ is injective, then its restriction $\rs^l$ to a subspace is also injective. If $s^l$ is surjective, then its adjoint $\left(s^*\right)^l$ is injective, so its restriction $\left(\rs^*\right)^l$ to a subspace is also injective, and so its adjoint $\rs^l$ is surjective. The remaining claims are similar.
\end{proof}

Specifically, we will apply this proposition to obtain the following two statements.

\begin{corollary}\label{cor:ringreducetom0}
  If $m\ge0$ and $m^*=m+p-q\ge0$, then the map $\rs^m\colon\oH_0\Lambda^{p,q}_m(\R^{n+1})\to\oH_0\Lambda^{p+m,q-m}_0(\R^{n+1})$ is an isomorphism. Otherwise, if $m<0$ or $m^*<0$, then $\oH_0\Lambda^{p,q}_m(\R^{n+1})$ is zero.
\end{corollary}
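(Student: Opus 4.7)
The plan is to mirror the proof of Proposition~\ref{prop:reducetom0} but in the vanishing-trace category, relying on the fact that the operators $\rs$ and $\rs^*$ inherit injectivity and surjectivity from $s$ and $s^*$ via Proposition~\ref{prop:ringinjsurj}. The statement splits naturally into two cases, corresponding to whether $(m, m^*)$ lies in the ``admissible'' region or not.

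For the first case, I would simply apply Proposition~\ref{prop:ringinjsurj} with $l = m$. Specifically, Corollary~\ref{cor:power} shows that when $m \ge 0$ and $m^* = m+p-q \ge 0$, the composition
\[
s^m \colon \Lambda^{p,q}_m \to \Lambda^{p+m,q-m}_0
\]
is both injective (since $m \ge l$) and surjective (since $m^* \ge 0$), hence an isomorphism. Proposition~\ref{prop:ringinjsurj} then transfers both properties to the restriction $\rs^m$ on the vanishing-trace subspace, so $\rs^m \colon \oH_0\Lambda^{p,q}_m(\R^{n+1}) \to \oH_0\Lambda^{p+m,q-m}_0(\R^{n+1})$ is an isomorphism.

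For the second case, where $m < 0$ or $m^* < 0$, I would invoke Definition~\ref{def:decomposition} (which sets $\Lambda^{p,q}_m = 0$ when $m < 0$) together with Corollary~\ref{cor:mqp} (which forces $m \ge q - p$, i.e.~$m^* \ge 0$, whenever $\Lambda^{p,q}_m$ contains a nonzero eigenvector). In either subcase the ambient pointwise space $\Lambda^{p,q}_m$ vanishes, so its subspace $\oH_0\Lambda^{p,q}_m(\R^{n+1})$ vanishes as well.

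The proof is essentially a two-line consequence of the infrastructure already established, so there is no substantive obstacle; the only thing to be careful about is that Proposition~\ref{prop:ringinjsurj} is stated for a single power $l$, and the hypothesis $m \ge 0$, $m^* \ge 0$ is exactly what is needed to guarantee that $s^m$ is simultaneously injective and surjective when taking $l = m$. Both directions of this guarantee come from Corollary~\ref{cor:power}, so invoking that corollary (rather than Proposition~\ref{prop:siso} directly) keeps the argument clean.
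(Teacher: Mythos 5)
Your proposal is correct and follows the same route as the paper: the paper's proof is simply ``apply Proposition~\ref{prop:ringinjsurj} to Proposition~\ref{prop:reducetom0},'' and you have just inlined the proof of Proposition~\ref{prop:reducetom0} (Corollary~\ref{cor:power} for the isomorphism, Definition~\ref{def:decomposition} and Corollary~\ref{cor:mqp} for the vanishing case) before transferring to the vanishing-trace subspaces. No gaps.
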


\begin{proof}
  Apply Proposition~\ref{prop:ringinjsurj} to Proposition~\ref{prop:reducetom0}.
\end{proof}

\begin{corollary}\label{cor:ringsurj}
  If $p\ge q-1$, then $\rs\colon\oH_0\Lambda^{p,q}(\R^{n+1})\to\oH_0\Lambda^{p+1,q-1}(\R^{n+1})$ is surjective.
\end{corollary}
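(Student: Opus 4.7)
The plan is to apply Proposition~\ref{prop:sinjsurj2} together with Proposition~\ref{prop:ringinjsurj}. First I would invoke Proposition~\ref{prop:sinjsurj2} to conclude that $s\colon \Lambda^{p,q}(\R^{n+1}) \to \Lambda^{p+1,q-1}(\R^{n+1})$ is surjective whenever $p \ge q-1$. This holds regardless of the relationship between $p,q$ and the ambient dimension $n+1$, since Proposition~\ref{prop:sinjsurj2} drops the constraint $0 \le p,q \le n+1$ and still gives surjectivity under the hypothesis $p \ge q-1$.

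Next I would invoke the final sentence of Proposition~\ref{prop:ringinjsurj}, which explicitly extends its conclusion from the individual summands $\Lambda^{p,q}_m$ to the full spaces $\Lambda^{p,q}$. Applied with $l=1$, this sentence says that surjectivity of $s$ on the full space $\Lambda^{p,q}(\R^{n+1})$ implies surjectivity of $\rs$ on the trace-free subspace $\oH_0\Lambda^{p,q}(\R^{n+1})$, which is exactly the conclusion of the corollary.

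Since the real work is already packaged into Proposition~\ref{prop:ringinjsurj}, no step presents a genuine obstacle, and the corollary reduces to a one-line citation. The underlying content of Proposition~\ref{prop:ringinjsurj} is essentially: surjectivity of $s$ on the full space is equivalent by adjointness to injectivity of $s^*$ on the full codomain space; the restriction of an injection to a subspace remains an injection, so $\rs^*$ is injective on the corresponding vanishing-trace subspace; dualizing once more recovers surjectivity of $\rs$ on $\oH_0\Lambda^{p,q}(\R^{n+1})$. I would simply invoke this packaged result rather than re-derive the adjointness chain in place.
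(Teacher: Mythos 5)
Your proposal is correct and is exactly the paper's proof: the paper's argument for this corollary is the one-line "Apply Proposition~\ref{prop:ringinjsurj} to Proposition~\ref{prop:sinjsurj2}," and your summary of the adjointness mechanism inside Proposition~\ref{prop:ringinjsurj} matches how that proposition is proved there.
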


\begin{proof}
  Apply Proposition~\ref{prop:ringinjsurj} to Proposition~\ref{prop:sinjsurj2}.
\end{proof}

\begin{lemma} \label{lemma:dimH0pqm}
  Assume $p,q,n \ge 0$ and $m \ge \max\{0,q-p\}$.  
  The dimension of $\mathring{\mathcal{H}}_0\Lambda^{p,q}_m(\mathbb{R}^{n+1})$ is
  \begin{align*}
    &\dim \mathring{\mathcal{H}}_0\Lambda^{p,q}_m(\mathbb{R}^{n+1}) \\
    &\quad= \binom{n+1}{q-m} \binom{q-m}{n+1-p-m} - \binom{n+1}{q-m-1} \binom{q-m-1}{n-p-m} \\
    &\quad= 
      \begin{dcases}
        \frac{p-q+2m+1}{q-m} \binom{n+1}{q-m-1} \binom{q-m}{n+1-p-m}, &\mbox{ if } m<q,\\
        1, &\mbox{ if } m=q \text{ and } \\ &\quad p+q=n+1, \\
        0, &\mbox{ otherwise. }
      \end{dcases}
  \end{align*}
\end{lemma}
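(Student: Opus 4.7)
The plan is to reduce to the $m=0$ case using the isomorphism from Corollary~\ref{cor:ringreducetom0}, apply Lemma~\ref{lemma:dimH0pq} to dimensions in the reduced setting, and finish with a short binomial identity. Specifically, since $m \ge \max\{0,q-p\}$ and $m \le q$, the hypotheses of Corollary~\ref{cor:ringreducetom0} hold, so $\mathring{s}^m$ provides an isomorphism
\[
\mathring{\mathcal{H}}_0\Lambda^{p,q}_m(\R^{n+1}) \cong \mathring{\mathcal{H}}_0\Lambda^{p+m,q-m}_0(\R^{n+1}).
\]
By Definition~\ref{def:decomposition}, $\Lambda^{p+m,q-m}_0 = \ker s$, and tracing through the vanishing-trace subspace this becomes $\ker \mathring{s}$, with
\[
\mathring{s} : \mathring{\mathcal{H}}_0\Lambda^{p+m,q-m}(\R^{n+1}) \to \mathring{\mathcal{H}}_0\Lambda^{p+m+1,q-m-1}(\R^{n+1}).
\]

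Next I would show that this $\mathring{s}$ is surjective. From $m \ge \max\{0,q-p\}$ one checks that $(p+m) - (q-m) = p-q+2m \ge 0$, so in particular $p+m \ge (q-m)-1$, and Corollary~\ref{cor:ringsurj} applies. Hence
\[
\dim \mathring{\mathcal{H}}_0\Lambda^{p,q}_m(\R^{n+1}) = \dim \mathring{\mathcal{H}}_0\Lambda^{p+m,q-m}(\R^{n+1}) - \dim \mathring{\mathcal{H}}_0\Lambda^{p+m+1,q-m-1}(\R^{n+1}).
\]
Plugging in the second form of the formula from Lemma~\ref{lemma:dimH0pq} (the one that puts $\binom{n+1}{q}$ out front) gives exactly the first displayed expression in the lemma.

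For the simplified second expression, assume $m<q$ and set $a=q-m$, $b=n+1-p-m$. Using $\binom{n+1}{a} = \tfrac{n+2-a}{a}\binom{n+1}{a-1}$ and $\binom{a-1}{b-1} = \tfrac{b}{a}\binom{a}{b}$, the subtraction telescopes to
\[
\frac{n+2-a-b}{a}\binom{n+1}{a-1}\binom{a}{b},
\]
and $n+2-a-b = p-q+2m+1$, matching the claimed closed form. The only remaining piece is the edge case $m=q$: here the second binomial product in the difference is $\binom{n+1}{-1}\binom{-1}{n-p-q}=0$, and the first product is $\binom{n+1}{0}\binom{0}{n+1-p-q}$, which is $1$ precisely when $p+q=n+1$ and $0$ otherwise.

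I do not expect a real obstacle here; the only subtle point is verifying the surjectivity hypothesis $p+m \ge q-m-1$ from the given range of $m$, and confirming that the two forms of the binomial formula in Lemma~\ref{lemma:dimH0pq} cooperate so that the difference telescopes cleanly. The $m=q$ case requires only a careful reading of the conventions $\binom{n+1}{-1}=0$ and $\binom{0}{k}=\delta_{k,0}$ for $k\ge 0$ (and $0$ for $k<0$), so it can be dispatched by inspection rather than a separate argument.
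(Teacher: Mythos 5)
Your proposal is correct and follows essentially the same route as the paper: reduce to $m=0$ via the isomorphism $\mathring{s}^m$, identify $\mathring{\mathcal{H}}_0\Lambda^{p+m,q-m}_0$ as the kernel of the surjective map $\mathring{s}$, and take the difference of dimensions from Lemma~\ref{lemma:dimH0pq}. The only difference is that you carry out the binomial telescoping and the $m=q$ edge case explicitly, which the paper leaves implicit; your algebra checks out.
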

\begin{proof}
  Since $m\ge0$ and $m^*=m+p-q\ge0$, we apply Corollary~\ref{cor:ringreducetom0} to find that $\oH_0\Lambda^{p,q}_m(\R^{n+1})$ is isomorphic to $\oH_0\Lambda^{p+m,q-m}_0(\R^{n+1})$, which is the kernel of $\rs\colon\oH_0\Lambda^{p+m,q-m}(\R^{n+1})\to\oH_0\Lambda^{p+m+1,q-m-1}(\R^{n+1})$. Observe that $p+m\ge q\ge q-m-1$, so this map is surjective by Corollary~\ref{cor:ringsurj}. We conclude that
  \begin{equation*}
    \begin{split}
      \dim\oH_0\Lambda^{p,q}_m(\R^{n+1})&=\dim\oH_0\Lambda^{p+m,q-m}_0(\R^{n+1})\\
      &=\dim\oH_0\Lambda^{p+m,q-m}(\R^{n+1})-\dim\oH_0\Lambda^{p+m+1,q-m-1}(\R^{n+1}).
    \end{split}
  \end{equation*}
  The result then follows from Lemma~\ref{lemma:dimH0pq}.
\end{proof}

\begin{lemma} \label{lemma:basecase}
  Assume $p,q,m \ge 0$.  
  The dimension of $\mathring{\mathcal{P}}_0 \Lambda^{p,q}_m(T^p)$ is
  \begin{equation*} 
    \dim \mathring{\mathcal{P}}_0 \Lambda^{p,q}_m(T^p) = 
    \begin{cases}
      \binom{p}{q} &\mbox{ if } m=0, \\
      0 &\mbox{ if } m>0.
    \end{cases}
  \end{equation*}
\end{lemma}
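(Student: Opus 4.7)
The plan is to observe that on $T^p$ the vanishing-trace condition is automatic, and then read off the dimension directly from Proposition~\ref{prop:lpqmnonzero}.

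First I would note that $T^p$ is $p$-dimensional, so its tangent space at any interior point is a $p$-dimensional vector space $V$, and a constant $(p,q)$-form on $T^p$ is simply an element of $\Lambda^{p,q}V^*$. Each face of $\partial T^p$ is a $(p-1)$-simplex, with tangent space of dimension $p-1$. For a constant $\phi \in \Lambda^{p,q}V^*$ and vectors $X_1,\dots,X_p,Y_1,\dots,Y_q$ tangent to such a face, the $p$ vectors $X_1,\dots,X_p$ lie in a $(p-1)$-dimensional subspace and are therefore linearly dependent; since $\phi$ is alternating in these first $p$ arguments, we automatically have $\phi(X_1,\dots,X_p;Y_1,\dots,Y_q)=0$. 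Hence $\mathring{\mathcal{P}}_0\Lambda^{p,q}_m(T^p)=\mathcal{P}_0\Lambda^{p,q}_m(T^p)$, and this space is naturally isomorphic to $\Lambda^{p,q}_m$ on the $p$-dimensional vector space $V$.

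Next I would apply Proposition~\ref{prop:lpqmnonzero} with $n=\dim V=p$. The inequality $\max\{0,q-p\}\le m\le\min\{q,n-p\}=\min\{q,0\}=0$ forces $m=0$ (and $q\le p$) whenever $\Lambda^{p,q}_m$ is nonzero. Thus $\mathring{\mathcal{P}}_0\Lambda^{p,q}_m(T^p)=0$ for all $m>0$, which handles the second case of the lemma.

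For $m=0$, since $\Lambda^{p,q}_0$ is the only possibly nonzero summand in the decomposition $\Lambda^{p,q}=\bigoplus_m\Lambda^{p,q}_m$ on the $p$-dimensional space $V$, we have $\Lambda^{p,q}_0=\Lambda^{p,q}V^*=\Lambda^pV^*\otimes\Lambda^qV^*$. Since $\dim V=p$, this tensor product has dimension $\binom{p}{p}\binom{p}{q}=\binom{p}{q}$ (which is $0$ when $q>p$, agreeing with the degenerate case). No step here is hard—the only point worth stressing is that the vanishing-trace condition is vacuous in this top-dimensional setting, which is precisely why the base case of the recursion in Lemma~\ref{lemma:recursion} takes such a clean form.
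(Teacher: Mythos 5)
Your proposal is correct and takes essentially the same approach as the paper: both rest on the observations that in top dimension the trace condition is automatic (any $p$ tangent vectors to a $(p-1)$-face are dependent), that only the $m=0$ summand survives when $n=p$, and that the resulting space $\Lambda^pV^*\otimes\Lambda^qV^*$ has dimension $\binom{p}{q}$. The only cosmetic difference is that the paper exhibits the elements explicitly as $\omega\otimes\alpha$ with $\omega$ the volume form and checks $s(\omega\otimes\alpha)=0$ directly, whereas you cite Proposition~\ref{prop:lpqmnonzero}; both are valid.
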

\begin{proof}
  When $q>p$, the space of $(p,q)$-forms on $T^p$ vanishes, as does the right-hand side of the above equation for all $m$.  When $q \le p$, the only nontrivial $(p,q)$-forms on $T^p$ are of the form $\omega \otimes \alpha$, where $\omega$ is the volume $p$-form and $\alpha$ is an arbitrary $q$-form.  These double forms belong to the kernel of $s$ and have vanishing trace on the boundary of $T^p$; hence they belong to $\mathring{\mathcal{P}}_0 \Lambda^{p,q}_0(T^p)$.  Since the space of constant $q$-forms on $T^p$ has dimension $\binom{p}{q}$, the result follows. 
\end{proof}

\begin{proposition} \label{lemma:dimP0pqm}
  Assume $p,q,n \ge 0$ and $\max\{0,q-p\} \le m \le q-1$.
  The dimension of $\mathring{\mathcal{P}}_0\Lambda^{p,q}_m(T^n)$ is
  \begin{align*}
    \dim \mathring{\mathcal{P}}_0\Lambda^{p,q}_m(T^n) 
    &= \binom{n+1}{q-m}\binom{q-m-1}{p+q-n-1} - \binom{n+1}{p+m+1}\binom{p+m}{p+q-n-1} \\
    &= \frac{p-q+2m+1}{p+m+1} \binom{n+1}{q-m} \binom{q-m-1}{n-p-m}.
  \end{align*}
\end{proposition}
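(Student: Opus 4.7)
The plan is to induct on $n$, using Lemma~\ref{lemma:recursion} as the driving recursion. That lemma gives
\[
\dim\oP_0\Lambda^{p,q}_m(T^n) + \dim\oP_0\Lambda^{p,q}_m(T^{n+1}) = \dim\oH_0\Lambda^{p,q}_m(\R^{n+1})
\]
(valid exactly under our hypothesis $m \le q-1$); combined with the trivial vanishing $\oP_0\Lambda^{p,q}_m(T^n) = 0$ for $n < p$, this determines the sequence $n \mapsto \dim\oP_0\Lambda^{p,q}_m(T^n)$ uniquely. So the task reduces to verifying that the candidate expression
\[
f(n) := \binom{n+1}{q-m}\binom{q-m-1}{p+q-n-1} - \binom{n+1}{p+m+1}\binom{p+m}{p+q-n-1}
\]
satisfies both the vanishing base case $f(p-1) = 0$ and the same recursion.

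First I would check $f(p-1) = 0$: setting $c := p+q-n-1 = q$, the binomial $\binom{q-m-1}{q}$ in the first term vanishes (since $m \ge 0$), and $\binom{p+1}{p+m+1}$ in the second term vanishes for $m > 0$, while for $m = 0$ the two terms cancel by Pascal. To establish the recursion, I would abbreviate $A = q-m$, $B = p+m+1$, $c = p+q-n-1$, and combine the four binomial products in $f(n)+f(n+1)$ via Pascal's rule $\binom{n+2}{k} = \binom{n+1}{k} + \binom{n+1}{k-1}$ together with $\binom{X-1}{c-1}+\binom{X-1}{c} = \binom{X}{c}$, obtaining
\[
f(n) + f(n+1) = \binom{n+1}{A}\binom{A}{c} + \binom{n+1}{A-1}\binom{A-1}{c-1} - \binom{n+1}{B}\binom{B}{c} - \binom{n+1}{B-1}\binom{B-1}{c-1}.
\]
The crux is the identity $A + B = p + q + 1 = c + n + 2$: via the factorial rewriting $\binom{n+1}{X}\binom{X}{c} = \binom{n+1}{c}\binom{n+1-c}{X-c}$, this yields the symmetric pairings $\binom{n+1}{A-1}\binom{A-1}{c-1} = \binom{n+1}{B-1}\binom{B-1}{c-1}$ (so those terms cancel) and $\binom{n+1}{B}\binom{B}{c} = \binom{n+1}{A-1}\binom{A-1}{c}$. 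Substituting collapses the sum to $\binom{n+1}{A}\binom{A}{c} - \binom{n+1}{A-1}\binom{A-1}{c}$, which matches $\dim\oH_0\Lambda^{p,q}_m(\R^{n+1})$ from Lemma~\ref{lemma:dimH0pqm} after rewriting $\binom{A}{n+1-p-m} = \binom{A}{c}$ and $\binom{A-1}{n-p-m} = \binom{A-1}{c}$ via the complementations $(n+1-p-m) + c = A$ and $(n-p-m) + c = A-1$.

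The equivalence of the two displayed forms of the dimension is a short factorial manipulation: directly computing the ratio $\binom{n+1}{p+m+1}\binom{p+m}{c} \big/ \binom{n+1}{q-m}\binom{q-m-1}{c}$ yields $\tfrac{q-m}{p+m+1}$, so $f(n) = \bigl(1 - \tfrac{q-m}{p+m+1}\bigr)\binom{n+1}{q-m}\binom{q-m-1}{c} = \tfrac{p-q+2m+1}{p+m+1}\binom{n+1}{q-m}\binom{q-m-1}{c}$, and $\binom{q-m-1}{c} = \binom{q-m-1}{n-p-m}$ by complementation. The main obstacle is the combinatorial identity reducing $f(n)+f(n+1)$ to the expression for $\dim\oH_0\Lambda^{p,q}_m(\R^{n+1})$; although it admits a clean derivation from the single symmetry $A+B = c+n+2$ applied twice, the bookkeeping around the four binomial products and their symmetric pairings requires care.
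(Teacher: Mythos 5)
Your proposal is correct and follows essentially the same route as the paper: induction on $n$ driven by Lemma~\ref{lemma:recursion} together with Lemma~\ref{lemma:dimH0pqm}, the only cosmetic differences being that you anchor the induction at the trivially zero case $n=p-1$ (thereby bypassing Lemma~\ref{lemma:basecase}) and verify the recursion with Pascal-type binomial identities instead of the paper's rational-function algebra. One tiny slip in your base-case check: with $n=p-1$ the second term carries $\binom{p}{p+m+1}$, not $\binom{p+1}{p+m+1}$, so it vanishes for every $m\ge 0$ and no Pascal cancellation is needed.
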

\begin{remark}\label{rem:dimP0pqm}
  The number above has a combinatorial interpretation: It counts the number of standard Young tableaux associated with the partition
  \[
    (n+1-q+m, \, n+1-p-m, \, \underbrace{1, 1, \dots, 1}_{p+q-n-1})
  \]
  of $n+1$. In Appendix~\ref{sec:rep}, we discuss the connection to representation theory explicitly, including how it can be used to compute the dimension of $\oP_0\Lambda^{p,q}_m(T^n)$.
\end{remark}
\begin{proof}
  If $n<p$, then both sides of the equation vanish.  We will prove the equation for $n \ge p$ using induction on $n$. In the base case $n=p$, the right-hand side vanishes when $m > 0$ and equals
  \[
    \frac{p-q+1}{p+1} \binom{p+1}{q} \binom{q-1}{0} = \binom{p}{q}
  \]
  when $m=0$, in agreement with Lemma~\ref{lemma:basecase}.  Now let $n > p$ and assume the formula holds for $n-1$.  Using Lemmas~\ref{lemma:recursion} and~\ref{lemma:dimH0pqm} and denoting $A:=p-q+2m+1$, we compute
  \begin{align*}
    &\dim \mathring{\mathcal{P}}_0\Lambda^{p,q}_m(T^n) \\
    &= \dim \mathring{\mathcal{H}}_0\Lambda^{p,q}_m(\mathbb{R}^n) - \dim \mathring{\mathcal{P}}_0\Lambda^{p,q}_m(T^{n-1}) \\
    &= \frac{A}{q-m} \binom{n}{q-m-1} \binom{q-m}{n-p-m} - \frac{A}{p+m+1} \binom{n}{q-m}\binom{q-m-1}{n-p-m-1} \\
    &= \frac{A}{n+1} \binom{n+1}{q-m} \frac{q-m}{p+q-n} \binom{q-m-1}{n-p-m} \\&\quad - \frac{A}{p+m+1} \frac{n+1-q+m}{n+1} \binom{n+1}{q-m} \frac{n-p-m}{p+q-n} \binom{q-m-1}{n-p-m}.
  \end{align*}
  Since
  \[
    q-m - \frac{(n+1-q+m)(n-p-m)}{p+m+1} = \frac{(n+1)(p+q-n)}{p+m+1},
  \]
  the expression above simplifies to
  \[
    \dim \mathring{\mathcal{P}}_0\Lambda^{p,q}_m(T^n) = \frac{A}{p+m+1} \binom{n+1}{q-m} \binom{q-m-1}{n-p-m}.\qedhere
  \]
\end{proof}

Now that we have determined the dimensions of the trace-free spaces, we know how many degrees of freedom to assign to each subsimplex $K \le T^n$ when constructing our constant finite element space on $T^n$. Namely, we assign $\dim\mathring{\mathcal{P}}_0\Lambda^{p,q}_m(T^l)$ degrees of freedom to each of the $\binom{n+1}{l+1}$ subsimplices $K \le T^n$ of dimension $l$, where the formula for $\dim\mathring{\mathcal{P}}_0\Lambda^{p,q}_m(T^l)$ is given in Proposition~\ref{lemma:dimP0pqm}. We can also verify that the total number of degrees of freedom associated with all of the subsimplices of $T^n$ matches the dimension of $\mathcal{P}_0 \Lambda^{p,q}_m(T^n)$.  We know this must be true from the preceding theory, but it is illuminating to verify it with a direct calculation.  We begin with two lemmas.
\begin{lemma} \label{lemma:dimPpqm}
  Assume $p,q,n \ge 0$ and $m \ge \max\{0,q-p\}$.
  The dimension of $\mathcal{P}_0 \Lambda^{p,q}_m(T^n)$ is
  \begin{align*}
    \dim \mathcal{P}_0 \Lambda^{p,q}_m(T^n) 
    &= \binom{n}{q-m}\binom{n}{p+m} - \binom{n}{q-m-1}\binom{n}{p+m+1} \\
    &= \frac{p-q+2m+1}{p+m+1} \binom{n+1}{q-m} \binom{n}{p+m}.
  \end{align*}
\end{lemma}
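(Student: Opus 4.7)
The plan is to identify $\mathcal{P}_0\Lambda^{p,q}_m(T^n)$ with a vector space of double multicovectors on an $n$-dimensional space, reduce to the case $m=0$ via Proposition~\ref{prop:reducetom0}, and then finish with a rank-nullity count.

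First, since $T^n$ is affinely embedded in $\R^{n+1}$ its tangent bundle is canonically trivial, so a constant double form on $T^n$ is determined by its value at any interior point. This yields an isomorphism $\mathcal{P}_0\Lambda^{p,q}(T^n) \cong \Lambda^{p,q}(V)$, where $V$ denotes the $n$-dimensional tangent space of $T^n$. Because the operators $s$ and $s^*$ (and hence their eigendecompositions) act pointwise, this isomorphism restricts to $\mathcal{P}_0\Lambda^{p,q}_m(T^n) \cong \Lambda^{p,q}_m(V)$.

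Next, the hypothesis $m\ge\max\{0,q-p\}$ says that both $m$ and $m^*=m+p-q$ are nonnegative, so Proposition~\ref{prop:reducetom0} furnishes an isomorphism $s^m\colon\Lambda^{p,q}_m \to \Lambda^{p+m,q-m}_0$, and by Definition~\ref{def:decomposition} the target equals $\ker s$ inside $\Lambda^{p+m,q-m}$. The same hypothesis gives $p+m\ge q-m\ge q-m-1$, so Proposition~\ref{prop:sinjsurj2} tells us that $s\colon\Lambda^{p+m,q-m}\to\Lambda^{p+m+1,q-m-1}$ is surjective. Rank-nullity then produces
\[
  \dim\Lambda^{p,q}_m(V) = \binom{n}{p+m}\binom{n}{q-m} - \binom{n}{p+m+1}\binom{n}{q-m-1},
\]
which is the first asserted formula.

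The second formula is a routine algebraic identity. Factoring $\binom{n}{p+m}\binom{n}{q-m}$ out of the difference and using the ratios $\binom{n}{p+m+1}=\binom{n}{p+m}\frac{n-p-m}{p+m+1}$ and $\binom{n}{q-m-1}=\binom{n}{q-m}\frac{q-m}{n-q+m+1}$ yields a common denominator $(n-q+m+1)(p+m+1)$ with numerator $(n-q+m+1)(p+m+1)-(q-m)(n-p-m)$. Expanding and collecting shows this numerator equals $(n+1)(p-q+2m+1)$; promoting $\binom{n}{q-m}$ to $\binom{n+1}{q-m}$ via $(n+1)/(n-q+m+1)$ then delivers the claimed expression. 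The main obstacle is simply keeping track of the binomial bookkeeping in this last manipulation; conceptually the argument is just the $m=0$ reduction combined with surjectivity of $s$.
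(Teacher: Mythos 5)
Your proposal is correct and follows essentially the same route as the paper's proof: reduce to the $m=0$ summand via the isomorphism $s^m$ from Proposition~\ref{prop:reducetom0}, identify $\Lambda^{p+m,q-m}_0$ with $\ker s$, invoke the surjectivity of $s\colon\Lambda^{p+m,q-m}\to\Lambda^{p+m+1,q-m-1}$ from Proposition~\ref{prop:sinjsurj2} (valid since $p+m\ge q-m-1$), and apply rank-nullity. The only difference is that you spell out the binomial manipulation verifying the second expression, which the paper leaves implicit; your algebra checks out.
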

\begin{proof}
  We proceed as in the proof of Lemma~\ref{lemma:dimH0pqm}. Since $m\ge0$ and $m^*=m+p-q\ge0$, we apply Proposition~\ref{prop:reducetom0} to see that $\cP_0\Lambda^{p,q}_m(T^n)$ is isomorphic to $\cP_0\Lambda^{p+m,q-m}_0(T^n)$, which is the kernel of $s\colon\cP_0\Lambda^{p+m,q-m}(T^n)\to\cP_0\Lambda^{p+m+1,q-m-1}(T^n)$. As before, this map is surjective by Proposition~\ref{prop:sinjsurj2} because $p+m\ge q-m-1$, so
  \begin{equation*}
    \begin{split}
      \dim \mathcal{P}_0 \Lambda^{p,q}_m(T^n) 
      &= \dim\cP_0\Lambda^{p+m,q-m}_0(T^n)\\
      &= \dim \mathcal{P}_0 \Lambda^{p+m,q-m}(T^n) - \dim \mathcal{P}_0 \Lambda^{p+m+1,q-m-1}(T^n) \\
      &= \binom{n}{p+m}\binom{n}{q-m} - \binom{n}{p+m+1}\binom{n}{q-m-1}.\qedhere      
    \end{split}    
  \end{equation*}
\end{proof}

We can alternatively compute the dimension of $\cP_0\Lambda^{p,q}_m(T^n)$ using representation theory; see Appendix~\ref{sec:rep}.

\begin{lemma} \label{lemma:binomial}
  We have
  \[
    \sum_{l=0}^n \binom{n+1}{l+1} \binom{l+1}{q-m} \binom{q-m-1}{l-p-m} = \binom{n+1}{q-m} \binom{n}{p+m}.
  \]
\end{lemma}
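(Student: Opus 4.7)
The plan is to treat this as a purely combinatorial identity and prove it by two standard manipulations: one binomial ``trinomial revision'' identity to pull the overall factor of $\binom{n+1}{q-m}$ outside the sum, followed by an application of Vandermonde's identity on what remains.

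First I would set $a := q-m$ and $b := p+m$ to reduce clutter, and reindex with $j = l+1$, rewriting the left-hand side as
\[
\sum_{j=1}^{n+1} \binom{n+1}{j}\binom{j}{a}\binom{a-1}{j-1-b}.
\]
The key observation is the classical identity
\[
\binom{n+1}{j}\binom{j}{a} = \binom{n+1}{a}\binom{n+1-a}{j-a},
\]
which follows by writing both sides as $(n+1)!/\bigl(a!(j-a)!(n+1-j)!\bigr)$. Substituting this lets me pull $\binom{n+1}{a} = \binom{n+1}{q-m}$ outside the sum, reducing the problem to showing
\[
\sum_j \binom{n+1-a}{j-a}\binom{a-1}{j-1-b} = \binom{n}{b}.
\]

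Next I would reindex by $k := j-a$ and use the elementary symmetry $\binom{a-1}{k+a-1-b}=\binom{a-1}{b-k}$ (valid because $\binom{a-1}{i}=\binom{a-1}{a-1-i}$), turning the sum into
\[
\sum_k \binom{n+1-a}{k}\binom{a-1}{b-k},
\]
which is exactly Vandermonde's identity and evaluates to $\binom{(n+1-a)+(a-1)}{b}=\binom{n}{b}$. Substituting back $a=q-m$ and $b=p+m$ yields the claimed right-hand side $\binom{n+1}{q-m}\binom{n}{p+m}$.

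I do not anticipate any real obstacle here; the only point that requires a line of care is that all sums should be understood as ranging over all integers, with the binomial coefficients vanishing outside the natural ranges of summation. This legitimises the reindexings and the use of $\binom{a-1}{k+a-1-b}=\binom{a-1}{b-k}$ without having to track the endpoints explicitly, and it makes Vandermonde directly applicable.
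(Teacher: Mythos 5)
Your proof is correct and follows essentially the same route as the paper: a trinomial-revision identity to extract $\binom{n+1}{q-m}$ from the sum, followed by Vandermonde's identity (the paper applies the symmetry flip to the factor $\binom{n+1-q+m}{\,\cdot\,}$ rather than to $\binom{q-m-1}{\,\cdot\,}$, but this is only a cosmetic difference).
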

\begin{proof}
  Since 
  \begin{align*}
    \binom{n+1}{l+1} \binom{l+1}{q-m} 
    &= \binom{n+1}{q-m} \binom{n+1-q+m}{l+1-q+m} \\
    &= \binom{n+1}{q-m} \binom{n+1-q+m}{n-l},
  \end{align*}
  it is enough to show that
  \[
    \sum_{l=0}^n \binom{n+1-q+m}{n-l} \binom{q-m-1}{l-p-m} = \binom{n}{p+m}.
  \]
  Equivalently, letting $j=l-p-m$, we must show that
  \[
    \sum_{j=0}^{n-p-m} \binom{n+1-q+m}{n-p-m-j} \binom{q-m-1}{j} = \binom{n}{n-p-m}.
  \]
  This holds because of Vandermonde's identity $\sum_{j=0}^a \binom{b}{a-j} \binom{c}{j} = \binom{b+c}{a}$.
\end{proof}

\begin{proposition}
  Assume $p,q,n \ge 0$ and $\max\{0,q-p\} \le m \le q-1$.
  Then
  \[
    \sum_{l=0}^n \binom{n+1}{l+1} \dim \mathring{\mathcal{P}}_0\Lambda^{p,q}_m(T^l) = \dim \mathcal{P}_0 \Lambda^{p,q}_m(T^n).
  \]
\end{proposition}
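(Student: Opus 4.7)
The plan is straightforward: this is purely a bookkeeping identity that matches the formula of Proposition~\ref{lemma:dimP0pqm} for the summands on the left with the formula of Lemma~\ref{lemma:dimPpqm} for the right, reduced to Lemma~\ref{lemma:binomial}. So I would simply substitute and verify.

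First, I would invoke Proposition~\ref{lemma:dimP0pqm} to rewrite each term in the sum as
\[
\dim \mathring{\mathcal{P}}_0\Lambda^{p,q}_m(T^l) = \frac{p-q+2m+1}{p+m+1}\binom{l+1}{q-m}\binom{q-m-1}{l-p-m},
\]
noting that the hypothesis $\max\{0,q-p\}\le m\le q-1$ is exactly what Proposition~\ref{lemma:dimP0pqm} requires. Pulling the constant prefactor $\frac{p-q+2m+1}{p+m+1}$, which is independent of $l$, out of the sum reduces the left-hand side to
\[
\frac{p-q+2m+1}{p+m+1}\sum_{l=0}^{n}\binom{n+1}{l+1}\binom{l+1}{q-m}\binom{q-m-1}{l-p-m}.
\]

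Second, I would apply Lemma~\ref{lemma:binomial} directly: the inner sum equals $\binom{n+1}{q-m}\binom{n}{p+m}$. The resulting expression
\[
\frac{p-q+2m+1}{p+m+1}\binom{n+1}{q-m}\binom{n}{p+m}
\]
is exactly the formula for $\dim\mathcal{P}_0\Lambda^{p,q}_m(T^n)$ given in Lemma~\ref{lemma:dimPpqm}, completing the proof.

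There is essentially no obstacle here; the only thing to be slightly careful about is that the hypotheses on $(p,q,m)$ match the ranges required by both Proposition~\ref{lemma:dimP0pqm} and Lemma~\ref{lemma:dimPpqm}. The former needs $m\le q-1$ (built into our hypothesis) and $m\ge\max\{0,q-p\}$ (also built in); the latter only needs $m\ge\max\{0,q-p\}$, which is weaker. Terms with $l<p+m$ or $l<q-m-1$ contribute zero on both sides because of the binomial coefficients, so the range $0\le l\le n$ in the sum is harmless.
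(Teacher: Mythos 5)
Your proof is correct and follows exactly the same route as the paper's: substitute the formula from Proposition~\ref{lemma:dimP0pqm}, factor out the constant, apply Lemma~\ref{lemma:binomial}, and match the result against Lemma~\ref{lemma:dimPpqm}. The remark about vanishing terms outside the effective range of $l$ is a nice (if unnecessary) extra check.
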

\begin{proof}
  We use Lemma~\ref{lemma:binomial} together with the formula for $ \dim \mathring{\mathcal{P}}_0\Lambda^{p,q}_m(T^l)$ given in Proposition~\ref{lemma:dimP0pqm} to compute
  \begin{align*}
    \sum_{l=0}^n &\binom{n+1}{l+1} \dim \mathring{\mathcal{P}}_0\Lambda^{p,q}_m(T^l) \\
                 &= \frac{p-q+2m+1}{p+m+1} \sum_{l=0}^n \binom{n+1}{l+1} \binom{l+1}{q-m} \binom{q-m-1}{l-p-m} \\
                 &= \frac{p-q+2m+1}{p+m+1} \binom{n+1}{q-m}\binom{n}{p+m}.
  \end{align*}
  By Lemma~\ref{lemma:dimPpqm}, this matches the dimension of $\mathcal{P}_0 \Lambda^{p,q}_m(T^n)$.
\end{proof}

\subsection{Examples of piecewise constant finite element spaces.} \label{sec:examples}
We are now ready to discuss the finite element spaces produced by our construction.  We continue to focus on the piecewise constant case $r=0$. As before, we let $N$ denote the dimension of the triangulation, and we use $n$ to refer to the dimension of a given simplex in the triangulation.  
For the reader's convenience, we list the values of $\dim \mathring{\mathcal{P}}_0\Lambda^{p,q}_m(T^n)$ for various values of $p,q,m$, and $n$ in Table~\ref{tab:dim}. We also list a few bases for $\mathcal{P}_0\Lambda^{p,q}_m(\mathcal T)$ in Table~\ref{tab:bases}.
\begin{table}
  \renewcommand{\arraystretch}{1.3}
  \begin{tabular}{r|ccccccc}
    & & & & $n$ & & & \\
    & 0 & 1 & 2 & 3 & 4 & 5 & 6\\
    \hline
    $\Lambda^{1,1}_0$ &  & 1 &  &  &  &  & \\\hline
    $\Lambda^{2,1}_0$ &  &  & 2 &  &  &  & \\ \hline
    $\Lambda^{2,2}_0$ &  &  & 1 & 2 &  &  & \\ 
    $\Lambda^{2,2}_1\cong\Lambda^{3,1}_0$ &  &  &  & 3 &  &  & \\ \hline
    $\Lambda^{3,2}_0$ &  &  &  & 3 & 5 &  & \\ 
    $\Lambda^{3,2}_1\cong\Lambda^{4,1}_0$ &  &  &  &  & 4 &  & \\ \hline
    $\Lambda^{3,3}_0$ &  &  &  & 1 & 5 & 5 & \\ 
    $\Lambda^{3,3}_1\cong\Lambda^{4,2}_0$ &  &  &  &  & 6 & 9 & \\ 
    $\Lambda^{3,3}_2\cong\Lambda^{4,2}_1\cong\Lambda^{5,1}_0$ &  &  &  &  &  & 5 & \\ 
  \end{tabular}
  \caption{Dimension of $\mathring{\mathcal{P}}_0\Lambda^{p,q}_m(T^n)$ for various values of $p,q,m$, and $n$. Zero entries are left blank.}
  \label{tab:dim}
\end{table}

\begin{table}
  \renewcommand{\arraystretch}{1.4}  
  \begin{tabular}{c|c|c}
    Space & $n$ & Basis elements associated with $n$-simplices \\
    \hline
    $\mathcal{P}_0\Lambda^{1,1}_0(\mathcal{T})$ & 1 & $\dlsym{i}{j}$ \\
    \hline
    \multirow{2}{*}{$\mathcal{P}_0\Lambda^{2,1}_0(\mathcal{T})$} & \multirow{2}{*}{2} & $\dlx{ij}{k}-\dlx{jk}{i}$, \\
          &  & $\dlx{ij}{k}-\dlx{ki}{j}$ \\
    \hline
    \multirow{3}{*}{$\mathcal{P}_0\Lambda^{2,2}_0(\mathcal{T})$} & 2 & $\dlsym{ij}{jk}+\dlsym{jk}{ki}+\dlsym{ki}{ij}$ \\
    \cline{2-3}
          & \multirow{2}{*}{3} & $\dlsym{ij}{kl}-\dlsym{ik}{lj}$, \\
          &  & $\dlsym{ij}{kl}-\dlsym{il}{jk}$ \\
    \hline
    \multirow{3}{*}{$\mathcal{P}_0\Lambda^{2,2}_1(\mathcal{T})$} & \multirow{3}{*}{3} & $\dlx{ij}{kl}-\dlx{kl}{ij}$, \\
          &  &  $\dlx{ik}{lj}-\dlx{lj}{ik}$, \\
          &  &  $\dlx{il}{jk}-\dlx{jk}{il}$ \\
    \hline
    \multirow{3}{*}{$\mathcal{P}_0\Lambda^{3,1}_0(\mathcal{T})$} & \multirow{3}{*}{3} & $\dlx{ijk}{l}-\dlx{kjl}{i}$, \\
          &  & $\dlx{ijk}{l}-\dlx{ikl}{j}$, \\
          &  & $\dlx{ijk}{l}-\dlx{jil}{k}$
  \end{tabular}
  \caption{Bases for $\mathcal{P}_0\Lambda^{p,q}_m(\mathcal{T})$ on a triangulation $\mathcal{T}$ of dimension $N$.  Each basis element is associated with a simplex $F$ of dimension $n \le N$.  We label the vertices of $F$ with the letters $\{i,j\}$, $\{i,j,k\}$, and $\{i,j,k,l\}$ for $n=1,2,3$, respectively.  Here, $\lambda_i$ denotes the piecewise linear function on $\cT$ that equals one at vertex $i$ and equals zero at all other vertices, and we use shorthand notation $d\lambda_{ij} = d\lambda_i \wedge d\lambda_j$ and $d\lambda_{ijk} = d\lambda_i \wedge d\lambda_j \wedge d\lambda_k$.  The symbol $\odot$ denotes the symmetrized tensor product $\alpha \odot \beta = \frac{1}{2}(\alpha \otimes \beta + \beta \otimes \alpha)$. One can see that the basis elements above are identically zero on any element that does not contain $F$. Note that an alternate basis for $\cP_0\Lambda^{3,1}_0(\mathcal{T})$ could be obtained by applying $s$ to the basis for $\cP_0\Lambda^{2,2}_1(\mathcal{T})$.}
  \label{tab:bases}
\end{table}

\subsubsection{The case $(p,q)=(1,1)$.}

As discussed in Section~\ref{sec:pq11}, the space $\Lambda^{1,1}$ decomposes into two spaces: a space $\Lambda^{1,1}_0$ consisting of symmetric bilinear forms, and a space $\Lambda^{1,1}_1$ consisting of skew-symmetric bilinear forms, i.e. 2-forms.  In dimension $N \ge 3$, the latter space does not admit a piecewise constant discretization, and correspondingly our construction fails to produce one because $m=q=1$.  The space $\Lambda^{1,1}_0$, on the other hand, admits a piecewise constant discretization.  Referring to the first row of Table~\ref{tab:dim}, the corresponding finite element space has 1 degree of freedom per edge.  The elements of this space have single-valued trace on every codimension-1 simplex $F$, which is equivalent to saying that $\phi(X;Y)$ is single-valued on $F$ for all vectors $X,Y$ that are tangent to $F$.  This space is the lowest order Regge finite element space studied by Christiansen~\cite{christiansen2004characterization,christiansen2011linearization} and Li~\cite{li2018regge}.

\subsubsection{The case $(p,q)=(2,1)$.}

As discussed in Section~\ref{sec:pq21}, the space $\Lambda^{2,1}$ decomposes into two spaces which, in dimension $N=3$, can be identified with matrices.  

The first space, $\Lambda^{2,1}_0$, consists of trace-free matrices under this identification.  Our construction yields a piecewise constant finite element space for such trace-free matrices, and, according to Table~\ref{tab:dim}, this finite element space has 2 degrees of freedom per triangle.  The matrices in this finite element space have normal-tangential continuity along element interfaces, meaning that $\nu^T A \tau_1$ and $\nu^T A \tau_2$ are single-valued along every triangle $F$ with normal vector $\nu$ and tangent basis $(\tau_1,\tau_2)$.  This follows from the identifications between $(2,1)$-forms and matrices discussed in Section~\ref{sec:pq21}.  This finite element space coincides with a space introduced by Gopalakrishnan, Lederer, and Sch\"oberl~\cite{gopalakrishnan2020mass}.

The members of the second space, $\Lambda^{2,1}_1$, can be identified with multiples of the identity matrix in dimension $N=3$.  Normal-tangential continuity is automatic for such matrices, so there is a trivial finite element space for $\Lambda^{2,1}_1$ in 3D that consists of all piecewise constant multiples of the identity.  In dimension $N \ge 4$, $\Lambda^{2,1}_1 \simeq \Lambda^3$ fails to admit a piecewise constant discretization with single-valued trace on element interfaces. Correspondingly, our construction fails to produce one since $m=q=1$.

\subsubsection{The case $(p,q)=(2,2)$.}\label{sec:pq22finite}

As discussed in Section~\ref{sec:pq22}, the space $\Lambda^{2,2}$ decomposes into three spaces.  For each space, we will discuss its discretization first in any dimension $N$ and then (if applicable) specialize to $N=3$.

The first space, $\Lambda^{2,2}_0$, consists of algebraic curvature tensors.  Our piecewise constant finite element space for such tensors, which appears to be new (in dimension $N \ge 4$), has 1 degree of freedom per triangle and 2 degrees of freedom per tetrahedron according to the third row of Table~\ref{tab:dim}.  The tensors in this finite element space have the property that for every element interface $F$, $\phi(X,Y;Z,W)$ is single-valued on $F$ for all vectors $X,Y,Z,W$ that are tangent to $F$.  (The same is true for shared simplices of lower dimension, too.)  In dimension $N=3$, we can identify each member of $\Lambda^{2,2}_0$ with a symmetric $3 \times 3$ matrix $A$, and the aforementioned continuity property reduces to the statement that $\nu^T A \nu$ is single-valued, where $\nu$ is the unit normal to $F$.  This finite element space in dimension $N=3$ coincides with a space introduced by Pechstein and Sch\"oberl~\cite{pechstein2011tangential}.

The second space, $\Lambda^{2,2}_1$, consists of skew-symmetric $(2,2)$-forms.  Its piecewise constant finite element discretization has 3 degrees of freedom per tetrahedron according to Table~\ref{tab:dim}.  In dimension $N=3$, every skew-symmetric $(2,2)$-form can be identified with a skew-symmetric $3 \times 3$ matrix $A$, and the aforementioned continuity property---normal-normal continuity---is vacuous since $\nu^T A \nu$ automatically vanishes.  Thus, this finite element space simply consists of all piecewise constant skew-symmetric $3 \times 3$ matrices.

The third space, $\Lambda^{2,2}_2$ consists of $(2,2)$-forms that alternate in all 4 arguments; i.e. 4-forms.  This space is zero in dimension $N\le3$, admits a discretization isomorphic to the discontinuous Galerkin space of piecewise constant 4-forms if $N=4$, and fails to admit a piecewise constant discretization in dimension $N \ge 5$.  Correspondingly, our construction fails to produce one since $m=q=2$.

\subsubsection{Symmetric and skew-symmetric $(p,p)$-forms.}

As discussed in Section~\ref{sec:symmetric}, symmetric and skew-symmetric $(p,p)$-forms correspond to the even and odd summands, respectively, in the decomposition $\Lambda^{p,p} = \bigoplus_{m=0}^p \Lambda^{p,p}_m$.  Since we can construct piecewise constant finite element spaces for all of the summands except $\Lambda^{p,p}_p$, we can sum those with an even index $m$ to obtain piecewise constant finite element spaces for $\Lambda^{p,p}_{\rm sym}$ if $p$ is odd, and we can sum those with an odd index $m$ to obtain piecewise constant finite element spaces for $\Lambda^{p,p}_{\rm skw}$ if $p$ is even.

\subsection{Related work} \label{sec:related}

For $k \le \ell+p-1$, Hu and Lin~\cite{hu2025finite} work with a space $\mathbb{W}^{k,\ell}_{[p]}$ which is defined as the kernel of $(s^*)^p : \Lambda^{k,\ell} \to \Lambda^{k-p,\ell+p}$.  They abbreviate
\[
  \mathbb{W}^{k,\ell} := \mathbb{W}^{k,\ell}_{[1]}.
\]
In our notation, by Proposition~\ref{prop:dualdecomp},
\begin{align*}
  \mathbb{W}^{k,\ell}_{[p]} 
  &= \bigoplus_{m^*=0}^{p-1} \prescript{}{m^*}\Lambda^{k,\ell} \\
  &= \bigoplus_{m+k-\ell=0}^{p-1} \Lambda^{k,\ell}_{m} \\
  &= \bigoplus_{m=\max\{0,\ell-k\}}^{p-1+\ell-k} \Lambda^{k,\ell}_m.
\end{align*}

For example,
\begin{align*}
  \mathbb{W}^{1,1} &= \Lambda^{1,1}_0, \\
  \mathbb{W}^{2,1}_{[2]} &= \Lambda^{2,1}_0, \\
  \mathbb{W}^{2,2} &= \Lambda^{2,2}_0, \\
  \mathbb{W}^{3,3} &= \Lambda^{3,3}_0, \\
  \mathbb{W}^{3,3}_{[2]} &= \Lambda^{3,3}_0 \oplus \Lambda^{3,3}_1, \\
  \mathbb{W}^{3,3}_{[3]} &= \Lambda^{3,3}_0 \oplus \Lambda^{3,3}_1 \oplus \Lambda^{3,3}_2.
\end{align*}
Among other things, Hu and Lin~\cite{hu2025finite} construct a finite element space for $\mathbb{W}^{k,\ell}_{[p]}$ with the same continuity properties as ours, which they denote by $C_{\iota^*\iota^*}\mathcal{P}^-\mathbb{W}^{k,\ell}_{[p]}$.  This finite element space coincides with our piecewise constant space when $p=1$.  When $p>1$, it generally differs; for example, the spaces $C_{\iota^*\iota^*}\mathcal{P}^-\mathbb{W}^{3,3}_{[2]}$ and $C_{\iota^*\iota^*}\mathcal{P}^-\mathbb{W}^{3,3}_{[3]}$ contain some linear functions~\cite[Lemma 2.5, item (3)]{hu2025finite}.  Hu and Lin also construct finite element spaces with other continuity properties and with general polynomial degrees.

\appendix

\section{Representation theory}\label{sec:rep}
In this section, we will discuss our results in the context of representation theory. Because our results do not depend on this appendix, we will only outline some of the proofs, viewing the role of representation theory more as a source of motivation and intuition. We discuss the decomposition $\Lambda^{p,q}=\bigoplus_m\Lambda^{p,q}_m$ from the perspective of representation theory in Section~\ref{sec:repdecomp}, giving the Young diagram of $\Lambda^{p,q}_m$ in Section~\ref{sec:youngpqm}, from which we obtain $\dim\Lambda^{p,q}_m$ in Section~\ref{sec:dimlpqm}. In Section~\ref{sec:projection}, we discuss how representation theory can yield projection operators $\Lambda^{p,q}\to\Lambda^{p,q}_m$ via symmetrization and antisymmetrization of tensors. In Section~\ref{sec:polyrep}, we move on to double forms with polynomial coefficients, discussing that $\cH_r\Lambda^{p,q}_m$ decomposes into up to four irreducible summands, and we identify the Young diagram for one of the summands. In Section~\ref{sec:constant}, we use this Young diagram to compute the constant in Proposition~\ref{prop:kkdd}. Then, in Section~\ref{sec:vanishing}, we show that, after reindexing, this Young diagram is also the Young diagram for the vanishing trace space $\oP_0\Lambda^{p,q}_m(T^n)$, and we use it to compute $\dim\oP_0\Lambda^{p,q}_m(T^n)$ in Section~\ref{sec:dimensionvanishing}.

\subsection{The decomposition of $\Lambda^{p,q}$ into a sum of irreducible representations}\label{sec:repdecomp}
As in Section~\ref{sec:doublemulti}, let $V$ be an $n$-dimensional vector space, and let $\Lambda^{p,q}=\Lambda^p\otimes\Lambda^q$ be the set of double multicovectors on $V$. The action of the general linear group $GL(V)$ on $V$ induces corresponding actions on the spaces $\Lambda^{p,q}$, so these are $GL(V)$-representations. However, unlike the space of simple multicovectors $\Lambda^k$, the space of double multicovectors $\Lambda^{p,q}$ is generally \emph{not} irreducible as a representation of $GL(V)$. One can decompose $\Lambda^{p,q}$ as a sum of irreducible representations using standard representation theory techniques; see, for instance, \cite[Exercise 6.13*]{fulton2004representation} for the more general case $\Lambda^{p_1}\otimes\dots\otimes\Lambda^{p_r}$, as well as \cite[Exercise 6.16*, Exercise 15.32*]{fulton2004representation} for the special case $\Lambda^p\otimes\Lambda^p$. See also \cite{olver1982hyperforms}. This decomposition is precisely our decomposition $\Lambda^{p,q}=\bigoplus_m\Lambda^{p,q}_m$, which we can see as follows.

The maps $s$ and $s^*$ are equivariant, that is, they commute with the action of $GL(V)$. Consequently, the eigenspaces of $s^*s$ are subrepresentations of $\Lambda^{p,q}$. A priori, they need not be irreducible subrepresentations, but we can check that they are by showing that the number of summands given by Proposition~\ref{prop:lpqmnonzero} is equal to the number of summands given by \cite[Exercise 6.13*]{fulton2004representation}.

Another way of looking at it is that equivariant maps between irreducible representations are either isomorphisms or zero. Per \cite[Exercise 6.13*]{fulton2004representation}, each irreducible subrepresentation of $\Lambda^{p,q}$ appears exactly once in the decomposition. So, by~\cite[Schur's Lemma 1.7]{fulton2004representation}, any equivariant map is a multiple of the identity when restricted to an irreducible subrepresentation. In other words, each irreducible subrepresentation is contained in an eigenspace. Generically, the eigenvalues corresponding to different irreducible representations will be distinct, in which case the irreducible subrepresentations are exactly the eigenspaces. In our case, we can verify that the eigenvalues are distinct in this sense by counting that the number of eigenspaces is equal to the number of irreducible subrepresentations.
\subsection{The Young diagram of $\Lambda^{p,q}_m$}\label{sec:youngpqm}
Irreducible representations of $GL(n)$ are given by Young diagrams. The representation $\Lambda^{p,q}_m$ is given by the Young diagram in Figure~\ref{fig:youngdiagram}. Note that the conditions in Proposition~\ref{prop:lpqmnonzero} imply that $p+m\ge q-m$, as required for Young diagrams.

\begin{figure}
  \begin{tikzpicture}[
    BC/.style = {decorate, 
      decoration={calligraphic brace, amplitude=5pt, raise=1mm},
      very thick, pen colour={black}
    },
    ]
    \matrix (m) [matrix of math nodes,
    nodes={draw, minimum size=6mm, anchor=center},
    column sep=-\pgflinewidth,
    row sep=-\pgflinewidth
    ]
    {
      ~   & ~  \\
      ~   & ~  \\
      |[draw=none,text height=3mm]| \vdots
      & |[draw=none,text height=3mm]| \vdots  \\
      ~   & ~ \\
      ~   &   \\
      ~   &   \\
      |[draw=none,text height=3mm]| \vdots
      &   \\
      ~   &   \\
    };
    \draw[BC] (m-8-1.south west) -- node[left =2.2mm] {$p+m$} (m-1-1.north west);
    \draw[BC] (m-1-2.north east) -- node[right=2.2mm] {$q-m$} (m-4-2.south east);
  \end{tikzpicture}
  \caption{The Young diagram for $\Lambda^{p,q}_m$}
  \label{fig:youngdiagram}
\end{figure}
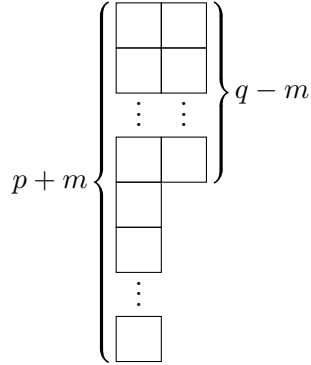

One way to see that the diagram in Figure~\ref{fig:youngdiagram} is indeed the diagram corresponding to $\Lambda^{p,q}_m$ is as follows. First, by Proposition~\ref{prop:reducetom0}, if $\Lambda^{p,q}_m$ is nonzero, then $s^m$ is an isomorphism from $\Lambda^{p,q}_m$ to $\Lambda^{p+m,q-m}_0$. Because $s$ is equivariant, $s^m$ is an isomorphism of representations (not just vector spaces). Thus, it suffices to consider the case where $m=0$.

Considering the space $\Lambda^{p,q}_0$, for concreteness, we assign indices $1,\dots,k=p+q$ to the boxes as in Figure~\ref{fig:youngindices}. The second part of \cite[Exercise 6.14*]{fulton2004representation} tells us how to construct an irreducible representation corresponding to this Young diagram as follows. We start with tensors that are symmetric with respect to the indices in each row, that is, tensors that are symmetric with respect to indices $a$ and $p+a$ for every $1\le a\le q$. Then, we antisymmetrize each column, obtaining tensors in $\Lambda^{p,q}$. Per \cite[Exercise 6.14*]{fulton2004representation}, the resulting space is an irreducible representation corresponding to this Young diagram.

\begin{figure}
  \begin{tikzpicture}[
    BC/.style = {decorate, 
      decoration={calligraphic brace, amplitude=5pt, raise=1mm},
      very thick, pen colour={black}
    },
    ]
    \matrix (m) [matrix of math nodes,
    nodes={draw, minimum height=6mm, minimum width=12mm, anchor=center},
    column sep=-\pgflinewidth,
    row sep=-\pgflinewidth
    ]
    {
      1   & p+1  \\
      2   & p+2  \\
      |[draw=none,text height=3mm]| \vdots
      & |[draw=none,text height=3mm]| \vdots  \\
      q   & p+q \\
      q+1   &   \\
      q+2   &   \\
      |[draw=none,text height=3mm]| \vdots
      &   \\
      p   &   \\
    };
    \draw[BC] (m-8-1.south west) -- node[left =2.2mm] {$p$} (m-1-1.north west);
    \draw[BC] (m-1-2.north east) -- node[right=2.2mm] {$q$} (m-4-2.south east);
  \end{tikzpicture}
  \caption{Tensor indices associated to each box of the Young diagram of $\Lambda^{p,q}_0$}
  \label{fig:youngindices}
\end{figure}
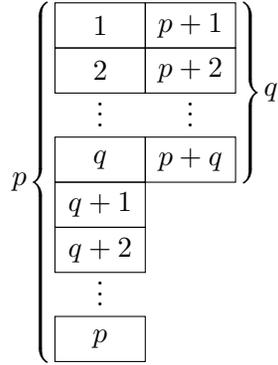

Now, compare with Lemma~\ref{lem:pq0}, which constructs a nonzero element $(\alpha_1\wedge\dots\wedge\alpha_p)\otimes(\alpha_1\wedge\dots\wedge\alpha_q)$ of $\ker s=\Lambda^{p,q}_0$. This tensor is exactly what we would get by applying the map in \cite[Exercise 6.14*]{fulton2004representation} to the tensor $(\alpha_1\otimes\dots\otimes\alpha_p)\otimes(\alpha_1\otimes\dots\otimes\alpha_q)$, which is indeed symmetric with respect to indices $a$ and $p+a$ for each $1\le a\le q$. We know $\Lambda^{p,q}_0$ is an irreducible representation, and we now know that it has nonempty intersection with the irreducible representation constructed by \cite[Exercise 6.14*]{fulton2004representation} for this Young diagram, so they must in fact be equal.

Alternatively, the $s$ operator is considered explicitly in \cite[Exercise~15.30]{fulton2004representation}, where it states that $\ker s$ is an irreducible representation. In the exercise, the representation is expressed using $\Gamma$ notation, but, using the notation correspondence in \cite[p.~223]{fulton2004representation}, we can see that the Young diagram for the representation is the one given in Figure~\ref{fig:youngindices}.

\subsection{The dimension of $\Lambda^{p,q}_m$}\label{sec:dimlpqm}
\begin{figure}
  \begin{tikzpicture}[
    BC/.style = {decorate, 
      decoration={calligraphic brace, amplitude=5pt, raise=1mm},
      very thick, pen colour={black}
    },
    ]
    \matrix (m1) at (-1,0) [matrix of math nodes,
    nodes={draw, minimum height=6mm, minimum width=18mm, anchor=center},
    column sep=-\pgflinewidth,
    row sep=-\pgflinewidth,
    matrix anchor = east,
    ]
    {
      n   & n+1  \\
      n-1   & n  \\
      |[draw=none,text height=3mm]| \vdots
      & |[draw=none,text height=3mm]| \vdots  \\
      n-q+1   & n-q+2 \\
      n-q   &   \\
      n-q-1   &   \\
      |[draw=none,text height=3mm]| \vdots
      &   \\
      n-p+1   &   \\
    };
    \draw[BC] (m1-8-1.south west) -- node[left =2.2mm] {$p$} (m1-1-1.north west);
    \draw[BC] (m1-1-2.north east) -- node[right=2.2mm] {$q$} (m1-4-2.south east);

    \matrix (m2) at (1,0) [matrix of math nodes,
    nodes={draw, minimum height=6mm, minimum width=18mm, anchor=center},
    column sep=-\pgflinewidth,
    row sep=-\pgflinewidth,
    matrix anchor = west,
    ]
    {
      p+1   & q  \\
      p   & q-1  \\
      |[draw=none,text height=3mm]| \vdots
      & |[draw=none,text height=3mm]| \vdots  \\
      p-q+2   & 1 \\
      p-q   &   \\
      p-q-1   &   \\
      |[draw=none,text height=3mm]| \vdots
      &   \\
      1   &   \\
    };
    \draw[BC] (m2-8-1.south west) -- node[left =2.2mm] {$p$} (m2-1-1.north west);
    \draw[BC] (m2-1-2.north east) -- node[right=2.2mm] {$q$} (m2-4-2.south east);
  \end{tikzpicture}
  \caption{The numbers $n-i+j$ (left) and the hook lengths (right) for the Young diagram of $\Lambda^{p,q}_0$}
  \label{fig:youngdimension}
\end{figure}
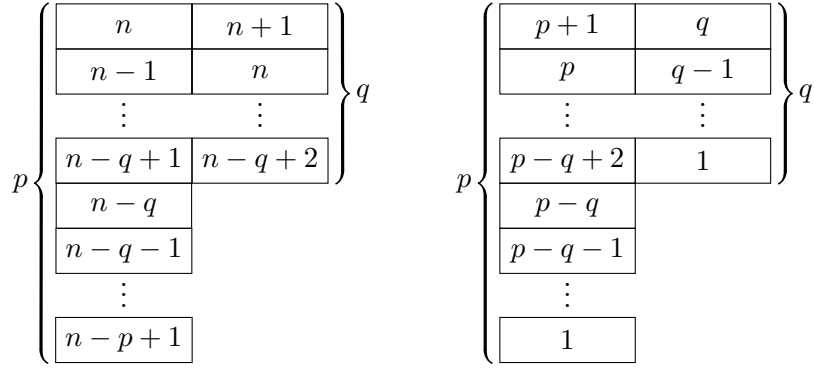

Now that we have the Young diagram corresponding to the representation $\Lambda^{p,q}_m$, its dimension is given by standard representation theory. In \cite{fulton2004representation}, the dimension of this representation is given in Theorem~6.3, but we find it more convenient to apply Exercise~6.4*. As before, without loss of generality, we may consider the case $m=0$. The formula in \cite[Exercise~6.4*]{fulton2004representation} instructs us to compute two numbers associated to each box in the Young diagram: the number $n-i+j$, where $i$ indexes the row and $j$ indexes the column, and the hook length, which is the number of boxes to the right of or below the given box, including the box itself. We fill in these numbers in the Young diagram for $\Lambda^{p,q}_0$ in Figure~\ref{fig:youngdimension}. Then, \cite[Exercise~6.4*]{fulton2004representation} tells us that the dimension of $\Lambda^{p,q}_0$ is obtained by dividing the product of the $n-i+j$ by the product of the hook lengths. The product of the $n-i+j$ is
\begin{equation*}
  \frac{n!}{(n-p)!}\frac{(n+1)!}{(n-q+1)!}.
\end{equation*}
The product of the hook lengths is
\begin{equation}\label{eq:pq0hook}
  \frac{(p+1)!}{p-q+1}q!=\frac{p+1}{p-q+1}p!q!.
\end{equation}
Dividing, we obtain that, whenever the space $\Lambda^{p,q}_0$ is nonzero, its dimension is
\begin{equation*}
  \dim\Lambda^{p,q}_0=\frac{p-q+1}{p+1}\frac{n!}{(n-p)!p!}\frac{(n+1)!}{(n-q+1)!q!}=\frac{p-q+1}{p+1}\binom np\binom{n+1}q.
\end{equation*}
Consequently, whenever the space $\Lambda^{p,q}_m$ is nonzero, its dimension is
\begin{equation*}
  \dim\Lambda^{p,q}_m=\dim\Lambda^{p+m,q-m}_0=\frac{p-q+2m+1}{p+m+1}\binom n{p+m}\binom{n+1}{q-m},
\end{equation*}
which matches the dimension computed in Lemma~\ref{lemma:dimPpqm} for $\cP_0\Lambda^{p,q}_m(T^n)$. (Here, we are implicitly identifying the constant coefficient space $\cP_0\Lambda^{p,q}_m(T^n)$ with $\Lambda^{p,q}_m(V)$, where $V$ is the tangent space to $T^n$ at a point in $T^n$.)

\subsection{Projecting onto $\Lambda^{p,q}_m$ using the Young symmetrizer}\label{sec:projection}
In this section, we discuss the projections onto the summands of the decomposition $\Lambda^{p,q}\to\Lambda^{p,q}_m$. Since $\Lambda^{p,q}_m$ is an eigenspace of $s^*s$, we can construct projection operators using the characteristic polynomial of $s^*s$. However, there is an easier construction using the Young symmetrizer from representation theory.

We recall key facts from \cite[pp.~46 and 76--77]{fulton2004representation}. Given a Young diagram with $k$ boxes, the \emph{Young symmetrizer} $c$ is the operation on $k$-tensors that symmetrizes the indices in each row, and then antisymmetrizes the indices in each column. The image of the Young symmetrizer is an irreducible representation of the general linear group. However, $c$ is not quite a projection; instead $c^2=Nc$, where $N$ is the product of the hook lengths \cite[Hook Length Formula~4.12 and Lemma~4.26]{fulton2004representation}. Note that, here, the convention when symmetrizing or antisymmetrizing is that we add or subtract the permuted terms \emph{without} dividing by a factorial.

We can immediately apply these results to obtain the projection onto the $m=0$ summand of $\Lambda^{p,q}$. Letting $k=p+q$ as before, we symmetrize the indices in each row of the Young diagram in Figure~\ref{fig:youngindices} and then antisymmetrize the indices in each column, obtaining a form in $\Lambda^{p,q}_0$. To obtain a projection, we then divide by the product of the hook lengths, which we have computed to be $\frac{p+1}{p-q+1}p!q!$ in Equation~\eqref{eq:pq0hook}.

We can use this projection $\Lambda^{p,q}\to\Lambda^{p,q}_0$ to construct a projection $\Lambda^{p,q}\to\Lambda^{p,q}_m$ with the following composition.
\begin{equation*}
  \Lambda^{p,q}\xrightarrow{s^m}\Lambda^{p+m,q-m}\xrightarrow{c}\Lambda^{p+m,q-m}_0\xrightarrow{\left(s^*\right)^m}\Lambda^{p,q}_m.
\end{equation*}
Indeed, presuming $\Lambda^{p,q}_m$ is nonzero, the map $s^m$ is an isomorphism from $\Lambda^{p,q}_m$ to $\Lambda^{p+m,q-m}_0$, so the above composition is, up to a constant, the projection from $\Lambda^{p,q}\to\Lambda^{p,q}_m$.

Note also that, up to a constant, the operator $s^m\colon\Lambda^{p,q}\to\Lambda^{p+m,q-m}$ is simply antisymmetrization in the first $p+m$ indices of the tensor. Similarly, $\left(s^*\right)^m\colon\Lambda^{p+m,q-m}\to\Lambda^{p,q}$ is antisymmetrization in the last $q$ indices. So, up to a constant, we can express the projection $\Lambda^{p,q}\to\Lambda^{p,q}_m$ as the following sequence of symmetrization operators on $\Lambda^{p,q}$, interpreted as a $k$-tensor that is antisymmetric in the first $p$ indices and antisymmetric in the last $q$ indices.
\begin{enumerate}
\item Antisymmetrize the first $p+m$ indices, obtaining a form in $\Lambda^{p+m,q-m}$.
\item Symmetrize each pair of corresponding indices in the left and right factors of $\Lambda^{p+m,q-m}$.
\item Antisymmetrize the first $p+m$ indices.
\item Antisymmetrize the last $q$ indices.
\end{enumerate}
Note that, in the third step, we skipped antisymmetrization in the last $q-m$ indices as required by the Young symmetrizer, because doing so would be redundant with the fourth step. Less obviously, we can actually skip the first step; doing so corresponds to interpreting the original $(p,q)$-form as a $k$-tensor and applying the Young symmetrizer for $\Lambda^{p+m,q-m}_0$ to it directly, without first passing to $\Lambda^{p+m,q-m}$.

Getting back to the map $\left(s^*\right)^mcs^m$, which is a projection up to a constant, it remains to compute the constant, which we do by applying the above map to $\Lambda^{p,q}_m$. Grouping the map as
\begin{equation*}
  s^*(s^*(\dotsm s^*(s^*cs)s\dotsm)s)s,
\end{equation*}
we have that the Young symmetrizer $c$ is applied to the space $\Lambda^{p+m,q-m}_0$ yielding the constant $\frac{p+m+1}{p-q+2m+1}(p+m)!(q-m)!$. So, the innermost parentheses are a constant multiple of $s^*s$ applied to $\Lambda^{p+m-1,q-m+1}_1$, and we know that $s^*s$ acts by the constant $(1)(1+p+m-1-(q-m+1)+1)=(1)(p-q+2m)$ on this space. We can continue reasoning this way until we get to the outermost $s^*s$ operator applied to $\Lambda^{p,q}_m$, which acts by $m(p-q+m+1)$. Multiplying, we find that the map $\left(s^*\right)^mcs^m$ is a projection times the constant
\begin{equation*}
  \frac{p+m+1}{p-q+2m+1}(p+m)!(q-m)!m!\frac{(p-q+2m)!}{(p-q+m)!}.
\end{equation*}

Here is one way to interpret the $m!\frac{(p-q+2m)!}{(p-q+m)!}$ constant. Recall that the operator $s$ can be thought of as moving one index from the right antisymmetric factor to the left antisymmetric factor. So, $s^m$ moves $m$ indices from the right antisymmetric factor to the left antisymmetric factor. But the order in which we move the indices does not matter, so a more natural operator would be $\frac{s^m}{m!}\colon\Lambda^{p,q}\to\Lambda^{p+m,q-m}$, and likewise for $\left(s^*\right)^m$. So, dividing by $m!^2$, the map $\frac{\left(s^*\right)^m}{m!}\frac{s^m}{m!}$ acts on $\Lambda^{p,q}_m$ by multiplication by $\binom{p-q+2m}m$, which can be thought of as a way to choose $m$ indices from the $p-q+2m$ singleton rows in Figure~\ref{fig:youngdiagram} to travel back and forth via the $s^m$ and $(s^*)^m$ operators.

\subsection{Polynomial double forms}\label{sec:polyrep}
As noted by Arnold, Falk, and Winther \cite[p.~37]{arnold2006finite}, we can think of elements of $\cH_r$ as symmetric $r$-tensors, and hence $\cH_r\Lambda^k$ can be thought of as a space of $(r+k)$-tensors that are symmetric in the first $r$ indices and antisymmetric in the last $k$ indices. From this perspective, the exterior derivative $\D$ is (up to a constant) simply antisymmetrization in the last $k+1$ indices, yielding a form in $\cH_{r-1}\Lambda^{k+1}$. Conversely, the Koszul operator $\kappa$ is symmetrization in the first $r+1$ indices, yielding a form in $\cH_{r+1}\Lambda^{k-1}$. In other words, $\D$ moves an index from the symmetric part to the antisymmetric part, and $\kappa$ moves an index from the antisymmetric part to the symmetric part.

Likewise, we can think of polynomial double forms in $\cH_r\Lambda^{p,q}$ as $(r+p+q)$-tensors that are symmetric in the first $r$ indices, antisymmetric in the next $p$ indices, and antisymmetric in the next $q$ indices. Up to constant multiples, the symmetrization/antisymmetrization operators that move indices from one group to the other are precisely $\D_L$, $\D_R$, $\kappa_L$, $\kappa_R$, $s$, and $s^*$.

The space $\cH_r\Lambda^{p,q}_m$ is a subrepresentation of $\cH_r\Lambda^{p,q}$, but it is not irreducible. Its decomposition into irreducibles is given by the Pieri formula \cite[Equation~(6.8)]{fulton2004representation}, giving up to four irreducible summands. In fact, our space $\cH_r^-\Lambda^{p,q}_m:=\kappa_L\kappa_R\cH_{r-2}\Lambda^{p+1,q+1}_m$ is one of the irreducible summands. However, for ease of discussion, we will focus our attention on the subspace $d_Ld_R\cH_{r+2}\Lambda^{p-1,q-1}_m$ of $\cH_r\Lambda^{p,q}_m$ instead. As before, we can assume $m=0$ without loss of generality, and we will show that the image of $d_Ld_R$ in $\cH_r\Lambda^{p,q}_0$ is an irreducible representation given by the Young diagram in Figure~\ref{fig:youngimagedd}.

\begin{figure}
  \begin{tikzpicture}[
    BC/.style = {decorate, 
      decoration={calligraphic brace, amplitude=5pt, raise=1mm},
      very thick, pen colour={black}
    },
    ]
    \matrix (m) [matrix of math nodes,
    nodes={draw, minimum size=6mm, anchor=center},
    column sep=-\pgflinewidth,
    row sep=-\pgflinewidth
    ]
    {
      \star   & \star & ~ & ~ & |[draw=none,text height=3mm]| \cdots & ~ \\
      ~   & ~ \\
      |[draw=none,text height=3mm]| \vdots
      & |[draw=none,text height=3mm]| \vdots  \\
      ~   & ~ &&&&|[draw=none]|~\\
      ~   &   \\
      ~   &   \\
      |[draw=none,text height=3mm]| \vdots
      &   \\
      ~   &   \\
    };
    \draw[BC] (m-1-3.north west) -- node[above=2.2mm] {$r$} (m-1-6.north east);
    \draw[BC] (m-8-1.south west) -- node[left =2.2mm] {$p$} (m-1-1.north west);
    \draw[BC] (m-1-6.north east) -- node[right=2.2mm] {$q$} (m-4-6.south east);
  \end{tikzpicture}
  \caption{The Young diagram for the image of $d_Ld_R\colon\cH_{r+2}\Lambda^{p-1,q-1}_0\to\cH_r\Lambda^{p,q}_0$}
  \label{fig:youngimagedd}
\end{figure}

As before, to obtain the irreducible representation, we start with tensors that are symmetric in the indices in each row, and then we antisymmetrize the columns. We can antisymmetrize the columns in two steps, where, in the first step, we ignore the indices marked with a $\star$ and just antisymmetrize the other $p-1$ and $q-1$ indices in the column. We then fully antisymmetrize the columns in the second step. After the first step, the tensor is still symmetric with respect to the indices in the first row, so we can view that part of the tensor as a homogeneous polynomial of degree $r+2$. For the indices in the remaining rows, we did the same construction as in Section~\ref{sec:youngpqm}, so we obtain a tensor in $\Lambda^{p-1,q-1}_0$. So, at the end of the first step, we have the space $\cH_{r+2}\Lambda^{p-1,q-1}_0$. The final antisymmetrization step moves two indices from the symmetric part into each of the antisymmetric parts, which, as discussed above, is simply the operator $d_Ld_R$, as desired.

\subsection{The constant in Proposition~\ref{prop:kkdd}}\label{sec:constant}
Now that we have identified the Young diagram for $d_Ld_R\cH_{r+2}\Lambda^{p-1,q-1}_0$, we can use representation theory to obtain the constant in Proposition~\ref{prop:kkdd} by applying the Young symmetrizer for Figure~\ref{fig:youngimagedd} to $d_Ld_R\cH_{r+2}\Lambda^{p-1,q-1}_0$. As discussed, on the corresponding irreducible representation, the Young symmetrizer acts by multiplication by the product of the hook lengths, which we can compute to be
\begin{equation}\label{eq:youngpqrhook}
  \frac{p+r+1}{p-q+1}\frac{q+r}qp!q!r!.
\end{equation}
using Figure~\ref{fig:youngpqrhook}.

\begin{figure}
  \begin{tikzpicture}[
    BC/.style = {decorate, 
      decoration={calligraphic brace, amplitude=5pt, raise=1mm},
      very thick, pen colour={black}
    },
    ]
    \matrix (m) [matrix of math nodes,
    nodes={draw, minimum height=6mm, minimum width=18mm, anchor=center},
    column sep=-\pgflinewidth,
    row sep=-\pgflinewidth
    ]
    {
      p+r+1   & q+r & r & r-1 & |[draw=none,text height=3mm]| \cdots & 1 \\
      p   & q-1 \\
      |[draw=none,text height=3mm]| \vdots
      & |[draw=none,text height=3mm]| \vdots  \\
      p-q+2   & 1 &&&&|[draw=none]|~\\
      p-q   &   \\
      p-q-1   &   \\
      |[draw=none,text height=3mm]| \vdots
      &   \\
      1   &   \\
    };
    \draw[BC] (m-1-3.north west) -- node[above=2.2mm] {$r$} (m-1-6.north east);
    \draw[BC] (m-8-1.south west) -- node[left =2.2mm] {$p$} (m-1-1.north west);
    \draw[BC] (m-1-6.north east) -- node[right=2.2mm] {$q$} (m-4-6.south east);
  \end{tikzpicture}
  \caption{The hook lengths of the Young diagram in Figure~\ref{fig:youngimagedd}}
  \label{fig:youngpqrhook}
\end{figure}
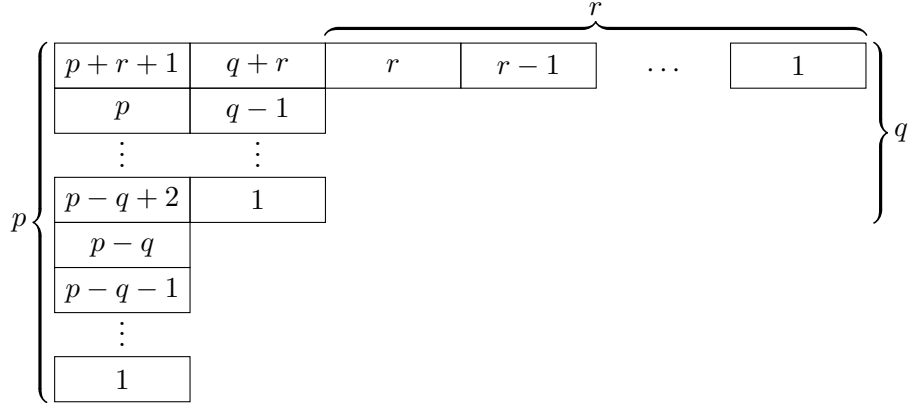

On the other hand, consider what happens when we apply the Young symmetrizer to $d_Ld_R\cH_{r+2}\Lambda^{p-1,q-1}_0 \subseteq \cH_r\Lambda^{p,q}_0$ via the following four steps:
\begin{enumerate}
\item Symmetrize the indices in the first row except for the indices in the boxes marked with a $\star$.
\item Finish symmetrizing the first row by symmetrizing the indices marked with a $\star$ with the rest of the row.
\item Symmetrize the indices in the remaining rows, and then antisymmetrize the columns except for the boxes in the first row marked with a $\star$.
\item Finish antisymmetrizing the columns by antisymmetrizing the indices marked with a $\star$ with the rest of their respective columns.
\end{enumerate}
We see that the first step is simply multiplication by $r!$, since forms in $\cH_r\Lambda^{p,q}$ are already symmetric in those $r$ indices. As discussed, the second step is $\kappa_L\kappa_R$, so we obtain a form in $\cH_{r+2}^-\Lambda^{p-1,q-1}_0$. We recognize the third step as the Young symmetrizer for the Young diagram of $\Lambda^{p-1,q-1}_0$ that is obtained by removing the first row of Figure~\ref{fig:youngimagedd}, so it acts by multiplication by the product of the hook lengths, which is $\frac p{p-q+1}(p-1)!(q-1)!$ by Equation~\eqref{eq:pq0hook}. Finally, the fourth step is $d_Ld_R$.

So, putting everything together, we reach the conclusion that $d_Ld_R\kappa_L\kappa_R$ acts on the space $d_Ld_R\cH_{r+2}\Lambda^{p-1,q-1}_0$ by multiplication by
\begin{equation*}
  \frac{\frac{p+r+1}{p-q+1}\frac{q+r}qp!q!r!}{\frac p{p-q+1}(p-1)!(q-1)!r!}=(p+r+1)(q+r).
\end{equation*}
Recall that, partway through the process, we obtained a form in $\cH_{r+2}^-\Lambda_0^{p-1,q-1}$. Taking on faith that one can deduce from the Pieri formula that $\cH_{r+2}^-\Lambda_0^{p-1,q-1}$ is irreducible, we have that $\kappa_L\kappa_R$ is an isomorphism between $d_Ld_R\cH_{r+2}\Lambda_0^{p-1,q-1}$ and $\cH_{r+2}^-\Lambda_0^{p-1,q-1}$. Consequently, $\kappa_L\kappa_Rd_Ld_R$ acts on $\cH_{r+2}^-\Lambda_0^{p-1,q-1}$ by multiplication by the same constant $(p+r+1)(q+r)$. Reindexing, we have that $\kappa_L\kappa_Rd_Ld_R$ acts on $\cH_r^-\Lambda_0^{p,q}$ by multiplication by the constant $(p+r)(q+r-1)$. Recalling the isomorphism $s^m\colon\Lambda_m^{p,q}\to\Lambda_0^{p+m,q-m}$, we conclude that $\kappa_L\kappa_Rd_Ld_R$ acts on $\cH_r^-\Lambda_m^{p,q}$ by multiplication by $(p+m+r)(q-m+r-1)$, matching Proposition~\ref{prop:kkdd}.

\subsection{The vanishing trace space $\oP_0\Lambda^{p,q}_m(T^n)$}\label{sec:vanishing}
We can also use representation theory to compute the dimension of the vanishing trace space $\oP_0\Lambda^{p,q}_m(T^n)$ given in Proposition~\ref{lemma:dimP0pqm}. However, the situation here is somewhat different, because the vanishing trace space is \emph{not} invariant under the action of the general linear group, since the general linear group does not preserve the boundary of $T^n$. Instead, $\oP_0\Lambda^{p,q}_m(T^n)$ is invariant under the action of the symmetric group $S_{n+1}$ given by permuting the $n+1$ vertices of $T^n$, or, equivalently, by permuting the $n+1$ coordinates $(\lambda_0,\dots,\lambda_n)$ of points in $T^n$. So, $\oP_0\Lambda^{p,q}_m(T^n)$ is an $S_{n+1}$-representation, and, as we will see, it is, in fact, irreducible. Irreducible representations of $S_{n+1}$ are also given by Young diagrams. As suggested by Remark~\ref{rem:dimP0pqm}, we will see that the Young diagram corresponding to $\oP_0\Lambda^{p,q}_m(T^n)$ is the diagram in Figure~\ref{fig:youngvanish}.

\begin{figure}
  \begin{tikzpicture}[
    BC/.style = {decorate, 
      decoration={calligraphic brace, amplitude=5pt, raise=1mm},
      very thick, pen colour={black}
    },
    ]
    \matrix (m) [matrix of math nodes,
    nodes={draw, minimum size=6mm, anchor=center},
    column sep=-\pgflinewidth,
    row sep=-\pgflinewidth
    ]
    {
      ~   & ~ & ~ & ~ & |[draw=none,text height=3mm]| \cdots & ~ \\
      ~   & ~ \\
      |[draw=none,text height=3mm]| \vdots
      & |[draw=none,text height=3mm]| \vdots  \\
      ~   & ~ &&&&|[draw=none]|~\\
      ~   &   \\
      ~   &   \\
      |[draw=none,text height=3mm]| \vdots
      &   \\
      ~   &   \\
    };
    \draw[BC] (m-1-3.north west) -- node[above=2.2mm] {$r':=p+q-(n+1)$} (m-1-6.north east);
    \draw[BC] (m-8-1.south west) -- node[left =2.2mm] {$p':=n+1-(q-m)$} (m-1-1.north west);
    \draw[BC] (m-1-6.north east) -- node[right=2.2mm] {$q':=n+1-(p+m)$} (m-4-6.south east);
  \end{tikzpicture}
  \caption{The Young diagram for $\oP_0\Lambda^{p,q}_m(T^n)$}
  \label{fig:youngvanish}
\end{figure}

A priori, it is unclear where this diagram is coming from, since, previously, in Figure~\ref{fig:youngdiagram}, we had $p+m$ boxes in the first column and $q-m$ boxes in the second column, but now we have $n+1-(q-m)$ and $n+1-(p+m)$ boxes, respectively, as well as an additional $p+q-(n+1)$ singleton columns. The answer is found in the proof of Theorem~\ref{thm:extension}, where, with $r=0$, we find these numbers from
\begin{equation*}
  d_Ld_R(u_N^{-1}\ostar_{S^n}\Phi^*\phi)\in\cH_{p+q-(n+1)}\Lambda_{m^*}^{n+1-p,\,n+1-q}(\R^{n+1})
\end{equation*}
via the isomorphisms
\begin{equation*}
  \Lambda_{m^*}^{n+1-p,\,n+1-q}\cong\Lambda_m^{n+1-q,\,n+1-p}\cong\Lambda_0^{n+1-(q-m),\,n+1-(p+m)}.
\end{equation*}
In the above, we recall that, in Theorem~\ref{thm:extension}, $\phi$ is an arbitrary extension of $\bar\phi\in\oP_0\Lambda^{p,q}_m(T^n)$, but, once we apply $\ostar_{S^n}$, the form only depends on $\bar\phi$, not on $\phi$. Meanwhile, the isomorphisms are given by the map $\tau$ that swaps the two factors of the double form (Proposition~\ref{prop:taudecomp}) and the map $s^m$ (Proposition~\ref{prop:reducetom0}). Putting everything together, we have an isomorphism given by
\begin{equation*}
  Z\colon\bar\phi\mapsto s^m\tau d_Ld_R(u_N^{-1}\ostar_{S^n}\Phi^*\phi)
\end{equation*}
between $\oP_0\Lambda^{p,q}_m(T^n)$ and a subspace of $\cH_{p+q-(n+1)}\Lambda_0^{n+1-(q-m),\,n+1-(p+m)}(\R^{n+1})$. As we will show, the image of $Z$ is a standard form of the irreducible representation of $S_{n+1}$ given by the Young diagram in Figure~\ref{fig:youngvanish}.

In a way that can be made more precise, this observation highlights the utility of the approach leading to Theorem~\ref{thm:extension}. Pulling back forms on the simplex to forms on the sphere is not just a ``trick'' for constructing extensions, but rather it is the more natural way of expressing these spaces, leading to a better understanding of the spaces, as well as their relationships to representation theory.

To simplify notation, let $p'=n+1-(q-m)$, $q'=n+1-(p+m)$, and $r'=p+q-(n+1)$, so $p'+q'+r'=n+1$. As discussed, the image of $Z$ is a subspace of $\cH_{r'}\Lambda_0^{p',q'}$. More precisely, we will eventually show that it is the subspace of forms that are in the image of $d_Ld_R\colon\cH_{r'+2}\Lambda_0^{p'-1,q'-1}\to\cH_{r'}\Lambda_0^{p',q'}$ and that are odd, in the sense of being anti-invariant under reflections across all coordinate planes. We begin by understanding this space.

First, we see that, because $p'+q'+r'=n+1$, the condition that a form be odd is actually quite stringent. A form being odd means that, when expressed in the standard basis, in every term, every variable $u_i$ appears an odd number of times, totaling both the factors of $u_i$ in the monomial part and the factors of $\du_i$ in the double form part. In particular, each of the $n+1$ variables $u_i$ must appear at least once, but, because $p'+q'+r'=n+1$, they must, in fact, appear exactly once.

Recalling that polynomials can be identified with symmetric tensors, let $A$ be the space of $(n+1)$-tensors spanned by tensors of the form $\du_{i_0}\otimes\dots\otimes \du_{i_n}$ where each $\du_i$ appears exactly once, or, in other words, the $i_0,\dots,i_n$ are a permutation of $0,\dots,n$. We can also think of $A$ as the formal linear combinations of elements of the symmetric group $S_{n+1}$, that is, the group algebra of $S_{n+1}$ \cite[Section~3.4]{fulton2004representation}. Let $c$ be the operation that symmetrizes the indices corresponding to the rows of the Young diagram in Figure~\ref{fig:youngvanish} and then antisymmetrizes the indices corresponding to the columns; this is the Young symmetrizer. When applied to $A$, the Young symmetrizer projects (up to a constant) to an irreducible $S_{n+1}$-representation, and, when applied to the full tensor space $V^{\otimes(n+1)}$, the Young symmetrizer projects (up to the same constant) to the corresponding $GL(n+1)$-representation; see \cite[p.~46 and 76--78]{fulton2004representation}. As discussed in Section~\ref{sec:polyrep}, applying this symmetrizer to the full tensor space yields the image of $d_Ld_R\colon\cH_{r'+2}\Lambda_0^{p'-1,q'-1}\to\cH_{r'}\Lambda_0^{p',q'}$, so applying the symmetrizer to $A$ yields the the forms in this space that are odd.

We have shown that the odd forms in the image of $d_Ld_R\colon\cH_{r'+2}\Lambda_0^{p'-1,q'-1}\to\cH_{r'}\Lambda_0^{p',q'}$ are an irreducible $S_{n+1}$-representation corresponding to the Young diagram in Figure~\ref{fig:youngvanish}. Because the space is irreducible, in order to show that it is the image of $Z$ it suffices to show that it contains the image of $Z$.

So, it remains to show that forms in the image of $Z$ are odd and in the image of $d_Ld_R$. The fact that $Z\bar\phi$ is odd follows from the discussion at the end of the proof of Theorem~\ref{thm:extension}, along with the facts that $s$ and $\tau$ commute with all pullbacks (Proposition~\ref{prop:pullbackdecomposition}), so, in particular, they commute with pullbacks via coordinate reflections. Next, $d_Ld_R$ commutes with $\tau$ because it is equal to $d_Rd_L$ by Proposition~\ref{prop:ddcommute}, and it commutes with $s^m$ by Proposition~\ref{prop:ddscommute}, so $Z\bar\phi=d_Ld_R(s^m\tau u_N^{-1}\ostar_{S^n}\Phi^*\phi)$. 

\subsection{The dimension of the vanishing trace space}\label{sec:dimensionvanishing}
We can now compute $\dim\oP_0\Lambda^{p,q}_m(T^n)$ using representation theory, specifically, the \cite[Hook Length Formula 4.12]{fulton2004representation}. (Note that the formula is different from the dimension formula we used earlier because we are now working with a representation of the symmetric group, not the general linear group.) To obtain the dimension, we divide $(n+1)!$ by the product of the hook lengths of the Young diagram in Figure~\ref{fig:youngvanish}. Noting that this diagram is the same as the one in Figure~\ref{fig:youngimagedd} except with $p'$, $q'$, and $r'$ in place of $p$, $q$, and $r$, we have from Equation~\eqref{eq:youngpqrhook} that the product of the hook lengths is
\begin{equation*}
  \frac{p'+r'+1}{p'-q'+1}\frac{q'+r'}{q'}p'!q'!r'!,
\end{equation*}
so the dimension of the representation is
\begin{equation*}
  \dim\oP_0\Lambda^{p,q}_m(T^n)=\frac{(n+1)!}{p'!q'!r'!}\frac{p'-q'+1}{p'+r'+1}\frac{q'}{q'+r'}
\end{equation*}
Note that $\frac{(n+1)!}{p'!q'!r'!}$ is just a trinomial coefficient, which can be given, for instance, by $\binom{n+1}{q'+r'}\binom{q'+r'}{q'}$. We can see that this formula matches the one in Proposition~\ref{lemma:dimP0pqm} because $p-q+2m+1=p'-q'+1$, $p+m+1=p'+r'+1$, $\binom{n+1}{q-m}=\binom{n+1}{q'+r'}$, and $\binom{q-m-1}{n-p-m}=\frac{n+1-p-m}{q-m}\binom{q-m}{n+1-p-m}=\frac{q'}{q'+r'}\binom{q'+r'}{q'}$.

\section*{Acknowledgements}

Yakov Berchenko-Kogan was supported by NSF grant DMS-2411209.  Evan Gawlik was supported by NSF grant DMS-2411208 and the Simons Foundation award MPS-TSM-00002615.

\printbibliography

\end{document}